\documentclass{article}

\usepackage{psfrag}
\usepackage[parfill]{parskip}
\usepackage{float, graphicx}
\usepackage[]{epsfig}
\usepackage{amsmath, amsthm, amssymb,}
\usepackage{epsfig}
\usepackage{verbatim}
\usepackage{multicol}
\usepackage{url}
\usepackage{subcaption}

\usepackage{latexsym}
\usepackage{mathrsfs}
\usepackage[colorlinks, bookmarks=true]{hyperref}
\usepackage{graphicx}
\usepackage{amsmath}
\usepackage{enumerate}
\usepackage[normalem]{ulem}
\usepackage{bm}
\usepackage{dsfont}
\usepackage{stmaryrd}
\usepackage{enumitem}
\usepackage[noadjust]{cite}
\usepackage{tikz}
\usepackage{indentfirst}
\usepackage{color}
\usepackage{xcolor}
\usepackage{hyperref}
\usepackage{mathtools}
\usepackage[margin=1in]{geometry}
\usepackage{cleveref}

\makeatletter

\def\@settitle{\begin{center}%
    \bfseries
 \normalfont\LARGE\@title
  \end{center}%
}
\def\@setauthors{\begin{center}%
 \normalsize\@author
  \end{center}%
}

\makeatother

\numberwithin{equation}{section}

\renewcommand{\cal}{\mathcal}
\newcommand\cA{{\mathcal A}}
\newcommand\cB{{\mathcal B}}

\newcommand{\cC}{{\cal C}}
\newcommand{\cD}{{\cal D}}

\newcommand{\cE}{{\cal E}}
\newcommand{\cF}{{\cal F}}
\newcommand{\cG}{{\cal G}}

\newcommand\cH{{\mathcal H}}
\newcommand{\cK}{{\cal K}}

\newcommand{\cM}{{\cal M}}
\newcommand{\cN}{{\cal N}}

\newcommand{\cP}{{\cal P}}

\newcommand{\cU}{{\mathcal U}}

\newcommand{\cX}{{\mathcal X}}
\newcommand{\cY}{{\mathcal Y}}

\newcommand{\sfJ}{{\mathsf J}}
\newcommand{\sfA}{{\mathsf A}}

\newcommand{\bfS}{{\bf S}}

\newcommand{\fa}{{\mathfrak a}}
\newcommand{\fb}{{\mathfrak b}}
\newcommand{\fc}{{\mathfrak c}}

\newcommand{\fo}{{\mathfrak o}}

\newcommand{\fp}{{\mathfrak p}}
\newcommand{\fq}{{\mathfrak q}}

\newcommand{\fg}{{\mathfrak g}}

\newcommand{\bmr}{{\bm{r}}}

\newcommand{\bmu}{{\bm{u}}}

\newcommand{\bmx}{{\bm{x}}}

\newcommand{\bfi}{{\bf i}}
\newcommand{\fC}{{\mathfrak C}}

\newcommand{\be}{\begin{equation}}
\newcommand{\ee}{\end{equation}}

\newcommand{\rd}{{\rm d}}

\newcommand{\ri}{\mathrm{i}}

\newcommand{\bC}{{\mathbb C}}
\newcommand{\bE}{\mathbb{E}}
\newcommand{\bH}{\mathbb{H}}

\newcommand{\bP}{\mathbb{P}}

\newcommand{\bR}{{\mathbb R}}
\newcommand{\bX}{{\mathbb X}}

\newcommand{\bT}{\mathbb T}
\newcommand{\bV}{\mathbb V}
\newcommand{\bW}{\mathbb W}

\newcommand{\bI}{\mathbb{I}}

\newcommand{\al}{\alpha}

\newcommand{\la}{\lambda}

\DeclareMathOperator{\Tr}{Tr}

\DeclareMathOperator{\dist}{dist}

\DeclareMathOperator{\Adm}{{Adm}}

\DeclareMathOperator{\OO}{O}
\DeclareMathOperator{\oo}{o}

\renewcommand{\Re}{\mathop{\mathrm{Re}}}
\renewcommand{\Im}{\mathop{\mathrm{Im}}}

\newcommand{\deq}{\mathrel{\mathop:}=} 
 
\renewcommand{\leq}{\leqslant}
\renewcommand{\geq}{\geqslant}

\newcommand{\del}{\partial}

\newcommand{\wh}{\widehat}
\newcommand{\wt}{\widetilde}

\newcommand{\qq}[1]{[\![{#1}]\!]}

\newcommand{\beq}{\begin{equation}}
\newcommand{\eeq}{\end{equation}}

\theoremstyle{plain} 
\newtheorem{theorem}{Theorem}[section]
\newtheorem*{theorem*}{Theorem}
\newtheorem{lemma}[theorem]{Lemma}
\newtheorem*{lemma*}{Lemma}
\newtheorem{corollary}[theorem]{Corollary}
\newtheorem*{corollary*}{Corollary}
\newtheorem{proposition}[theorem]{Proposition}
\newtheorem*{proposition*}{Proposition}

\newtheorem*{assumption*}{Assumption}
\newtheorem{claim}[theorem]{Claim}

\newtheorem{definition}[theorem]{Definition}
\newtheorem*{definition*}{Definition}
\newtheorem{example}[theorem]{Example}
\newtheorem*{example*}{Example}
\newtheorem{remark}[theorem]{Remark}

\newtheorem*{remark*}{Remark}
\newtheorem*{remarks*}{Remarks}

\newcommand{\col}{\vcentcolon}

\newcommand{\rhosc}{\rho_{\mathrm{sc}}}

\newcommand{\msc}{m_{\rm sc}}
\newcommand{\md}{m_d}

\newcommand{\sfS}{{\sf S}}

\newcommand{\sfF}{{\sf F}}

\def\author#1{\par
    {\centering{\authorfont#1}\par\vspace*{0.05in}}
}

\def\titlefont{\fontsize{13}{15}\bfseries\boldmath\selectfont\centering{}}
\def\authorfont{\fontsize{13}{15}}

\let\affiliationfont\rhfont

\def\address#1{\par
    {\centering{\affiliationfont#1\par}}\par\vspace*{11pt}
}

\def\body{
\setcounter{footnote}{0}
\def\thefootnote{\alph{footnote}}
\def\@makefnmark{{$^{\rm \@thefnmark}$}}
}

\def\title#1{
    \thispagestyle{plain}
    \vspace*{-14pt}
    \vskip 79pt
    {\centering{\titlefont #1\par}}%
    \vskip 1em
}

\def\rhosc{\rho_{\text{sc}}}

\usepackage{scalerel}
\newcommand{\cT}{{\mathcal T}}

\definecolor{forestgreen}{RGB}{34, 139, 34}

\newcommand{\GG}{{\mathcal G}}
\newcommand{\fR}{{\mathfrak R}}
\newcommand{\tG}{{\widetilde G}}
\newcommand{\tL}{{\widetilde L}}
\newcommand{\tcG}{{\widetilde \cG}}
\newcommand{\oOmega}{\overline{\Omega}}

\begin{document}

\title{Lecture Notes on Edge Universality for Random Regular Graphs}

\vspace{1.2cm}

\noindent \begin{minipage}[c]{0.45\textwidth}
 \author{Jiaoyang Huang}
\address{University of Pennsylvania\\
   huangjy@wharton.upenn.edu}
 \end{minipage}
\begin{minipage}[c]{0.45\textwidth}
 \author{Horng-Tzer Yau}
\address{Harvard University \\
   htyau@math.harvard.edu}

 \end{minipage}

\begin{abstract}

The purpose of this note is to explain the structure, general strategy, and main
ideas of the proof in the work of Huang, McKenzie, and Yau (2024) on the Ramanujan
property and edge universality of random regular graphs. The core of the argument
is the derivation of self-consistent equations and a microscopic version of the loop
equations for random $d$-regular graphs. We first recall the local law for random
$d$-regular graphs, and then illustrate the main ideas behind
the derivation of the self-consistent equations and the first loop equation.

\end{abstract}

\bigskip
{
\hypersetup{linkcolor=black}
\setcounter{tocdepth}{1}
\tableofcontents
}

\newpage

\section{Introduction}
We give an exposition of the ideas and proofs in \cite{huang2024ramanujan}. 
The overall strategy consists of three main components:
\begin{enumerate}
\item proving concentration for the self-consistent equations, via estimates of 
the expectations of high moments of these equations, leading to optimal 
concentration of eigenvalues;
\item proving a microscopic version of the loop equations, which can be viewed 
as a refinement of the self-consistent equations with an additional correction 
term;
\item deriving edge universality using the loop equations together with Dyson 
Brownian motion.
\end{enumerate}

We start in \Cref{s:Greenf} by recalling some useful identities for the Green's
function, which are standard tools in random matrix theory. In 
\Cref{s:local_law}, we recall the local law for random \(d\)-regular graphs and
related concepts from the previous work \cite{huang2024spectrum}. 

To illustrate the basic ideas, in this note we only prove estimates for the
expectation (first moment) of the self-consistent equations and derive the
first microscopic version of the loop equation. These are stated in
\Cref{t:recursion}. We then derive edge universality using the loop equations
in \Cref{s:universality}. The proof of \Cref{t:recursion} occupies the
remainder of the note.

The key tool is the local resampling procedure in \Cref{s:local_resampling},
which efficiently exploits the randomness of random \(d\)-regular graphs. Local
resampling produces an exchangeable pair of random \(d\)-regular graphs (before
and after resampling). In \Cref{s:exchangeable}, we explain the basic ideas of
proving concentration using exchangeable pairs, and in \Cref{s:expQ} we discuss
how to apply this method in the setting of random \(d\)-regular graphs. It
turns out, however, that the resulting error is too weak and far from optimal.

To derive self-consistent equations with optimal error, we need to:
\begin{enumerate}
\item develop an iteration mechanism, in which we repeatedly perform local 
resampling and the error improves at each step (see \Cref{s:proofoutline});
\item track the terms arising from the iteration by introducing admissible 
functions and forests in \Cref{s:forest};
\item for terms depending on the graph after local resampling, rewrite them as 
functions of the original graph using either the Schur complement formula (in 
\Cref{sec:schurlemma}) or the Woodbury formula (in \Cref{sec:fanalysis});
\item control the errors generated at each iteration step, as done in 
\Cref{s:change_est}.
\end{enumerate}

We remark that the derivation of the estimates for the self-consistent
equations and the loop equation are not independent. The loop equation (as
proved in \Cref{s:first_loop}) essentially follows from identifying the next
order correction to the self-consistent equations.

\subsection{Random $d$-regular graphs}

A  graph on $N$ vertices $\qq{N}=\{1,\dots,N\}$ is $d$-regular if every vertex has degree $d$.
Necessarily $1\leq d\leq N-1$ and $dN$ is even. In this article we mainly consider the \emph{uniform model} ${\cG_{N,d}}$:
Sample uniformly from the finite set of all \emph{simple} (no loops or multiple edges) $d$-regular graphs on $N$ vertices $\qq{N}$.

Besides the uniform model, several other models are considered in the literature. 
\paragraph{Permutation model.}
When $d$ is even, take $d/2$  independent uniformly distributed permutations $\sigma_1,\dots,\sigma_{d/2}$ on $\qq{N}$. 
The permutation model is the random graph on $N$ vertices obtained by adding an edge $\{i, \sigma_j(i)\}$ for each $i\in\qq{N}$
and $1\leq j\leq d/2$. 
\paragraph{Matching model.}
When $N$ is even, take $d$ independent uniformly distributed perfect matchings $\sigma_1,\dots,\sigma_{d}$ on $\qq{N}$. 
The matching model is the random graph on $N$ vertices obtained by adding an edge $\{i, \sigma_j(i)\}$ for each $i\in\qq{N}$
and $1\leq j\leq d$. 

These models yield a $d$-regular \emph{multigraph} with possible self-loops and
multiple edges; conditioning on simplicity gives the uniform model $\cG_{N,d}$. In this article we mainly discuss the uniform model, but many results also hold for other models.

We can identify a $d$-regular graph with its adjacency matrix $A$. 
Let \(\mathcal{G}\) be a simple, undirected random graph on the vertex set \(\qq{N}\).
Its adjacency matrix \(A=(A_{ij})\in\{0,1\}^{N\times N}\) is defined by \(A_{ij}=1\) if and only if \(i\) and \(j\) are adjacent. For $d$-regular graphs, it is immediate that $A$ has a trivial eigenvalue $d$ with associated eigenvector ${\bm e}=(1,1,\dots,1)^\top$. Moreover, by the Perron-Frobenius theorem, all other eigenvalues are bounded in absolute value by $d$. For convenience, we shall consider the normalized adjacency matrix
\begin{equation} \label{def_H}
H\deq A/\sqrt{d-1}.
\end{equation}

We denote the eigenvalues of the normalized adjacency matrix $H$ of a $d$-regular graph $\GG$ on $N$ vertices as $\lambda_1=d/\sqrt{d-1}\geq\lambda_2\geq \cdots\geq \lambda_N$. We also introduce the {Green's function} and the Stieltjes transform of the empirical eigenvalue distribution of the normalized adjacency matrix $H$ 
\begin{align}\label{e:G}
  G(z) \deq  (H-z)^{-1},\quad   m_N(z) \deq \frac{1}{N} \sum_i G_{ii}(z)=\frac1N\sum_{i=1}^N\frac{1}{\la_i-z},\quad z\in \bC^+.
\end{align}

By local weak convergence, the empirical eigenvalue density of random $d$-regular graphs converges to that of the infinite $d$-regular tree, which is known as the Kesten-McKay distribution; see \cite{kesten1959symmetric,mckay1981expected}. This density is given by 
\begin{align}\label{e:KMdistribution}
\varrho_d(x):=\mathbf1_{x\in [-2,2]} \left(1+\frac1{d-1}-\frac {x^2}d\right)^{-1}\frac{\sqrt{4-x^2}}{2\pi}.
\end{align}
Note that close to the spectral edge $\pm 2$, the Kesten-Mckay distribution has square root behavior:
\begin{align}\label{e:edge_behavior}
x\rightarrow\pm 2,\quad \varrho_d(x)=\frac{\cA\sqrt{2\mp x}}{\pi} +\OO(|2\mp x|), \quad \cA:=\frac{d(d-1)}{(d-2)^2}.
\end{align}
We denote by $\md(z)$ the Stieltjes transform of the Kesten--McKay distribution $\varrho_d(x)$,
\begin{align*}
    \md(z)=\int_\bR \frac{\varrho_d(x)\rd x}{x-z}=(d-1)\frac{-(d-2)z+ d\sqrt{z^2-4}}{2(d^2-(d-1)z^2)},\quad z\in \bC^+:=\{w\in \bC \col \Im[w]>0\}.
\end{align*}
We recall the semicircle distribution $\varrho_{\rm sc}(x)$ and its Stieltjes transform $\msc(z)$:
\begin{align}\begin{split}\label{e:msc_equation}
 \varrho_{\rm sc}(x)=\bm1_{x\in[-2,2]}\frac{\sqrt{4-x^2}}{2\pi},
 \quad 
 \msc(z)=\int_\bR \frac{\varrho_{\rm sc}(x)\rd x}{x-z}=\frac{-z+\sqrt{z^2-4}}{2}.
\end{split}\end{align}
Explicitly, the Stieltjes transform of the Kesten--McKay distribution $\md(z)$ can be expressed in terms of the Stieltjes transform $\msc(z)$:
\begin{align}\label{e:md_equation}
    \md(z)=\frac{1}{-z-\frac{d}{d-1}\msc(z)}.
\end{align}

\subsection{Main Results}

Our main result in \cite{huang2024ramanujan} verifies the edge universality conjecture for random $d$-regular graphs by Sarnak \cite{sarnak2004expander} and Miller, Novikoff and Sabelli \cite{miller2008distribution}.

\begin{theorem}[Edge eigenvalue universality \cite{huang2024ramanujan}]
\label{t:universality}
Fix $d\geq 3$, $k\geq 1$ and $s_1,s_2,\cdots, s_k \in \bR$, and let $\cA=d(d-1)/(d-2)^2$  from \eqref{e:edge_behavior}. There exists a small $\varepsilon>0$ such that the eigenvalues $\lambda_1 = {d}/{\sqrt{d-1}} \geq \lambda_2 \geq \cdots \geq \lambda_N$ of the normalized adjacency matrix $H$ of random $d$-regular graphs satisfy: 
\begin{align*}\begin{split}
 &\phantom{{}={}}\bP_{H}\left( (\cA N)^{2/3} ( \lambda_{2} - 2 )\geq s_1,\cdots, (\cA N)^{2/3} ( \lambda_{k+1} - 2 )\geq s_k \right)\\
 &= \bP_{\mathrm{GOE}}\left( N^{2/3} ( \mu_1 - 2  )\geq s_1,\cdots, N^{2/3} ( \mu_k - 2  )\geq s_k\right) +\OO(N^{-\varepsilon}),
\end{split}\end{align*}
where $\mu_1\geq \mu_2\geq \cdots \geq\mu_N$ are the eigenvalues of the GOE.
The analogous statement holds for the smallest eigenvalues $-\lambda_N,\ldots,-\lambda_{N-k+1}$.
\end{theorem}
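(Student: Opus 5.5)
The plan follows the three-component strategy outlined in the introduction: optimal rigidity from the self-consistent equation, a microscopic loop equation, and a comparison via Dyson Brownian motion.

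\textbf{Step 1: optimal rigidity at the edge.} I would take as input the local law for $G(z)$ with near-optimal error. Its consequence is that the eigenvalues of $H$ near $\pm 2$ lie on the scale $N^{-2/3+\oo(1)}$ of their classical locations with respect to $\varrho_d$. In particular, $|\lambda_2-2|\leq N^{-2/3+\oo(1)}$ with overwhelming probability. To get this, I would show that the self-consistent equation $m_N \approx \md$ (more precisely, the equation for the auxiliary quantity built from $\msc$ via \eqref{e:md_equation}) holds with error $\OO(N^{-1+\oo(1)}/\Im z)$ in all high moments. Near the edge, the square-root behaviour \eqref{e:edge_behavior} converts this control into eigenvalue rigidity by the usual stability analysis of the quadratic equation.

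\textbf{Step 2: microscopic loop equation.} Next I would refine the self-consistent equation to identify its next-order correction. Concretely, I would show that $\bE[P(z,m_N(z))]$ equals an explicit term of the form $\frac{1}{N}\partial_z m_N$-type plus an error $\oo(N^{-1}/\Im z)$, uniformly for $z$ with $\Re z$ within $N^{-2/3+\epsilon}$ of $2$ and $\Im z\geq N^{-2/3-\delta}$. This matches the first loop equation of the GOE after the rescaling $z\mapsto 2+(\cA N)^{-2/3}w$. Both steps use the local resampling and exchangeable pair argument, iterated via forests and admissible functions, with Schur complement and Woodbury expansions to express post-resampling quantities in terms of the original graph.

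\textbf{Step 3: universality.} I would run Dyson Brownian motion $H_t=H+\sqrt t\,W$, with $W$ a GOE, for time $t=N^{-1/3-\epsilon'}$. This is short enough that the edge moves only negligibly, since the loop equation lets one compute how the Stieltjes transform at the edge evolves under the flow. It is also long enough that, by known edge universality for DBM with rigid initial data (Landon--Yau type results), the top eigenvalues of $H_t$ follow the Tracy--Widom/GOE Airy statistics with scale factor $\cA^{2/3}$. I would then compare $H$ with $H_t$. Using the loop equation, I would show that the Itô drift of $\bE F(\Im m_N)$ for edge observables $F$ vanishes to leading order, so the distributions of $(\cA N)^{2/3}(\lambda_{i+1}-2)$ change by only $\OO(N^{-\varepsilon})$ over time $t$. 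The smallest eigenvalues follow by symmetry.

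The main obstacle is Step 2. One must extract the correction term exactly, with error strictly better than the natural $N^{-1}/\Im z$ scale, at spectral scales below $N^{-2/3}$. This requires a careful iteration bookkeeping of all resampling error terms and cancellation of the leading ones.
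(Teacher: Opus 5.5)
Your overall architecture is the paper's: rigidity from resampling-based self-consistent equations, then a microscopic loop equation, then Dyson Brownian motion plus a comparison driven by the loop equation. However, Step 3 fails as written, and repairing it forces Step 2 to be much stronger than you state.

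\textbf{Step 3: the time window does not exist.} You take $t=N^{-1/3-\epsilon'}$ and claim it is long enough for DBM edge universality and short enough that the edge does not move. Neither holds.
\begin{itemize}
\item The Landon--Yau and Adhikari--Huang edge results require $t\geq N^{-1/3+\fc}$, because the edge relaxation time is $N^{-1/3}$.
\item For any $t\gg N^{-2/3}$, the free-convolution edge $E_t$ moves by $\asymp t$, since $\partial_t E_t=m(E_t;t)$ is of order one.
\end{itemize}
So there is no short-time window, and for fixed $d$ there is no moment matching to fall back on. The paper instead runs the flow for the long time $t=N^{-1/3+\fc}$ and follows the moving edge via $z(t)=E_t+w$ and $\Delta_t=m_N(\cdot;t)-m(\cdot;t)$. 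Combining Gaussian integration by parts, the Burgers equation and $\partial_tE_t=m(E_t;t)$ turns the drift into $\frac12\bE[\partial_zF_t(z(t))]$ as in \eqref{e:DBM_mt2}, where $F_t$ is exactly the loop-equation combination. One then needs $|\partial_t\bE[\cdots]|\leq N^{-2\fc}$, so that the integral over $[0,N^{-1/3+\fc}]$ is still $o(1)$ after the $N^{1/3}$ normalization. (The flow should also use a GOE restricted to matrices with vanishing row and column sums, so that $\bm e$ remains an eigenvector.)

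\textbf{Step 2: the precision target is off by a factor $N^{1/3}$.}
\begin{itemize}
\item At $\Im z\asymp N^{-2/3}$, every term of the loop equation ($\Delta^2$, $\sqrt{z-E_t}\,\Delta$, $N^{-1}\partial_zm_N$) has size $N^{-2/3}$. Your error $o(N^{-1}/\Im z)$ is $o(N^{-1/3})$, so it cannot even detect the $\partial_zm_N/N$ correction. What is needed is $\OO(N^{-2/3-10\fc})$, a power gain over $\Im m_N/(N\eta)$, as in \eqref{e:micro_loop}.
\item For the same reason, Step 1 needs self-consistent-equation errors of size $\Phi\approx \Im m_N/(N\eta)$, not $1/(N\eta)$, to obtain $N^{-2/3}$ rigidity.
\item A single first loop equation for $H$ controls only $\partial_t\bE[m_N]$ at $t=0$. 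Comparing eigenvalue distributions requires the loop equations for $H(t)$ at every time along the flow and at every rank $p$. That means inserting arbitrary products $\prod_i\Delta_t(z_i(t))$, together with the $N^{-2}$ cross terms in \eqref{e:micro_loop}, which account for the second-order Itô contributions.
\end{itemize}

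\textbf{Where the precision comes from.} In the paper it comes from the tree-extension equations $Q\approx Y_\ell(Q,z)$ and $m_N\approx X_\ell(Q,z)$, with $\ell$ a small multiple of $\log_{d-1}N$. This is neither a closed equation for $m_N$ nor an $\ell=0$ analogue of \eqref{e:md_equation}. Each resampling step gains a factor $(d-1)^{-\ell/2}$. That gain makes the iteration terminate after $\OO(\log N/\ell)$ steps and yields the error $N^{\fo}\bE[\Psi]/(d-1)^{\ell/2}$ in \eqref{e:Qrefined_bound}.
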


When the degree $d$ grows with the size of the graph, edge universality for random $d$-regular graphs has been established previously for $N^{2/3 + \oo(1)} \leq d \leq N/2$, by He \cite{he2022spectral}, and for  $N^{\oo(1)} \leq d \leq N^{1/3 - \oo(1)}$, by the authors  of this paper \cite{huang2023edge}, which generalized a result for $N^{2/9 + \oo(1)} \leq d \leq N^{1/3 - \oo(1)}$, by Bauerschmidt, Knowles and the authors of this paper \cite{bauerschmidt2020edge}.

\Cref{t:universality} implies that the fluctuation of the second largest eigenvalue converges to the Tracy-Widom$_1$ distribution. The  Tracy-Widom$_1$ distribution  has  about $83\%$ of its mass on the set $\{x:x<0\}$\cite{carr_tw1_cdf_2025}.
Therefore \Cref{t:universality} implies $83\%$ of
$d$-regular graphs have the second eigenvalue \emph{less than} $2$.
The proof of \Cref{t:universality} can be extended to show 
that  the largest and smallest nontrivial eigenvalues converge in distribution to \emph{independent} Tracy-Widom$_1$ distributions.
As a consequence, we have the following result.

\begin{corollary}[\cite{huang2024ramanujan}]\label{c:rate}
Fix $d\geq 3$ and $N$ sufficiently large.  With probability approximately $69\%$, a randomly sampled $d$-regular graph has $\max\{\lambda_2,|\lambda_N|\}\leq 2$, and is therefore Ramanujan. 
\end{corollary}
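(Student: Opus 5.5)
The corollary will follow from \Cref{t:universality}, in the joint version mentioned just before it: the top and bottom nontrivial edges converge to \emph{independent} Tracy--Widom$_1$ laws. We write the Ramanujan event as
\[
\{\lambda_2\le 2\}\cap\{-\lambda_N\le 2\}=\{(\cA N)^{2/3}(\lambda_2-2)\le 0\}\cap\{(\cA N)^{2/3}(-\lambda_N-2)\le 0\}.
\]
By the joint edge universality, the probability of this event equals
\[
\bP_{\mathrm{GOE}\otimes\mathrm{GOE}'}\bigl(N^{2/3}(\mu_1-2)\le 0,\ N^{2/3}(\mu'_1-2)\le 0\bigr)+\OO(N^{-\varepsilon}),
\]
where $\mu_1,\mu'_1$ are the top eigenvalues of two independent GOE matrices. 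By the classical GOE edge limit (Tracy--Widom) and continuity of the Tracy--Widom$_1$ distribution function $F_1$ at $0$, this converges to $F_1(0)^2$. Taking $F_1(0)\approx 0.83$ from \cite{carr_tw1_cdf_2025} gives $F_1(0)^2\approx 0.69$.

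The order of steps is as follows. First, we state the joint version of \Cref{t:universality}. For fixed $s,s'$,
\[
\bP_H\bigl((\cA N)^{2/3}(\lambda_2-2)\ge s,\ (\cA N)^{2/3}(-\lambda_N-2)\ge s'\bigr)
\]
equals the corresponding probability for two independent GOE matrices, up to $\OO(N^{-\varepsilon})$. Second, we pass from ``$\ge$'' events to ``$\le$'' events. Inclusion--exclusion reduces this to the marginal statements (already in \Cref{t:universality}) plus the joint one. The boundary issue between $\le 0$ and $<0$ is harmless, since the limit law has a continuous distribution function: we sandwich with $s=\pm\delta$ and let $\delta\to0$. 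Third, we identify the limit as $F_1(0)^2$.

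The main obstacle is the joint statement, specifically the asymptotic independence of the two edges. Its proof runs through the Dyson Brownian motion step of the strategy. We run the matrix Ornstein--Uhlenbeck flow $H_t$ for time $t=N^{-1/3+\delta}$. The loop equations and the optimal rigidity (from the self-consistent equations) show that, at both edges simultaneously, the edge statistics of $H$ and $H_t$ agree to order $N^{-2/3-c}$. Then we invoke known edge universality for DBM with a locally regular initial profile (square-root density near both $\pm2$). For DBM, the dynamics near the two opposite edges are driven by asymptotically decoupled local equations, so the Gaussian-divisible ensemble has independent Tracy--Widom$_1$ edges, just as GOE does.

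Concretely, we would check that the comparison step in \Cref{s:universality} is applied to a product of two smooth cutoff functions, one counting eigenvalues near $+2$ and one near $-2$. The loop-equation estimates are symmetric under $H\mapsto -H$, so the same bounds control both factors, and the joint comparison then follows by the same argument as for a single edge.
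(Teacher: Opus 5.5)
Your proposal is correct and takes the same route as the paper. The paper likewise only asserts that the proof of \Cref{t:universality} extends to give asymptotically independent Tracy--Widom$_1$ limits for $\lambda_2$ and $-\lambda_N$, and then squares $F_1(0)\approx 0.83$ to obtain $\approx 0.69$.

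One justification in your sketch needs correcting. The law of $H$ is not invariant under $H\mapsto -H$, since the trivial eigenvalue $d/\sqrt{d-1}$ breaks that symmetry. What you actually need is the analogous lower-edge estimate stated at the end of \Cref{t:recursion}, together with loop equations that mix spectral parameters near $+2$ and $-2$. In those mixed equations the cross terms are negligible because $|z-z_i|\approx 4$.
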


\subsection{Parameters}

In this note we fix the parameters as follows
\begin{align}\label{e:parameters}
    0<\fo\ll \fb\ll  \fc\ll \fg\ll 1,
\end{align} 
set $\fR=(\fc/4)\log_{d-1}N$ and choose $\ell $ such that $\ell/\log_{d-1}N\ll \fb$.
Below, we describe their meanings and where they are introduced:
\begin{itemize}
    \item Many estimates involve bounds containing $N^\fo$ factors, which are harmless.
    \item $\fb$ relates to the concentration of Green's function entries, with errors bounded by $N^{-\fb}$, see \eqref{eq:infbound0} and \eqref{eq:infbound}.
    \item For the spectral parameter $z\in \bC^+$ in Green's functions and Stieltjes transforms, we restrict it to $\Im[z]\geq N^{-1+\fg}$, see \eqref{e:D}.
    \item $\ell$ comes from local resampling in \Cref{s:local_resampling}, we resample  boundary edges of balls with radius $\ell$.
    \item $\fc$ defines $\fR$, and with high probability, random $d$-regular graphs are tree-like within radius $\fR$ neighborhoods, see \Cref{def:omegabar}.
\end{itemize}

We restrict our analysis to the spectral domain 
\begin{align} \label{e:D}
    \mathbf D=\{z\in \bC^+:  N^{-1+\fg}\leq \Im[z]\leq N^{-\fo}, |\Re[z]|\leq 2+N^{-\fo}  \}.
\end{align}
In the spectral domain $\bf D$,  we impose the conditions $\Im[z]\leq N^{-\fo}$ and $|\Re[z]|\leq 2+N^{-\fo}$. These constraints ensure that  $|\msc(z)|$ is close to $1$, specifically satisfying $||\msc(z)|-1|\lesssim N^{-\fo/2}$.

\section{Properties of the Green's functions}
\label{s:Greenf}
Throughout this paper, we repeatedly use some (well-known) identities for Green's functions,
which we collect them here.

\subsection{Resolvent identity}

The following well-known identity is referred as resolvent identity:
for two invertible matrices $A$ and $B$ of the same size, we have
\begin{equation} \label{e:resolv}
  A^{-1} - B^{-1} = A^{-1}(B-A)B^{-1}=B^{-1}(B-A)A^{-1}.
\end{equation}

\subsection{Woodbury formula}
Let $A+UCV^\top$ be a rank $r$ perturbation of $A$. Namely, $U, V\in \bR^{N\times r}$ and $C\in \bR^{r\times r}$. Then, the Woodbury formula  gives us 
\begin{align}\label{e:woodbury}
(A+UCV^\top)^{-1}-A^{-1}=-A^{-1}U(C^{-1}+V^\top A^{-1} U)^{-1}V^\top A^{-1}.
\end{align}

\subsection{Schur complement formula}

Given an $N\times N$ matrix $H$ and an index set $\bT \subset \qq{N}$, recall that we denote by
$H|_\bT$ the $\bT \times \bT$-matrix obtained by restricting $H$ to $\bT$,
and that by $H^{(\bT)} = H|_{\bT^\complement}$ the matrix obtained by removing
the rows and columns corresponding to indices in $\bT$.
Thus, for any $\bT \subset \qq{N}$,
any symmetric matrix $H$ can be written (up to rearrangement of indices) in the block form
\begin{equation*}
  H = \begin{bmatrix} A& B^\top\\ B &D  \end{bmatrix},
\end{equation*}
with $A=H|_{\bT}$ and $D=H^{(\bT)}$.
The Schur complement formula asserts that, for any $z\in \bC^+$,
\begin{equation} \label{e:Schur}
 G=(H-z)^{-1}= \begin{bmatrix}
   (A-B^\top G^{(\bT)} B)^{-1} & -(A-B^\top G^{(\bT)} B)^{-1}B^\top G^{(\bT)} \\
   -G^{(\bT)} B(A-B^\top G^{(\bT)} B)^{-1} & G^{(\bT)}+G^{(\bT)} B(A-B^\top G^{(\bT)} B)^{-1}B^\top G^{(\bT)} 
 \end{bmatrix},
\end{equation}
where $G^{(\bT)}=(D-z)^{-1}$.
Throughout the paper, we often use the following special cases of \eqref{e:Schur}:
\begin{align} \begin{split}\label{e:Schur1}
  G|_{\bT} &= (A-B^\top G^{(\bT)} B)^{-1},\\
  G|_{\bT\bT^\complement}&=-G|_{\bT}B^\top G^{(\bT)},\\
   G|_{\bT^\complement}&=G^{(\bT)}+G|_{\bT^\complement\bT}(G|_{\bT})^{-1}G|_{\bT\bT^\complement}=G^{(\bT)}-G^{(\bT)}BG|_{\bT\bT^\complement},
  \end{split}
\end{align}
as well as the special case
\begin{equation} \label{e:Schurixj}
G_{ij}^{(k)} = G_{ij}-\frac{G_{ik}G_{kj}}{G_{kk}}
=G_{ij}+(G^{(k)}H)_{ik} G_{kj}.
\end{equation}

\subsection{Ward identity}

For any symmetric $N\times N$ matrix $H$, its Green's function $G(z)=(H-z)^{-1}$ satisfies
the Ward identity
\begin{equation} \label{e:Ward}
  \sum_{j=1}^{N} |G_{ij}(z)|^2= \frac{\Im G_{jj}(z)}{\eta},
\end{equation}
where $\eta=\Im [z]$. This can be deduced from using \eqref{e:resolv} on $G-G^*$. This identity provides a bound for the sum $\sum_{j=1}^{N} |G_{ij}(z)|^2$
in terms of the diagonal entries of the Green's function.

\section{Local law for random $d$-regular graphs}
\label{s:local_law}

In this section, we present the local law results from \cite{huang2024spectrum}. It states that the Green's function of random $d$-regular graphs are approximated by the Green's function extension with general weights, which are easy to compute. We introduce the concept of Green's function extension with general weights in \Cref{s:pre},  and state the local law in \Cref{s:local_law2}.

\subsection{Locally tree-like graphs}

Random $d$-regular graphs are locally tree-like in the sense that each radius $\OO(\log_{d-1}(N))$ neighborhoods is either a truncated tree (contains no cycles), or contains at most one cycle.

\begin{definition}\label{def:omegabar}
Fix $d\geq 3$ and a sufficiently small $0<\fc<1$, $\fR=(\fc /4)\log_{d-1}N$ as in \eqref{e:parameters}. We define the event $\oOmega$,  where the following occur: 
    \begin{enumerate}
        \item 
The number of vertices that do not have a  tree neighborhood of radius $\fR$ is at most $N^{\fc}$.
        \item 
        The radius $\fR$ neighborhood of each vertex has an excess (i.e., the number of independent cycles) of at most $1$. 
    \end{enumerate}
\end{definition}

The event $\oOmega$ is a typical event. The following proposition from \cite[Proposition 2.1]{huang2024spectrum} states that $\oOmega$ holds with high probability. 
\begin{proposition}[{\cite[Proposition 2.1]{huang2024spectrum}}]\label{lem:omega}
$\oOmega$ occurs with probability $1-\OO(N^{-(1-\fc)})$.
\end{proposition}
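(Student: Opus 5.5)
We prove \Cref{lem:omega} with a first moment (Markov) bound on cycle counts in a random $d$-regular graph. Most of the work is done in the configuration (pairing) model, and the estimates are then transferred to the uniform model $\cG_{N,d}$. For fixed $d$, a uniformly random pairing of the $dN$ half-edges is simple with probability bounded below by a constant $c_d>0$. Conditioned on being simple, it is uniform on $\cG_{N,d}$. So any event of probability $\OO(N^{-(1-\fc)})$ in the pairing model also has probability $\OO(N^{-(1-\fc)})$ in $\cG_{N,d}$. The basic input is the standard count: a fixed set of $k$ distinct pairs (edges) is present in the pairing with probability $\prod_{j<k}(dN-2j-1)^{-1}\le (1+\oo(1))(dN)^{-k}$ for $k\ll N$.

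\textbf{Step 1 (condition 1).} Suppose vertex $v$ does not have a tree neighborhood of radius $\fR$. Then $v$ lies within distance $\fR$ of a cycle of length at most $2\fR+1$. Let $Y$ be the number of cycles of length $k\le 2\fR+1$. The expected number of $k$-cycles is $\OO((d-1)^k/k)$, so
\[
\bE[Y]\lesssim \sum_{k\le 2\fR+1}(d-1)^k\lesssim (d-1)^{2\fR}.
\]
Each short cycle has at most $k(d-1)^{\fR}$ vertices within distance $\fR$ of it. Hence the number $X$ of bad vertices satisfies
\[
\bE[X]\lesssim \fR(d-1)^{3\fR}=\fR N^{3\fc/4}.
\]
Markov's inequality then gives $\bP(X>N^{\fc})\lesssim \fR N^{-\fc/4}$.

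This rate is too weak. To reach the claimed rate, the same first-moment bound is applied instead to the union event that some pair of short cycles lies close together; this event is controlled in Step 2. Outside that event, the short cycles are well separated, and we bound $X$ deterministically by $Y\cdot\OO(\fR(d-1)^{\fR})$. A second-moment or factorial-moment estimate for $Y$ then gives $\bP(Y\ge N^{\fc/4})$ as small as desired, since $Y$ is approximately Poisson with mean $N^{\fc/2}$. If the rate still falls short, we would choose the constants in $\fR$ slightly differently; this is the bookkeeping to be adjusted.

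\textbf{Step 2 (condition 2, the main obstacle).} If some ball $B_\fR(v)$ has excess at least $2$, then the graph contains a connected subgraph $S$ with excess $2$ (two independent cycles) and at most $\OO(\fR)$ edges. Such a subgraph is a "theta" graph or two cycles joined by a path, with total size $s\le 4\fR+\OO(1)$. Count these shapes: there are $\OO(s^{3})$ ways to choose the combinatorial type with its segment lengths, at most $N^{s-1}$ vertex labelings (since $S$ has $s-1$ vertices when its excess is $2$), and at most $d^{2s}$ half-edge choices. The probability that all $s$ specified edges are present is at most $(1+\oo(1))(dN)^{-s}$. Therefore the expected number of such subgraphs is
\[
\lesssim \sum_{s\le 4\fR+\OO(1)} s^3 (d-1)^{s} N^{-1}\lesssim \fR^3 (d-1)^{4\fR}N^{-1}=\fR^3 N^{-1+\fc}.
\]
Markov's inequality gives probability $\OO(N^{-(1-\fc)})$ up to a $\log^3 N$ factor. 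That factor can be absorbed by replacing $\fc$ with a slightly larger constant in the definition of $\fR$ only, or by using the more precise count $(d-1)^{s}$ in place of $d^{s}$, which gives a geometric sum.

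The delicate point is exactly this bookkeeping: we must verify that the number of half-edge labelings per vertex is $(d-1)$ rather than $d$ along paths, so that the exponent becomes exactly $N^{\fc}$, and we must handle the $\oo(1)$ corrections in the pairing probabilities uniformly for $s=\OO(\log N)$. With conditions 1 and 2 each established, a union bound proves the proposition.
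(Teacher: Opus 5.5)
The notes contain no proof of this proposition; they cite it from the earlier paper. Your overall route is the standard one and can be made to work: pairing model, a first-moment bound on excess-two configurations, concentration of the short-cycle count, then conditioning on simplicity. But neither quantitative step, as written, gives the stated rate $\OO(N^{-(1-\fc)})$.

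\textbf{Step~1.} You assert that $\bP(Y\ge N^{\fc/4})$ is small while also noting $\bE Y\asymp N^{\fc/2}/\fR$. The threshold lies below the mean, so this probability tends to $1$.

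The correct threshold comes from the deterministic bound $X\le Y(2\fR+1)\tfrac{d}{d-2}(d-1)^{\fR}$. This holds because every bad vertex is within distance $\fR$ of a vertex of a cycle of length at most $2\fR+1$, and it needs no separation of cycles. So what you need is that $\bP(Y\ge t)$ is small for $t=\tfrac{d-2}{d(2\fR+1)}N^{3\fc/4}$. Excluding nearby pairs of short cycles by a first moment, as you suggest, would in any case be too costly: two cycles of length about $2\fR$ joined by a path of length about $2\fR$ have expected count about $N^{-1+3\fc/2}$.

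A second moment is not enough either, since Chebyshev only gives a bound of order $\fR N^{-\fc}$. What works is a factorial moment of fixed order: $\bE[(Y)_m]\le 2(\bE Y)^m$ for each fixed $m$. Ordered tuples of pairwise vertex-disjoint cycles contribute at most $(1+\oo(1))(\bE Y)^m$. Tuples with an overlap have a union with more edges than vertices, which costs a factor $N^{-1}$ against only polylogarithmic shape counts. Then $\bP(Y\ge t)\le \bE[(Y)_m]/(t)_m\lesssim (CN^{-\fc/4})^m\le N^{-1}$ once $m\ge 4/\fc$.

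Your fallback of adjusting the constants in $\fR$ is not available, since $\fR$ and the threshold $N^{\fc}$ are fixed by the statement.

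\textbf{Step~2.} Summing over all excess-two shapes with $s\le 4\fR+\OO(1)$ edges and $\OO(s^3)$ types gives $\fR^3N^{-1+\fc}$. Neither proposed remedy removes the logarithms. Changing $\fc$ in $\fR$ changes the event, and a larger radius only makes the bound worse. Using $(d-1)^s$ instead of $d^s$ fixes the exponential base, not the polynomial number of shapes of near-maximal size.

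The missing idea is to use the radius constraint by rooting the configuration at the center $v$. Let $T$ be the union of the BFS geodesics from $v$ to the endpoints of two non-tree edges of $\cB_{\fR}(v,\cG)$, and let $S$ be $T$ together with those two edges. Then $T$ has at most four leaves, all at depth at most $\fR$, so $|E(T)|\le 4\fR$.

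If $|E(T)|=4\fR-j$, then each leaf-depth deficit is at most $j$. So is the depth of each branch point, because the geodesic to a branch point is shared by at least two of the four geodesics and hence contributes its full length to $j$. If there are fewer than four leaves, then $j\ge\fR$ and all parameters are again at most $j$.

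Thus there are $\OO((j+1)^{7})$ shapes with deficit $j$, and each has expected number of copies $\lesssim N^{-1}(d-1)^{4\fR+2-j}$. The sum over $j$ is geometric, giving $\OO(N^{-1}(d-1)^{4\fR})=\OO(N^{-1+\fc})$ with no logarithmic loss.
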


As we will see, for graphs $\cG\in \oOmega$,  their Green's functions can be approximated by tree extensions with overwhelmingly high probability. 
For the infinite $d$-regular tree and the infinite $(d-1)$-ary tree (trees where the root has degree $d-1$ and all other vertices have degree $d$),
the following proposition computes their Green's function explicitly.
\begin{proposition}[{\cite[Proposition 2.2]{huang2024spectrum}}]\label{greentree}
Let $\cX$ be the infinite $d$-regular tree.
For all $z \in \bC^+$, its Green's function is
\begin{equation} \label{e:Gtreemkm}
  G_{ij}(z)=m_{d}(z)\left(-\frac{\msc(z)}{\sqrt{d-1}}\right)^{\dist_{\cX}(i,j)},
\end{equation}
where $\dist_\cX(i,j)$ is the graph distance of the two vertices $i,j$ in $\cX$.
Let $\cY$ be the infinite $(d-1)$-ary tree with root vertex $o$.
Its Green's function is
\begin{equation} \label{e:Gtreemsc}
  G_{ij}(z)=m_{d}(z)\left(1-\left(-\frac{\msc(z)}{\sqrt{d-1}}\right)^{2{\rm anc}_{\cY}(i,j)+2}\right)\left(-\frac{\msc(z)}{\sqrt{d-1}}\right)^{\dist_{\cY}(i,j)},
\end{equation}
where ${\rm anc}_\cY(i,j)$ is the distance from the common ancestor of the vertices $i,j$ to the root $o$. 
In particular,
\begin{align}\label{e:Gtreemsc2}
G_{oi}(z)=\msc(z)\left(-\frac{\msc(z)}{\sqrt{d-1}}\right)^{\dist_{\cY}(o,i)}.
\end{align}
\end{proposition}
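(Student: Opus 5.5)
We first compute the Green's function of the $(d-1)$-ary tree $\cY$ at the root, then propagate along paths, and finally obtain the $d$-regular tree $\cX$ and the general entries of $\cY$ from the Schur complement formula \eqref{e:Schur1}. Throughout, write $q\deq -\msc(z)/\sqrt{d-1}$, and use the self-consistent equation $\msc^2+z\msc+1=0$, which follows from \eqref{e:msc_equation}.

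\emph{Step 1 (root of $\cY$).} Removing the root $o$ from $\cY$ leaves $d-1$ disjoint copies of $\cY$, each rooted at a child of $o$. Let $a\deq G^{\cY}_{oo}$. By \eqref{e:Schur1} with $\bT=\{o\}$, and since $G^{(o)}$ is block diagonal,
\[
a=\Bigl(-z-\tfrac{1}{d-1}\textstyle\sum_{c\sim o} G^{(o)}_{cc}\Bigr)^{-1}=\frac{1}{-z-a}.
\]
Thus $a^2+za+1=0$, and the root with $\Im a>0$ is $a=\msc(z)$.

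\emph{Step 2 (off-diagonal entries along a path).} Let $j$ lie in the subtree of a child $c$ of $o$. By the second line of \eqref{e:Schur1},
\[
G_{oj}=-G_{oo}\,(d-1)^{-1/2}\,G^{(o)}_{cj}=q\,G^{(o)}_{cj}.
\]
Iterating along the path from $o$ to $j$ gives \eqref{e:Gtreemsc2}. For $\cX$, removing a vertex $i$ leaves $d$ copies of $\cY$. Hence
\[
G^{\cX}_{ii}=\Bigl(-z-\tfrac{d}{d-1}\msc\Bigr)^{-1}=m_d,
\]
by \eqref{e:md_equation}. The same peeling argument yields $G^{\cX}_{ij}=m_d\,q^{\dist(i,j)}$, which is \eqref{e:Gtreemkm}.

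\emph{Step 3 (general entries of $\cY$).} For $i,j\in\cY$, let $w$ be their common ancestor at depth $k={\rm anc}_\cY(i,j)$. The subtree below $w$ looks like $\cX$ with one branch (the one toward $o$) replaced by a finite path to the root. I would compute $G_{ww}$ by a Schur complement at $w$. The $d-1$ downward branches each contribute $\msc$. The upward branch contributes the diagonal entry at the parent of $w$ in the graph with $w$ removed. That graph is a tree of depth $k-1$ of the same type, so this gives a recursion in $k$. Writing $g_k$ for $G_{ww}$ at depth $k$, with $g_0=\msc$, the candidate
\[
g_k=m_d\bigl(1-q^{2k+2}\bigr)
\]
should satisfy a Möbius-type recursion. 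Checking this is the main algebraic obstacle. I would try to linearize it by writing $g_k=m_d(1-x_k)$, verify that $x_{k+1}=q^2x_k$, and use $m_d(1-q^2)=\msc$ to anchor $k=0$. Once $g_k$ is known, $G_{ij}=g_k\,q^{\dist(i,w)+\dist(w,j)}$ by the peeling of Step 2, because the paths from $w$ to $i$ and from $w$ to $j$ enter downward subtrees isomorphic to $\cY$.

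Infinite trees need care, since resolvents are defined as bounded operators on $\ell^2$. I would justify the Schur complement formula there by noting that $\|H\|\le d/\sqrt{d-1}$, so all resolvents are analytic in $z$. Alternatively, one can prove the formulas for large $\Im z$ by a convergent Neumann series and extend to all of $\bC^+$ by analytic continuation. The choice of root in Step 1 is fixed by $\Im a>0$.
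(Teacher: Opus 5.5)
The paper only cites this result from \cite{huang2024spectrum} and gives no proof, so I am judging your argument on its own. Steps 1 and 2 are correct. The gap is in Step 3, exactly at the point you defer as ``the main algebraic obstacle,'' and the recursion you describe would not close.

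The problem is the upward contribution. Deleting $w$ (at depth $k\ge 1$) leaves its parent $p$ in $\cY$ with the whole subtree of $w$ pruned, so $p$ has only $d-2$ children in that component. Your picture of the upward branch as a finite path also omits the $d-2$ infinite copies of $\cY$ hanging off each ancestor. Consequently $G^{(w)}_{pp}\neq g_{k-1}$, and that graph is not ``of the same type.'' The closed recursion your description suggests, $g_k^{-1}=-z-\msc-g_{k-1}/(d-1)$, is false: it gives $g_1=(-z-\tfrac{d}{d-1}\msc)^{-1}=m_d$ instead of $m_d(1-q^4)$, so the check $x_{k+1}=q^2x_k$ cannot succeed.

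To repair it, introduce $u_j$, the diagonal entry at a depth-$j$ vertex after deleting one of its child subtrees, with the convention $u_{-1}=0$. Schur complements give $u_j^{-1}=-z-\frac{(d-2)\msc+u_{j-1}}{d-1}$ and $g_j^{-1}=-z-\msc-\frac{u_{j-1}}{d-1}$. Hence $u_j^{-1}=g_j^{-1}+\frac{\msc}{d-1}$, and
\[
g_k^{-1}=\frac{1}{\msc}-\frac{g_{k-1}}{(d-1)+\msc\, g_{k-1}} .
\]
Substituting $g_{k-1}=m_d(1-y)$ yields exactly $g_k=m_d(1-q^2y)$, so your linearization then goes through.

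A much shorter route avoids the recursion entirely. $\cY$ is the component of $o$ in $\cX\setminus\{r\}$ for a neighbor $r$ of $o$. So by \eqref{e:Schurixj} and your formula for $\cX$,
\[
G^{\cY}_{ij}=G^{\cX}_{ij}-\frac{G^{\cX}_{ir}G^{\cX}_{rj}}{G^{\cX}_{rr}}=m_d\bigl(q^{\dist(i,j)}-q^{\dist(i,r)+\dist(r,j)}\bigr).
\]
Since $\dist(i,r)+\dist(r,j)=\dist(o,i)+\dist(o,j)+2=\dist(i,j)+2{\rm anc}(i,j)+2$, this is \eqref{e:Gtreemsc} for all $i,j$ at once, and \eqref{e:Gtreemsc2} follows from $m_d(1-q^2)=\msc$.

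This also fixes a smaller omission in your last sentence. For $i,j$ in different branches below $w$, peeling from $w$ only gives $G_{wj}=g_kq^{\dist(w,j)}$. You additionally need $G_{ij}=G_{iw}G_{wj}/G_{ww}$, which follows from \eqref{e:Schurixj} because $G^{(w)}_{ij}=0$.
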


\subsection{Green's function extension with general weights}\label{s:pre}

We recall the notion of a Green’s function extension with general weight $\Delta$ from {\cite[Section 2.3]{huang2024spectrum}}. Roughly speaking, we consider the radius-$\ell$ neighborhood $\cT = \cB_\ell(o, \cG)$ of a vertex $o$ in $\cG$. Assume it is a tree, then the boundary vertices of $\cT$ are leaves (i.e., vertices of degree 1), to each of which we attach a weight $-\Delta$; see the left panel of \Cref{fig:three-trees}. The resulting Green’s function of $\cT$ with these boundary weights is referred to as the Green’s function extension with general weight $\Delta$. The following formal definition slightly extend this idea to allow cycles.

\begin{definition}\label{def:pdef}
    Fix degree $d\geq 3$, and a graph $\cT$ with degrees bounded by $d$. We define the function $P(\cT,z,\Delta)$ as follows. We denote $A(\cT)$  the adjacency matrix of $\cT$, $D(\cT)$ the diagonal matrix of degrees of $\cT$, and $\bI$ the diagonal matrix indexed by the vertex set $\bT$ of $\cT$. Then 
    \begin{align}\label{e:defP}
    P(\cT,z,\Delta):=\frac{1}{-z+A(\cT)/\sqrt{d-1}-(d\mathbb I -D(\cT))\Delta/(d-1)}.
    \end{align}
\end{definition}

The matrix $P(\cT,z,\Delta)$ is the Green's function of the matrix obtained from $A(\cT)/\sqrt{d-1}$ by attaching to each vertex $i\in \cT$ a weight $-(d-D_{ii}(\cT))\Delta/(d-1)$, see the middle panel of \Cref{fig:three-trees} ($d=3$ and one boundary vertex has degree $2$). When $\Delta=\msc(z)$, \eqref{e:defP} is the Green's function of the tree extension of $\cT$, i.e. extending $\cT$ by attaching copies of infinite $(d-1)$-ary trees to $\cT$ to make each vertex degree $d$. If $\cT$ is a tree, then in this case, the Green's function agrees with the Green's function of the infinite $d$-regular tree, as in \eqref{e:Gtreemkm}. 
For any vertex set $\bX$ in $\cT$, we define the following Green's function with vertex $\bX$ removed, see the right panel of \Cref{fig:three-trees}. Let $A^{(\bX)}(\cT)$ and $D^{(\bX)}(\cT)$ denote the matrices obtained from $A(\cT), D(\cT)$ by removing the row and column associated with vertex $\bX$, and $\bI$ the diagonal matrix indexed by the vertex set $\bT\setminus \bX$. The Green's function is then defined as:
 \begin{align}\label{e:defPi}
    P^{(\bX)}(\cT,z,\Delta):=\frac{1}{-z+A^{(\bX)}(\cT)/\sqrt{d-1}-(d\mathbb I-D^{(\bX)}(\cT))\Delta/(d-1)},
    \end{align}


\begin{figure}[t]
  \centering

  \begin{subfigure}{0.32\textwidth}
    \centering
    \resizebox{\linewidth}{!}{%
    \begin{tikzpicture}[
      every node/.style={circle,draw,inner sep=1.5pt},
      >=stealth
    ]

    \node[draw=none] at (-2.5,2) {\huge $\mathcal T$};

    \node (r) at (0,0) {};
    \node[draw=none, below] at (r) {\Large $o$};

    \draw[dashed] (0,0) circle (4cm);

    \node (a1) at (90:1.8)  {};
    \node[draw=none, right] at (a1) {\Large $i$};

    \node (a2) at (210:1.8) {};
    \node (a3) at (330:1.8) {};

    \draw (r) -- (a1);
    \draw (r) -- (a2);
    \draw (r) -- (a3);


    \node (a1b1) at (70:3.2)  {$-\Delta$};
    \node (a1b2) at (110:3.2) {$-\Delta$};
    \draw (a1) -- (a1b1);
    \draw (a1) -- (a1b2);

    \node (a2b1) at (190:3.2) {$-\Delta$};
    \node (a2b2) at (230:3.2) {$-\Delta$};
    \draw (a2) -- (a2b1);
    \draw (a2) -- (a2b2);

    \node (a3b1) at (310:3.2) {$-\Delta$};
    \node (a3b2) at (350:3.2) {$-\Delta$};
    \draw (a3) -- (a3b1);
    \draw (a3) -- (a3b2);

    \end{tikzpicture}%
    }
  \end{subfigure}
  \hfill
  \begin{subfigure}{0.32\textwidth}
    \centering
    \resizebox{\linewidth}{!}{%
    \begin{tikzpicture}[
      every node/.style={circle,draw,inner sep=1.5pt},
      >=stealth
    ]

    \node[draw=none] at (-2.5,2) {\huge $\mathcal T$};
    \node (r) at (0,0) {};
    \node[draw=none, below] at (r) {\Large $o$};
    \draw[dashed] (0,0) circle (4cm);
    \node (a1) at (90:1.8)  {};
    \node[draw=none, right] at (a1) {\Large $i$};
    \node (a2) at (210:1.8) {};
    \node (a3) at (330:1.8) {};
    \draw (r) -- (a1);
    \draw (r) -- (a2);
    \draw (r) -- (a3);
    \node (a1b1) at (70:3.2)  {$-\Delta$};
    \node (a1b2) at (110:3.2) {$-\Delta$};
    \draw (a1) -- (a1b1);
    \draw (a1) -- (a1b2);
    \node (a2b2) at (230:3.2) {$-\Delta$};

    \draw (a2) -- (a2b2);
    \node (a3b1) at (310:3.2) {${\color{red}-\Delta/2}$};
        \draw (a2) -- (a3b1);
    \node (a3b2) at (350:3.2) {$-\Delta$};
    \draw (a3) -- (a3b1);
    \draw (a3) -- (a3b2);

    \end{tikzpicture}%
    }
  \end{subfigure}
  \hfill
  \begin{subfigure}{0.32\textwidth}
    \centering
    \resizebox{\linewidth}{!}{%
    \begin{tikzpicture}[
      every node/.style={circle,draw,inner sep=1.5pt},
      >=stealth
    ]

    \node[draw=none] at (-2.5,2) {\huge $\mathcal T^{(i)}$};
    \node (r) at (0,0) {};
    \node[draw=none, below] at (r) {\Large $o$};
    \draw[dashed] (0,0) circle (4cm);
    \node (a1) at (90:1.8)  {};
    \node[draw=none, right] at (a1) {\Large $i$};
    \node (a2) at (210:1.8) {};
    \node (a3) at (330:1.8) {};
    \draw[dashed] (r) -- (a1);
    \draw (r) -- (a2);
    \draw (r) -- (a3);
    \node (a1b1) at (70:3.2)  {$-\Delta$};
    \node (a1b2) at (110:3.2) {$-\Delta$};
    \draw[dashed] (a1) -- (a1b1);
    \draw[dashed] (a1) -- (a1b2);
    \node (a2b1) at (190:3.2) {$-\Delta$};
    \node (a2b2) at (230:3.2) {$-\Delta$};
    \draw (a2) -- (a2b1);
    \draw (a2) -- (a2b2);
    \node (a3b1) at (310:3.2) {$-\Delta$};
    \node (a3b2) at (350:3.2) {$-\Delta$};
    \draw (a3) -- (a3b1);
    \draw (a3) -- (a3b2);

    \end{tikzpicture}%
    }
  \end{subfigure}

\caption{Left panel: for a truncated $d$-regular tree $\cT$, we attach a weight $-\Delta$ to each leaf vertex; middle panel: for a general graph $\cT$, we attach a weight $-(d-D_{ii}(\cT))/(d-1)$ to each vertex; right panel: remove the vertex set $\bX=\{i\}$.}

  \label{fig:three-trees}
\end{figure}
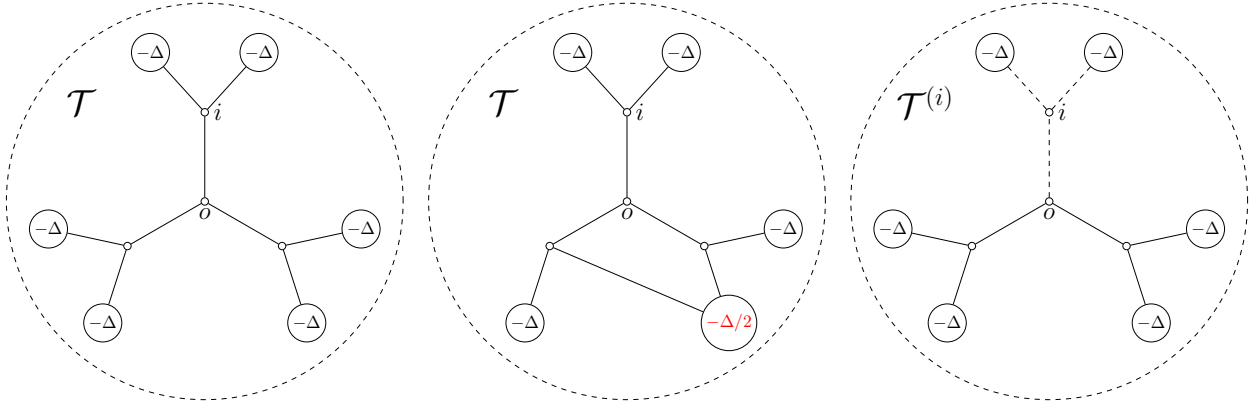

For any integer $\ell\geq 1$, we define the functions $X_\ell(\Delta,z), Y_\ell(\Delta,z)$ as
\begin{align}\label{def:Y}
X_\ell(\Delta,z)=P_{oo}(\cB_\ell(o,\cX),z,\Delta),\quad Y_\ell(\Delta,z)=P_{oo}(\cB_\ell(o,\cY),z,\Delta),
\end{align}
where $\cX$ is the infinite $d$-regular tree with root vertex $o$, and $\cY$ is the infinite $(d-1)$-ary tree with root vertex $o$. 
Then $\msc(z)$ is a fixed point of the function $Y_\ell$, i.e. $Y_\ell(\msc(z),z)=\msc(z)$. Also, $X_\ell(\msc(z),z)=\md(z)$. 
The following proposition states that if $\Delta$ is sufficiently close to $\msc(z)$, then $Y_\ell(\Delta,w)$ is close to $\msc(z)$, and $X_\ell(\Delta,w)$ is close to $\md(z)$.

\begin{proposition}\label{p:recurbound}
Given $z, \Delta, \Delta'\in \bC^+$ such that $\ell^2|\Delta-\msc(z)|, \ell^2|\Delta'-\msc(z)|\ll 1$, then
the functions $X_\ell(\Delta,z), Y_\ell(\Delta,z)$ 
satisfy
\begin{align}\begin{split}\label{e:recurbound}
Y_\ell(\Delta,z)-\msc(z)
&=\msc^{2\ell+2}(z)\md(z)\left(\frac{1-\msc^{2\ell+2}(z)}{d-1}+\frac{d-2}{d-1}\frac{1-\msc^{2\ell+2}(z)}{1-\msc^2(z)}\right)(\Delta-\msc(z))^2\\
&+\msc^{2\ell+2}(z)(\Delta-\msc(z))+\OO(\ell^5|\Delta-\msc(z)|^3).
\end{split}\end{align}
and
\begin{align}\begin{split}\label{e:Xrecurbound}
X_\ell(\Delta,z)-\md(z)
&=\frac{d}{d-1}\md^2(z)\msc^{2\ell}(z)(\Delta-\msc(z))+\OO\left(|\Delta-\msc(z)|^2\right).
\end{split}\end{align}
Moreover, the difference $Y_\ell(\Delta,z)-Y_\ell(\Delta',  z)$ satisfies
\begin{align}\label{e:Yl_derivative}
   |Y_\ell(\Delta,  z)-Y_\ell(\Delta',  z)|\lesssim \ell|\Delta-\Delta'|, 
\end{align}
\end{proposition}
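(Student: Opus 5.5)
The plan is to reduce everything to a scalar recursion along the depth of the tree, using the Schur complement formula \eqref{e:Schur1} together with the symmetry of the truncated trees. Let $\cY_k$ be the $(d-1)$-ary tree of depth $k$ with boundary weights $-\Delta$, and let $y_k(\Delta)$ be the diagonal resolvent entry at its root. Removing the root with \eqref{e:Schur1} splits the tree into $d-1$ independent copies of $\cY_{k-1}$, which gives
\begin{align*}
y_k=\frac{1}{-z-y_{k-1}},\qquad y_0=\frac{1}{-z-\Delta}.
\end{align*}
Two facts about this recursion will be used. First, since a leaf carries weight $-(d-1)\Delta/(d-1)=-\Delta$, the normalization $H=A/\sqrt{d-1}$ and the $d-1$ children combine into a single factor, so the map is $F(y)=1/(-z-y)$. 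Second, by \eqref{e:msc_equation} the point $\msc$ is a fixed point of $F$. Then $Y_\ell=F^{\circ(\ell+1)}(\Delta)$ up to index bookkeeping. The root of $\cX$ has $d$ children, so the same argument gives
\begin{align*}
X_\ell=\frac{1}{-z-\frac{d}{d-1}\,F^{\circ \ell}(\Delta)},
\end{align*}
which matches \eqref{e:md_equation} at $\Delta=\msc$.

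Next I would expand $F$ around the fixed point. Since $F'(\msc)=F(\msc)^2=\msc^2$ and $F''(\msc)=2\msc^3$, we have $F(\msc+\delta)-\msc=\msc^2\delta+\msc^3\delta^2+\OO(\delta^3)$. In fact this is exact as a Möbius map: $1/(F(\msc+\delta)-\msc)=\msc^{-2}/\delta-\msc^{-1}$. Iterating the exact form gives a closed formula
\begin{align*}
\frac{1}{F^{\circ n}(\Delta)-\msc}=\frac{\msc^{-2n}}{\delta}-\msc^{-1}\sum_{j=0}^{n-1}\msc^{-2j},
\end{align*}
and hence
\begin{align*}
F^{\circ n}(\Delta)-\msc=\frac{\msc^{2n}\delta}{1-\msc^{2n+1}\delta\,(1-\msc^{-2n})/(1-\msc^{-2})}.
\end{align*}
Because $|\msc|\approx1$, the geometric sum is bounded by $n$, and the denominator is $1+\OO(\ell|\delta|)$ whenever $\ell|\delta|\ll1$. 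Expanding the denominator to second order gives the linear term $\msc^{2\ell+2}\delta$ and a quadratic term. The error from the next order is $\OO(\ell^2|\delta|^3)$, comfortably within the claimed $\ell^5|\delta|^3$. The hypothesis $\ell^2|\delta|\ll1$ is more than enough for this step.

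For \eqref{e:Xrecurbound}, apply the same expansion to $X_\ell$: differentiating $1/(-z-\tfrac{d}{d-1}u)$ at $u=\msc$ gives $\tfrac{d}{d-1}\md^2$ by \eqref{e:md_equation}. Multiplied by $F^{\circ\ell}-\msc=\msc^{2\ell}\delta+\OO(\ell|\delta|^2)$, this yields the claim. The $\ell$ in the error is absorbed either by the hypothesis or by the trivial bound $\ell|\delta|^2\lesssim|\delta|$. Bound \eqref{e:Yl_derivative} follows from the exact formula: the derivative in $\delta$ is $\msc^{2n}/(1-c_n\delta)^2$ with $|c_n|\lesssim n$, so it is $\OO(1)$, which is in particular $\lesssim \ell$. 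Alternatively, one can bound the product of the derivatives $F'(F^{\circ j})=(F^{\circ j})^2$, each of which is $1+\OO(\ell|\delta|)$ in modulus, over $\ell$ steps.

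The main obstacle is matching the quadratic coefficient exactly to the stated form, in particular the $\md$ factor and the split into $\frac{1}{d-1}$ and $\frac{d-2}{d-1}$ pieces. I suspect this indicates that the intended definition of $Y_\ell$ involves $\cB_\ell(o,\cY)$ with the root vertex also carrying a deficiency weight, so that the last step uses $d$ children rather than $d-1$. I would therefore recompute the final Schur step with the precise weight vector $(d\bI-D(\cT))\Delta/(d-1)$ from \eqref{e:defP}, including the root's own deficiency. I would then use $\md=\msc/(1-\msc^2/(d-1))$, which follows from \eqref{e:md_equation}, together with the identity $\frac{1}{d-1}+\frac{d-2}{d-1}\frac{1}{1-\msc^2}=\frac{\md}{\msc}\cdot(\ldots)$, to rearrange the sum $\sum_j\msc^{2j}$ into the displayed coefficient. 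Because of the extra root weight, the recursion is no longer purely geometric, and the $\ell^5$ in the error presumably comes from crudely bounding those nested correction sums.
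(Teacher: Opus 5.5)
Your reduction is sound and is a genuinely different route from the paper's. Removing the root via \eqref{e:Schur1} gives $Y_\ell=F^{\circ(\ell+1)}(\Delta)$ and $X_\ell=\bigl(-z-\tfrac{d}{d-1}F^{\circ\ell}(\Delta)\bigr)^{-1}$ with $F(y)=1/(-z-y)$, and the exact linearization $1/(F(\msc+\delta)-\msc)=\msc^{-2}\delta^{-1}-\msc^{-1}$ is the right tool. But the proposal does not close as written. The ``main obstacle'' in your last paragraph comes from an algebra slip, and the remedy you propose is wrong.

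With $\delta=\Delta-\msc$, the iteration gives
\[
F^{\circ n}(\Delta)-\msc=\frac{\msc^{2n}\delta}{1-c_n\delta},\qquad c_n=\msc^{2n-1}\sum_{j=0}^{n-1}\msc^{-2j}=\sum_{k=0}^{n-1}\msc^{2k+1}=\msc\,\frac{1-\msc^{2n}}{1-\msc^{2}}.
\]
The exponent is $\msc^{2n-1}$, not $\msc^{2n+1}$; at $n=1$ the formula must reduce to $\msc^2\delta/(1-\msc\delta)$. Also $|c_n|\le n$ since $|\msc|<1$. Hence the quadratic coefficient of $Y_\ell$ is $\msc^{2\ell+2}c_{\ell+1}=\msc^{2\ell+3}(1-\msc^{2\ell+2})/(1-\msc^2)$. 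This is exactly the coefficient in \eqref{e:recurbound}. From \eqref{e:md_equation} and $-z=\msc+\msc^{-1}$ one has $\md=(d-1)\msc/(d-1-\msc^2)$, so
\[
\md\Bigl(\frac{1}{d-1}+\frac{d-2}{d-1}\cdot\frac{1}{1-\msc^2}\Bigr)=\md\,\frac{d-1-\msc^2}{(d-1)(1-\msc^2)}=\frac{\msc}{1-\msc^2},
\]
and multiplying by $\msc^{2\ell+2}(1-\msc^{2\ell+2})$ finishes.

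No root weight is involved, and adding one would break the statement. With weight $-\Delta/(d-1)$ at the root you get $Y_\ell=(-z-\Delta/(d-1)-F^{\circ\ell}(\Delta))^{-1}$, which equals $\md\neq\msc$ at $\Delta=\msc$, whereas \eqref{e:recurbound} forces $Y_\ell(\msc,z)=\msc$. The literal reading of \eqref{e:defP} does weight the root of $\cB_\ell(o,\cY)$, but the statement and the paper's proof use $((H-z-\Delta\bI^\del)^{-1})_{oo}$, which is exactly your $F^{\circ(\ell+1)}(\Delta)$. Your remainder bound $\OO(\ell^2|\delta|^3)$ is correct. So is the Lipschitz argument: $\partial_\delta$ of the closed form is $\msc^{2n}(1-c_n\delta)^{-2}=\OO(1)$ on the convex region $\ell|\delta|\ll1$.

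For \eqref{e:Xrecurbound}, your claim that the $\ell$ in $\OO(\ell|\delta|^2)$ can be absorbed is not valid. Neither $\ell^2|\delta|\ll1$ nor $\ell|\delta|^2\lesssim|\delta|$ turns $\ell|\delta|^2$ into $|\delta|^2$. In fact the $\ell$ cannot be removed: your formula gives the exact quadratic coefficient
\[
\frac{d}{d-1}\md^2\msc^{2\ell}c_\ell+\frac{d^2}{(d-1)^2}\md^3\msc^{4\ell},
\]
and as $z\to2$ one has $\msc\to-1$, $c_\ell\to-\ell$, with no cancellation against the bounded second term. So near the edge the quadratic term has size $\asymp\ell|\delta|^2$. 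What you can prove, and what the paper's argument gives when run for $X_\ell$ with the analogue of \eqref{e:Ptotalsum}, is $\OO(\ell|\Delta-\msc|^2)$ (the paper's route even loses a higher power of $\ell$). This is harmless where it is used in \eqref{e:mQrelation}, since there $|Q-\msc|\lesssim N^{-1/3+\oo(1)}$.

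Compared with the paper: the paper expands $(H-z-\Delta\bI^\del)^{-1}$ in powers of $(\Delta-\msc)\bI^\del$ around the tree Green's function $P$. It evaluates $(P\bI^\del P)_{oo}$ and $(P\bI^\del P\bI^\del P)_{oo}$ from \eqref{e:Gtreemsc}--\eqref{e:Gtreemsc2} and bounds the tail by path counting. That crude tail bound is the source of the $\ell^5$, the Lipschitz constant $\ell$, and the hypothesis $\ell^2|\Delta-\msc|\ll1$.

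Your recursion exploits that every leaf carries the same weight. This collapses the problem to one M\"obius map and gives an exact formula, sharper errors, and works under the weaker condition $\ell|\Delta-\msc|\ll1$. The paper's matrix expansion does not need that symmetry. It is the same mechanism, with the same weight bounds \eqref{e:sum_Pbound}, that is reused in \Cref{l:coefficient}, where the boundary perturbation $\cD$ is not a multiple of $\bI^\del$.
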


\begin{proof}[Proof of Proposition \ref{p:recurbound}]
The proofs for $X_\ell$ and $Y_\ell$ are identical, so we will only provide the proof for $Y_\ell$.
We denote $\cH=\cB_\ell(o, \cY)$, which is the truncated $(d-1)$-ary tree at level $\ell$. We denote its vertex set as $\bH$ and normalized adjacency matrix as $H$. We denote $\bI$ and $\bI^\del$ the diagonal matrices, such that for $x,y\in \bH$,  $\bI_{xy}=\delta_{xy}$ and $\bI^\del_{xy}=\bm1(\dist_\cH(x,o)=\ell)\delta_{xy}$. Then \eqref{e:defP} gives that
\begin{align*}
Y_\ell( \Delta, z)=P(\cH,z,\Delta)
=\left(H-z-\Delta \mathbb I^\del\right)^{-1},\quad P=P(\cH,z,\msc(z))=(H-z-\msc(z)\bI^\del)^{-1}.
\end{align*}
In the rest of the proof, we will simply write $\msc=\msc(z)$. We can compute the Green's function $P(\cH, z, \Delta)$ by a perturbation argument,
\begin{align}\begin{split}\label{e:expansionGP}
P(\cH, z, \Delta)
&=\left(H-z-\Delta \mathbb I^\del\right)^{-1}=\left(H-z-\msc\mathbb I^\del-(\Delta-\msc)\mathbb I^\del)\right)^{-1}\\
&=P+\sum_{k\geq 1}(\Delta-\msc)^{k}P(\mathbb I^\del P)^k.
\end{split}\end{align}
With the explicit expression of $P$ as given in \eqref{e:Gtreemsc2}, we can compute
\begin{align}\label{e:PBPoo}
\left(P\mathbb I^\del P\right)_{oo}
=\sum_{l}P_{ol}P_{lo}=\sum_{l,l'}\msc^2 \left(-\frac{\msc}{\sqrt{d-1}}\right)^{2\ell}
&=\msc^{2\ell+2}
\end{align}
where the summation is over $l,l'$ such that $\dist_{\cH}(l,o)=\dist_{\cH}(l',o)=\ell$, and there are $(d-1)^{\ell}$ values of $l$. Moreover, for $k\geq 2$ we will show the following two relations for $P$,
\begin{align}
   \label{e:Pboundary} &\left(P\mathbb I^\del P\mathbb I^\del P\right)_{oo}=\msc^{2\ell+2}\md\left(\frac{1-\msc^{2\ell+2}}{d-1}+\frac{d-2}{d-1}\frac{1-\msc^{2\ell+2}}{1-\msc^2}\right) ,\\
    \label{e:Ptotalsum}&(|P|^k)_{oo}\lesssim (C\ell)^{2k-3},
\end{align}
where for each $i,j\in \bH$, $|P|_{ij}\deq |P_{ij}|$.

The relation \eqref{e:Pboundary} follows from explicit computation using \eqref{e:Gtreemsc} and \eqref{e:Gtreemsc2}
\begin{align*}
&\phantom{{}={}}\sum_{l, l'}P_{ol}P_{ll'}P_{l'o}=\sum_{l,l'}\msc^2 \left(-\frac{\msc}{\sqrt{d-1}}\right)^{2\ell}\md\left(1-\left(-\frac{\msc}{\sqrt{d-1}}\right)^{2+2{\rm anc}_\cH(l, l')}\right)\left(-\frac{\msc}{\sqrt{d-1}}\right)^{\dist_{\cH}(l,l')}\\
&= \frac{\msc^{2\ell+2}}{(d-1)^{\ell}}\sum_{l,l'} \md\left(1-\left(\frac{\msc}{\sqrt{d-1}}\right)^{2+2{\rm anc}_\cH(l, l')}\right)\left(-\frac{\msc}{\sqrt{d-1}}\right)^{\dist_{\cH}(l,l')}\\
&=\msc^{2\ell+2}\md\left(1-\left(\frac{\msc}{\sqrt{d-1}}\right)^{2+2\ell}
+\sum_{r=1}^\ell \left(1-\left(\frac{\msc}{\sqrt{d-1}}\right)^{2+2(\ell-r)}\right)\left(\frac{\msc}{\sqrt{d-1}}\right)^{2r}(d-2)(d-1)^{r-1}
\right)\\
&=\msc^{2\ell+2}\md\left(\frac{1-\msc^{2\ell+2}}{d-1}+\frac{d-2}{d-1}\frac{1-\msc^{2\ell+2}}{1-\msc^2}\right),
\end{align*}
where the summation in the first line is over $l,l'$ such that $\dist_{\cH}(l,o)=\dist_{\cH}(l',o)=\ell$; in the second to last line we used that there are $(d-1)^\ell$ choices of $l$; for a given $l$, there are $(d-2)(d-1)^{r-1}$ values of $l'$ such that $\dist_{\cH}(l, l')=2r, {\rm anc}_\cH(l, l')=\ell-r$ for $1\leq r\leq \ell$. When $l=l'$, it holds $\dist_{\cH}(l, l')=0, {\rm anc}_\cH(l, l')=\ell$.

To prove \eqref{e:Ptotalsum}, we show by induction that for any vertex $i$ such that $\dist_{\cH}(o,i)\leq \ell$,
\begin{align}\label{e:inductionPk}
    (|P|^k)_{o i}\lesssim \frac{(C\ell)^{2k-2}}{(d-1)^{\dist_{\cH}(o,i)/2}}.
\end{align}
The statement for $k=1$ follows from \eqref{e:Gtreemsc}. Assume the statement \eqref{e:inductionPk} holds for $k-1$, we prove it for $k$,
\begin{align}\label{e:inductionPk2}
   (|P|^k)_{o i}= \sum_{j\in \bH} (|P|^{k-1})_{o j}|P|_{ji}
   \lesssim\sum_{j\in \bH} \frac{(C\ell)^{2k-4}}{(d-1)^{\dist_{\cH}(o,j)/2}}\frac{1}{(d-1)^{\dist_{\cH}(j,i)/2}}.
\end{align}
We denote the path from $o$ to $i$ as $\cP$, then $|\{j\in \bH: \dist_{\cH}(j,\cP)=r\}|\leq \ell(d-1)^{r}$, and for $\dist_{\cH}(j,\cP)=r$, we have
\begin{align*}
    \frac{1}{(d-1)^{\dist_{\cH}(o,j)/2}}\frac{1}{(d-1)^{\dist_{\cH}(j,i)/2}}\lesssim \frac{1}{(d-1)^r}\frac{1}{(d-1)^{\dist_{\cH}(o,i)/2}}.
\end{align*}
In this way, we can reorganize the sum over $j$ in \eqref{e:inductionPk2} according to its distance to $\cP$,
\begin{align*}
     (|P|^k)_{o i}&= \sum_{r=0}^\ell \sum_{j\in \bH:\dist_{\cH}(j,\cP)=r }\frac{(C\ell)^{2k-4}}{(d-1)^{\dist_{\cH}(o,j)/2}}\frac{1}{(d-1)^{\dist_{\cH}(j,i)/2}}\\
     &\lesssim \sum_{r=0}^\ell|\{j: \dist_{\cH}(j,\cP)=r\}|\frac{(C\ell)^{2k-4}}{(d-1)^r}\frac{1}{(d-1)^{\dist_{\cH}(o,i)/2}}\lesssim \frac{(C\ell)^{2k-2}}{(d-1)^{\dist_{\cH}(o,i)/2}},
\end{align*}
which shows \eqref{e:inductionPk}. The claim \eqref{e:Ptotalsum} is a consequence of \eqref{e:inductionPk}
\begin{align*}
    (|P|^k)_{oo}
    =\sum_{i\in \bH} (|P|^{k-1})_{oi}|P|_{io}
    \lesssim \sum_{i\in \bH} \frac{(C\ell)^{2k-4}}{(d-1)^{\dist_{\cH}(o,i)/2}}\frac{1}{(d-1)^{\dist_{\cH}(i,o)/2}}
    \lesssim (C\ell)^{2k-3}.
\end{align*}

%
%
%

The claim \eqref{e:recurbound} follows from plugging \eqref{e:PBPoo}, \eqref{e:Pboundary} and \eqref{e:Ptotalsum} into \eqref{e:expansionGP}, and use $\ell^2|\Delta-\msc|\ll1$:
\begin{align}\begin{split}\label{e:expansionGP2}
Y_\ell(\Delta,z)-\msc(z)
&=\sum_{k\geq 1}(\Delta-\msc)^{k}(P(\mathbb I^\del P)^k)_{oo}\\
&=\msc^{2\ell+2}(z)\md(z)\left(\frac{1-\msc^{2\ell+2}(z)}{d-1}+\frac{d-2}{d-1}\frac{1-\msc^{2\ell+2}(z)}{1-\msc^2(z)}\right)(\Delta-\msc(z))^2\\
&+\msc^{2\ell+2}(z)(\Delta-\msc(z))+\sum_{k\geq 3}|\Delta-\msc(z)|^k(|P|^{k+1})_{oo}.
\end{split}\end{align}

The claim \eqref{e:Yl_derivative} follows from bounding the derivative
\begin{align}\label{e:derivative}
|\del_1 Y_\ell(\Delta,  z)|\lesssim \ell.
\end{align}
The claim \eqref{e:derivative} follows from taking derivative with respect to $\Delta$ on both sides of \eqref{e:expansionGP2}, and plugging in \eqref{e:PBPoo} and \eqref{e:Ptotalsum} 
\begin{align*}
|\del_1 Y_\ell(\Delta,  z)|
\lesssim  |(P\mathbb I^\del P)_{oo}|+\sum_{k\geq 1}|\Delta-\msc|^k (|P|^{k+2})_{oo}
\lesssim 1+C\ell \sum_{k\geq 1} (C\ell^2 |\Delta-\msc|)^k \lesssim \ell.
\end{align*}

\end{proof}

%
%
%
%

\subsection{Local law of $H$}
\label{s:local_law2}

To state the local law of $H$,  we introduce a quantity $Q(z)$ that was first defined in \cite{bauerschmidt2019local, huang2024spectrum}, and plays a crucial role in the proof of local law. The quantity is the average of $G_{jj}^{(i)}(z)$ over all pairs of adjacent vertices $i\sim j$:
\begin{align}\label{e:Qsum}
Q(z):=\frac{1}{Nd}\sum_{i\sim j}G_{jj}^{(i)}(z).
\end{align}


For any vertex set $\bX\subset \qq{N}$, and integer $r\geq 1$, we denote
$\cB_r(\bX,\cG)=\{i\in \qq{N}: \dist_\cG(i, \bX)\leq r\}$ the ball of radius-$r$ around vertices $\bX$ in $\GG$. 
The weak local law of $H$ has been proven in {\cite[Theorem 4.2]{huang2024spectrum}}, which is recalled below (by taking $(\fa, \fb,\fc, \mathfrak r)=(12,300,\fc,\fc/32)$). 

\begin{theorem}[{\cite[Theorem 4.2]{huang2024spectrum}}] \label{thm:prevthm0}
Fix any sufficiently small $0<\fb\ll\fc<1$, $\fR=(\fc/4)\log_{d-1}(N)$ and any $z\in \bC^+$, we define $\eta(z)=\Im[z], \kappa(z)=\min\{|\Re[z]-2|, |\Re[z]+2|\}$, and the error parameters
\begin{align}\label{e:defepsilon}
\varepsilon'(z):=(\log N)^{100}\left(N^{-10\fb} +\sqrt{\frac{\Im[m_d(z)]}{N\eta(z)}}+\frac{1}{(N\eta(z))^{2/3}}\right),\quad \varepsilon(z):=\frac{\varepsilon'(z)}{\sqrt{\kappa(z)+\eta(z)+\varepsilon'(z)}}.
\end{align}
For any $\fC\geq 1$ and $N$ large enough, with probability at least $1-\OO(N^{-\fC})$ with respect to the uniform measure on $\oOmega$, 
\be\label{eq:infbound0}
|G_{ij}(z)-P_{ij}(\cB_{\fR/100}(\{i,j\},\cG),z,\msc(z))|,\quad |Q(z)-\msc(z)|,\quad |m_N(z)-m_d(z)|\lesssim \varepsilon(z)
\ee
uniformly for every $i,j\in \qq{N}$, and any $z\in \bC^+$ with $\Im[z]\geq (\log N)^{300}/N$. 
\end{theorem}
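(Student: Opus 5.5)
The plan is a continuity (bootstrap) argument in the spectral parameter $\eta=\Im[z]$, run on the event $\oOmega$ from \Cref{lem:omega}. For $\eta$ of order one the bounds \eqref{eq:infbound0} are soft: $\|G\|\leq 1/\eta$, and local tree-likeness of the radius-$\fR/100$ balls around $\{i,j\}$ gives $G_{ij}\approx P_{ij}(\cB_{\fR/100}(\{i,j\},\cG),z,\msc)$ up to errors exponentially small in $\fR$. I will then lower $\eta$ along a grid with steps of size $N^{-3}$. At each step I use the Lipschitz bound $|\del_z G_{ij}|\leq \eta^{-2}$ to transfer the estimate from the previous grid point with a weakened constant (say $2\varepsilon(z)$). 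The goal is to upgrade this a priori bound back to $\varepsilon(z)$ with probability $1-\OO(N^{-\fC})$, and a union bound over the $N^{\OO(1)}$ grid points then closes the induction.

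The upgrade step has two parts. \emph{First}, I reduce entrywise control to the single scalar $Q$. Apply the Schur complement formula \eqref{e:Schur1} with $\bT=\cB_\ell(o,\cG)$. This expresses $G|_{\bT}$ as the Green's function of the ball with boundary weights given by entries $G^{(\bT)}_{xy}$, where $x,y$ are neighbours of boundary vertices outside $\bT$. The diagonal such terms are, on average, close to $Q$. The off-diagonal terms should be small, since their contributions average out over the $(d-1)^\ell$ boundary vertices; I expect a Ward-identity bound \eqref{e:Ward} to control them by $\sqrt{\Im m_N/(N\eta)}$. Up to these errors, $G_{oo}\approx X_\ell(Q,z)$, $G^{(i)}_{jj}\approx Y_\ell(Q,z)$, and more generally $G_{ij}\approx P_{ij}(\cdot,z,Q)$. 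By \eqref{e:Xrecurbound} and \eqref{e:Yl_derivative} (Lipschitz constant $\lesssim\ell$), replacing $Q$ by $\msc$ costs $\OO(\ell|Q-\msc|)$.

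\emph{Second}, averaging $G^{(i)}_{jj}\approx Y_\ell(Q,z)$ over the edges $i\sim j$ yields the self-consistent equation
\begin{align*}
Q=Y_\ell(Q,z)+\mathcal E .
\end{align*}
The error $\mathcal E$ is controlled by a local resampling (switching) argument. Switching the boundary edges of $\cB_\ell(o,\cG)$ with uniformly chosen edges produces an exchangeable pair. Using the Woodbury formula \eqref{e:woodbury}, the change in $G$ is expressed through off-diagonal entries of $G$ between far-apart random vertices, which are small by the a priori bound and Ward. High moments $\bE|\mathcal E|^{2p}$ are then bounded by $(N^{-10\fb}+\sqrt{\Im m_d/(N\eta)}+(N\eta)^{-2/3})^{2p}$ up to logarithmic factors, and Markov's inequality gives the high-probability statement. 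I expect this estimate on $\mathcal E$ to be the main obstacle: one has to track how resampled quantities depend on the new graph, and the fluctuation terms must be shown to genuinely average out rather than merely being bounded one at a time. The $N^{-10\fb}$ term absorbs the exceptional non-tree vertices (at most $N^{\fc}$ of them) together with the truncation errors.

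Finally, the stability analysis. By \eqref{e:recurbound}, setting $\delta=Q-\msc$, the self-consistent equation becomes
\begin{align*}
(1-\msc^{2\ell+2})\,\delta-c_\ell\,\delta^2=\mathcal E+\OO(\ell^5|\delta|^3).
\end{align*}
Near the edge, $1-\msc^{2\ell+2}$ is comparable to $\ell\sqrt{\kappa+\eta}$, and $c_\ell$ is bounded below. This is the standard square-root edge stability: it gives $|\delta|\lesssim |\mathcal E|/\sqrt{\kappa+\eta+|\mathcal E|}$, exactly the form of $\varepsilon(z)$ in \eqref{e:defepsilon}. To rule out the wrong root of this quadratic, I use continuity in $\eta$ from the previous grid point. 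The estimate for $m_N=\frac1N\sum_o G_{oo}\approx X_\ell(Q,z)$ then follows from \eqref{e:Xrecurbound}, and the entrywise bound follows from the first part, which closes the bootstrap.
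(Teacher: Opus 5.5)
These notes import the theorem from \cite{huang2024spectrum} without proof, so I am comparing your outline with the resampling machinery recalled here. Your architecture is the right one: a bootstrap in $\eta$, a self-consistent equation $Q\approx Y_\ell(Q,z)$ obtained from local resampling and exchangeability, and a quadratic stability analysis. However, two steps fail as written.

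\textbf{The entrywise step is circular.} In the unswitched graph, the outer boundary vertices $a_\alpha$ of $\cB_\ell(o,\cG)$ form a fixed set of $\asymp(d-1)^\ell$ vertices. To first order, the Schur complement writes $G_{oo}-X_\ell(Q,z)$ as a sum over $\alpha,\beta\in\qq{\mu}$ of $G^{(\bT)}_{a_\alpha a_\beta}-\delta_{\alpha\beta}Q$, with weights of order $(d-1)^{-\ell}$.
\begin{itemize}
\item Nothing in $\cG$ makes the diagonal part smaller than the a priori error: you only know that $G^{(\bT)}_{a_\alpha a_\alpha}$ and $Q$ are each within $\OO(\varepsilon)$ of $\msc$.
\item The $\asymp(d-1)^{2\ell}$ off-diagonal terms, bounded one by one, are worse than the input error by a factor $(d-1)^\ell$.
\item The Ward identity \eqref{e:Ward} cannot help. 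It controls $\sum_{y\in\qq{N}}|G_{xy}|^2=\Im G_{xx}/\eta$, so over a fixed set of columns it gives no $1/N$ gain.
\end{itemize}
So this step at best returns its input error. It can neither improve the entrywise bound nor give $G^{(i)}_{jj}\approx Y_\ell(Q,z)$ edge by edge with a better error.

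The $1/N$ gain and the cancellation you want only appear when the boundary is attached at random positions, which is exactly what local resampling provides. In $\tcG=T_\bfS(\cG)$ the boundary is attached to the endpoints $c_\alpha$ of independent uniform edges $(b_\alpha,c_\alpha)$. Then off-diagonal entries between $c_\alpha\neq c_\beta$ have $\bP_\bfS$-second moment of order $\Im m_N/(N\eta)$ by Ward (compare \eqref{e:Gccerror}). Meanwhile $\sum_\alpha(G^{(b_\alpha)}_{c_\alpha c_\alpha}-Q)$ is a sum of terms that are i.i.d.\ given $\cG$ up to replacement errors and (nearly) centred by the definition of $Q$, so it concentrates. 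Combined with the Schur expansion of \Cref{l:coefficient}, this gives $\wt G_{oo}\approx X_\ell(Q,z)$ and $\wt G^{(i)}_{oo}\approx Y_\ell(Q,z)$ with high $\bP_\bfS$-probability. Exchangeability (\Cref{lem:exchangeablepair}) then transfers this to $\cG$ at the fixed vertex $o$, followed by a union bound over vertices.

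So resampling must be used vertex by vertex for the entrywise law itself, not only for the averaged error $\mathcal E$. To reach probability $1-\OO(N^{-\fC})$ you must also show that switchings landing near cycles or near each other are few with that probability; \Cref{lem:configuration} alone only gives $1-N^{-1+2\fc}$.

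\textbf{Stability fails in the bulk for fixed $\ell$.} By \eqref{e:recurbound}, the linear coefficient $1-\msc^{2\ell+2}$ and the quadratic coefficient $c_\ell=\msc^{2\ell+2}\md\,(1-\msc^{2\ell+2})\bigl(\tfrac{1}{d-1}+\tfrac{d-2}{(d-1)(1-\msc^2)}\bigr)$ both carry the factor $1-\msc^{2\ell+2}$. This is structural. $Y_\ell(\cdot,z)$ is the $(\ell+1)$-fold iterate of the M\"obius map $x\mapsto 1/(-z-x)$, whose multiplier at its fixed point $\msc$ is $\msc^2$, and this map is nearly elliptic in the bulk.

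Consequently, for fixed $\ell$ there are $\asymp\ell$ bulk energies where $\msc^{2\ell+2}=1+\OO(\ell\eta)$. At these energies $Q=Y_\ell(Q,z)+\mathcal E$ says essentially nothing about $Q-\msc$. Your claims that $c_\ell$ is bounded below and that $|\delta|\lesssim|\mathcal E|/\sqrt{\kappa+\eta+|\mathcal E|}$ are therefore false there.

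The fix is to choose the radius depending on $z$. If $|1-\msc^{2\ell+2}|<|1-\msc^2|/4$, then $|\msc|^{2\ell+2}>1/2$, and hence $|1-\msc^{2\ell+4}|>|1-\msc^2|/4$. So one of the radii $\ell,\ell+1$ gives a linear coefficient $\gtrsim|1-\msc^2|\asymp\sqrt{\kappa+\eta}$ for bounded $z$. You would run the resampling argument for both radii, divide by that coefficient, and then the quadratic analysis goes through.
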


\begin{definition}\label{def:omega}
We denote by \(\Omega\subset \overline{\Omega}\) the set of \(d\)-regular graphs
for which \eqref{eq:infbound0} holds.
\end{definition}

As a consequence of \Cref{thm:prevthm0} we have the following statement.
\begin{claim}
Uniformly for every $i,j\in \qq{N}$, and any $z\in \bC^+$ with $|z|\leq 1/\fg, \Im[z]\geq N^{-1+\fg}$, we have the following bounds: 
\begin{align}\begin{split}\label{eq:infbound}
&|G_{ij}(z)-P_{ij}(\cB_{\fR/100}(\{i,j\},\cG),z,\msc(z)|\lesssim N^{-2\fb} ,\\
 &|Q(z)-\msc(z)|,\quad |m_N(z)-m_d(z)|\lesssim N^{-2\fb} .
\end{split}\end{align}
and for any $i,j\in \qq{N}$
\begin{align}\label{e:Gsize}
    \Im[G_{ij}(z)]\lesssim N^{\fo/2} \Im[m_N(z)]
\end{align}
\end{claim}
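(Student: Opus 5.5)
The plan is to obtain both parts of the claim from \Cref{thm:prevthm0}, applied on graphs in $\Omega$. We reduce everything to a bound on the error parameter $\varepsilon(z)$ over the region $|z|\leq 1/\fg$, $\Im[z]\geq N^{-1+\fg}$, and then use the Ward identity to compare $\Im[G_{ij}]$ with $\Im[m_N]$.

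\emph{Step 1: the bound on $\varepsilon(z)$.} We need $\varepsilon(z)\lesssim N^{-2\fb}$. Let $\eta=\Im[z]\geq N^{-1+\fg}$. Then $(N\eta)^{-2/3}\leq N^{-2\fg/3}$, and $\Im[m_d(z)]\lesssim 1$ because $\varrho_d$ is bounded. Hence $\sqrt{\Im[m_d]/(N\eta)}\lesssim N^{-\fg/2}$. Since $\fb\ll\fg$, we get
\[
\varepsilon'(z)\lesssim(\log N)^{100}\bigl(N^{-10\fb}+N^{-\fg/2}\bigr)\lesssim(\log N)^{100}N^{-10\fb}.
\]
For the denominator in $\varepsilon(z)$ we use only $\kappa+\eta+\varepsilon'\geq\varepsilon'$. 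This gives
\[
\varepsilon\leq\sqrt{\varepsilon'}\lesssim(\log N)^{50}N^{-5\fb}\leq N^{-2\fb}.
\]
The condition $\Im[z]\geq(\log N)^{300}/N$ of \Cref{thm:prevthm0} is implied by $\Im[z]\geq N^{-1+\fg}$. The theorem is pointwise in $z$, so to get uniformity we take a grid of mesh $N^{-10}$ in the bounded region. A union bound over the grid loses only a polynomial factor in the probability $N^{-\fC}$. To pass from the grid to all $z$, we use that $G$, $Q$, $m_N$, $P$, $\msc$ and $m_d$ are Lipschitz with constant at most $\eta^{-2}\leq N^2$. This proves \eqref{eq:infbound}.

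\emph{Step 2: the bound \eqref{e:Gsize}.} Write $G=\sum_\alpha u_\alpha u_\alpha^\top/(\lambda_\alpha-z)$, which gives
\[
\Im G_{ij}=\sum_\alpha\frac{\eta\, u_\alpha(i)u_\alpha(j)}{|\lambda_\alpha-z|^2}.
\]
By Cauchy--Schwarz, $|\Im G_{ij}|\leq\sqrt{\Im G_{ii}\,\Im G_{jj}}$, so it suffices to treat diagonal entries, i.e.\ to show $\Im G_{ii}\lesssim N^{\fo/2}\Im m_N$.

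The natural first step is \eqref{eq:infbound}: it gives $\Im G_{ii}=\Im P_{ii}+\OO(N^{-2\fb})$ and $\Im m_N=\Im m_d+\OO(N^{-2\fb})$. For a tree-like neighbourhood, $P_{ii}$ is the Green's function entry of the tree extension, and $\Im P_{ii}$ is comparable to $\Im m_d$. Near the edge, however, $\Im m_d$ can be as small as $\eta/\sqrt{\kappa}$ or $\sqrt{\eta}$, which may be below $N^{-2\fb}$. So additive errors do not suffice.

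This is the main obstacle. I would handle it by a delocalization argument: prove $|u_\alpha(i)|^2\lesssim N^{-1+\fo/2}$ for all $\alpha$ and $i$. Given this,
\[
\Im G_{ii}=\sum_\alpha\frac{\eta|u_\alpha(i)|^2}{|\lambda_\alpha-z|^2}\leq N^{\fo/2}\cdot\frac1N\sum_\alpha\frac{\eta}{|\lambda_\alpha-z|^2}=N^{\fo/2}\Im m_N,
\]
which is the desired bound.

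To get delocalization, fix $\alpha$ and choose $z_\alpha=\lambda_\alpha+\ri\eta_0$ with $\eta_0=N^{-1+\fo/2}$. This value may lie below $N^{-1+\fg}$, but it is still above $(\log N)^{300}/N$, so the entrywise bound of \Cref{thm:prevthm0} still applies there. Using the sharper $\varepsilon(z)$ near the edge together with the boundedness of $P_{ii}$, this gives $|G_{ii}(z_\alpha)|\lesssim 1$. Then
\[
|u_\alpha(i)|^2\leq\eta_0\Im G_{ii}(z_\alpha)\lesssim N^{-1+\fo/2}.
\]
The checks that need care are that $P_{ii}=\OO(1)$ uniformly, including for vertices whose neighbourhood contains one cycle (permitted by $\oOmega$), and that the union bound over all $\alpha$ is compatible with the high-probability statement. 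For the latter, the grid of spectral parameters from Step 1 suffices.
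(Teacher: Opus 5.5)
Your proposal is correct and follows the paper's proof. You bound $\varepsilon\le\sqrt{\varepsilon'}$ to read \eqref{eq:infbound} off \eqref{eq:infbound0}, then use $\Im G_{ii}\lesssim 1$ at a spectral parameter just above $(\log N)^{300}/N$ to get eigenvector delocalization, which you insert into the spectral decomposition of $\Im G_{ij}$. The differences are cosmetic: the grid/Lipschitz step is unnecessary because \Cref{thm:prevthm0}, and hence the definition of $\Omega$, is already uniform in $z$; the paper uses $\eta=(\log N)^{300}/N$ rather than $N^{-1+\fo/2}$; and at your scale $\eta_0$ you only need $\varepsilon\le\sqrt{\varepsilon'}\ll 1$, with no edge-specific sharpening.
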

\begin{proof}
    For $\Im[z]\geq N^{-1+\fg}$, thanks to \eqref{e:defepsilon}, \begin{align}\label{e:epsilon_bound}
    \varepsilon(z)\leq \sqrt{\varepsilon'(z)}\leq N^{-4\fb}.
\end{align}  
Thus, the claim \eqref{eq:infbound} follows from \eqref{eq:infbound0}.

We denote the eigenvectors of $H$ as $\bmu_1, \bmu_2, \cdots, \bmu_N$. We notice that the first statement in \eqref{eq:infbound0} implies that for $\Im[z]\geq (\log N)^{300}/N$, $\Im[G_{ii}(z)]\lesssim 1$. It follows that eigenvectors are delocalized \begin{align*}
    \|\bmu_\al\|^2_\infty\lesssim \max_{1\leq i\leq N}((\log N)^{300}/N)\Im[G_{ii}(\lambda_\alpha+\ri (\log N)^{300}/N)]\lesssim (\log N)^{300}/N\ll N^{\fo/2-1}.
\end{align*} 
Thus \eqref{e:Gsize} follows
\begin{align}\label{e:smallGij}
        |\Im[G_{xy}(z)]|=\left|\sum_{\al=1}^N\frac{\eta (\bmu_\al\bmu^\top_\al)_{xy}}{|\la_\al-z|^2}\right|
        \leq N^{\fo/2-1}\sum_{\al=1}^N\frac{\eta }{|\la_\al-z|^2}=N^{\fo/2} \Im[m_N(z)].
    \end{align}
\end{proof}

The following lemma states that after removing some vertices of $\cG$, the local law \eqref{eq:infbound} still holds (with possibly worse error).
\begin{lemma}\label{l:basicG}
Let $z\in \bC^+$ satisfy $|z|\leq 1/\fg$, and set $\eta:=\Im z\geq N^{-1+\fg}$.
Fix a $d$-regular graph $\cG\in \Omega$ (recall \Cref{def:omega}) and vertices
$\{i,o\}$ in $\cG$, and let $\cT=\cB_\ell(o,\cG)$ with vertex set $\bT$. For any $0\leq \mu\lesssim (d-1)^\ell$, take a family of directed edges
$\{(b_\alpha, c_\alpha)\}_{\alpha\in \qq{\mu}}$, set
$\bW=\{b_\alpha\}_{\alpha\in \qq{\mu}}$, and condition on the event that
\begin{align}\label{e:condition}
A_{io}\prod_{\alpha\in \qq{\mu}}A_{b_\alpha c_\alpha}
\prod_{x\neq y\in \{o, c_1, c_2,\dots, c_\mu\}}
\bm1\bigl(\cB_{\fR/2}(x,\cG) \text{ is a tree}\bigr)
\bm1\bigl(\dist_\cG(x,y)\geq \fR/2\bigr)=1.
\end{align}

Then, for any $\bX$ of the form
$\bX\in\{\bW,\,\bT\cup \bW\}$ and any $x,y\not\in \bX$, we have
\begin{align}\label{eq:local_law}
\bigl|G^{(\bX)}_{xy}(z)
- P^{(\bX)}_{xy}\bigl(\cB_{\fR/100}(\{x,y\}\cup \bX,\cG),z,\msc(z)\bigr)\bigr|
\lesssim N^{-\fb},\qquad
\bigl|\Im G^{(\bX)}_{xy}(z)\bigr|
   \lesssim N^\fo \Im[ m_N(z)],
\end{align}
and for any $x\not\in \bX$,
\begin{align}\label{e:Gest}
\frac{1}{N}\sum_{b\sim c\notin \bX}\bigl|G_{cx}^{(\bX)}(z)\bigr|^2
   \lesssim \frac{N^\fo \Im m_N(z)}{N\eta},\qquad
\frac{1}{Nd}\sum_{b\sim c\not\in \bX}
  \bigl|G_{c x}^{(b \bX)}(z)\bigr|^2
  \lesssim \frac{N^\fo \Im[m_N(z)]}{N\eta},
\end{align}
where the summation is over directed edges $(b,c)$ of $\cG$, and $b,c\not\in \bX$.
\end{lemma}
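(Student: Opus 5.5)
We will deduce the statement from the unremoved local law \eqref{eq:infbound} by expressing $G^{(\bX)}$ through $G$ via the Schur complement formula \eqref{e:Schur1}. The first observation is that $|\bX|\leq |\bT|+|\bW|\lesssim (d-1)^\ell\leq N^{\fb/10}$, since $\ell\ll \fb\log_{d-1}N$. So we are removing a small set of vertices.

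\textbf{Step 1: the local law for $G|_\bX$.} Write
\[
G^{(\bX)}=G|_{\bX^\complement}-G|_{\bX^\complement\bX}(G|_{\bX})^{-1}G|_{\bX\bX^\complement},
\]
which follows from \eqref{e:Schur1}. Apply the same identity to the tree-extension Green's function. Let
\[
\cB=\cB_{\fR/100}(\{x,y\}\cup\bX,\cG),\qquad P=P(\cB,z,\msc).
\]
Then $P^{(\bX)}$ satisfies the same Schur relation with $P$ in place of $G$. We need that $P(\cB,z,\msc)_{uv}$ agrees with $P(\cB_{\fR/100}(\{u,v\},\cG),z,\msc)_{uv}$ up to $N^{-2\fb}$. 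This should hold because the tree extension decays like $(d-1)^{-\dist/2}$, and the two neighborhoods coincide near $u,v$. The conditioning \eqref{e:condition} is what makes this work: the vertices $o,c_\alpha$ have tree neighborhoods and are mutually far apart, and $\bW$ consists of neighbors of the $c_\alpha$. Hence $\cB$ decomposes into well-separated pieces, each with excess at most one (using $\cG\in\oOmega$). Entrywise \eqref{eq:infbound} then gives
\[
|G|_{\bX}-P|_{\bX}|_\infty\lesssim N^{-2\fb},
\]
and the same bound for the off-diagonal blocks $\bX\times\{x,y\}$.

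\textbf{Step 2 (main obstacle): stability of $(G|_\bX)^{-1}$.} We need a bound on $\|(P|_\bX)^{-1}\|$ in a norm compatible with entrywise errors of size $|\bX|\,N^{-2\fb}\leq N^{-\fb}$. We use that $(P|_\bX)^{-1}$ equals $-z$ plus the adjacency restricted to $\bX$ minus the tree-extension self-energy of $\bX^\complement$ (again Schur). On tree-like pieces this self-energy is explicit and has rows with $\OO(1)$ nonzero bounded entries, coming from boundary weights like $\msc$. Because the pieces around distinct $c_\alpha$ are separated by distance $\geq\fR/2$, the cross terms are $\OO((d-1)^{-\fR/4})$. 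So $(P|_\bX)^{-1}$ has bounded $\ell^\infty\to\ell^\infty$ norm. The resolvent expansion
\[
(G|_\bX)^{-1}-(P|_\bX)^{-1}=-(G|_\bX)^{-1}(G|_\bX-P|_\bX)(P|_\bX)^{-1}
\]
then converges, since $\|G|_\bX-P|_\bX\|_{\infty\to\infty}\leq|\bX|N^{-2\fb}\ll1$. Multiplying out the Schur formula, and summing the $\ell^1$ row decay of $P|_{x\bX}$ over $\bX$ (which costs at most $|\bX|$), yields the first bound in \eqref{eq:local_law} with error $N^{-\fb}$.

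\textbf{Step 3: the imaginary part bound.} For $\Im G^{(\bX)}_{xy}$, we do not use the entrywise approximation. Instead we use eigenvector delocalization of $H^{(\bX)}$, exactly as in \eqref{e:smallGij}. The first bound of \eqref{eq:local_law}, applied at $\Im z=(\log N)^{300}/N$, gives $\Im G^{(\bX)}_{xx}\lesssim1$, hence delocalization. Interlacing ($\bX$ is a removal of $\OO(N^{\fb})$ rows) gives
\[
\frac1N\sum_\alpha\frac{\eta}{|\la^{(\bX)}_\alpha-z|^2}\leq \Im m_N+\OO\!\left(\frac{|\bX|}{N\eta}\right).
\]
The error term is absorbed as $\lesssim N^{\fo}\Im m_N$, using $\Im m_N\gtrsim \eta$ together with the fact that $\Im m_N\gtrsim N^{-\fo}/(N\eta)\cdot$ suitable lower bound. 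If this is too lossy near the edge, we will instead use Schur: $\Im$ of the correction term is controlled by $\Im G|_{\bX}$ via \eqref{e:Gsize}.

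\textbf{Step 4: the sums \eqref{e:Gest}.} These follow from the Ward identity \eqref{e:Ward} for $H^{(\bX)}$ (respectively $H^{(b\bX)}$). For the second sum, we first convert $G^{(b\bX)}_{cx}$ to $G^{(\bX)}$ through \eqref{e:Schurixj}:
\[
G^{(b\bX)}_{cx}=G^{(\bX)}_{cx}-\frac{G^{(\bX)}_{cb}\,G^{(\bX)}_{bx}}{G^{(\bX)}_{bb}}.
\]
Here $|G^{(\bX)}_{bb}|\gtrsim1$ by Step 1, and $|G^{(\bX)}_{cb}|\lesssim1$. Summing over $b$ with Ward again, each vertex appears $\OO(d)$ times. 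Both sums are thus bounded by $\Im G^{(\bX)}_{xx}/(N\eta)\lesssim N^\fo\Im m_N/(N\eta)$ by Step 3.
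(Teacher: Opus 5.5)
Your Steps 1, 2 and 4 follow the paper's proof. These are the Schur identity $G^{(\bX)}=G-G(G|_{\bX})^{-1}G$ and its analogue for $P$, and the explicit inverse $(P|_{\bX})^{-1}$ (for $\bX=\bT$ it is $H|_{\bT}-z-\msc(z)\mathbb I^{\partial}$) with $\OO(1)$ row sums. Then come a resolvent expansion for $(G|_{\bX})^{-1}$, and Ward's identity plus \eqref{e:Schurixj}.

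The gap is in Step 3, and Step 4 depends on it. Your main route there does not work, for two reasons.
\begin{itemize}
\item The first bound of \eqref{eq:local_law} is only proved for $\eta\geq N^{-1+\fg}$. At $\eta=(\log N)^{300}/N$ the input \eqref{eq:infbound0} gives only a polylogarithmically small entrywise error. That error does not survive the inversion of $G|_{\bX}$, which needs $|\bX|\cdot(\text{entrywise error})\ll 1$ with $|\bX|\geq(d-1)^\ell$ polynomially large in $N$. So you have no delocalization for $H^{(\bX)}$. Running the argument at $\eta=N^{-1+\fg}$ instead costs a factor $N^{\fg}\gg N^{\fo}$.
\item The interlacing error $|\bX|/(N\eta)$ cannot be absorbed into $N^{\fo}\Im m_N$. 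The lemma allows any $|z|\leq 1/\fg$. For $z$ at distance of order one from the spectrum and $\eta=N^{-1+\fg}$, one has $N^{\fo}\Im m_N\asymp N^{-1+\fg+\fo}$, while $|\bX|/(N\eta)\geq N^{-\fg}$. No lower bound on $\Im m_N$ of the kind you invoke holds on the whole domain.
\end{itemize}

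The fallback in your last sentence is what the paper does, and it is the right route. It needs the following specific inputs.
\begin{itemize}
\item Take imaginary parts in $G^{(\bX)}_{xy}=G_{xy}-\sum_{j,k\in\bX}G_{xj}(G|_{\bX})^{-1}_{jk}G_{ky}$.
\item Bound every $\Im G_{uv}$ by \eqref{e:Gsize}. This uses delocalization of the full $H$, for which \eqref{eq:infbound0} is available down to $\eta=(\log N)^{300}/N$ with no Schur amplification.
\item Control $\Im[(G|_{\bX})^{-1}]$ through $\Im[A^{-1}]=-A^{-1}\,\Im[A]\,\overline{A^{-1}}$, together with the $\OO(1)$ row sums of $(G|_{\bX})^{-1}$ from Step 2.
\item Bound $\sum_{j\in\bX}|G_{xj}|\lesssim\ell$ using the decay $|P_{xj}|\lesssim(d-1)^{-\dist(x,j)/2}$, plus $|\bX|N^{-2\fb}$. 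Do not use the crude count $|\bX|$ from Step 2. The total loss must be $\ell^2N^{\fo/2}\leq N^{\fo}$, whereas $|\bX|^2N^{\fo/2}$ is much larger than $N^{\fo}$.
\end{itemize}
With this you get $|\Im G^{(\bX)}_{xy}|\lesssim N^{\fo}\Im m_N$, and your Step 4 then goes through as written.
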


\begin{proof}
We will only prove the case that $\bX=\bT$, the other cases can be proven in the same way, so we omit. 
 From the Schur complement formula \eqref{e:Schur1}, we have
    \begin{align}\label{e:GT}
       G^{(\bT)}=G-G(G|_{\bT})^{-1}G.
    \end{align}

     We denote $\cT=\cB_\ell(o,\cG)$ and $P=P(\cB_{\fR/100}(\bT\cup \{x,y\},\cG), z, \msc(z))$. We recall that by \eqref{e:condition} the vertex \(o\) has a tree neighborhood of radius \(\fR/2\). The connected component component of \(o\) in
\(\cB_{\fR/100}(\bT\cup \{x,y\},\cG)\) is a truncated $d$-regular tree. 
As a consequence, when restricted to the connected component of \(o\) in
\(\cB_{\fR/100}(\bT\cup \{x,y\},\cG)\), \(P\) is the Green's function
of the \(d\)-regular tree.
     
     Then \eqref{e:defP} gives $(P|_\bT)^{-1}=H|_\bT-z-\msc(z)\bI^\del$, where the diagonal matrix $\bI_{ij}^{\del}=\bm1(\dist_{\cT}(o,i)=\ell)\delta_{ij}$ for $i,j\in \bT$. And for any $i \in \bT$,
\begin{equation} \label{e:sumPTSbd}
\sum_{j\in \bT} |(P|_{\bT})^{-1}_{ij}|
\lesssim \sum_{j\in \bT}|H_{ij}-z\delta_{ij}-\bI_{ij}^\del|\lesssim1.
\end{equation}

Moreover, if $x$ is in the connected component of \(o\) in
\(\cB_{\fR/100}(\bT\cup \{x,y\},\cG)\), then 
\eqref{eq:infbound} implies
\begin{align}\label{e:G-P}
|G_{ij}-P_{ij}|,\quad |G_{xj}-P_{xj}|\leq N^{-2\fb}, \qquad i,j\in \bT.
\end{align}
If \(x\) is not in the connected component of \(o\) in
\(\cB_{\fR/100}(\bT\cup \{x,y\},\cG)\), then \(\dist_\cG(x,j)> \fR/50\) for any
\(j\in \bT\). In this case \(P_{xj}=0\), and by \eqref{eq:infbound} we have
\(|G_{xj}|\leq N^{-2\fb}\). In this case, we also have
\begin{align}\label{e:G-P1}
|G_{ij}-P_{ij}|,\quad |G_{xj}-P_{xj}|\leq N^{-2\fb}, \qquad i,j\in \bT.
\end{align}

We can perform a resolvent expansion according to \eqref{e:resolv} and rewrite $(G|_{\bT})^{-1}_{ij}$ as: for some large enough $\fp\geq 1$
\begin{align}\begin{split}\label{e:sum_Ginverse0}
(G|_{\bT})^{-1}_{ij}-(P|_\bT)^{-1}_{ij}=\left(
\sum_{k=1}^\fp (P|_\bT)^{-1}\left((P|_\bT-G|_{\bT})(P|_\bT)^{-1}\right)^k
\right)_{ij}+\OO\left(\frac{1}{N}\right)\\
\lesssim \sum_{l\in \bT}|((P|_\bT)^{-1})_{il}| N^{-2\fb} \sum_{l\in \bT}|((P|_\bT)^{-1})_{lj}| +\OO\left(\frac{1}{N}\right)\lesssim N^{-2\fb},
\end{split}\end{align}
where in the second statement we used  \eqref{e:sumPTSbd} and \eqref{e:G-P1}. From the above expression, we get
\begin{align}\label{e:sum_Ginverse}
\sum_{j\in \bT}|(G|_\bT)^{-1}_{ij}| \lesssim \sum_{j\in \bT}|(P|_\bT)^{-1}_{ij}| +\OO(|\bT|N^{-2\fb} )\lesssim 1.   
\end{align}

Again, since the vertex $o$ has radius $\fR/2$ tree neighborhood,  for any  $0\leq r\leq \ell$, $|\{j\in \bT: \dist_\cG(x,j)=r\}|\lesssim (d-1)^{r}$. 
  As a consequence, for any $i\in \bT$, \eqref{e:G-P1} and \eqref{e:Gtreemkm} together imply that 
\begin{align}\begin{split}\label{e:sumGix}
&\sum_{j\in \bT}|P_{xj}|\lesssim 
\sum_{j\in \bT}\left(\frac{|\msc|}{\sqrt{d-1}}\right)^{\dist_\cT(x,j)}
 \lesssim \sum_{0\leq r\leq 2\ell}\frac{|\{j\in \bT: \dist(x,j)=r\}|}{(d-1)^{r/2}} \lesssim \ell,\\
&    \sum_{j\in \bT} |G_{xj}|\lesssim 
    \sum_{j\in \bT}|P_{xj}| +\OO(|\bT|N^{-2\fb})\lesssim \ell.
\end{split}\end{align}

%
 
For $P$,  Schur complement formula \eqref{e:Schur1} gives $P^{(\bT)}=P-P(P|_{\bT})^{-1}P$. By taking difference with \eqref{e:GT}, we get
\begin{align}\begin{split}\label{e:GTbbbbb}
 &\phantom{{}={}}|G^{(\bT)}_{xy}-P^{(\bT)}_{xy}|
 \lesssim |G_{xy}-P_{xy}|
 +\sum_{i,j\in \bT}|G_{xi}-P_{xi}||(G|_\bT)^{-1}_{ij}||G_{jy}|\\
&+\sum_{i,j\in \bT}|P_{xi}||(G|_\bT)^{-1}_{ij}-(P|_\bT)^{-1}_{ij}||G_{jy}|+
\sum_{i,j\in \bT}|P_{xi}||(P|_\bT)^{-1}_{ij}||G_{jy}-P_{jy}|\\
&\lesssim |G_{xy}-P_{xy}|+N^{-2\fb} \sum_{i,j\in \bT} \left(
|(G)^{-1}_{ij}||G_{jy}|+
|P_{xi}||G_{jy}|
+|P_{xi}||(P|_\bT)^{-1}_{ij}|\right)\\
&\lesssim \ell^2 N^{2\fb} \lesssim N^{-\fb}.
\end{split}
\end{align}
where in the second inequality, we used
\eqref{e:G-P1} and \eqref{e:sum_Ginverse0}; in the third inequality we used  \eqref{e:sumPTSbd}, \eqref{e:sumGix} and \eqref{e:sum_Ginverse}. This finishes the proof of the first statement in \eqref{eq:local_law}.

 By taking imaginary part on both sides of \eqref{e:GT}, we get
    \begin{align}\begin{split}\label{e:Gimaginary}
        \Im[G_{xy}^{(\bT)}]
        &=\Im[G_{xy}]-\Im[(G (G|_{\bT})^{-1}G)_{xy}]\lesssim \Im[G_{xy}]+\sum_{j\in \bT}\Im[G_{xj}] |(G|_{\bT})^{-1}G)_{jy}|\\
        &+\sum_{j\in \bT}|(G(G|_{\bT})^{-1})_{xj}||\Im[G_{jy}] |
        +\sum_{j,k\in \bT}|G_{xj}|\Im[(G|_{\bT})^{-1}_{jk}]|G_{ky}|.
    \end{split}\end{align}
In the following we show the following estimates
\begin{align}\label{e:sumG}
    \sum_{j\in \bT}(|G_{xj}|+|(G(G|_{\bT})^{-1})_{xj}|+|G_{jy}|+|(G(G|_{\bT})^{-1})_{jy}|)
    \lesssim \ell, \quad \max_{j,k\in \bT}\Im[(G|_{\bT})^{-1}_{jk}]\lesssim \ell^2 N^{\fo/2}\Im[m_N].
\end{align}
Then the second statement in \eqref{eq:local_law} follows from plugging \eqref{e:smallGij} and \eqref{e:sumG} into \eqref{e:Gimaginary}.

We start with the first statement in \eqref{e:sumG}.
  The first statement in \eqref{e:sumG} follows from the estimates \eqref{e:sum_Ginverse} and \eqref{e:sumGix}
\begin{align*}
     \sum_{j\in \bT}(|G_{xj}|+|(G(G|_{\bT})^{-1})_{xj}|)
     \lesssim \sum_{j\in \bT}|G_{xj}|\left(1+\sum_{i\in \bT}|(G|_{\bT})^{-1}_{ji}|\right)\lesssim \ell.
\end{align*}

    Next we prove the second statement in \eqref{e:sumG}. For the imaginary part of a symmetric matrix, we have the following identity
    \begin{align*}
        \Im[A^{-1}]= -A^{-1}\Im[A] \overline{A^{-1}}.
    \end{align*}
    Thus 
    \begin{align}\label{e:Im_Ginverse}
        \Im[(G|_\bT)^{-1}_{xy}]
        =-((G|_\bT)^{-1}\Im[G|_\bT] \overline{(G|_\bT)^{-1}})_{xy}.
    \end{align}
The second statement in \eqref{e:sumG} follows from plugging \eqref{e:smallGij} and \eqref{e:sum_Ginverse} into \eqref{e:Im_Ginverse}.

The first statement in \eqref{e:Gest} follows from a Ward identity \eqref{e:Ward}, and the second statement in \eqref{eq:local_law}:
\begin{align*}
    \frac{1}{N}\sum_{c\notin \bT}|G_{cx}^{(\bT)}|^2=\frac{\Im[G_{xx}^{(\bT)}]}{N\eta}\lesssim \frac{N^\fo\Im[m_N]}{N\eta}. 
\end{align*}

For the second statement in \eqref{e:Gest}, thanks to the Schur complement formula \eqref{e:Schurixj},
\begin{align}\label{e:Gbound}
    |G_{c x}^{(b  \bT)}|=\left|G_{c x}^{(  \bT)}-\frac{G_{cb}^{( \bT)}G_{b x}^{( \bT)}}{G_{bb}^{( \bT)}}\right|\lesssim |G_{cx}^{(  \bT)}|+|G_{b x}^{( \bT)}|,
\end{align}
where in the second statement we used \eqref{eq:local_law} to bound $|G_{cb}^{( \bT)}|\lesssim 1, |G_{bb}^{( \bT)}|\gtrsim 1$. By averaging \eqref{e:Gbound} we get
\begin{align*}
    \frac{1}{Nd}\sum_{b\sim c\not\in  \bT}|G_{c x}^{(b  \bT)}|^2\lesssim  \frac{1}{Nd}\sum_{b\sim c\not\in  \bT}|G_{cx}^{(  \bT)}|^2+|G_{bx}^{( \bT)}|^2\lesssim \frac{ N^\fo \Im[m_N]}{N\eta},
\end{align*}
where the second statement follows from the first statement in \eqref{e:Gest}.

\end{proof}

\section{Edge universality.}\label{s:universality}
We now outline the ideas behind proving edge universality \Cref{t:universality} for random \(d\)-regular graphs. Edge universality follows the standard three-step scheme \cite{erdHos2017dynamical}. The key deviations arise in steps (i) and (iii):
\begin{itemize}
  \item[(i)] In the sparse setting, the optimal concentration of eigenvalue locations is not with respect to the semicircle law as in the Wigner case. The self-consistent equations for the relevant Stieltjes transforms are therefore more intricate. New techniques---local resampling that more effectively exploit the randomness---are introduced to derive these equations and establish concentration; see \Cref{s:three_step} for details.
  \item[(iii)] For random \(d\)-regular graphs with fixed \(d\), standard Wigner-type comparison methods fail. A different mechanism based on loop equations plays a central role; see \Cref{s:loop1} for details.
\end{itemize}

\subsection{Three-step strategy.}\label{s:three_step}
In this section we recall the three-step strategy for proving edge universality for Wigner matrices (see, e.g., \cite{erdHos2017dynamical}) and highlight the key differences that arise in random $d$-regular graph setting. Let \(H\) denote either a Wigner matrix or the normalized adjacency matrix of a random \(d\)-regular graph. We write its eigenvalues in decreasing order as \(\lambda_1 \geq \lambda_2 \geq \cdots \geq \lambda_N\).

  \textbf{Step (i): local law and rigidity.}
The first step is to prove optimal eigenvalue rigidity up to the spectral edge (on the $N^{-2/3}$ scale), yielding precise control of the extreme eigenvalues. This is achieved by deriving a self-consistent equation for the Stieltjes transform $m_N(z)$ (or related spectral observables),
\begin{align*}
  m_N(z):=\frac{1}{N}\sum_{i=1}^N\frac{1}{\lambda_i-z}.
\end{align*}
For Wigner matrices, the self-consistent equation takes the form: with high probability
\begin{align}\label{e:self_eq1}
  \,1+z m_N(z)+m_N^2(z)\approx 0,
\end{align}
whose unique solution is the Stieltjes transform of the semicircle law. The self-consistent equation is proven through carefully estimating high moments $\bE[|1+z m_N(z)+m_N^2(z)|^{2p}]$.

For a random \(d\)-regular graph \(\cG\) on \(N\) vertices with fixed degree \(d\),
obtaining a closed equation for the Stieltjes transform requires a more refined
observable. We recall from \eqref{e:Qsum} the quantity  
\begin{align}\label{e:Qsum0}
Q(z)=\frac{1}{Nd}\sum_{i\sim j}G_{jj}^{(i)}(z),
\end{align}
and the functions \(Y_\ell(Q(z), z)\) and \(X_\ell(Q(z), z)\) from \eqref{def:Y},
which are the Green's functions at the root of a truncated \((d-1)\)-ary tree
and a truncated \(d\)-regular tree of depth \(\ell\asymp \log_{d-1} N\) with
boundary weights \(Q(z)\). It turns out that the quantity \(Q(z)\) and the
Stieltjes transform \(m_N(z)\) satisfy two self-consistent equations:
\begin{align}\label{e:self_consistent}
Q(z)-Y_\ell(Q(z),z)\approx 0,\quad m_N(z)-X_\ell(Q(z),z)\approx 0. 
\end{align}
One of the main results of \cite{huang2024ramanujan} is devoted to proving the
self-consistent equations \eqref{e:self_consistent} via an iteration scheme
utilizing local resampling. This note is devoted to explaining the derivation
of these self-consistent equations by showing that their expectations are small.

  \textbf{Step (ii): strong ergodicity of Dyson Brownian motion.}
We next study the normalized adjacency matrix perturbed by a small GOE matrix
\begin{align}\label{e:Ht}
  H(t)\;\deq\; H + \sqrt{t}\,Z,
\end{align}
where $Z$ is a GOE matrix for Wigner matrices; for random $d$-regular graphs, $Z$ is the GOE restricted to the subspace of matrices with vanishing row and column sums (so that the trivial eigenvector ${\bm e}=(1,\ldots,1)^\top$ is preserved).

Assume the eigenvalues of $H$ satisfy optimal edge rigidity with respect to a limiting density $\varrho$ exhibiting square-root edge behavior, which is the consequence of Step (i). In particular, for Wigner matrices $\varrho$ is the semicircle law \eqref{e:msc_equation}; for random $d$-regular graphs $\varrho$ is the Kesten--McKay law $\varrho_d$ from \eqref{e:KMdistribution}.
  Then, by \cite{landon2017edge,adhikari2020dyson}, after a time $t \geq N^{-1/3+\fc}$ (for arbitrarily small $\fc>0$) the edge statistics of $H(t)$ are universal: the extreme eigenvalues converge to the Airy$_1$ point process (the edge scaling limit of GOE). 

The asymptotic empirical eigenvalue distribution of the matrix $H(t)=H+\sqrt t Z$ can be described by the free additive convolution, from free probability theory \cite{MR1488333}. We denote the semicircle distribution of variance $t$ as $t^{-1/2}\rhosc(t^{-1/2}x)$.
Given a probability measure $\varrho$ on $\bR$, we denote its free convolution with a semicircle distribution of variance $t$ by $\varrho_t:=\varrho\boxplus t^{-1/2}\rhosc(t^{-1/2}x)$. Write the Stieltjes transforms of $\varrho$ and $\varrho_t$ as
\begin{align*}
m(z)=m(z;0)=\int (x-z)^{-1} \varrho(x)\rd x,\quad m(z;t)=\int (x-z)^{-1} \varrho_{t}(x)\rd x, \quad z\in \bC\setminus \bR.
\end{align*} 
The Stieltjes transform $m(z;t)$ solves the complex Burgers equation
\begin{align}\label{e:eq2}
\del_t m(z,t)=\del_z m(z,t) m(z,t)=\frac{1}{2}\del_z (m(z,t)^2).
\end{align}
We denote the right edge of the free convolution density $\varrho_t$ as $E_t$, then its evolution satisfies the following equation
\begin{align}\label{e:Etshift}
 \del_t E_t=m( E_t,t).
\end{align}

  \textbf{Step (iii): Green's function comparison.}
This final step removes the Gaussian component in \eqref{e:Ht} and, via a comparison argument, transfers edge universality from the Gaussian-divisible model to the original ensemble. In particular, it shows that the $k$-point correlation function at the edge is invariant along the flow \eqref{e:Ht}, with \(t\) interpreted as time. This is achieved by the following comparison theorem for multipoint correlation functions at the edge (equivalently, by comparing products of Stieltjes transforms on the microscopic scale). For details on how this theorem yields edge universality, see \cite[Section~17]{erdHos2017dynamical}.

 \begin{proposition}\label{p:small_change}
Fix a small $\fc>0$. We introduce the microscopic window:
 \begin{align}\label{e:micro_window}
     {\bf M}:=\{w\in \bC:N^{-2/3-\fc}\leq |\Im[w]|\leq N^{-2/3+\fc}, -N^{-2/3+\fc}\leq \Re[w]\leq N^{-2/3+\fc}\}.
 \end{align}
  Then for any $p\geq 1$, and $w_1, w_2,\cdots, w_{p}\in {\bf M}$, $z_j(t)=E_t+w_j$ for $1\leq j\leq p$, the following holds
 \begin{align}\label{e:product_s}
    \left.\bE\left[\prod_{j=1}^p N^{1/3}(m_N(z_j(t);t)-m(z_j(t);t))\right]\right|_{t=0}^{t=N^{-1/3+\fc}}=\OO\left( N^{-\fc}\right),
 \end{align}
 provided $N$ is sufficiently large.
 \end{proposition}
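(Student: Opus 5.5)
We will prove \eqref{e:product_s} by differentiating in \(t\) and showing that the derivative is small. Write \(F(t)=\prod_{j=1}^p N^{1/3}(m_N(z_j(t);t)-m(z_j(t);t))\). The claim follows once we show
\[
\bigl|\partial_t \bE[F(t)]\bigr|\lesssim N^{1/3-2\fc}, \qquad 0\leq t\leq N^{-1/3+\fc},
\]
since integrating over an interval of length \(N^{-1/3+\fc}\) then gives \(\OO(N^{-\fc})\).

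\textbf{Step 1: compute the derivative.} The time derivative has three sources.
\begin{itemize}
\item The Gaussian part \(\sqrt t Z\). Using Itô's formula, or Gaussian integration by parts for the GOE restricted to the subspace of matrices with zero row sums, it produces the generator \(\frac{1}{2N}\sum_{ij}\partial_{h_{ij}}^2\) acting on products of resolvents. Acting on \(m_N\), this gives \(\partial_z(m_N^2)/2\) plus a \(1/N\)-size term \(\frac{1}{N^2}\Tr G^3\)-type correction.
\item The moving spectral parameter \(z_j(t)=E_t+w_j\). Via \eqref{e:Etshift} it contributes \(m(E_t,t)\,\partial_z m_N\).
\item The deterministic part \(-\partial_t m\). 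By \eqref{e:eq2} this equals \(-m\,\partial_z m\).
\end{itemize}
Combining these, each factor \(\Delta_j:=m_N(z_j)-m(z_j)\) evolves by
\[
\partial_z\bigl[(m(z_j)-m(E_t))\Delta_j+\tfrac12\Delta_j^2\bigr]
\]
plus the drift and noise terms. After the usual cancellation, \(\partial_t\bE[F]\) is a sum of \(\bE[\prod_{k\neq j}\cdots]\) times terms of these types.

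\textbf{Step 2: bound the easy terms.} The quadratic terms \(\Delta_j^2\) and the \(1/N\) cross terms between different \(j\) are handled by optimal edge rigidity from Step (i). The required input is \(|\Delta_j|\lesssim N^{\fo}/(N\eta)\) at the edge, which is the high-moment version of the self-consistent equations \eqref{e:self_consistent} together with the square-root behaviour \eqref{e:edge_behavior}. At \(|w|\sim N^{-2/3}\) this gives \(N^{1/3}|\Delta_j|\lesssim N^{C\fc}\), and these terms come out of order \(N^{1/3-c}\).

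\textbf{Step 3: the main obstacle.} The difficult term is the linear drift \(\partial_z[(m(z_j)-m(E_t))\Delta_j]\). Since \(m(z_j)-m(E_t)\sim\sqrt{|w|}\sim N^{-1/3}\), its size is \(N^{1/3}\cdot N^{1/3}\) per unit time, which is too large by a factor \(N^{\fc}\). For Wigner matrices this is absorbed by cumulant expansion. For fixed \(d\) that expansion is unavailable, so we replace it with the loop equation.

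The first microscopic loop equation (\Cref{t:recursion}) identifies \(\bE[\Delta_j\prod_{k\neq j}\Delta_k]\) up to \(\OO(N^{-\fc})\) relative error. It expresses this quantity through the self-consistent equations \(Q-Y_\ell(Q,z)\) and \(m_N-X_\ell(Q,z)\), whose expectations against the other factors are small. The correction term is precisely of the form \(\frac1N\partial_z(\cdots)\).

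We will linearize \(Y_\ell\) and \(X_\ell\) using \Cref{p:recurbound}, rewriting \(\Delta_j\) in terms of \(Q-\msc\). The quadratic coefficient in \eqref{e:recurbound} then reproduces exactly the \(\tfrac12\Delta^2\) Burgers structure with the Kesten--McKay edge constant \(\cA\). Consequently the leading drift terms cancel against the deterministic flow, leaving errors of order \(N^{1/3-2\fc}\).

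The delicate point is checking that the loop equation's correction matches the GOE contribution \(\tfrac1{N}\partial_z^2\) produced by the noise term in Step 1, so that the two cancel. We expect this to be the core computation, and we will carry it out by expanding both sides at \(z=E_t+w\) to order \(N^{-1/3}\).
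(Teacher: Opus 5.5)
\textbf{There is a genuine gap, and it sits in your Step~2.} Your skeleton is the paper's: differentiate in $t$, let Gaussian integration by parts and the Burgers equation reduce $\partial_t\bE[\Delta_t(z(t))]$ to $\frac12\bE[\partial_z F_t(z(t))]$ with $F_t=\Delta_t^2+2(m(z;t)-m(E_t;t))\Delta_t+\partial_z m_N/N$, kill this with a loop equation, then integrate over $[0,N^{-1/3+\fc}]$. The break is the claim that the quadratic and cross terms can be handled by rigidity.

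At the edge scale $|w|\sim N^{-2/3}$ one has $|\Delta_t|\sim N^{-1/3}$, $\sqrt{|z-E_t|}\sim N^{-1/3}$ and $|\partial_z m_N|/N\lesssim \Im[m_N]/(N\eta)\sim N^{-2/3}$. So $\Delta_t^2$, $2\cA\sqrt{z-E_t}\,\Delta_t$ and $\partial_z m_N/N$ are all of the same order $N^{-2/3+\OO(\fc)}$; after rescaling they are the three order-one terms of \eqref{e:Airy}. The same holds for the It\^o cross-variation terms between two different factors: they have the same size $N^{-(p+1)/3+\OO(\fc)}$ as $F_t\prod_{k\neq j}\Delta_t(z_k(t))$.

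Hence each of these pieces contributes $N^{1/3+\OO(\fc)}$ to $\partial_t\bE[F]$. That exceeds your target $N^{1/3-2\fc}$, so rigidity cannot dispose of any of them separately. Only the full combination is small in expectation. That is exactly the content of the rank-$p$ microscopic loop equation \eqref{e:micro_loop} in \Cref{p:loop}, whose second line is there precisely to cancel the cross-variation terms you propose to discard.

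\textbf{The input you cite does not supply this.} \Cref{t:recursion} is only the $t=0$, $p=1$ statement. It contains no factors $\prod_{k\neq j}\Delta_k$, and it concerns $H$ itself rather than $H+\sqrt t Z$, whose edge $E_t$ and limiting transform $m(\cdot;t)$ move. Linearizing $Y_\ell,X_\ell$ at $t=0$ via \Cref{p:recurbound} therefore cannot yield what is needed: the rank-$p$ equation for the perturbed matrix at every time in the window. The paper takes \Cref{p:loop} as its input; the notes carry out $p=1$ via \eqref{e:firsteqa}. Consequently there is no separate ``matching'' of the loop-equation correction with the GOE term to verify afterwards: \eqref{e:eq1}--\eqref{e:DBM_mt2} show that $\partial_t$ produces literally the loop-equation combination, and the real work is proving that equation.

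\textbf{Your target precision is also too weak.} The loop equation controls $\bE[F_t]$, not $\bE[\partial_z F_t]$. The paper passes to the derivative by Cauchy's formula on a circle of radius $\Im[z(t)]/2\geq N^{-2/3-\fc}/2$, as in \eqref{e:F_t2}. Together with the prefactor $N^{p/3}$ and the time integration, a loop-equation error $\epsilon$ becomes $\epsilon\,N^{(p+1)/3+2\fc}$. Reaching $\OO(N^{-\fc})$ therefore needs $\epsilon\leq N^{-(p+1)/3-3\fc}$; the paper has $N^{-(p+1)/3-10\fc}$. An $\OO(N^{-\fc})$ relative error, or an expansion ``to order $N^{-1/3}$'', is swamped by the $N^{\OO(\fc)}$ losses from the window $\mathbf M$, the Cauchy estimate and the integration.
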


For Wigner matrices and random graphs with polynomially growing degree, \Cref{p:small_change} follows by integrating out the GOE perturbation $Z$ via Gaussian integration by parts, expanding the randomness of $H$ via cumulant expansion, and showing that the leading contributions cancel. For random $d$-regular graphs with fixed degree $d$, this argument breaks down. In the proof of \Cref{t:universality}, a key observation is that the time derivative of the multi-point correlation functions of the Stieltjes transform of $H(t)$ is governed by microscopic loop equations. In the next section, we state these loop equations and explain how they yield the Green's function comparison for fixed-degree $d$-regular graphs.

\begin{figure}

\begin{center}
\includegraphics[trim={0cm 0.5cm 0cm 4.5cm}, clip,scale=0.8]{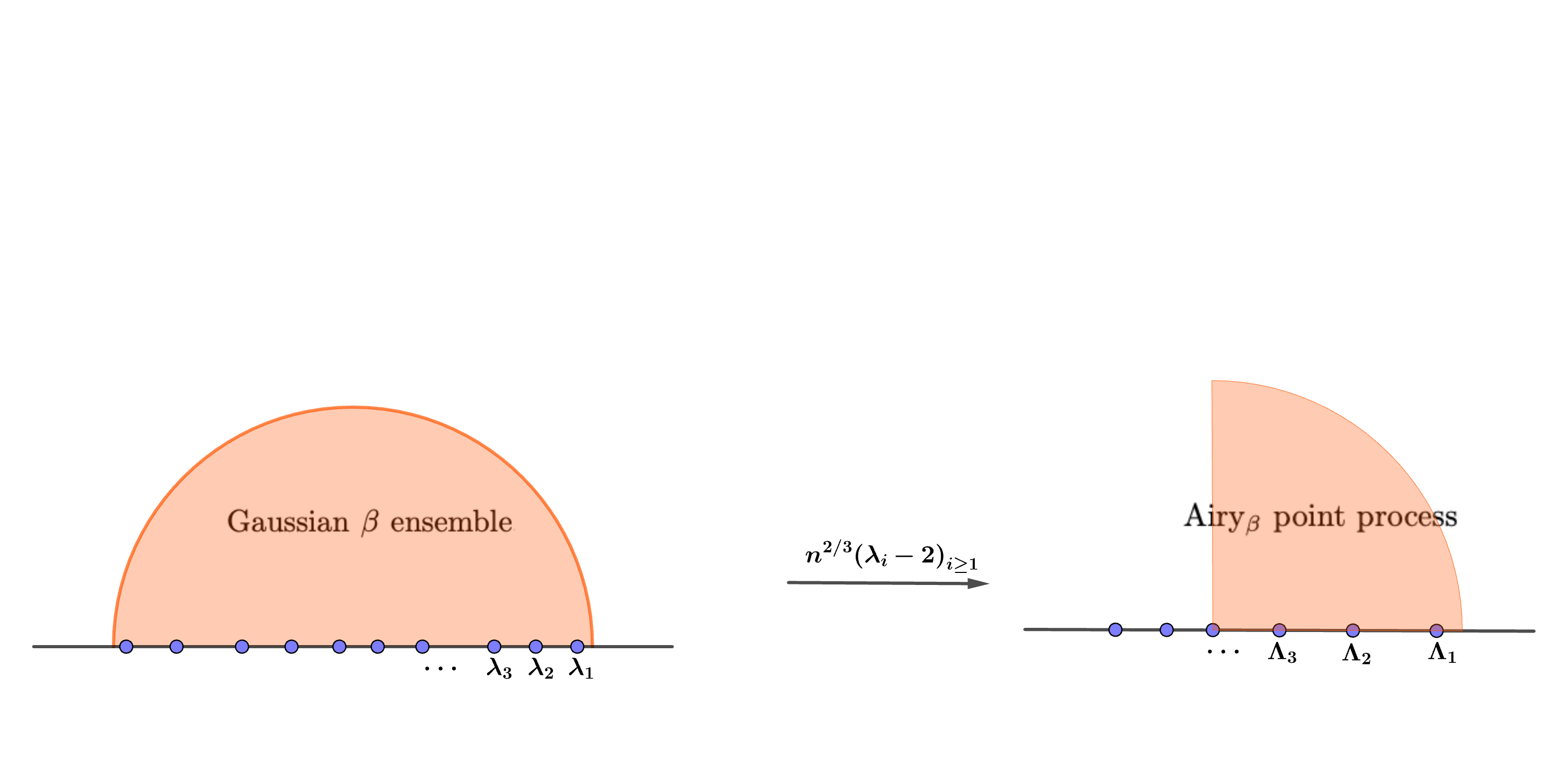}
 \caption{Extreme eigenvalues of Gaussian $\beta$-ensemble converge to the Airy$_\beta$ point process.   }
 \label{f:airy}
 \end{center}
 \end{figure}
 
\begin{center}

\end{center}

\subsection{Loop equations and Green's function comparison}\label{s:loop1}
Loop (or Dyson-Schwinger) equations were first used in the theoretical physics literature (e.g., in the work of Migdal \cite{migdal1983loop}) and were later introduced to the mathematical community by Johansson \cite{johansson1998fluctuations} to derive macroscopic central limit theorems for general $\beta$-ensembles of eigenvalues of random matrices, see also \cite{borot2013asymptotic,borot2024asymptotic, shcherbina2013fluctuations} and the book \cite{guionnet2019asymptotics}. For the Gaussian $\beta$-ensemble, the loop (or Dyson-Schwinger) equation describes a {recursive structure} satisfied by the Stieltjes transform (correlation functions) of the eigenvalues.
The first loop equation is given by: for $z\in \bC\setminus \bR$,
\begin{align}\label{e:loop0}
\bE\left[ m_N(z)^2+zm_N(z)+1+\frac{2-\beta}{\beta}\frac{\del_z m_N(z)}{N}\right]=0,\quad m_N(z):=\frac{1}{N}\sum_{i=1}^N\frac{1}{\lambda_i-z}.
\end{align}
More generally, for any $p\geq 1$, the loop equations at rank $p$ are given by : for $z, z_1, z_2,\cdots, z_{p-1}\in \bC\setminus \bR$
\begin{align}\begin{split}\label{e:loop1}
&\bE\left[\left(m_N(z)^2+zm_N(z)+1+\frac{2-\beta}{\beta}\frac{\del_z m_N(z)}{N}\right)\prod_{j=1}^{ p-1}m_N(z_j)\right]\\
&+\frac{2}{\beta N^2}\sum_{j=1}^{p-1}
    \bE\left[\del_{z_j} \frac{m_N(z)-m_N(z_j)}{z-z_j}\prod_{ i\neq j}m_N(z_i)\right]=0.
\end{split}\end{align}
These loop equations \eqref{e:loop0} and \eqref{e:loop1} arise from the invariance of the eigenvalue integral under infinitesimal reparametrizations of the variables; they can be derived either by integration by parts or, equivalently, by an infinitesimal change of variables.

More generally, one can derive the loop equations for $\beta$-ensembles with general potential $V$
\begin{align*}
P(\lambda_1, \lambda_2,\cdots, \lambda_N)=\frac{1}{Z_N^{\beta, V}} \prod_{i<j} |\lambda_i-\lambda_j|^\beta
\prod_{i=1}^N e^{-\beta N  V(\lambda_i)}.
\end{align*}
Loop (or Dyson-Schwinger) equations play a crucial role in proving {eigenvalue rigidity and universality} for $\beta$-ensembles with general potential \cite{bourgade2012bulk, bourgade2014universality, bourgade2014edge, shcherbina2014change, bekerman2015transport}. 

In general, loop  equations are model-dependent. For $\beta$-ensembles, they depend on the potential $V$. However, when zooming in near the spectral edge, the microscopic version of loop equations becomes {universal}. To introduce \emph{microscopic version of loop equations}, we recall that the extreme eigenvalues of GOE converge to the Airy$_1$ point process $\Lambda_1\geq \Lambda_2\geq \Lambda_3\geq \cdots $; see \Cref{f:airy}. We can introduce the following  normalized Stieltjes transform of the Airy$_1$ point process: 
\begin{align*}
 S(w)=\sum_{i\geq 1}\left(\frac{1}{\Lambda_i-w}-\frac{1}{\fa_i}\right)-\frac{{\rm Ai}'(0)}{{\rm Ai}(0)}, \quad w\in \bC\setminus \bR,
\end{align*}
where $\fa_1>\fa_2>\fa_3>\cdots$ are zeros of the Airy function ${\rm Ai}(w)$. Under the above normalization,  $ S(w)$ has square root behavior: $ S(w)\sim \sqrt w$ as $w\rightarrow \infty$. $ S(w)$ can be viewed as a random meromorphic function, with poles given by the Airy$_1$ point process.

Formally, taking the soft-edge scaling limit of the loop equations \eqref{e:loop0} and \eqref{e:loop1}, we obtain a hierarchy of equations for the Stieltjes transform $ S(w)$ of the Airy$_1$ point process. The rank-$p$ loop equations are given by: for any $w,w_1,\ldots,w_{p-1}\in\bC\setminus\bR$,
\begin{align}\begin{split}\label{e:Airy}
&\phantom{{}={}}\bE\left[\left({( S(w)-\sqrt{w})^2+ 2\sqrt{w}( S(w)-\sqrt w) +\del_w  S(w)}\right)\prod_{i=1}^{p-1} S(w_i)\right]\\
&+2\sum_{1\leq i\leq p-1}\bE\left[\del_{w_i}\left( \frac{ S(w)- S(w_i)}{w-w_i}\right)\prod_{j:j\neq i}  S(w_j)\right]=0.
\end{split}\end{align}
These loop equations \eqref{e:Airy} provide necessary conditions for the edge statistics to converge to the Airy point process.

A key intermediate step in \cite{huang2024ramanujan} toward the proof of \Cref{t:universality} is the following microscopic loop equation for the normalized adjacency matrix of a random $d$-regular graph perturbed by a small GOE. Its proof occupies the main part of \cite{huang2024ramanujan} and is analogous to the derivation of the self-consistent equations \eqref{e:self_consistent}. To make the error terms sufficiently small, we need to iterate the local resampling procedure many times and carefully track the accumulated errors.

\begin{proposition}\label{p:loop}
Fix a small $\fc>0$. Let $\Delta_t(z)\deq m_N(z;t)-m(z;t)$, and recall the microscopic window $\mathbf M$ from \eqref{e:micro_window}. Fix $p\ge 1$ and $w,w_1,\ldots,w_{p-1}\in\mathbf M$, and set $z(t)\deq E_t+w$ and $z_j(t)\deq E_t+w_j$ for $1\le j\le p-1$. Then for $N$ sufficiently large,
\begin{align}\begin{split}\label{e:micro_loop}
&\phantom{{}={}}\bE\!\left[\Big(\Delta_t\!\big(z(t)\big)^2 + 2\cA \sqrt{z(t)-E_t}\;\Delta_t\!\big(z(t)\big) + \frac{\partial_z m_N\!\big(z(t);t\big)}{N}\Big)\prod_{i=1}^{p-1}\Delta_t\!\big(z_i(t)\big)\right] \\
&\quad +\;\frac{2}{N^2}\sum_{1\le i\le p-1}\bE\!\left[\partial_{z}\left.\left(\frac{m_N\!\big(z;t\big)-m_N\!\big(z;t\big)}{z-z_i(t)}\right)\right|_{z=z_i(t)}\prod_{j: 1\leq j\neq i\leq p-1}\Delta_t\!\big(z_j(t)\big)\right]=\OO\left(N^{-(p+1)/3-10\fc}\right). 
\end{split}\end{align}
\end{proposition}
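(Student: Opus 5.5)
We plan to derive \Cref{p:loop} from the self-consistent equations \eqref{e:self_consistent} and then expand them to second order around the edge. Write $Q_t(z)$ for the analogue of \eqref{e:Qsum0} for $H(t)$, with $t\le N^{-1/3+\fc}$. The first step integrates out the GOE component $\sqrt t Z$ by Gaussian integration by parts. This produces exactly the terms $\partial_z m_N/N$ and the $2N^{-2}\sum_i\partial_{z}(\cdots)$ terms, because these are the $\beta=1$ contributions in \eqref{e:loop1}. The graph randomness in $H$ is handled by local resampling: for $\cG\in\Omega$ we resample the boundary edges of $\cB_\ell(o,\cG)$ and use the exchangeable pair to show
\[
\bE\Big[\big(Q_t-Y_\ell(Q_t,z)\big)\prod_j\Delta_t(z_j)\Big],\qquad \bE\Big[\big(m_N-X_\ell(Q_t,z)\big)\prod_j\Delta_t(z_j)\Big]
\]
vanish up to an explicit correction plus a small error. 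The correction comes from the next-order term in the resampling expansion, i.e.\ the covariance between $G^{(i)}_{jj}$ and $\prod_j m_N(z_j)$. The off-$\Omega$ contribution is negligible by \Cref{thm:prevthm0} and \Cref{lem:omega}.

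The second step turns these into a closed equation for $\Delta_t$. Using \Cref{p:recurbound} with $\Delta=Q_t$ gives $Y_\ell(Q)-Q=(\msc^{2\ell+2}-1)(Q-\msc)+c_\ell(Q-\msc)^2+\OO(\ell^5|Q-\msc|^3)$. Combining this with \eqref{e:Xrecurbound} lets us express $Q-\msc$ linearly in $m_N-\md$ up to quadratic errors, and we substitute back. Near the edge, $1-\msc^2\approx 2\sqrt{z-E_t}$, which gives the coefficient $2\cA\sqrt{z(t)-E_t}$ after normalizing by $\md$ and the constants in \eqref{e:edge_behavior}. The time-dependence of $m(z;t)$ through \eqref{e:eq2} and \eqref{e:Etshift} recenters everything at $E_t$. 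Throughout, the local law \eqref{eq:infbound} and \Cref{l:basicG} give $|\Delta_t|\lesssim N^{-1/3+\fc}$ on $\mathbf M$, so each factor $\Delta_t(z_j)$ costs $N^{-1/3}$. Cubic remainders are then $\OO(N^{-(p+2)/3+C\fc})$, which is below the target.

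The third step, and the main obstacle, is to make the resampling errors of size $o(N^{-(p+1)/3})$. A single resampling gives only an $\OO(N^{-\fb})$-type relative error, which is far too weak. We would iterate: each resampling produces new terms, which we rewrite via the Schur complement \eqref{e:Schur1} or the Woodbury formula \eqref{e:woodbury} as functions of the original graph. We then resample again, tracking the terms through an admissible-forest bookkeeping so that every iteration gains a factor $N^{-\fb}$ or a Ward-identity factor $\Im m_N/(N\eta)$ from \eqref{e:Gest}. After $\OO(1/\fb)$ iterations the errors reach $N^{-(p+1)/3-10\fc}$. The delicate points are identifying precisely the term producing $\partial_z m_N/N$, which must come out with coefficient exactly $1$ (the GOE value), and showing that all other non-closed terms cancel or are small.
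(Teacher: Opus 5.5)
You have a genuine gap at the core: you misidentify where the term $\partial_z m_N/N$ comes from, and you leave its actual derivation open. Gaussian integration by parts over $\sqrt t Z$ produces $m_N^2$ and $N^{-2}\Tr G^2=\partial_z m_N/N$ only with prefactor $t\le N^{-1/3+\fc}$. At $t=0$ there is no Gaussian component at all, yet the loop equation \eqref{e:micro_loop2} holds with coefficient $1$. For $p=1$ there is also no product $\prod_j m_N(z_j)$ whose covariance could supply it. In the paper, Gaussian integration by parts enters only through \eqref{e:eq1}, where the loop equation is an input; the term is generated entirely by the graph randomness.

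The mechanism is that the errors left after one resampling in \eqref{e:IFIF} are not negligible. They are of Ward size $\Phi\sim N^{-2/3+o(1)}$, the same order as the loop-equation terms, and for $p=1$ this is not below the target $N^{-2/3-10\fc}$. So they must be computed exactly rather than iterated away. Three terms are reduced to explicit multiples of $\cA^{-2}\bE[\partial_z m_N/N]$, using \eqref{e:remove_one} and \eqref{e:sum_neighbor}:
\begin{enumerate}
\item the replacement error \eqref{e:first erm0}, via the leading terms in \Cref{l:diffG1};
\item the second-order Schur term \eqref{e:second erm0};
\item the error \eqref{e:track_error2} from re-expanding $I_1$.
\end{enumerate}
The $(d-1)^\ell$-size and constant contributions cancel, leaving total coefficient $-(\ell+1)$. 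This matches the factor $(\ell+1)$ in $Q-Y\approx(\ell+1)\bigl(2\sqrt{z-2}(Q-\msc)+(Q-\msc)^2\bigr)$ and gives \eqref{e:Qrefined_bound}. Iteration only pushes the \emph{remaining} terms down to $\OO(N^\fo\bE[\Psi](d-1)^{-\ell/2})$. The ``delicate point'' you defer is the content of the proof, and your claim that iterating makes every error $o(N^{-(p+1)/3})$ fails exactly for these terms.

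Second, your Step 2 relies on an a priori bound you do not have. \eqref{eq:infbound} and \Cref{l:basicG} give only $|m_N-\md|,|Q-\msc|\lesssim N^{-2\fb}$, not $N^{-1/3+\fc}$. The bound $|\Delta_t|\lesssim N^{-1/3+o(1)}$ on $\mathbf M$ is optimal edge rigidity. In the paper it comes from high-moment analogues of \eqref{e:QY}--\eqref{e:mz}, which yield the high-probability bounds \eqref{e:QYmX}, \eqref{e:Q-m}, \eqref{e:m-md}. These pointwise bounds are indispensable for three things:
\begin{enumerate}
\item converting $\frac{\cA^2}{\ell+1}(Q-Y)$ into $(m_N-\md)^2+2\cA\sqrt{z-2}(m_N-\md)$ inside the expectation, as in \eqref{e:Q-YQ7}--\eqref{e:AQ-Y}, which needs $|m_N-X|\ll N^{-1/3}$ with high probability;
\item the same conversion also needs $|Q-\msc|\lesssim N^{-1/3+o(1)}$ with high probability;
\item the final error estimate, which uses $\bE[\Psi]\lesssim N^{-2/3+o(1)}$ from \eqref{e:edge}.
\end{enumerate}
First-moment estimates of $(m_N-X)\prod_j\Delta_t(z_j)$, which is all your Step 1 provides, do not allow this substitution.
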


The {scaling limit $N^{1/3}\Delta_t(E_t+w/(\cA N)^{2/3})/\cA^{2/3}$} converges to $ S(w)-\sqrt w$ in the microscopic version of loop equations \eqref{e:Airy}. The error bound on the right-hand side of \eqref{e:micro_loop} is precisely calibrated so that, after scaling each factor of $\Delta_t$ by $N^{1/3}$, the total error still vanishes as $N\to\infty$.

In what follows, we show that the microscopic loop equations \eqref{e:micro_loop} imply the Green's function comparison in \Cref{p:small_change}, thereby completing the final step of the three-step strategy. For clarity, we prove \eqref{e:product_s} only in the case \(p=1\), writing $z(t)=E_t+w$ with $w\in \bf M$. We refer to \cite[Section 3.3]{huang2024ramanujan} for the general case.  In this case, \eqref{e:product_s} follows from integrating the following statement from $0$ to $t=N^{-1/3+\fc}$
   \begin{align}\label{e:small_change}
    | \del_t \bE\left[m_N(z(t);t)-m(z(t);t)\right]|\leq N^{-2\fc}.
 \end{align}

 The first loop equation corresponds to $p=1$ in \eqref{e:micro_loop}
\begin{align}\label{e:firsteqa}
\bE\left[(\Delta_t(z(t)) ^2+ 2\cA \sqrt{z(t)-2}\Delta_t(z(t)) +\frac{\del_z m_N(z(t);t)}{N}\right]=\OO\left( N^{-2/3-10\fc}\right),
\end{align}
where $z(t)=E_t+w$ with $w\in \bf M$. Next we show that \eqref{e:firsteqa} implies \eqref{e:small_change}.
By Gaussian integration by parts,
\begin{align}\label{e:eq1}
  \partial_t\,\bE\big[m_N(z;t)\big]
  \;=\; \frac{1}{N}\,\partial_t\,\bE\!\left[\Tr\,(H+\sqrt{t}\,Z-z)^{-1}\right]
  \;=\; \frac{1}{2}\,\bE\!\left[\partial_z\!\left(m^2_N(z;t)+\frac{\partial_z m_N(z;t)}{N}\right)\right].
\end{align}
Subtracting the complex Burgers equation \eqref{e:eq2} for $s(z;t)$ from \eqref{e:eq1} gives
\begin{align}\label{e:DBM_mt}
  \partial_t\,\bE\big[m_N(z;t)-m(z;t)\big]
  \;=\; \frac{1}{2}\,\bE\!\left[\partial_z\!\left(m^2_N(z;t)-m^2(z;t)+\frac{\partial_z m_N(z;t)}{N}\right)\right].
\end{align}

Plugging $z(t)=E_t+w$ with $w\in\mathbf M$ into \eqref{e:DBM_mt}, we obtain
\begin{align}\begin{split}\label{e:DBM_mt2}
  &\partial_t\,\bE\big[m_N(z(t);t)-m(z(t);t)\big]
  \;=\; \frac{1}{2}\,\bE\big[\partial_z F_t\big(z(t)\big)\big],\\
  &F_t(z)\;:=\;\Delta_t(z)^2 + 2\big(m(z;t)-m(E_t;t)\big)\Delta_t(z) + \frac{\partial_z m_N(z;t)}{N}.
\end{split}\end{align}
For $z$ near the spectral edge,  with $z-E_t\in\mathbf M$,  and using the square-root behavior of $\varrho_t$, we have
$
  m(z;t)-m(E_t;t)\approx \cA\,\sqrt{\,z-E_t\,}$.
Thus, neglecting the replacement error, the first loop equation \eqref{e:firsteqa} implies
\begin{align}\label{e:F_t}
  \bE\big[F_t(z)\big]=\OO\left( N^{-2/3-10\fc}\right).
\end{align}

Since $F_t$ is analytic on $\bC\setminus\bR$, Cauchy’s integral formula on a circle $\omega$ centered at $z(t)$ with radius $\Im z(t)/2\geq N^{-2/3-\fc}/2$ yields
\begin{align}\label{e:F_t2}
  \bE\big[\partial_z \left.F_t(z)\right|_{z=z(t)}\big]=\frac{1}{2\pi\ri}\oint_\omega \frac{\bE\big[F_t(w)\big]}{(w-z(t))^2}\rd w =\OO\left( N^{-8\fc}\right).
\end{align}
Combining \eqref{e:DBM_mt2} with \eqref{e:F_t2} proves \eqref{e:small_change}, and hence the Green's function comparison \Cref{p:small_change}.

\subsection{Self-consistent equation and the first loop equation}

To illustrate the main ideas, in these notes we present the proof of
the following \Cref{t:recursion}, which yields the self-consistent equations and the first
microscopic version of the loop equations at the edge. We refer to
\cite{huang2024ramanujan} for the complete proof of edge universality. 

We recall the functions \(Y_\ell, X_\ell\) and the quantity \(Q(z)\) from
\eqref{def:Y} and \eqref{e:Qsum}. For simplicity of notation, we write
\begin{align}\label{e:defYt}
 Y(z)=Y_\ell(Q(z), z),\quad X(z)=X_\ell(Q(z), z).
\end{align}
We also recall the sets  \(\overline{\Omega}\) and \(\Omega\) of \(d\)-regular
graphs from \Cref{def:omegabar} and \Cref{thm:prevthm0}. 

Next, we introduce some further error terms. We emphasize that they depend on
the graph \(\cG\) and are therefore random quantities.

\begin{definition}\label{def:phidef}
For any \(z\in \bC^+\) in the upper half–plane with \(\eta=\Im z\), we introduce
the control parameter 
\begin{align}\begin{split}\label{e:defPhi}
 \Phi(z)&:=\frac{\Im[m_N(z)]}{N\eta}+\frac{1}{N^{1-2\fc}},\quad 
 \Psi(z):= \bm1(\cG\in \Omega)
\left(\Phi(z)+\frac{|Q(z)-Y(z)|}{N^{\fb/8}}\right).
\end{split}\end{align}
\end{definition}

\begin{remark}
The quantity \(\Phi(z)\) is a standard control parameter in random matrix
theory. We will control the expectation of \(Q(z)-Y(z)\) via \(\Psi(z)\), see
\eqref{e:QY}. Although \(\Psi(z)\) itself contains \(|Q(z)-Y(z)|\) (but with an
extra factor \(N^{-\fb/8}\)), we can (ignoring the absolute value) move this
term to the left-hand side and conclude that the expectation of \(Q(z)-Y(z)\) is
controlled by \(\Phi(z)\).
\end{remark}

The following result states that the expectation of the self-consistent
equations is negligible, and it gives the first loop equation corresponding to
the case \(t=0\), \(p=1\) of \Cref{p:loop}. The proof of \Cref{t:recursion}
will be given in the remainder of this article.

\begin{theorem}\label{t:recursion}
Fix a spectral parameter \(z\in \bf D\) (recall from \eqref{e:D}), and recall the control parameter
\(\Psi(z)\) from \eqref{e:defPhi}. Then
\begin{align}
\label{e:QY}&\bE\left[\bm1(\cG\in \Omega)\bigl(Q(z)-Y(z)\bigr)\right]
   \lesssim (d-1)^{2\ell}\bE[\Psi(z)],\\
\label{e:mz} &\bE\left[\bm1(\cG\in \Omega)\bigl(m_N(z)-X(z)\bigr)\right]
   \lesssim (d-1)^{2\ell}\bE[\Psi(z)].
\end{align}
Moreover, if we further assume that \(|z-2|\leq N^{-\fg}\), then we have a
refined estimate for \(Q(z)-Y(z)\):
\begin{align}\begin{split}\label{e:Qrefined_bound}
  & \phantom{{}={}}\bE\left[\bm1(\cG\in \Omega)\left(
  \frac{\cA^2}{\ell+1}\bigl(Q(z)-Y(z)\bigr)+\frac{\partial_z m_N(z)}{N}
  \right)\right]
  =\OO\left(\frac{N^\fo\bE[\Psi(z)]}{(d-1)^{\ell/2}}\right),
\end{split}\end{align}
where the constant \(\cA=d(d-1)/(d-2)^2\) is as in \eqref{e:edge_behavior}. If
\(|z+2|\leq N^{-\fg}\), an analogous statement holds after multiplying the first
term in \eqref{e:Qrefined_bound} by \(-1\).
\end{theorem}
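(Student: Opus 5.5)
We prove all three statements with the exchangeable-pair/local resampling scheme outlined in the introduction. We fix a vertex $o$ and a neighbor $i$. By symmetry of the uniform model,
\[
\bE[\bm1(\cG\in\Omega)(Q-Y)] = \bE\bigl[\bm1(\cG\in\Omega)\bigl(G^{(i)}_{oo}-Y_\ell(Q,z)\bigr)\bigr],
\]
with $(i,o)$ a uniformly chosen directed edge. Similarly, $\bE[m_N-X]=\bE[G_{oo}-X_\ell(Q,z)]$.

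\textbf{Step 1: resample and compare with $Y_\ell$.} Let $\cT=\cB_\ell(o,\cG)$. We resample the boundary edges of $\cT$: each boundary edge $(l_\alpha,a_\alpha)$ is switched with a uniformly chosen edge $(b_\alpha,c_\alpha)$. This produces an exchangeable pair $(\cG,\tG)$. On the event \eqref{e:condition}, the Schur complement formula \eqref{e:Schur1} applied in $\tG$ gives
\[
\tG^{(i)}_{oo}=\Bigl(P\bigl(\cT^{(i)},z,\Delta\bigr)\Bigr)_{oo}
\]
up to corrections, where the boundary weights are replaced by the matrix $\tG^{(\bT)}|_{\{c_\alpha\}}$. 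Its diagonal entries are $G^{(b_\alpha\bT)}_{c_\alpha c_\alpha}$, whose average over the random edges is essentially $Q$. Its off-diagonal entries $G^{(\bT)}_{c_\alpha c_\beta}$ have size $\lesssim N^{-\fb}$ by Lemma \ref{l:basicG}.

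We expand $P$ around boundary weight $Q$ as in the proof of Proposition \ref{p:recurbound}. The leading term is exactly $Y_\ell(Q,z)$. The first-order terms $\sum_\alpha (G^{(b_\alpha\bT)}_{c_\alpha c_\alpha}-Q)\,(P\bI^\del P)$ have mean zero over the resampled edges, up to $\Phi$-errors. The second-order terms in the off-diagonal entries $G_{c_\alpha c_\beta}$ are handled by the Ward identity \eqref{e:Gest}: averaging over $c_\alpha,c_\beta$ gives $\frac1{N^2}\sum|G_{cc'}|^2\lesssim N^{\fo}\Im m_N/(N\eta)\le N^{\fo}\Phi$. The factor $(d-1)^{2\ell}$ counts the pairs $\alpha,\beta$.

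\textbf{Step 2: pass back to $\cG$.} Exchangeability gives $\bE[F(\tG)]=\bE[F(\cG)]$. To obtain \eqref{e:QY} we must control $\tG^{(i)}_{oo}$ versus quantities of $\cG$ and $Y_\ell(\tilde Q)$ versus $Y_\ell(Q)$. For the latter, we write $\tilde Q-Q$ via the Woodbury formula \eqref{e:woodbury}, a rank-$\OO((d-1)^\ell)$ perturbation. Its expectation is bounded by $\Phi$ plus $|Q-Y|$-type terms. The latter are absorbed into $\Psi$ using the factor $N^{-\fb/8}$ and the Lipschitz bound \eqref{e:Yl_derivative}.

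The naive single-step error is too large, as the introduction warns. We therefore iterate: each error term that is not yet of size $\Psi$ is itself an expectation of a product of Green's function entries (an "admissible function"). We resample again, gaining a factor $N^{-\fb}$ each time, and stop after $\OO(1/\fb)$ rounds. The same argument with $\cX$ in place of $\cY$ (no vertex removed) gives \eqref{e:mz} via $X_\ell$ and \eqref{e:Xrecurbound}.

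\textbf{Step 3: refined estimate near the edge.} For \eqref{e:Qrefined_bound} we keep the second-order terms instead of bounding them. Near $z=2$ we have $\msc\approx -1$, so $1-\msc^{2}\approx 0$, and by \eqref{e:recurbound} the linear part of $Q-Y$ degenerates. The sum over boundary pairs of $\bE[G^{(\bT)}_{c_\alpha c_\beta}G^{(\bT)}_{c_\beta c_\alpha}]$ gives $\frac{1}{N^2}\sum_{c,c'}G_{cc'}^2=\frac{1}{N}\cdot\frac{1}{N}\Tr G^2=\partial_z m_N/N$. This is multiplied by a combinatorial coefficient coming from \eqref{e:Pboundary}, which sums over the $\ell+1$ levels and produces the constant $\cA^2/(\ell+1)$ after normalizing.

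\textbf{Main obstacle.} The hardest part is showing that all other second-order terms are smaller by $(d-1)^{-\ell/2}$. This includes the diagonal fluctuations $(G_{cc}-Q)^2$ and the terms from the cycles created by the switching. We would first try to show that each non-leading term carries at least one extra off-diagonal factor $P_{xy}\sim(d-1)^{-\dist/2}$ with distance $\ge \ell$. This decay bookkeeping, together with keeping the iterative errors at $N^{\fo}\Psi$ rather than $(d-1)^{2\ell}\Psi$, is the delicate step.
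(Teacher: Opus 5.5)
Your overall architecture (resample at $o$, Schur complement, Woodbury for $\wt Q-Q$, iterate over admissible functions) matches the paper's. However, there is a genuine gap, most seriously in Step 3.

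\textbf{Step 3.} Since $|\partial_z m_N|/N\le \Im m_N/(N\eta)\le\Phi$, every contribution proportional to $\bE[\partial_z m_N/N]$ must be computed exactly. Your plan to show that everything except the \eqref{e:Pboundary}-type sum is smaller by $(d-1)^{-\ell/2}$ fails, because the other contributions are not smaller.

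\begin{itemize}
\item In the second-order term, the boundary entries pair in two ways. One pairing has weight $L^{(i)}_{l_\al l_\beta}$ and produces the $-(\ell+1)$. The other has weight $L^{(i)}_{l_\beta l_\beta}$, summed over $\al\in\sfA_i$ and $\beta\in\qq{\mu}$. Its coefficient is $-\frac{d(d-1)^\ell}{d-2}$, i.e.\ \emph{larger} by $(d-1)^\ell$ (see \eqref{e:thesum}).
\item The first-order terms you dismiss as ``mean zero up to $\Phi$-errors'' contain the replacement error $\wt G^{(\bT)}_{c_\al c_\beta}-G^{(b_\al b_\beta)}_{c_\al c_\beta}$. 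You silently identify $\wt G^{(\bT)}_{c_\al c_\al}$ with $G^{(b_\al\bT)}_{c_\al c_\al}$. By \Cref{l:diffG1} this error is quadratic in Green's function entries, and its total contribution is $\bigl(\frac{d(d-1)^\ell}{d-2}-\frac{d}{d-2}\bigr)\cA^{-2}\bE[\partial_z m_N/N]$.
\item In the paper, the $(d-1)^\ell$ parts of these two cancel. The leftover $\frac{2}{d-2}$ cancels against a term $-\frac{2}{d-2}$. That term appears only when $(G^{(b_\al)}_{c_\al c_\al}-Y)(G^{(b_\al)}_{c_\al c_\al}-Q)$, generated at the first step, is resampled once more (\Cref{p:track_error2}). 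Only after all three are added is the coefficient of $\cA^{-2}\bE[\partial_z m_N/N]$ exactly $-(\ell+1)$.
\item The factor $\cA^{-2}$ is not a normalization. The relevant average is of $(G^{(bb')}_{cc'})^2$, not $G_{cc'}^2$. Removing $b,b'$ by the Schur complement gives $G_{cc'}-(G_{bc'}+G_{cb'})/\sqrt{d-1}+G_{bb'}/(d-1)$. Squaring this and using $HG=zG+1\approx 2G$ near the edge yields $\cA^{-2}\partial_z m_N/N$ (\eqref{e:averageGG}).
\end{itemize}
As written, your computation gives $-(\ell+1)\bE[\partial_z m_N/N]$ with no $\cA^{-2}$, and leaves terms of size $(d-1)^\ell\Phi$ unaccounted for.

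\textbf{Steps 1--2.} Two ingredients are missing already for \eqref{e:QY}.

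First, the first-order term contains the off-diagonal sum $\frac{\msc^{2\ell+2}}{(d-1)^{\ell+1}}\sum_{\al\neq\beta\in\sfA_i}G^{(b_\al b_\beta)}_{c_\al c_\beta}$. It is linear with $(d-1)^{2\ell+2}$ summands, so Cauchy--Schwarz and Ward only bound it by about $(d-1)^{\ell}\Phi^{1/2}$, and ``resampling again'' gains nothing by itself. The paper uses the trivial eigenvector ($\sum_c G_{cx}=\OO(1)$, \eqref{e:av1}) and the Schur complement. This shows the average over the two random edges is $\frac{1}{d-1}G_{b_\al b_\beta}(G^{(b_\al)}_{c_\al c_\al}-Q)(G^{(b_\beta)}_{c_\beta c_\beta}-Q)$ up to negligible errors (\eqref{e:core21}). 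One $Q$ is then replaced by $Y$. It is exactly this form, $(G^{(b)}_{cc}-Y)$ times an admissible function, that lets the iteration close. Note also that each round gains either an extra factor $N^{-\fb}(d-1)^{3\ell}$ or only $(d-1)^{-\ell/2}$. So you need $\OO(\log_{d-1}N/\ell)$ rounds, not $\OO(1/\fb)$.

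Second, comparing $\wt G^{(\bT)}_{c_\al c_\beta}$ with $G^{(b_\al b_\beta)}_{c_\al c_\beta}$ produces terms $|G^{(\bT\bW)}_{c_\al x}|^2$ with $x\sim b_\al$, $x\neq c_\al$. Their indices are not decoupled by the edge average, so \eqref{e:Gest} does not apply. The paper needs the punctured-vertex Ward bound of \Cref{lem:deletedalmostrandom}, proved by its own self-improving resampling argument.
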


Close to the right spectral edge, i.e.\ for
\(|z-2|\lesssim N^{-2/3+\oo(1)}\) with \(\Im z\gtrsim N^{-2/3+\oo(1)}\), we
have
\begin{align}\begin{split}\label{e:mscmd0}
\msc(z )&=-1+\OO(\sqrt{|z -2|}), \quad 1-\msc^2(z)=2\sqrt{z-2}+\OO(|z-2|), \\
 \md(z )&=-\frac{d-1}{d-2}+\OO(\sqrt{|z -2|}),
\end{split}\end{align}
and the true sizes of the error terms are
\begin{align}\begin{split}\label{e:edge}
  \Im[m_N(z)]\lesssim \frac{N^{\oo(1)}}{N^{1/3}}, \quad  
  \Phi(z), \Psi(z)\lesssim \frac{N^{\oo(1)}}{N^{2/3}}.
\end{split}\end{align}
The optimal edge rigidity of eigenvalues in \cite{huang2024ramanujan} follows
from establishing analogous high-moment estimates to \eqref{e:QY} and
\eqref{e:mz}, which essentially lead to 
\begin{align}\label{e:QYmX}
|Q(z)-Y(z)|\lesssim \frac{N^{\oo(1)}}{N^{2/3}}, \quad 
|m_N(z)-X(z)|\lesssim \frac{N^{\oo(1)}}{N^{1/2}}.
\end{align}
We remark that, for technical reasons, the estimate for \(|m_N(z)-X(z)|\) is
weaker. Nevertheless, these estimates are sufficient to derive optimal bounds
for \(Q(z)-\msc(z)\) and \(m_N(z)-\md(z)\), as explained below.

The concentration of \(Q(z)\) and \(m_N(z)\) can be derived from
\eqref{e:QYmX}. We also recall the expansion of \(Y(z)=Y_\ell(Q(z),z)\) from
\eqref{e:recurbound} (by taking \(\Delta=Q(z)\)):
\begin{align}\begin{split}\label{e:Q-YQ6}
&\phantom{{}={}}Q-Y=(Q-\msc(z))-(Y-\msc(z))=(1-\msc^{2(\ell+1)}(z))(Q-\msc(z))\\
&\quad -\msc^{2\ell+2}(z)\md(z)\left(\frac{1-\msc^{2\ell+2}(z)}{d-1}
+\frac{d-2}{d-1}\frac{1-\msc^{2\ell+2}(z)}{1-\msc^2(z)}\right)(Q-\msc(z))^2
 +\OO\bigl(\ell^5|Q-\msc(z)|^3\bigr)\\
&= 
(1-\msc^{2}(z ))(\ell+1)(Q -\msc(z ))
+(\ell+1)(Q -\msc(z ))^2\\
&\quad +\OO\Bigl(\ell\bigl(|z -2||Q -\msc(z )|
 +\sqrt{|z -2|}|Q -\msc(z )|^2\bigr)+\ell^5|Q -\msc(z )|^3\Bigr)\\
&= 
(\ell+1)\Bigl(2\sqrt{z-2}\,(Q -\msc(z ))+(Q -\msc(z ))^2\Bigr)\\
&\quad +\OO\Bigl(\ell\bigl(|z -2||Q -\msc(z )|
 +\sqrt{|z -2|}|Q -\msc(z )|^2\bigr)+\ell^5|Q -\msc(z )|^3\Bigr),
\end{split}\end{align}
where we used \eqref{e:mscmd0} to replace \(\msc(z), \md(z), 1-\msc^2(z)\) by
\(-1, -(d-1)/(d-2), 2\sqrt{z-2}\), respectively. Given that \(Q-Y\) is small as
in \eqref{e:QYmX}, we can analyze \eqref{e:Q-YQ6} as a quadratic equation in
\(Q-\msc(z)\) with a small error. Such quadratic equations are ubiquitous in
random matrix theory; for example, for Wigner matrices. Using \eqref{e:QYmX},  the standard argument
shows that for \(|z-2|\lesssim N^{-2/3+\oo(1)}\), with high probability we have
\begin{align}\label{e:Q-m}
|Q-\msc(z)|\lesssim\frac{N^{\oo(1)}}{N^{1/3}},
\end{align}
Using \eqref{e:Q-m} we can simplify the error in \eqref{e:Q-YQ6}, and get
\begin{align}\begin{split}\label{e:Q-YQ7}
Q-Y=
(\ell+1)\Bigl(2\sqrt{z-2}\,(Q -\msc(z ))+(Q -\msc(z ))^2\Bigr)+\OO(N^{-1+\oo(1)}),
\end{split}\end{align}

Using the expansion of \(X(z)=X_\ell(Q(z),z)\) from \eqref{e:Xrecurbound} (with
\(\Delta=Q(z)\)), we have
\begin{align*}
X-\md(z)
&=\frac{d}{d-1}\md^2(z)\,\msc^{2\ell}(z)\,(Q-\msc(z))
   +\OO\bigl(|Q-\msc(z)|^2\bigr)\\
&= \cA(Q-\msc(z)) 
   +\OO\bigl(\ell\sqrt{|z-2|}\,|Q-\msc(z)|+|Q-\msc(z)|^2\bigr),
\end{align*}
where we again used \eqref{e:mscmd0} to replace \(\msc(z), \md(z)\) by
\(-1, -(d-1)/(d-2)\). It follows, by rearranging, that
\begin{align}\begin{split}\label{e:mQrelation}
m_N(z) -\md(z) &=m_N(z) -X+X-\md(z)\\
&=\cA(Q -\msc(z )) 
 +\OO\bigl(|m_N(z) -X|+\ell\sqrt{|z-2|}|Q-\msc(z)|+|Q-\msc(z)|^2\bigr).
\end{split}\end{align}
Combining \eqref{e:QYmX}, \eqref{e:Q-m} and \eqref{e:mQrelation}, we conclude
that for \(|z-2|\lesssim N^{-2/3+\oo(1)}\), with high probability
\begin{align}\label{e:m-md}
m_N(z)-\md(z)=\cA(Q-\msc(z))+ \OO\left(\frac{N^{\oo(1)}}{N^{1/2}}\right)
 =\OO\left(\frac{N^{\oo(1)}}{N^{1/3}}\right).
\end{align}

We remark that \eqref{e:Qrefined_bound} is slightly different from the loop
equation \eqref{e:micro_loop} (with \(t=0\) and \(p=1\)):
\begin{align}\begin{split}\label{e:micro_loop2}
&\phantom{{}={}}\bE\!\left[(m_N(z)-\md(z))^2 + 2\cA \sqrt{z-2}\,(m_N(z)-\md(z)) 
+ \frac{\partial_z m_N(z)}{N}\right]
 =\OO\left(N^{-2/3-10\fc}\right). 
\end{split}\end{align}

By plugging \eqref{e:m-md} into \eqref{e:Q-YQ7}, we conclude that for
\(|z-2|\lesssim N^{-2/3+\oo(1)}\), with high probability 
\begin{align}\label{e:AQ-Y}
    \frac{\cA^2(Q-Y)}{\ell+1}
    =(m_N(z)-\md(z))^2
    +2\cA \sqrt{z-2}\,(m_N(z)-\md(z))
    +\OO\left(\frac{N^{\oo(1)}}{N^{5/6}}\right).
\end{align}
The claim \eqref{e:micro_loop2} then follows from plugging \eqref{e:AQ-Y} into \eqref{e:Qrefined_bound}, and
the error term comes from
\begin{align*}
\frac{N^\fo\bE[\Psi(z)]}{(d-1)^{\ell/2}}
\lesssim \frac{N^{\fo+\oo(1)}}{(d-1)^{\ell/2}N^{2/3}}
\lesssim N^{-2/3-10\fc},
\end{align*}
where we used \eqref{e:edge}.

\section{Local resampling}  
\label{s:local_resampling}

In this section, we recall the local resampling and its properties. This gives us the framework to talk about resampling from the random regular graph distribution as a way to get an improvement in our estimates of the Green's function.

For any graph $\cG$, we denote the set of unoriented edges by $E(\cG)$,
and the set of oriented edges by $\vec{E}(\cG):=\{(u,v),(v,u):\{u,v\}\in E(\cG)\}$.
For a subset $\vec{S}\subset \vec{E}(\cG)$, we denote by $S$ the set of corresponding non-oriented edges.
For a subset $S\subset E(\cG)$ of edges we denote by $[S] \subset \qq{N}$ the set of vertices incident to any edge in $S$.
Moreover, for a subset $\bV\subset\qq{N}$ of vertices, we define $E(\cG)|_{\bV}$ to be the subgraph of $\cG$ induced by $\bV$.

\begin{figure}[t]
  \centering

  \begin{subfigure}{0.48\textwidth}
    \centering
    \resizebox{\linewidth}{!}{
    \begin{tikzpicture}[
  every node/.style={circle,draw,inner sep=1.5pt},
  >=stealth
]

\fill[blue!10] (0,0) ellipse (11cm and 9cm);

\node[draw=none] at (-2.5,8) {\Huge $\mathcal G$};

\node (r) at (0,0) {};
\node[draw=none, below] at (r) {\Large $o$};

\draw[dashed] (0,0) circle (4cm);

\node[draw=none] at (3,2) {\Huge $\mathcal T$};

\node (a1) at (90:1.8)  {};
\node[draw=none, right] at (a1) {\Large $i$};

\node (a2) at (210:1.8) {};
\node (a3) at (330:1.8) {};

\draw (r) -- (a1);
\draw (r) -- (a2);
\draw (r) -- (a3);


\node (a1b1) at (70:3.2)  {};
\node (a1b2) at (110:3.2) {};
\draw (a1) -- (a1b1);
\draw (a1) -- (a1b2);
\node[draw=none, left] at (a1b2) {\Large $l_\mu$};

\node (a2b1) at (190:3.2) {};
\node (a2b2) at (230:3.2) {};
\draw (a2) -- (a2b1);
\draw (a2) -- (a2b2);
\node[draw=none, yshift=15pt] at (a2b1) {\Large $l_1=l_2$};
\node[draw=none, right] at (a2b2) {\Large $l_3=l_4$};

\node (a3b1) at (310:3.2) {};
\node (a3b2) at (350:3.2) {};
\draw (a3) -- (a3b1);
\draw (a3) -- (a3b2);


\node (a1b1c1) at (60:4.6) {};
\node (a1b1c2) at (80:4.6) {};
\draw[blue] (a1b1) -- (a1b1c1);
\draw[blue] (a1b1) -- (a1b1c2);

\node (a1b2c1) at (100:4.6) {};
\node (a1b2c2) at (120:4.6) {};
\draw[blue] (a1b2) -- (a1b2c1);
\draw[blue] (a1b2) -- (a1b2c2);
\node[draw=none, left] at (a1b2c2) {\Large $a_\mu$};

\node (a2b1c1) at (180:4.6) {};
\node (a2b1c2) at (200:4.6) {};
\draw[blue] (a2b1) -- (a2b1c1);
\draw[blue] (a2b1) -- (a2b1c2);
\node[draw=none, left] at (a2b1c1) {\Large $a_1$};
\node[draw=none, left] at (a2b1c2) {\Large $a_2$};

\node (a2b2c1) at (220:4.6) {};
\node (a2b2c2) at (240:4.6) {};
\draw[blue] (a2b2) -- (a2b2c1);
\draw[blue] (a2b2) -- (a2b2c2);
\node[draw=none, left] at (a2b2c1) {\Large $a_3$};
\node[draw=none, left] at (a2b2c2) {\Large $a_4$};

\node (a3b1c1) at (300:4.6) {};
\node (a3b1c2) at (320:4.6) {};
\draw[blue] (a3b1) -- (a3b1c1);
\draw[blue] (a3b1) -- (a3b1c2);

\node (a3b2c1) at (340:4.6) {};
\node (a3b2c2) at (0:4.6)   {};
\draw[blue] (a3b2) -- (a3b2c1);
\draw[blue] (a3b2) -- (a3b2c2);

\foreach \a in {0,30,...,330} {
  \coordinate (P\a) at ({9*cos(\a)},{7*sin(\a)});
  \coordinate (Q\a) at ({9*cos(\a+10)},{7*sin(\a+10)});
  \node (na\a) at (P\a) {};
  \node (nb\a) at (Q\a) {};
  \draw[blue] (na\a) -- (nb\a);
}

\node[draw=none, left] at (P150) {\Large $b_\mu$};
\node[draw=none, left] at (Q150) {\Large $c_\mu$};

\node[draw=none, left] at (P180) {\Large $b_1$};
\node[draw=none, left] at (Q180) {\Large $c_1$};

\node[draw=none, left] at (P210) {\Large $b_2$};
\node[draw=none, left] at (Q210) {\Large $c_2$};

\node[draw=none, below] at (P240) {\Large $b_3$};
\node[draw=none, below] at (Q240) {\Large $c_3$};

\node[draw=none, below] at (P270) {\Large $b_4$};
\node[draw=none, below] at (Q270) {\Large $c_4$};

\end{tikzpicture}
}
  \end{subfigure}
  \hfill
  \begin{subfigure}{0.48\textwidth}
    \centering
    \resizebox{\linewidth}{!}{
       \begin{tikzpicture}[
  every node/.style={circle,draw,inner sep=1.5pt},
  >=stealth
]

\fill[blue!10] (0,0) ellipse (11cm and 9cm);

\node[draw=none] at (-2.5,8) {\Huge $\wt{\mathcal G}$};

\node (r) at (0,0) {};
\node[draw=none, below] at (r) {\Large $o$};

\draw[dashed] (0,0) circle (4cm);

\node[draw=none] at (3,2) {\Huge $\mathcal T$};

\node (a1) at (90:1.8)  {};
\node[draw=none, right] at (a1) {\Large $i$};

\node (a2) at (210:1.8) {};
\node (a3) at (330:1.8) {};

\draw (r) -- (a1);
\draw (r) -- (a2);
\draw (r) -- (a3);


\node (a1b1) at (70:3.2)  {};
\node (a1b2) at (110:3.2) {};
\draw (a1) -- (a1b1);
\draw (a1) -- (a1b2);
\node[draw=none, right] at (a1b2) {\Large $l_\mu$};

\node (a2b1) at (190:3.2) {};
\node (a2b2) at (230:3.2) {};
\draw (a2) -- (a2b1);
\draw (a2) -- (a2b2);
\node[draw=none, yshift=15pt] at (a2b1) {\Large $l_1=l_2$};
\node[draw=none, right] at (a2b2) {\Large $l_3=l_4$};

\node (a3b1) at (310:3.2) {};
\node (a3b2) at (350:3.2) {};
\draw (a3) -- (a3b1);
\draw (a3) -- (a3b2);

\foreach \a in {0,30,...,330} {
  \coordinate (P\a) at ({9*cos(\a)},{7*sin(\a)});
  \coordinate (Q\a) at ({9*cos(\a+10)},{7*sin(\a+10)});
  \node (na\a) at (P\a) {};
  \node (nb\a) at (Q\a) {};
}

\node[draw=none, left] at (P150) {\Large $b_\mu$};
\node[draw=none, left] at (Q150) {\Large $c_\mu$};

\node[draw=none, left] at (P180) {\Large $b_1$};
\node[draw=none, left] at (Q180) {\Large $c_1$};

\node[draw=none, left] at (P210) {\Large $b_2$};
\node[draw=none, left] at (Q210) {\Large $c_2$};

\node[draw=none, below] at (P240) {\Large $b_3$};
\node[draw=none, below] at (Q240) {\Large $c_3$};

\node[draw=none, below] at (P270) {\Large $b_4$};
\node[draw=none, below] at (Q270) {\Large $c_4$};


\node (a1b1c1) at (60:4.6) {};
\node (a1b1c2) at (80:4.6) {};

\draw[red] (Q60) -- (a1b1);
\draw[red] (P60) -- (a1b1c1);

\draw[red] (Q90) -- (a1b1);
\draw[red] (P90) -- (a1b1c2);

\node (a1b2c1) at (100:4.6) {};
\node (a1b2c2) at (120:4.6) {};

\draw[red] (Q120) -- (a1b2);
\draw[red] (P120) -- (a1b2c1);

\draw[red] (Q150) -- (a1b2);
\draw[red] (P150) -- (a1b2c2);

\node[draw=none, right] at (a1b2c2) {\Large $a_\mu$};

\node (a2b1c1) at (180:4.6) {};
\node (a2b1c2) at (200:4.6) {};

\draw[red] (Q180) -- (a2b1);
\draw[red] (P180) -- (a2b1c1);

\draw[red] (Q210) -- (a2b1);
\draw[red] (P210) -- (a2b1c2);

\node[draw=none, above] at (a2b1c1) {\Large $a_1$};
\node[draw=none, above] at (a2b1c2) {\Large $a_2$};

\node (a2b2c1) at (220:4.6) {};
\node (a2b2c2) at (240:4.6) {};

\draw[red] (Q240) -- (a2b2);
\draw[red] (P240) -- (a2b2c1);

\draw[red] (Q270) -- (a2b2);
\draw[red] (P270) -- (a2b2c2);

\node[draw=none, right] at (a2b2c1) {\Large $a_3$};
\node[draw=none, right] at (a2b2c2) {\Large $a_4$};

\node (a3b1c1) at (300:4.6) {};
\node (a3b1c2) at (320:4.6) {};

\draw[red] (Q300) -- (a3b1);
\draw[red] (P300) -- (a3b1c1);

\draw[red] (Q330) -- (a3b1);
\draw[red] (P330) -- (a3b1c2);

\node (a3b2c1) at (340:4.6) {};
\node (a3b2c2) at (0:4.6)   {};

\draw[red] (Q0) -- (a3b2);
\draw[red] (P0) -- (a3b2c1);

\draw[red] (Q30) -- (a3b2);
\draw[red] (P30) -- (a3b2c2);

\end{tikzpicture}}

  \end{subfigure}

   \caption{
    An example of the local resampling performed on the graph is as follows. We replace the blue edges, located on the boundary of the radius-$\ell$ neighborhood of a vertex $o$, with randomly chosen edges from the graph. Together, these edges constitute the resampling data, denoted by $\mathbf{S}$. This operation creates new red edges and establishes a new boundary.}
    \label{fig:switchingproc}
\end{figure}
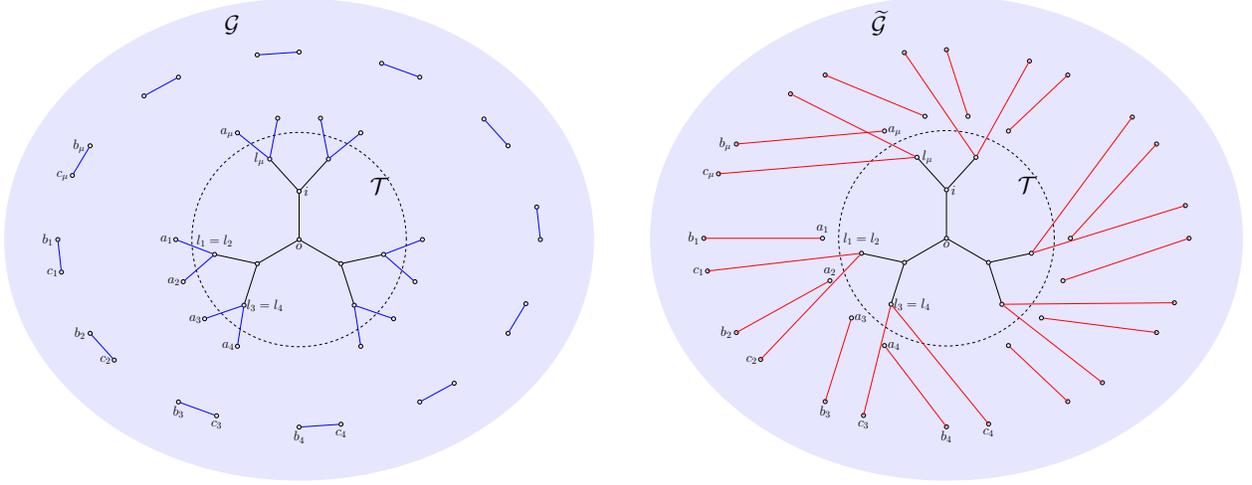

\begin{definition}
A (simple) switching is encoded by two oriented edges $\vec S=\{(v_1, v_2), (v_3, v_4)\} \subset \vec{E}$.
We assume that the two edges are disjoint, i.e.\ that $|\{v_1,v_2,v_3,v_4\}|=4$.
Then the switching consists of
replacing the edges $\{v_1,v_2\}, \{v_3,v_4\}$ with the edges $\{v_1,v_4\},\{v_2,v_3\}$.
We denote the graph after the switching $\vec S$ by $T_{\vec S}(\cG)$,
and the new edges $\vec S' = \{(v_1,v_4), (v_2,v_3)\}$ by
$
  T(\vec S) = \vec S'
$.
\end{definition}

The local resampling involves a fixed center vertex, which we now assume to be vertex $o$,
and a radius $\ell$.
Given a $d$-regular graph $\cG$, we write $\cT\deq\cB_{\ell}(o,\cG)$ to denote the radius-$\ell$ neighborhood of $o$ (which may not necessarily be a tree) and write $\bT$ for its vertex set.\index{$\cT, \bT$}
The edge boundary $\del_E \cT$ of $\cT$ consists of the edges in $\cG$ with one vertex in $\bT$ and the other vertex in $\qq{N}\setminus\bT$.
We enumerate the edges of $\del_E \cT$ as $ \del_E \cT = \{e_1,e_2,\dots, e_\mu\}$, where $e_\al=\{l_\al, a_\al\}$ with $l_\al \in \bT$ and $a_\al \in \qq{N} \setminus \bT$. We orient the edges $e_\al$ by defining $\vec{e}_\al=(l_\al, a_\al)$.
We notice that $\mu$ and the edges $e_1,e_2, \dots, e_\mu$ depend on $\cG$. The edges $e_\al$ are distinct, but
the vertices $a_\al$ are not necessarily distinct and neither are the vertices $l_\al$. Our local resampling switches the edge boundary of $\cT$ with randomly chosen edges in $\cG^{(\bT)}$
if the switching is admissible (see below), and leaves them in place otherwise.
To perform our local resampling, see \Cref{fig:switchingproc}, we choose $(b_1,c_1), \dots, (b_\mu,c_\mu)$ to be independent, uniformly chosen oriented edges from the graph $\cG^{(\bT)}$, i.e.,
the oriented edges of $\cG$ that are not incident to $\bT$,
and define 
\begin{equation}\label{e:defSa}
  \vec{S}_\al= \{\vec{e}_\al, (b_\al,c_\al)\},
  \qquad
  {\bf S}=(\vec S_1, \vec S_2,\dots, \vec S_\mu).
\end{equation}
The sets $\bf S$ will be called the \emph{resampling data} for $\cG$. We remark that repetitions are allowed in the data $(b_1, c_1), (b_2, c_2),\cdots, (b_\mu, c_\mu)$.
We define an indicator that will be crucial to the definition of the switch.

\begin{definition}
For $\al\in\qq{\mu}$,
we define the indicator functions
$I_\al \equiv I_\al(\cG,{\bf S})=1$\index{$I_\alpha$}  if
\begin{enumerate}
\item
 the subgraph $\cB_{\fR/4}(\{a_\al, b_\al, c_\al\}, \cG^{(\bT)})$ after adding the edge $\{a_\al, b_\al\}$ is a tree;
\item 
and $\dist_{\cG^{(\bT)}}(\{a_\al,b_\al,c_\al\}, \{a_\beta,b_\beta,c_\beta\})> {\fR/4}$ for all $\beta\in \qq{\mu}\setminus \{\al\}$.
\end{enumerate}
\end{definition}
 The indicator function $I_\alpha$ imposes two conditions. The first one is a ``tree" condition, which ensures that 
 $a_\al$ and $\{b_\al, c_\al\}$ are far away from each other, and their neighborhoods are trees. 
The second one imposes an ``isolation" condition, which ensures that we only perform simple switching when the switching pair is far away from other switching pairs. In this way, we do not need to keep track of the interaction between different simple switchings. 

We define the \emph{admissible set}
\begin{align}\label{Wdef}
{\mathsf W}_{\bf S}:=\{\al\in \qq{\mu}: I_\al(\cG,{\bf S}) \}.
\end{align}
We say that the index $\al \in \qq{\mu}$ is \emph{switchable} if $\al\in {\mathsf W}_{\bf S}$. We denote the set $\bW_{\bf S}=\{b_\al:\al\in {\mathsf W}_{\bf S}\}$\index{$\bW_{\bf S}$}. Let $\nu:=|{\mathsf W}_{\bf S}|$ be the number of admissible switchings and $\al_1,\al_2,\dots, \al_{\nu}$
be an arbitrary enumeration of ${\mathsf W}_{\bf S}$.
Then we define the switched graph by
\begin{equation} \label{e:Tdef1}
T_{\bf S}(\cG) := \left(T_{\vec S_{\al_1}}\circ \cdots \circ T_{\vec S_{\al_\nu}}\right)(\cG),
\end{equation}
and the resampling data by
\begin{equation} \label{e:Tdef2}
  T({\bf S}) := (T_1(\vec{S}_1), \dots, T_\mu(\vec{S}_\mu)),
  \quad
  T_\al(\vec{S}_\al) \deq
  \begin{cases}
    T(\vec{S}_\al) & (\al \in {\mathsf W}_{\bf S}),\\
    \vec{S}_\al & (\al \not\in {\mathsf W}_{\bf S}).
  \end{cases}
\end{equation}

To make the structure more clear, we introduce an enlarged probability space.
Equivalent to the definition above, the sets $\vec{S}_\al$ as defined in \eqref{e:defSa} are uniformly distributed over 
\begin{align*}
{\sf S}_{\al}(\cG)=\{\vec S\subset \vec{E}: \vec S=\{\vec e_\al, \vec e\}, \text{$\vec{e}$ is not incident to $\cT$}\},
\end{align*}
i.e., the set of pairs of oriented edges in $\vec{E}$ containing $\vec{e}_\al$ and another oriented edge in $\cG^{(\bT)}$.
Therefore ${\bf S}=(\vec S_1,\vec S_2,\dots, \vec S_\mu)$ is uniformly distributed over the set
${\sf S}(\cG)=\sf S_1(\cG)\times \cdots \times \sf S_\mu(\cG)$.

We introduce the following notation on the probability and expectation with respect to the randomness of the $\bfS\in \sf S(\cG)$.
\begin{definition}\label{def:PS}
    Given any $d$-regular graph $\cG$, we 
    denote $\bP_\bfS(\cdot)$ the uniform probability measure on ${\sf S}(\cG)$;
 and $\bE_\bfS[\cdot]$ the expectation  over the choice of $\bfS$ according to $\bP_\bfS$. 
\end{definition}

 The following claim from \cite[Lemma 7.3]{huang2024spectrum} states that this switch is invariant under the random regular graph distribution.

\begin{lemma}[{\cite[Lemma 7.3]{huang2024spectrum}}] \label{lem:exchangeablepair}
Fix $d\geq 3$. We recall the operator $T_\bfS$ from \eqref{e:Tdef1}. Let $\cG$ be a random $d$-regular graph  and $\bfS$ uniformly distributed over $\sfS(\cG)$, then the graph pair $(\cG, T_{\bf S}(\cG))$ forms an exchangeable pair:
\begin{align*}
(\cG, T_{\bf S}(\cG))\stackrel{law}{=}(T_{\bf S}(\cG), \cG).
\end{align*}
\end{lemma}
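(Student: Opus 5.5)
The plan is to prove exchangeability by exhibiting a measure-preserving involution on the enlarged space of pairs $(\cG,\bfS)$ with $\bfS\in\sfS(\cG)$. Under the joint law, $\cG$ is uniform on the simple $d$-regular graphs and $\bfS$ is uniform on $\sfS(\cG)$. So the pair has weight $1/(|\cG_{N,d}|\,|\sfS(\cG)|)$, and
\begin{align*}
\bP(\cG=g,\,T_\bfS(\cG)=g')=\frac{\#\{\bfS\in\sfS(g): T_\bfS(g)=g'\}}{|\cG_{N,d}|\,|\sfS(g)|}.
\end{align*}
Exchangeability therefore follows from two facts. First, $|\sfS(g)|=|\sfS(g')|$ whenever $g'=T_\bfS(g)$. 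Second, the map $\Theta:(g,\bfS)\mapsto (T_\bfS(g),T(\bfS))$ (with suitably oriented new data) is a bijection from $\{\bfS\in\sfS(g):T_\bfS(g)=g'\}$ onto $\{\bfS'\in\sfS(g'):T_{\bfS'}(g')=g\}$. Both facts should follow from showing that $\Theta$ maps the space into itself and is an involution.

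\textbf{Step 1: the ball and the edge count are preserved.} Take a switchable $\al$. The switching removes $\{l_\al,a_\al\},\{b_\al,c_\al\}$ and adds $\{l_\al,c_\al\},\{a_\al,b_\al\}$, where $b_\al,c_\al\notin\bT$ and no edge inside $\bT$ is touched. I will check that $\cB_\ell(o,T_\bfS(\cG))$ has the same vertex set $\bT$ and the same internal edges as $\cT$. The new boundary edges are $\{l_\al,c_\al\}$ for switchable $\al$ and $\{l_\al,a_\al\}$ otherwise, so $\mu$ is unchanged. Since the total number of edges is also unchanged, the number of oriented edges of $\cG^{(\bT)}$, namely $Nd-2|E(\cT)|-2\mu$, is the same for both graphs. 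Hence $|\sfS(\cG)|=|\sfS(T_\bfS(\cG))|$.

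\textbf{Step 2: the inverse data and the switchable set.} For a switchable $\al$, I take the new boundary edge $\vec e_\al'=(l_\al,c_\al)$ and the new outside edge $(b_\al,a_\al)$, oriented so that a switching with these two edges restores $\{l_\al,a_\al\},\{b_\al,c_\al\}$. I will reconcile this with the orientation convention in $T(\vec S_\al)$, reorienting if needed. For $\al$ not switchable, I keep $\vec S_\al$ unchanged. Because distinct switchable pairs are more than $\fR/4$ apart, the switchings commute, so the enumeration order in \eqref{e:Tdef1} is irrelevant and applying the data twice gives back $(\cG,\bfS)$.

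\textbf{Main obstacle: $I_\al(\cG,\bfS)=I_\al(T_\bfS(\cG),T(\bfS))$ for every $\al$.} The key observation is local. Near $\{a_\al,b_\al,c_\al\}$, the graph $\cG^{(\bT)}$ with the edge $\{a_\al,b_\al\}$ added coincides with $T_\bfS(\cG)^{(\bT)}$ with the edge $\{b_\al,c_\al\}$ added: both are $\cG^{(\bT)}\cup\{a_\al,b_\al\}$ locally. Switchings at other indices happen outside radius $\fR/4$ by the isolation condition, so they do not affect this local picture. This gives the tree condition in both directions. For the isolation condition, I will argue that distances between the triples $\{a_\beta,b_\beta,c_\beta\}$ up to $\fR/4$ are unchanged: in the new graph the same triples appear, and the only edges that change lie inside triples already at distance greater than $\fR/4$ from each other.

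The delicate part is the non-switchable indices, where I must show that failure of the condition is preserved. The plan is a case analysis: a witness cycle or short path for $\cG$ either avoids all switched edges, or passes through some switchable triple. In the latter case it can be rerouted using the swapped edge, since the union graphs agree locally. This gives a bijection of the two sets of data, and exchangeability follows from the displayed formula.
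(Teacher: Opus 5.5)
Your proposal is correct and follows the standard argument behind the cited lemma, which comes from Huang--Yau and goes back to Bauerschmidt--Huang--Knowles--Yau; the notes themselves do not reprove it. That argument shows that $(\cG,\bfS)\mapsto(T_\bfS(\cG),T(\bfS))$ is a measure-preserving involution on the enlarged space, using three facts: $\cB_\ell(o,\cdot)$ and hence $|\sfS(\cdot)|$ are unchanged; the inverse data must be $\{(l_\al,c_\al),(b_\al,a_\al)\}$, so the orientation in $T(\vec S)$ does need the fix you anticipate; and ${\mathsf W}_{\bf S}$ is preserved because the tree condition is phrased on the union graph $\cG^{(\bT)}\cup\{a_\al,b_\al\}$.

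One simplification: the rerouting case you plan for non-switchable $\al$ never arises. Local agreement of the union graphs would not justify such rerouting anyway, since $\{b_\gamma,c_\gamma\}$ is not an edge of $T_\bfS(\cG)^{(\bT)}$. What actually happens is that every vertex of a witness path or ball for $\al$ lies within distance $\fR/4$ of its triple in $\cG^{(\bT)}$. Isolation of a switchable $\gamma$ keeps triple$_\gamma$ farther than $\fR/4$ from \emph{every} other triple, so no witness can touch it. Hence the witness survives verbatim in $T_\bfS(\cG)^{(\bT)}$.
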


\subsection{Green's function of resampled graph}

Fix an edge $\{i,o\}\subset \cG$, we recall the resampling data ${\bf S}=\{(l_\al, a_\al), (b_\al, c_\al)\}_{\al\in\qq{\mu}}$ around $o$ from \Cref{s:local_resampling}, denote  $\wt \cG=T_\bfS \cG$. In the remainder of the paper, we denote by $\wt H$ the normalized adjacency matrix of $\tcG$. Then
\begin{align}\label{e:H-H}
   \wt H-H=-\sum_{\al\in\qq{\mu} }\xi_\al,\quad  \xi_\al:=\frac{1}{\sqrt{d-1}}\left(\Delta_{l_\al a_\al}
    +\Delta_{b_\al c_\al}
    -\Delta_{l_\al c_\al}-\Delta_{a_\al b_\al}\right).
\end{align}

Its Green's function and the Stieltjes transform of its empirical eigenvalue distribution are denoted as follows:
\begin{align}\label{e:tGtm}
    \wt G(z)=(\widetilde H-z)^{-1}, \quad \wt m_N(z)=\frac{1}{N}\Tr\wt G(z).
\end{align}

In the rest of this section we collect some basic estimates of the Green's functions of $ \wt G(z)$. Their proofs follow from analyzing Green's functions using the resolvent identity formula \eqref{e:resolv}.

\begin{lemma}
    \label{c:expectationbound}
Let $z\in \bC^+$ satisfy $|z|\leq 1/\fg$, and set $\eta:=\Im z\geq N^{-1+\fg}$.
Fix a $d$-regular graph $\cG\in \Omega$ (recall \Cref{def:omega}) and vertices
$\{i,o\}$ in $\cG$, and let $\cT=\cB_\ell(o,\cG)$ with vertex set $\bT$.
We denote the resampling data ${\bf S}=\{(l_\al, a_\al), (b_\al, c_\al)\}_{\al\in\qq{\mu}}$ around $o$ (recall from \Cref{s:local_resampling}), and  $\wt \cG=T_\bfS \cG$. Moreover, we also assume that $\wt \cG\in\Omega$.

Then, for any $\bX$ of the form
$\bX\in\{\emptyset, \bT\}$ and any $x,y\not\in \bX$, we have
\begin{align}\label{eq:local_law1}
\bigl|\wt G^{(\bX)}_{xy}(z)
- P^{(\bX)}_{xy}\bigl(\cB_{\fR/100}(\{x,y\}\cup \bX,\wt \cG),z,\msc(z)\bigr)\bigr|
\lesssim N^{-\fb}.
\end{align}
Moreover, the following holds
\begin{align}\label{e:tmmdiff}
    |\wt m_N(z)-m_N(z)|\lesssim \frac{(d-1)^\ell N^\fo \Im[m_N(z)]}{N\eta},
\end{align}
\end{lemma}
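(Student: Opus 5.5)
The first claim \eqref{eq:local_law1} needs almost nothing new. By assumption $\wt\cG\in\Omega$, so \eqref{eq:infbound} holds for $\wt G$ with error $N^{-2\fb}$. For $\bX=\emptyset$ we just note that $P^{(\emptyset)}=P$, and the bound follows. For $\bX=\bT$ we rerun the proof of \Cref{l:basicG} with $\cG$ replaced by $\wt\cG$. That argument used only three inputs: the local law \eqref{eq:infbound}, the fact that $\cT$ is a truncated tree around $o$ with a radius-$\fR/2$ tree neighbourhood, and the row-sum bounds \eqref{e:sumPTSbd}, \eqref{e:sumGix}. Local resampling leaves $\cT=\cB_\ell(o,\wt\cG)$ unchanged. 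Every admissible switching attaches to the boundary leaves $l_\al$ a vertex $c_\al$ whose radius-$\fR/4$ neighbourhood is a tree, and distinct switchings are separated by distance $\fR/4$. So in $\wt\cG$ the radius-$\fR/100$ neighbourhood of $\bT$ is still a tree (after shrinking radii by a constant factor, which is harmless). We then use the Schur formula $\wt G^{(\bT)}=\wt G-\wt G(\wt G|_\bT)^{-1}\wt G$, expand $(\wt G|_\bT)^{-1}$ around $(P|_\bT)^{-1}$ exactly as in \eqref{e:sum_Ginverse0}, and get the $\ell^2N^{-2\fb}\lesssim N^{-\fb}$ bound as in \eqref{e:GTbbbbb}.

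For \eqref{e:tmmdiff} we first reduce the rank of the perturbation by comparing through $\bT$. By \eqref{e:H-H}, $\wt H-H$ is supported on at most $4\mu\lesssim(d-1)^\ell$ vertices. The cleanest route is to split into two steps, $m_N\to m_N^{(\bT\cup\bW)}$-type quantities and then back to $\wt m_N$. An alternative is to use the resolvent identity \eqref{e:resolv} directly:
\begin{align*}
\wt m_N-m_N=\frac1N\Tr(\wt G-G)=\frac{1}{N}\sum_{\al}\Tr\bigl(\wt G\,\xi_\al\, G\bigr)=\frac1N\sum_\al\sum_{u,v}(\xi_\al)_{uv}(G\wt G)_{vu}.
\end{align*}
Each $\xi_\al$ has $\OO(1)$ nonzero entries of size $\OO(1)$, so
\begin{align*}
|\wt m_N-m_N|\lesssim\frac{1}{N}\sum_{\al}\max_{u,v\in\{l_\al,a_\al,b_\al,c_\al\}}|(G\wt G)_{vu}|.
\end{align*}
By Cauchy--Schwarz, $|(G\wt G)_{vu}|\le(\sum_x|G_{vx}|^2)^{1/2}(\sum_x|\wt G_{xu}|^2)^{1/2}$. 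The Ward identity \eqref{e:Ward} turns this into $\eta^{-1}(\Im G_{vv}\,\Im\wt G_{uu})^{1/2}$.

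The main obstacle is bounding these imaginary parts by $N^{\fo}\Im m_N$. For $G$ this is \eqref{e:Gsize}. For $\wt G$, \eqref{e:Gsize} applied to $\wt\cG\in\Omega$ only gives $\Im\wt G_{uu}\lesssim N^{\fo/2}\Im\wt m_N$. This is circular, but it can be closed by a self-improving argument. Set $\delta:=|\wt m_N-m_N|$. The above gives
\[
\delta\lesssim\frac{(d-1)^\ell N^{\fo/2}}{N\eta}\sqrt{\Im m_N(\Im m_N+\delta)}.
\]
The prefactor satisfies $(d-1)^\ell N^{\fo}/(N\eta)\ll1$, since $\eta\ge N^{-1+\fg}$ and $\ell\ll\fb\log_{d-1}N$. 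Solving this quadratic inequality yields
\[
\delta\lesssim\frac{(d-1)^\ell N^{\fo}\Im m_N}{N\eta}+\Bigl(\frac{(d-1)^\ell N^{\fo/2}}{N\eta}\Bigr)^2,
\]
and the second term is absorbed into the first because $\Im m_N\gtrsim\eta$ (using $|\la_i|\lesssim1$ and $|z|\le1/\fg$). This proves \eqref{e:tmmdiff}. If finer control is needed, the imaginary part of $\wt G$ could instead be transferred to $G$ via the Woodbury formula \eqref{e:woodbury}, using that the $4\mu\times4\mu$ inner matrix is well conditioned by the entrywise local laws. I would keep that as a fallback.
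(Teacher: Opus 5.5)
Your proposal follows the paper's proof. \eqref{eq:local_law1} comes from $\wt\cG\in\Omega$, \eqref{eq:infbound} and the argument of \Cref{l:basicG}; \eqref{e:tmmdiff} comes from the resolvent identity, Cauchy--Schwarz, the Ward identity, \eqref{e:Gsize} for both $G$ and $\wt G$, and a rearrangement. The only difference is that the paper splits $|\wt G_{xu}||G_{vx}|\le\frac12(|\wt G_{xu}|^2+|G_{vx}|^2)$. This gives an inequality linear in $\Im\wt m_N\le\Im m_N+\delta$, where you instead keep the product and get a quadratic one.

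That quadratic inequality is solved incorrectly in your write-up. Set $\epsilon:=(d-1)^\ell N^{\fo/2}/(N\eta)$ and $a:=\Im m_N$. Then
\[
\delta\le C\epsilon\sqrt{a(a+\delta)}\le C\epsilon a+C\epsilon\sqrt{a\delta}\le C\epsilon a+\tfrac12C^2\epsilon^2a+\tfrac12\delta .
\]
Rearranging gives $\delta\le 2C\epsilon a+C^2\epsilon^2a\lesssim\epsilon a$. So the second term carries a factor $\Im m_N$ and is absorbed simply because $\epsilon\ll1$.

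Your version has a bare $\epsilon^2$, and it cannot be absorbed for the reason you give. Using only $\Im m_N\gtrsim\eta$, absorbing it would require $(d-1)^\ell\lesssim N\eta^2$. That fails for $\eta$ near $N^{-1+\fg}$, for instance when $\Re z$ is at distance of order one from the spectrum, where $\Im m_N\asymp\eta$.

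With this correction, or with the paper's AM--GM split, the argument is complete. The Woodbury fallback is not needed.
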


\begin{proof}
The first statement \eqref{eq:local_law1} follows from our assumption that $\wt \cG\in\Omega$ and \eqref{eq:local_law} (with $\bX=\bT$).

For the second statement, we recall the notations from \eqref{e:H-H}
  \begin{align}\begin{split}\label{e:tmmdiff2}
      &\phantom{{}={}}|\wt m_N(z)-m_N(z)|
      =\frac{1}{N}\Tr (\wt H-z)^{-1} - \frac{1}{N}\Tr (H-z)^{-1}
      =\frac{1}{N}\Tr (\wt H-z)^{-1} (H-\wt H) (H-z)^{-1}\\
      &=\frac{1}{N}\sum_{\al\in\qq{\mu}}\Tr (\wt H-z)^{-1} \xi_\al (H-z)^{-1}\\
      &\leq \frac{1}{\sqrt{d-1}N}\sum_{i=1}^N\sum_{\al\in\qq{\mu}}(|\wt G_{il_\al}|+|\wt G_{ia_\al}|+|\wt G_{ib_\al}|+|\wt G_{ic_\al}|)( |G_{a_\al i}|+|G_{b_\al i}|
      +|G_{c_\al i}|+|G_{l_\al i}|)\\
      &\lesssim 
      \frac{1}{\sqrt{d-1}N}\sum_{i=1}^N\sum_{\al\in\qq{\mu}}(|\wt G_{il_\al}|^2+|\wt G_{ia_\al}|^2+|\wt G_{ib_\al}|^2+|\wt G_{ic_\al}|^2
      +|G_{a_\al i}|^2+|G_{b_\al i}|^2
      +|G_{c_\al i}|^2+|G_{l_\al i}|^2)\\
      &\lesssim \frac{(d-1)^\ell N^\fo \Im[\wt m_N(z)]}{N\eta}+\frac{(d-1)^\ell N^\fo \Im[m_N(z)]}{N\eta}
  \end{split}\end{align}  
  where the first two statements is from resolvent identity \eqref{e:resolv}; the third and fourth statements follows from \eqref{e:H-H}; the fifth statement follows from Cauchy-Schwarz inequality; the sixth statement follows from the Ward identity \eqref{e:Ward}, \eqref{e:Gsize} and $\mu=\OO((d-1)^\ell)$.
Since $N\eta\geq N^\fg\gg (d-1)^\ell N^\fo$, the claim \eqref{e:tmmdiff} follows from rearranging \eqref{e:tmmdiff2}. 

\end{proof}

\section{Concentration via exchangeable pair}\label{s:exchangeable}
In this section, we explain the basic ideas of proving concentration using
exchangeable pairs, and discuss how to apply this approach in the setting of
random \(d\)-regular graphs.

\subsection{Sum of independent Bernoulli random variables}
Let $X_1,X_2, \dots,X_n$ be independent $\mathrm{Bernoulli}(p_i)$ random variables and define
\[
W = \sum_{i=1}^n X_i, \qquad \mu = \mathbb{E}W = \sum_{i=1}^n p_i.
\]
In this section, we explain how to use exchangeable pair to show that $W$ concentrates.

Choose $I \sim \mathrm{Unif}\{1,\dots,n\}$ independent of everything and let
$X_I, X_I' \sim \mathrm{Bernoulli}(p_I)$ be independent copies.
Set
\[
W' = W - X_I + X_I'.
\]
Then $(W,W')$ is an exchangeable pair and satisfies $|W'-W|\le 1$. Moreover,
\begin{align}\label{e:deff}
\mathbb{E}[W'-W \mid X_1,\dots,X_n]
  = \frac{1}{n}\sum_{i=1}^n (p_i - X_i)
  = -\frac{1}{n}(W - \mu)=:-f(W).
\end{align}

For any smooth $g$,
\[
\mathbb{E}[f(W)g(W)] =\bE[(W-W') g(W)]= \frac{1}{2}\mathbb{E}\!\big[(W-W')(g(W)-g(W'))\big].
\]
Take $g(w) =    f^{2p-1}(w)$ with $f(w)=(w-\mu)/n$ from \eqref{e:deff}. Then 
\begin{align}\begin{split}\label{e:moment}
\mathbb{E}[f^{2p}(W)]&= \frac{1}{2}\mathbb{E}\!\big[(W-W')(f^{2p-1}(W)-f^{2p-1}(W'))\big]\\
&\leq \frac{2p-1}{4}\mathbb{E}\!\big[(W-W')(f(W)-f(W')(f^{2p-2}(W)+f^{2p-2}(W'))\big]\\
&= \frac{2p-1}{2n}\mathbb{E}\!\big[(W-W')^2 f^{2p-2}(W)\big]
= \frac{2p-1}{2n}\bE[\mathbb{E}\!\big[(W-W')^2|W] f^{2p-2}(W)\big].
\end{split}\end{align}
We can then bound the inner expectation as
\begin{align}\label{e:2moment}
\mathbb{E}[(W-W')^2 \mid W]
          = \frac{1}{n}\sum_{i=1}^n \mathbb{E}[(X_i - X_i')^2 \mid W]\leq  \frac{1}{n}\sum_{i=1}^n \mathbb{E}[(X_i + X_i') \mid W]=\frac{W+\mu}{n}.
\end{align}
By plugging \eqref{e:2moment} into \eqref{e:moment}, we arrive at the following moment bound
\begin{align}\begin{split}\label{e:moment2}
\mathbb{E}[f^{2p}(W)]
\leq \frac{2p-1}{2n^2}\bE[(W+\mu) f^{2p-2}(W)\big]
= \frac{2p-1}{2n}\bE[f^{2p-1}(W)\big]
+\frac{(2p-1)\mu}{n^2}\bE[f^{2p-2}(W)\big].
\end{split}\end{align}
We can upper bound the odd moments by even moments using H{\" o}lder's inequality. Then recursively, we can bound the $2p$-th moment as
\begin{align*}
\bE\left[\left(\frac{\sum_i (X_i-p_i)}{n}\right)^{2p}\right]=\mathbb{E}[f^{2p}(W)]\leq \frac{C_p\mu^p}{n^{2p}}
\end{align*}

\subsection{Random d-regular graphs}
In our setting, we choose a directed edge $(i,o)$ uniformly at random, independently of everything else, and let $\wt{\cG}$ be the graph obtained by locally resampling the neighborhood of the vertex $o$. Then $(\cG,\tcG)$ form an exchangeable pair. In next section we will show that
\begin{align*}
\bE\!\left[\widetilde G_{oo}^{(i)}\,\big|\,\cG\right]
=\bE_{\mathbf S}\!\left[\widetilde G_{oo}^{(i)}\right]
= Y+\cE_{(i,o)}(\cG),
\end{align*}
where we recall $Y$ from \eqref{e:defYt}, and $\cE_{(i,o)}(\cG)$ is small (here $\bE_{\mathbf S}$ denotes expectation over the resampling randomness). This implies
\begin{align*}
\bE\!\left[\wt Q- Q\,\big|\,\cG\right]
= \frac{1}{Nd} \sum_{i\sim o}\bE_{\mathbf S}\!\left[\widetilde G_{oo}^{(i)}\right]-Q
= Y-Q+\cE(\cG),
\qquad 
\cE(\cG):=\frac{1}{Nd}\sum_{i\sim o}\cE_{(i,o)}(\cG).
\end{align*}

An argument analogous to the exchangeable-pairs proof of concentration for sums of Bernoulli random variables yields high-moment bounds for $\wt Q-Q$, and hence concentration of $\wt Q-Q$. Although the individual errors $\cE_{(i,o)}(\cG)$ are small—by \eqref{eq:infbound} and \Cref{l:basicG} one obtains bounds of order $\OO(N^{-\fb})$—these are not sufficient to obtain the optimal eigenvalue concentration.  For example in \Cref{t:recursion}, near the spectral edge $|z\pm 2|=N^{-2/3+\oo(1)}$, the optimal error is $N^{-2/3+\oo(1)}$. To attain this optimal rate, we perform additional local resampling steps to capture and control the fluctuations of the error terms $\cE_{(i,o)}(\cG)$ more precisely. This is outlined in \Cref{s:forest} and \Cref{s:proofoutline}.

\section{Expectation of $Q(z)$.}\label{s:expQ}

In this section we take the first step computing the expectation of $Q(z)-Y(z)$.
\begin{align}\begin{split}\label{e:maint0}
\bE\left[{\bm1(\cG\in \Omega)}(Q-Y)\right]
&=\frac{1}{Nd}\sum_{o,i} \bE\left[A_{oi}{\bm1(\cG\in \Omega)}(G_{oo}^{(i)}-Y)\right].
\end{split}\end{align}
For $\cG\in \oOmega$ from \Cref{def:omegabar}, the number of vertices that do not have a tree neighborhood of radius $\fR$ is at most $N^\fc$. We can restrict the righthand side of \eqref{e:maint0} to the sum over vertices $o$ which has radius $\fR$ tree neighborhood: 
\begin{align}\begin{split}\label{e:maint1}
&\frac{1}{Nd}\sum_{o,i} \bE\left[A_{oi}{\bm1(\cG\in \Omega)}(G_{oo}^{(i)}-Y)\right] =\frac{1}{Nd}\sum_{o,i} \bE\left[ I(\{o,i\},\cG){\bm1(\cG\in \Omega)}(G_{oo}^{(i)}-Y)\right] 
+\OO\left(\frac{1}{N^{1-\fc}}\right),
\end{split}\end{align}
where 
\begin{align}\label{e:defineI}
   I(\{o,i\},\cG)= A_{oi}\bm1(\cB_{\fR}(o, \cG) \text{ is a tree}).
\end{align}

To compute the righthand side of \eqref{e:maint0}, condition on $I(\{o,i\},\cG)=1$ we perform a local resampling around $(i, o) \in \cF$ using the resampling data ${\bf S}=\{(l_\al, a_\al), (b_\al, c_\al)\}_{\al\in\qq{\mu}}$, where $\mu := d(d-1)^{\ell}$ is the number of boundary edges of the $d$-regular tree truncated at depth $\ell+1$. We denote the new graph as $\widetilde \cG = T_\bfS(\cG)$, with its corresponding Green's function $\widetilde G$. 

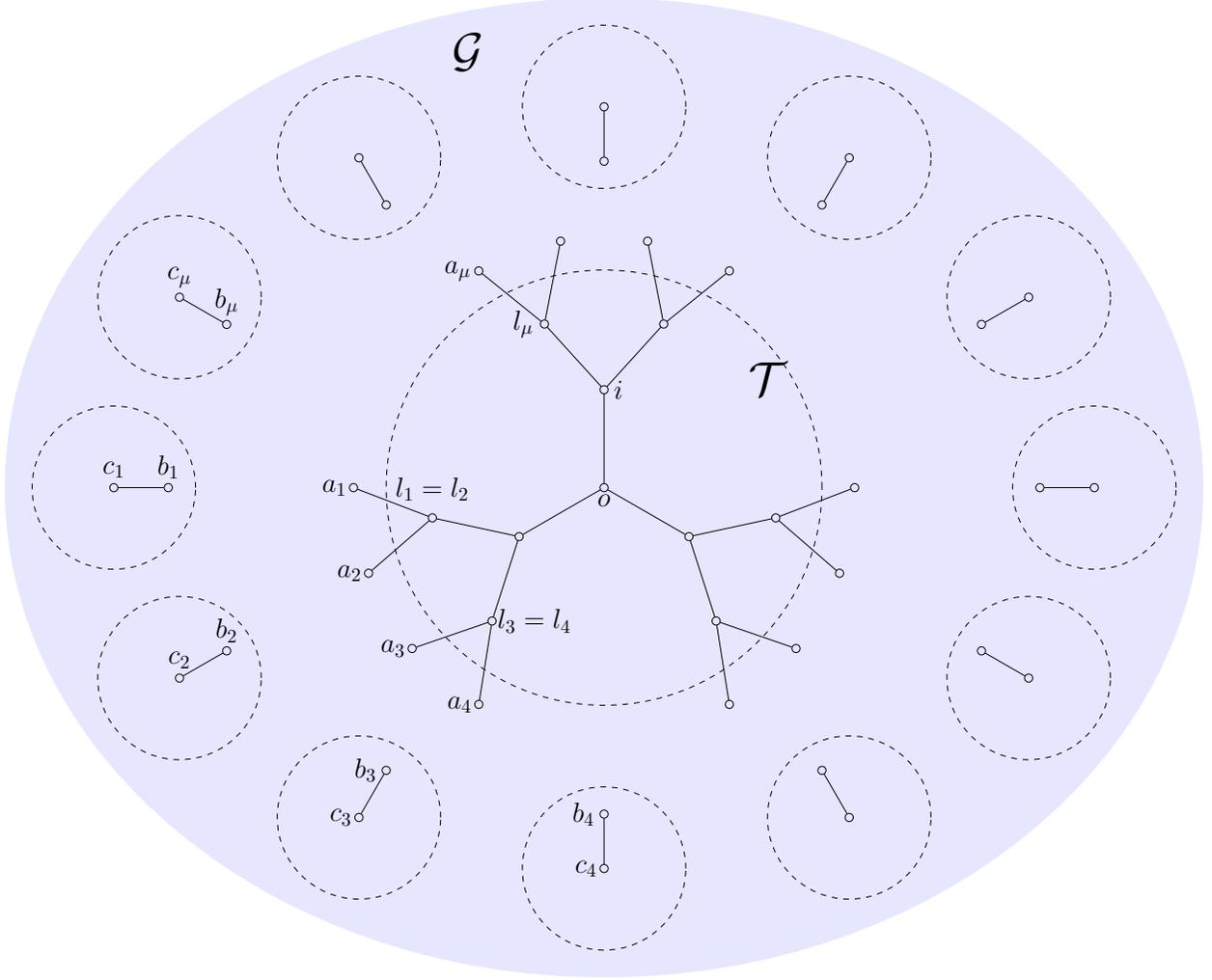
\begin{figure}[t]
\centering
\resizebox{\textwidth}{!}{%
\begin{tikzpicture}[
  every node/.style={circle,draw,inner sep=1.5pt},
  >=stealth
]

\fill[blue!10] (0,0) ellipse (11cm and 9cm);

\node[draw=none] at (-2.5,8) {\Huge $\mathcal G$};

\node (r) at (0,0) {};
\node[draw=none, below] at (r) {\Large $o$};

\draw[dashed] (0,0) circle (4cm);

\node[draw=none] at (3,2) {\Huge $\mathcal T$};

\node (a1) at (90:1.8)  {};
\node[draw=none, right] at (a1) {\Large $i$};

\node (a2) at (210:1.8) {};
\node (a3) at (330:1.8) {};

\draw (r) -- (a1);
\draw (r) -- (a2);
\draw (r) -- (a3);


\node (a1b1) at (70:3.2)  {};
\node (a1b2) at (110:3.2) {};
\draw (a1) -- (a1b1);
\draw (a1) -- (a1b2);
\node[draw=none, left] at (a1b2) {\Large $l_\mu$};

\node (a2b1) at (190:3.2) {};
\node (a2b2) at (230:3.2) {};
\draw (a2) -- (a2b1);
\draw (a2) -- (a2b2);
\node[draw=none, yshift=15pt] at (a2b1) {\Large $l_1=l_2$};
\node[draw=none, right] at (a2b2) {\Large $l_3=l_4$};

\node (a3b1) at (310:3.2) {};
\node (a3b2) at (350:3.2) {};
\draw (a3) -- (a3b1);
\draw (a3) -- (a3b2);


\node (a1b1c1) at (60:4.6) {};
\node (a1b1c2) at (80:4.6) {};
\draw (a1b1) -- (a1b1c1);
\draw (a1b1) -- (a1b1c2);

\node (a1b2c1) at (100:4.6) {};
\node (a1b2c2) at (120:4.6) {};
\draw (a1b2) -- (a1b2c1);
\draw (a1b2) -- (a1b2c2);
\node[draw=none, left] at (a1b2c2) {\Large $a_\mu$};

\node (a2b1c1) at (180:4.6) {};
\node (a2b1c2) at (200:4.6) {};
\draw (a2b1) -- (a2b1c1);
\draw (a2b1) -- (a2b1c2);
\node[draw=none, left] at (a2b1c1) {\Large $a_1$};
\node[draw=none, left] at (a2b1c2) {\Large $a_2$};

\node (a2b2c1) at (220:4.6) {};
\node (a2b2c2) at (240:4.6) {};
\draw (a2b2) -- (a2b2c1);
\draw (a2b2) -- (a2b2c2);
\node[draw=none, left] at (a2b2c1) {\Large $a_3$};
\node[draw=none, left] at (a2b2c2) {\Large $a_4$};

\node (a3b1c1) at (300:4.6) {};
\node (a3b1c2) at (320:4.6) {};
\draw (a3b1) -- (a3b1c1);
\draw (a3b1) -- (a3b1c2);

\node (a3b2c1) at (340:4.6) {};
\node (a3b2c2) at (0:4.6)   {};
\draw (a3b2) -- (a3b2c1);
\draw (a3b2) -- (a3b2c2);

\foreach \a in {0,30,...,330} {
  \coordinate (P\a) at ({8*cos(\a)},{6*sin(\a)});
  \coordinate (Q\a) at ({9*cos(\a)},{7*sin(\a)});
  \node (na\a) at (P\a) {};
  \node (nb\a) at (Q\a) {};
  \draw (na\a) -- (nb\a);
  \draw[dashed] (Q\a) circle (1.5cm);
}

\node[draw=none, above] at (P150) {\Large $b_\mu$};
\node[draw=none, above] at (Q150) {\Large $c_\mu$};

\node[draw=none, above] at (P180) {\Large $b_1$};
\node[draw=none, above] at (Q180) {\Large $c_1$};

\node[draw=none, above] at (P210) {\Large $b_2$};
\node[draw=none, above] at (Q210) {\Large $c_2$};

\node[draw=none, left] at (P240) {\Large $b_3$};
\node[draw=none, left] at (Q240) {\Large $c_3$};

\node[draw=none, left] at (P270) {\Large $b_4$};
\node[draw=none, left] at (Q270) {\Large $c_4$};

\end{tikzpicture}%
}
\caption{We view \(\cF\) (corresponding to the solid edges) as an embedded subgraph of 
\(\cG\), which contains all the switching edges. The indicator 
\(I(\cF,\cG)\) signifies that \(o, c_1, c_2, \ldots, c_\mu\) have tree 
neighborhoods and are well separated.}
\label{fig:FG}
\end{figure}

We introduce the following subgraph of $\cG$
\begin{equation}\label{e:buildF}
  \cF =B_{\ell}(o, \cG)\cup \{(l_\al, a_\al), (b_\al, c_\al)\}_{\al\in\qq{\mu}}=B_{\ell+1}(o, \cG)\cup \{ (b_\al, c_\al)\}_{\al\in\qq{\mu}}=(\bfi , E ),
\end{equation}
which contains all the switching edges, see \Cref{fig:FG}. To ensure switching edges are well separated and lie in large tree neighborhoods, we
use the following indicator.
\begin{align}\label{e:defIFG}
I(\cF ,\cG)
:= \prod_{\{x,y\}\in E } A_{xy}
   \;\prod_{c\in \{o,c_1,\cdots, c_\mu\}}\;\prod_{x\in \cB_\ell(c;\cG)}
      \bm{1}\!\big(\cB_{\fR}(x;\cG)\ \text{is a tree}\big)
   \; \prod_{\substack{c\neq c'\in \{o,c_1,\cdots, c_\mu\}}}      \bm{1}\!\big(\dist_\cG(c,c')\ge 3\fR\big).
\end{align}

Condition on \(I(\cF,\cG)=1\). Ignoring vertex labels, the graph \(\cF\) is simply 
a forest consisting of a \(d\)-regular tree truncated at depth \(\ell+1\) together 
with \(\mu\) disjoint edges. Thus the averaging over the edge \((i,o)\) in 
\eqref{e:maint0}, together with the randomness in the resampling data 
\(\{(b_\alpha,c_\alpha)\}_{\alpha\in\qq{\mu}}\), can be viewed as an average 
over embeddings of \(\cF\) into \(\cG\).

To formalize this, let \(\cF=(V,E)\) denote the \emph{template} whose vertices 
\(V\) are formal symbols, consisting of the truncated \(d\)-regular tree of depth 
\(\ell+1\) and the \(\mu\) disjoint edges. The vertices become concrete vertices 
of \(\cG\) only after an embedding is chosen. An embedding is specified by an 
assignment \(\bfi\in\qq{N}^{|V|}\); different choices of \(\bfi\) yield 
different embeddings. Once an embedding \(\bfi\) is fixed, we write 
\(\cF=(\bfi,E)\), and the indicator \(I(\cF,\cG)\) in \eqref{e:defIFG} is 
well defined.

\begin{claim}\label{c:resample}
We perform a local resampling around vertex $o$, and denote the resampled graph by $\wt \cG$, then
\begin{align}\begin{split}\label{e:maint02}
\bE\left[{\bm1(\cG\in \Omega)}(Q-Y)\right]
&=\frac{1}{Nd}\sum_{o,i} \bE\left[A_{oi}{\bm1(\cG\in \Omega)}(G_{oo}^{(i)}-Y)\right] \\
&=\frac{1}{Z_{\cF }}\sum_{ \bfi }\bE\left[I(\cF ,\cG)\bm1(\cG,\tcG\in \Omega) (\wt G_{oo}^{(i)}-Y )\right]+\OO(N^{-\fb/2}\bE[\Psi]),
\end{split}\end{align}
where the normalization constant $Z_\cF$ is not random, and for any $\cG\in \Omega$, it satisfies
\begin{align}\label{e:ZF}
\sum_{ \bfi }I(\cF ,\cG)=Z_{\cF}\left(1+\OO\left(\frac{1}{N^{1-2\fc}}\right)\right).
\end{align}
\end{claim}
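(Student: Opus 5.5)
We combine the exchangeability of \Cref{lem:exchangeablepair} with the reduction \eqref{e:maint1}, after replacing the indicator \(A_{oi}\bm1(\cB_\fR(o,\cG)\text{ is a tree})\) by the stronger indicator \(I(\cF,\cG)\) that also involves the resampling data. The first equality in \eqref{e:maint02} is just the definition \eqref{e:Qsum}. For the second, start from \eqref{e:maint1}. Since the resampled graph has the same law as \(\cG\), and the uniform choice of \((i,o)\) together with the data \(\bfS\) is again uniform, we get
\[
\frac1{Nd}\sum_{o,i}\bE\bigl[I(\{o,i\},\cG)\bm1(\cG\in\Omega)(G^{(i)}_{oo}-Y)\bigr]=\frac1{Nd}\sum_{o,i}\bE\,\bE_\bfS\bigl[I(\{o,i\},\tcG)\bm1(\tcG\in\Omega)(\wt G^{(i)}_{oo}-\wt Y)\bigr].
\]
Here \(\wt Y=Y_\ell(\wt Q,z)\), and the quantities are computed in \(\tcG\). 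We want to reach the right side of \eqref{e:maint02}, which has \(Y\) of the original graph. For this we must replace \(\wt Y\) by \(Y\), \(I(\{o,i\},\tcG)\) by \(I(\cF,\cG)\), and \(\bm1(\tcG\in\Omega)\) by \(\bm1(\cG,\tcG\in\Omega)\).

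\textbf{Step 1: unpack the resampling average.} Writing \(\bE_\bfS\) as the average over the \(\mu\) independent uniform oriented edges \((b_\al,c_\al)\) of \(\cG^{(\bT)}\), the double average over \((i,o)\) and \(\bfS\) becomes a sum over embeddings \(\bfi\) of the template \(\cF\). It carries the weight \(\prod_{\{x,y\}\in E}A_{xy}\) divided by \(Nd\,(Nd-|\vec E(\cT)|)^{\mu}\). Inserting the separation and tree indicators of \eqref{e:defIFG} costs only the event that some \(c_\al\) lies near \(o\), near another \(c_\beta\), or near a non-tree vertex. On \(\oOmega\) this has probability \(\OO((d-1)^{O(\fR)}N^{\fc}/N)=\OO(N^{-1+2\fc})\) per embedding, which is absorbed in \(\Phi\le\Psi\) since \(|G|\lesssim1\) on \(\Omega\). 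On \(I(\cF,\cG)=1\) all switchings are admissible, so \(I_\al=1\) for all \(\al\). The normalization \eqref{e:ZF} follows by counting embeddings. The tree part gives \(Nd(d-1)^{|\bT|-2}\) exactly when \(o\) has a tree neighborhood, and each \(c_\al\) gives \(Nd(1+\OO(N^{-1+2\fc}))\) choices. Since \(\cG\in\Omega\subset\oOmega\), the bad fraction is at most \(N^{\fc}(d-1)^{3\fR}/N\le N^{-1+2\fc}\). Take \(Z_\cF\) to be the deterministic main term.

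\textbf{Step 2: replace \(\wt Y\) by \(Y\).} By \eqref{e:Yl_derivative}, \(|\wt Y-Y|\lesssim\ell|\wt Q-Q|\). The term \(\wt Q-Q\) is controlled like \eqref{e:tmmdiff}: use the resolvent identity with the rank-\(\OO((d-1)^\ell)\) perturbation \eqref{e:H-H}, then Schur \eqref{e:Schurixj} and Ward \eqref{e:Gest}. This gives
\[
|\wt Q-Q|\lesssim (d-1)^{\ell}N^{\fo}\,\Im[m_N]/(N\eta)\lesssim N^{-\fb/2}\Phi,
\]
using \(\ell/\log_{d-1}N\ll\fb\). The indicator mismatch \(\bm1(\tcG\in\Omega)\) versus \(\bm1(\cG,\tcG\in\Omega)\) is handled by exchangeability again. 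Each event \(\cG\notin\Omega\) has probability \(N^{-\fC}\) for any \(\fC\) by \Cref{thm:prevthm0}, so it contributes negligibly.

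\textbf{Main obstacle.} I expect the main difficulty to be the bookkeeping in Step 1. The law of \(\bfS\) depends on \(\cG\) through \(\mu\) and \(\cG^{(\bT)}\). So after passing to \(\tcG\) we must check that the pair (graph, embedding) has the symmetric weight, so that summing over embeddings of \(\cF\) in \(\cG\) with the indicator \(I(\cF,\cG)\) reproduces exactly the exchanged expectation up to \(1+\OO(N^{-1+2\fc})\). This works because, under \(I(\cF,\cG)=1\), the switch is an involution between embeddings of \(\cF\) in \(\cG\) and of the switched forest in \(\tcG\), and the edge counts \(|\vec E(\cG^{(\bT)})|\) are deterministic up to \(1+\OO(N^{-1})\).
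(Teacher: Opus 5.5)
\textbf{There is a genuine gap in Step 2.} You need $|\wt Q-Q|\lesssim (d-1)^{\ell}N^{\fo}\Im[m_N]/(N\eta)\lesssim N^{-\fb/2}\Phi$, and the second inequality fails whenever the first term of $\Phi$ dominates. Near the spectral edge, which is the regime that matters, it would require $(d-1)^{\ell}N^{\fo}\lesssim N^{-\fb/2}$. But this prefactor is at least $1$; the condition $\ell/\log_{d-1}N\ll\fb$ only keeps it below a small power of $N$.

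No sharper pointwise estimate can repair this. Each of the $\OO((d-1)^\ell)$ switched edges is a finite-rank perturbation that typically moves $Q$ by an amount of Ward size $\Im[m_N]/(N\eta)$; the paper's own bound \eqref{e:Qtmtbound} is $(d-1)^{3\ell}N^{\fo}\Phi$. Moreover, by \eqref{e:recurbound}, $\partial_1 Y_\ell(\msc,z)=\msc^{2\ell+2}$ has modulus close to $1$ on $\mathbf D$. So replacing $\wt Y$ by $Y$ through $|\wt Y-Y|\lesssim \ell|\wt Q-Q|$ costs an error of order $\bE[\Phi]$ or larger, not $\OO(N^{-\fb/2}\bE[\Psi])$. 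The extra factor $N^{-\fb/2}$ matters downstream: in \eqref{e:Qrefined_bound} the correction $\partial_z m_N/N$ is itself of size $\Phi$, so an $\bE[\Phi]$-sized error at this first step would swamp the loop equation.

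\textbf{The missing idea is that $Y$ never needs to be transported to $\tcG$.} Resampling around $o$ does not change $\cB_\ell(o)$, so $A_{oi}$ is the same in $\cG$ and $\tcG$. Also $(\cG,\tcG)$ is exchangeable by \Cref{lem:exchangeablepair}. Take any indicator $\chi$ symmetric in $(\cG,\tcG)$, for example $\chi=A_{oi}\bm1(\cG,\tcG\in\Omega)\bm1(\cB_\fR(o,\cG),\cB_\fR(o,\tcG)\text{ are trees})$. Then the antisymmetric quantity $\chi\,(G^{(i)}_{oo}-\wt G^{(i)}_{oo})$ has mean zero, so
\[
\bE\bigl[\chi\,(G^{(i)}_{oo}-Y)\bigr]=\bE\bigl[\chi\,(\wt G^{(i)}_{oo}-Y)\bigr]
\]
holds exactly, with $Y$ computed in the original graph. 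In your formulation this is the statement $\bE[\chi(\wt Y-Y)]=0$ by antisymmetry, and Step 2 should use it instead of a pointwise bound. This is the symmetry the paper points to. What remains is indicator bookkeeping, where three further corrections are needed.

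First, bad events of probability $\OO(N^{-1+2\fc})$ times an $\OO(1)$ integrand give only $\OO(\bE[\Psi])$, because $N^{-1+2\fc}$ is exactly the floor of $\Phi$. To gain the factor $N^{-\fb/2}$ you must use $|G^{(i)}_{oo}-Y|,|\wt G^{(i)}_{oo}-Y|\lesssim N^{-\fb}$ on $\{\cG,\tcG\in\Omega\}$ with tree neighbourhoods. This follows from the local law, $|Q-\msc|\lesssim N^{-2\fb}$, and \eqref{e:Yl_derivative}.

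Second, $\bP(\cG\notin\Omega)=\OO(N^{-1+\fc})$, not $\OO(N^{-\fC})$, because \Cref{thm:prevthm0} is stated relative to $\oOmega$. So indicators such as $\bm1(\tcG\in\Omega)$ must be inserted while the integrand involves only a graph already known to lie in $\Omega$; off $\Omega$, $Y$ is only bounded by $1/\eta$.

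Third, the number of labelled embeddings of the ball given $(i,o)$ is $[(d-1)!]^{|\bT|}$, as in \eqref{e:sumA2}, not $(d-1)^{|\bT|-2}$. This only affects $Z_\cF$.
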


The relation \eqref{e:ZF} follows from the fact that most vertices of $\cG$ has large tree neighborhood, so the number of ``good" embeddings of $\cF$ into $\cG$ concentrates.

If we temporarily ignore the indicator and the averaging over embeddings, the
claim reduces to the simple symmetry
\[
 \bE\!\left[A_{oi}\,(G^{(i)}_{oo}-Y)\right]
= \bE\!\left[A_{oi}\,(\wt G^{(i)}_{oo}-Y)\right],
\]
which follows from that $(\cG,\widetilde{\cG})$ are identically distributed under
the local resampling. The indicator $I(\cF,\cG)$ just restrict attention to “good” placements where we have good estimates for the Green's function. We omit the proof of \Cref{c:resample}.

In the following proposition, we show that the expectation \eqref{e:maint02} breaks down into an $\OO(1)$-weighted sum of terms in the same form, in the following sense.
\begin{definition}\label{def:O1sum}
    We say that $\mathcal U$ is an $\mathcal O(1)$-weighted sum of elements of $\mathcal R$ if there exist finitely many terms $R_1,\cdots,R_m\in\mathcal R$ (with $m$ possibly depending on $N$) and coefficients $\fc_1,\cdots,\fc_m$ such that
\[
\mathcal U=\sum_{j=1}^m \fc_j\,R_j,
\qquad\text{and}\qquad
\sum_{j=1}^m |\fc_j|=\mathcal O(1).
\]
\end{definition}

\begin{proposition}\label{p:iteration}
We recall $\cF=(\bfi, E)$ from \eqref{e:buildF}, and view it as embedded  in $\cG$ with vertices given by $\bfi$.  Then 
    \begin{align}\label{e:IFIF}
        \frac{1}{Z_{\cF}}\sum_{\bfi}\bE\left[I(\cF,\cG)\bm1(\cG,\tcG\in \Omega)(\widetilde G_{oo}^{(i)}-Y) \right]
        =I_1+I_2+\cE,
    \end{align}
    where $|\cE|=\OO((d-1)^{2\ell}\bE[\Psi])$, and   
  \begin{enumerate}
  \item  $I_1$ is an $\OO(1)$-weighted sum of terms of the following form
    \begin{align}\label{e:case1}
       \frac{1}{(d-1)^{\fq \ell/2}Z_{\cF}}\sum_{\bfi}  \bE[\bm1(\cG\in \Omega)I(\cF, \cG)(G_{c_\al c_\al}^{(b_\al)}-Y)(G_{c_\al c_\al}^{(b_\al)}-Q)],
    \end{align}
    where $\fq \geq 0$; 
        
      \item $I_2$ is an $\OO(1)$-weighted sum of terms of the following form
    \begin{align}\label{e:case3}
       \frac{(d-1)^{3r\ell}}{(d-1)^{\fq\ell/2}Z_{\cF}}\sum_{\bfi}  \bE[\bm1(\cG\in \Omega)I(\cF, \cG)(G_{c_\al c_\al}^{(b_\al)}-Y)R_{\bfi}].
    \end{align}
Here, $\fq\geq 0$ and $r\geq 2$, and the function $R_{\bfi}$ contains $r$ factors of the form 
\begin{align}\begin{split}\label{e:rdefcE1}
    &\{(G_{c_\al c_\al}^{(b_\al)}-Q)\}_{\al\in\qq{\mu}}, \quad  \{ G_{c_\al c_\beta}^{(b_\al b_\beta)}, G_{b_\al b_\beta}\}_{\al\neq\beta\in \qq{\mu}},\quad (Q-\msc(z)).
\end{split}\end{align}

  \end{enumerate}
\end{proposition}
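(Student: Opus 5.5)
The plan is to express $\wt G^{(i)}_{oo}$, the Green's function entry of the resampled graph at the root, through quantities of the original graph $\cG$, and then compare the result with $Y=Y_\ell(Q,z)$.

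\emph{Step 1: Schur complement on $\bT$.} On the event $I(\cF,\cG)=1$, the neighborhood $\cT=\cB_\ell(o,\cG)$ is a truncated tree. After resampling, each boundary vertex $l_\al$ is joined to $c_\al$ for $\al\in{\sf W}_\bfS$. Note that $\wt G^{(\bT)}$ is the Green's function of $\tcG^{(\bT)}$, which differs from $\cG^{(\bT\cup\bW)}$ only by the edges $\{a_\al,b_\al\}$. Applying the Schur complement formula \eqref{e:Schur1} with respect to $\bT$ gives
\[
\wt G^{(i)}_{oo}=\Bigl(\bigl(H|_{\bT}-z-B^\top \wt G^{(\bT)}B\bigr)^{(i)}\Bigr)^{-1}_{oo}.
\]
Here the boundary block $(B^\top\wt G^{(\bT)}B)_{l_\al l_\beta}$ equals $\frac{1}{d-1}\wt G^{(\bT)}_{c_\al c_\beta}$. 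Comparing with Definition~\ref{def:pdef}, this is exactly $P^{(i)}_{oo}(\cT,z,\Delta)$ with the boundary weights replaced by the matrix $\wt G^{(\bT)}|_{\{c_\al\}}$.

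\emph{Step 2: reduce to $\cG$.} Next I would use the Woodbury formula \eqref{e:woodbury} for the rank-$O(\mu)$ perturbation that adds the edges $\{a_\al,b_\al\}$. This rewrites $\wt G^{(\bT)}_{c_\al c_\beta}$ in terms of $G^{(b_\al)}_{c_\al c_\al}$ (diagonal terms) and $G^{(b_\al b_\beta)}_{c_\al c_\beta}$, $G_{b_\al b_\beta}$ (off-diagonal terms), plus errors. By Lemma~\ref{l:basicG} the off-diagonal quantities are small, and the errors are controlled by the Ward bound \eqref{e:Gest}. I would then set $\Delta_\al=G^{(b_\al)}_{c_\al c_\al}$ and expand the matrix inverse around the diagonal weight $Q$, as in \eqref{e:expansionGP}:
\[
\wt G^{(i)}_{oo}=Y_\ell(Q,z)+\sum_\al \tfrac{\partial Y}{\partial\Delta_\al}(\Delta_\al-Q)+\text{second and higher order}.
\]
The first-order term is treated with the exchangeable-pair symmetry. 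Because $(b_\al,c_\al)$ is a uniform random edge, $\bE_\bfS[G^{(b_\al)}_{c_\al c_\al}]=Q$ up to the indicator corrections. So the linear term vanishes in expectation, apart from the discrepancy caused by $I(\cF,\cG)$ and $\bm1(\tcG\in\Omega)$. That discrepancy I would bound by $(d-1)^{2\ell}\bE[\Psi]$ using \eqref{e:ZF}.

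\emph{Step 3: sort the remaining terms.} The quadratic diagonal terms $(\Delta_\al-Q)^2$ I would rewrite as $(\Delta_\al-Y)(\Delta_\al-Q)+(Y-Q)(\Delta_\al-Q)$. The first product has form \eqref{e:case1}. The second is $(Y-Q)$ times a centered quantity, so it is absorbed into $\cE$ via $\Psi$. All cubic and higher terms, together with products involving off-diagonal entries or $(Q-\msc)$, carry at least $r\ge2$ factors from \eqref{e:rdefcE1}. These produce \eqref{e:case3}. The combinatorial factor $(d-1)^{3r\ell}$ comes from summing over $\mu^r$ boundary indices and the derivative weights, with $(d-1)^{-\fq\ell/2}$ the decay of the tree Green's function coefficients given by \eqref{e:Gtreemsc2}. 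I would check that the coefficient sums are $\OO(1)$ using \eqref{e:Ptotalsum}.

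\emph{Main obstacle.} The hardest step will be Step 2. All Woodbury and resolvent error terms must be controlled at order $(d-1)^{2\ell}\Psi$, not merely $N^{-\fb}$. This requires bounding cross terms such as $G_{c_\al x}^{(b_\al\bT)}$ by the Ward identity, averaged over the random edges $(b_\al,c_\al)$. My first attempt would be Cauchy--Schwarz combined with the second estimate in \eqref{e:Gest}.
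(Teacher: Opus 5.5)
Your skeleton matches the paper: a Schur complement on $\bT$, expansion of the boundary weights around $Q$, killing the linear diagonal term by averaging over the random edge $(b_\al,c_\al)$, and splitting $(G^{(b_\al)}_{c_\al c_\al}-Q)^2$ into the form \eqref{e:case1} plus a $|Y-Q|$ error. The gap is that you drop the central first-order term. Expanding $\bigl(H^{(i)}_{\bT}-z-\wt B^\top\wt G^{(\bT)}\wt B\bigr)^{-1}_{oo}$ to first order around $Q$ also produces
\[
\frac{\msc^{2\ell+2}(z)}{(d-1)^{\ell+1}}\sum_{\al\neq\beta\in\sfA_i}G^{(b_\al b_\beta)}_{c_\al c_\beta}.
\]
This is a \emph{single} factor summed over about $(d-1)^{2\ell+2}$ pairs, so its total weight is $(d-1)^{\ell+1}$. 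Your expansion in the $\Delta_\al$ alone omits it, and neither of your fallbacks works. The local law only gives $(d-1)^{\ell+1}N^{-\fb}$, far above $(d-1)^{2\ell}\bE[\Psi]$, which is $N^{-2/3+\oo(1)}$ near the edge. And a one-factor term is not of the form \eqref{e:case3}, which needs $(G^{(b_\al)}_{c_\al c_\al}-Y)$ times $r\geq 2$ further factors.

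The missing idea is the cancellation \eqref{e:core21}. Since $\bm e$ is an eigenvector, the row sums of $G$ are $\OO(1)$ by \eqref{e:av1}. So after adding $b_\al,b_\beta$ back by Schur complement and averaging over both random edges, $G^{(b_\al b_\beta)}_{c_\al c_\beta}$ becomes $\frac{G_{b_\al b_\beta}}{d-1}(G^{(b_\al)}_{c_\al c_\al}-Q)(G^{(b_\beta)}_{c_\beta c_\beta}-Q)$ up to $\OO(N^{-\fb/2}\Phi)$. This is where the $I_2$ terms come from, and why $G_{b_\al b_\beta}$ appears in \eqref{e:rdefcE1} at all.

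Your bookkeeping of the remainder is off for the same reason:
\begin{itemize}
\item Quadratic terms in which some index occurs only once, e.g.\ $(G^{(b_\al)}_{c_\al c_\al}-Q)(Q-\msc(z))$, must be killed or upgraded by the same edge averaging. Otherwise they only give \eqref{e:case3} with $r=1$.
\item $(G^{(b_\al b_\beta)}_{c_\al c_\beta})^2$ is not an $I_2$ term. It is bounded via the Ward identity as in \eqref{e:Gccerror}, and this bound, together with the replacement error, is what produces the factor $(d-1)^{2\ell}$ in $\cE$.
\end{itemize}

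Second, the replacement error $\wt G^{(\bT)}_{c_\al c_\beta}-G^{(b_\al b_\beta)}_{c_\al c_\beta}$ cannot be closed with Cauchy--Schwarz and \eqref{e:Gest}. Note that $\tcG^{(\bT)}$ differs from $\cG^{(\bT\cup\bW)}$ by the vertices $b_\al$ together with all their edges to $\cN_\al$ (the $d-1$ old neighbors other than $c_\al$, plus $a_\al$), not only by the edges $\{a_\al,b_\al\}$. Re-inserting them produces, as in \eqref{e:diffG1} with $\gamma=\al$, terms $|G^{(\bT\bW)}_{c_\al x}|^2$ with $x\sim b_\al$, $x\neq c_\al$. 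Here $x$ is not a fixed vertex but another neighbor of the same random vertex $b_\al$. Averaging over $(b_\al,c_\al)$ therefore does not decouple the indices, \eqref{e:Gest} does not apply, and the local law only gives $N^{-2\fb}$.

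The paper needs the punctured-vertex Ward bound \eqref{e:sameasdisconnect}: $\frac1N\sum_o\bE\bigl[I(\{i,o\},\cG)|G^{(o)}_{ij}|^2\bigr]\lesssim\bE[N^{\fo}\Phi]$ for distinct $i,j\sim o$. It is proved by a further local resampling at $o$ and a self-improving inequality (\Cref{lem:deletedalmostrandom}, \Cref{lem:task2}). Without it, your Step 2 does not reach order $(d-1)^{2\ell}\bE[\Psi]$.
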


\subsection{Switching using 
the Schur complement formula}

  To prove \Cref{p:iteration}, we need to rewrite $\wt G_{oo}^{(i)}-Y $ which depends on the Green's functions of the switched graph $\tcG$, in terms of the Green's functions of the original graph 
$\cG$, see \Cref{fig:switchingproc}. This is achieved as follows.

\begin{lemma}\label{l:coefficient}
Assume that $\cG, \widetilde \cG\in \Omega$ and $I(\cF,\cG)=1$ from \eqref{e:defIFG}, and define the index set $\sfA_i := \{ \alpha \in \qq{\mu} : \dist_{\cT}(i, l_\al) = \ell+1 \}$, see \Cref{fig:Ai}. The following holds:
  $\widetilde G_{oo}^{(i)}-Y$ can be rewritten as a weighted sum  
\begin{align}\begin{split}\label{e:easy_G-Y}
   \widetilde G_{oo}^{(i)}-Y
   &= \frac{\msc(z )^{2\ell+2}} {(d-1)^{\ell+1}}\sum_{\al\in\sfA_i}(G^{(b_\alpha)}_{c_\alpha c_\alpha}-Q)+\frac{\msc(z )^{2\ell+2}} {(d-1)^{\ell+1}}\sum_{\al\neq \beta\in\sfA_i}G^{(b_\alpha b_\beta)}_{c_\alpha c_\beta}
   + \cU+\cE.
\end{split}\end{align}
where 
$\cU$ is an $\OO(1)$-weighted sum of terms in the form $(d-1)^{3(r-1)\ell}R_r$, where $r\geq 2$, and $R_r$ contains $r$ factors of the form 
 \begin{align}\label{e:error_factor}
     (G^{(b_\al)}_{c_\al c_\al}-Q),  \quad G_{c_\al c_\beta}^{(b_\al b_\beta)},\quad Q-\msc(z), \quad \al\neq \beta\in \qq{\mu},
 \end{align}
 and at least one of them is $G_{c_\al c_\al}^{(b_\al)}-Q$ or $G_{c_\al c_\beta}^{(b_\al b_\beta)}$; and the error $\cE$ is given by
\begin{align}\begin{split}\label{e:defCE}
 \cE&= \frac{\msc^{2(\ell+1)}(z)} {(d-1)^{\ell+1}}\sum_{\al,\beta\in\sfA_i}(\wt G^{(\bT)}_{c_\alpha c_\beta}-G^{(b_\alpha b_\beta)}_{c_\alpha c_\beta})+\OO\left(\frac{1}{N^{\fb/2} } \sum_{\al, \beta\in \qq{\mu}} |\wt G^{(\bT)}_{c_\al c_\beta}-G^{(b_\al b_\beta)}_{c_\al c_\beta}| +\frac{1}{N^2}\right)\\
 &=\OO\left(\sum_{\al, \beta\in \qq{\mu}} |\wt G^{(\bT)}_{c_\al c_\beta}-G^{(b_\al b_\beta)}_{c_\al c_\beta}| +\frac{1}{N^2}\right).
\end{split}\end{align}
\end{lemma}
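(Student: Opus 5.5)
Plan: apply the Schur complement formula to $\widetilde G^{(i)}$ with respect to the vertex set $\bT\setminus\{i\}$, which reduces $\widetilde G_{oo}^{(i)}$ to a finite-dimensional resolvent. In $\tcG$, after switching, the ball $\cT=\cB_\ell(o,\cG)$ is attached to the rest of the graph through the new edges $\{l_\al,c_\al\}$, $\al\in\qq{\mu}$. The condition $I(\cF,\cG)=1$ and $\cG,\tcG\in\Omega$ guarantee that all indices are switchable, so the Schur formula \eqref{e:Schur1} gives
\begin{align*}
\widetilde G^{(i)}_{oo}=\Bigl(\bigl(H|_{\bT\setminus\{i\}}-z-\tfrac{1}{d-1}B\bigr)^{-1}\Bigr)_{oo},\qquad B_{l_\al l_\beta}:=\widetilde G^{(\bT)}_{c_\al c_\beta}.
\end{align*}
Removing $i$ disconnects the subtree through $i$, and only the boundary indices $\al$ whose leaf $l_\al$ lies on the $o$ side remain; these correspond (after relabeling) to $\sfA_i$. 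So $\widetilde G_{oo}^{(i)}$ is an explicit function of the $|\sfA_i|\times|\sfA_i|$ matrix $(\widetilde G^{(\bT)}_{c_\al c_\beta})$.

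Next I write $\widetilde G^{(\bT)}_{c_\al c_\beta}=G^{(b_\al b_\beta)}_{c_\al c_\beta}+(\widetilde G^{(\bT)}_{c_\al c_\beta}-G^{(b_\al b_\beta)}_{c_\al c_\beta})$, and decompose the diagonal main part as $Q+(G^{(b_\al)}_{c_\al c_\al}-Q)$. With all diagonal entries equal to $Q$ and off-diagonal entries equal to zero, the matrix above is exactly the one defining $Y=Y_\ell(Q,z)$ from \eqref{def:Y}; the off-diagonal $G^{(b_\al b_\beta)}_{c_\al c_\beta}$ and the diagonal fluctuations are $\OO(N^{-\fb})$ by \Cref{l:basicG} together with the isolation built into $I(\cF,\cG)$. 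I then perform a resolvent (Neumann) expansion around the $Y$-configuration, as in \eqref{e:expansionGP}, with perturbation $\frac{1}{d-1}\mathrm{diag}\text{-}/\text{off-diag}$ entries. The first order term is $\frac{1}{d-1}\sum_{\al,\beta}P_{ol_\al}(\cdot)_{\al\beta}P_{l_\beta o}$. Using \eqref{e:Gtreemsc2} and \Cref{p:recurbound} (with $Q$ in place of $\msc$, and then replacing $Q$ by $\msc$ with error absorbed into $Q-\msc$ factors of $\cU$), we get $P_{ol_\al}\approx \msc^{\ell+1}(-\msc/\sqrt{d-1})^{\ell}$-type weights, so that the coefficient becomes $\msc^{2\ell+2}/(d-1)^{\ell+1}$, giving the two explicit sums in \eqref{e:easy_G-Y}.

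The higher order terms of the expansion are products of $r\ge2$ factors from \eqref{e:error_factor}, each contracted against products of tree Green's functions. Bounding the coefficients with the $\ell^1$ estimates of the type \eqref{e:Ptotalsum}/\eqref{e:inductionPk} and crude counting of $|\sfA_i|\le\mu\lesssim(d-1)^{\ell}$ per extra factor yields the $\OO(1)$-weighted sum with weights $(d-1)^{3(r-1)\ell}$; here the number of terms and the power in $\ell$ per order must be tracked, and the series converges since each factor is $\OO(N^{-\fb})\ll (d-1)^{-3\ell}$ given $\ell/\log_{d-1}N\ll\fb$. Truncating at large fixed order leaves $\OO(N^{-2})$. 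Terms containing at least one difference $\widetilde G^{(\bT)}-G^{(b b)}$ form $\cE$: the linear one has the stated coefficient, and any with an extra small factor gets $N^{-\fb/2}$.

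The main obstacle I expect is the precise bookkeeping of the leading coefficient and the reference point: the expansion is around the configuration with diagonal $Q$ (so that its value is exactly $Y$), but the linear weights are computed at $\msc$; the discrepancy must be shown to be of the form $(Q-\msc)\times$(fluctuation factor), i.e.\ a product of $r\ge2$ factors with at least one being $G^{(b_\al)}_{c_\al c_\al}-Q$ or $G^{(b_\al b_\beta)}_{c_\al c_\beta}$, which fits into $\cU$. I would handle this by differentiating $\Delta\mapsto (P(\Delta)\bI^\del P(\Delta))_{o l_\al}$ with the derivative bound analogous to \eqref{e:derivative}.
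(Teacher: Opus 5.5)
Your overall route is the paper's: Schur complement of $\wt G^{(i)}$ over $\bT\setminus\{i\}$, a Neumann expansion in the boundary block, the first-order term giving the two explicit sums with coefficient $\msc^{2\ell+2}/(d-1)^{\ell+1}$, higher orders forming $\cU$, and the replacement $\wt G^{(\bT)}_{c_\al c_\beta}\to G^{(b_\al b_\beta)}_{c_\al c_\beta}$ forming $\cE$. Two steps are wrong as written, however.

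\textbf{$\wt G^{(i)}_{oo}$ does not depend only on the $\sfA_i\times\sfA_i$ block.} The matrix in your own Schur formula lives on all of $\bT\setminus\{i\}$. That set still contains the $d-1$ components of $\cT^{(i)}$ hanging from the children of $i$. The boundary term couples their leaves to the $o$-side leaves through $\wt G^{(\bT)}_{c_\al c_\beta}$ with $\al\in\sfA_i$ and $\beta\notin\sfA_i$. Only the first-order term is confined to $\sfA_i$, because $L^{(i)}_{ol_\beta}=0$ for $\beta\notin\sfA_i$. From second order on, paths through the $i$-side components contribute; an example is the $L^{(i)}_{l_\beta l_\beta}$ terms with $\beta\notin\sfA_i$ in \eqref{e:Uterm}. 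This is why $\cU$ in the statement runs over $\al\neq\beta\in\qq{\mu}$, and these terms later enter the coefficient computed in \eqref{e:thesum}. A literal reduction to the $\sfA_i$ block with entries $\wt G^{(\bT)}$ therefore yields a false identity. A genuine reduction to the $o$-component is possible, but its entries would be Green's function entries of a graph in which the $i$-side subtrees are not removed, and those are not the quantities in the lemma.

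\textbf{The step you flag as the main obstacle cannot be done with a derivative bound.} Expanding around the $Y$-configuration does give a fluctuation factor in every term for free. But the weights are then entries of $P^{(i)}(\cT,z,Q)$, which depend on the random $Q$. A bound like \eqref{e:derivative} only controls $|P^{(i)}(\cT,z,Q)-L^{(i)}|$. It leaves a random coefficient and does not exhibit the remainder as an $\OO(1)$-weighted sum, with deterministic coefficients, of products of the factors in \eqref{e:error_factor}. Deterministic coefficients are needed downstream, where they are pulled out of expectations (see \eqref{e:Uterm} and \eqref{e:oneterm}).

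The fix is to Neumann-expand $P^{(i)}(\cT,z,Q)=L^{(i)}\sum_{j\ge 0}\bigl((Q-\msc)\wt B^\top\wt B L^{(i)}\bigr)^j$, where $\wt B$ is the normalized incidence matrix of the edges $(c_\al,l_\al)$. The paper does this in one stroke. It expands both $\wt G^{(i)}_{oo}$ and $Y$ around $L^{(i)}=P^{(i)}(\cT,z,\msc)$, with perturbations $\cD=\wt B^\top(Q-\msc)\wt B+\cD_1$ and $\wt B^\top(Q-\msc)\wt B$ respectively, and subtracts. All terms without a $\cD_1$ factor cancel, and the weights are explicit products of tree entries, bounded via \eqref{e:sum_Pbound}.

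Minor points:
\begin{itemize}
\item $L^{(i)}_{ol_\al}=\msc(-\msc/\sqrt{d-1})^{\ell}$, not $\msc^{\ell+1}(-\msc/\sqrt{d-1})^{\ell}$, although your final coefficient is right.
\item Smallness of the perturbation comes from $\tcG\in\Omega$ via \eqref{eq:local_law1}, not only from \Cref{l:basicG}.
\item For $k=2$ the weight bound needs the decay $|L^{(i)}_{ol_\al}|\lesssim(d-1)^{-\ell/2}$ at both ends; crude counting with $|L|\le 1$ overshoots $(d-1)^{3\ell}$.
\end{itemize}
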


\begin{figure}[t]
\centering
\resizebox{\textwidth}{!}{%
\begin{tikzpicture}[
  every node/.style={circle,draw,inner sep=1.5pt},
  >=stealth
]

\fill[blue!10] (0,0) ellipse (9cm and 6cm);

\node[draw=none] at (-5,4) {\Huge $\mathcal G$};

\node (r) at (0,0) {};
\node[draw=none, below] at (r) {\Large $o$};

\draw[dashed] (0,0) circle (4cm);

\node[draw=none] at (3,2) {\Huge $\mathcal T$};

\node (a1) at (90:1.8)  {};
\node[draw=none, right] at (a1) {\Large $i$};

\node (a2) at (210:1.8) {};
\node (a3) at (330:1.8) {};

\draw (r) -- (a1);
\draw (r) -- (a2);
\draw (r) -- (a3);


\node (a1b1) at (70:3.2)  {};
\node (a1b2) at (110:3.2) {};
\draw (a1) -- (a1b1);
\draw (a1) -- (a1b2);
\node[draw=none, left] at (a1b2) {\Large $l_\mu$};

\node[fill=red] (a2b1) at (190:3.2) {};
\node[fill=red] (a2b2) at (230:3.2) {};
\draw (a2) -- (a2b1);
\draw (a2) -- (a2b2);
\node[draw=none, red, yshift=15pt] at (a2b1) {\Large $l_1=l_2$};
\node[draw=none, red, right] at (a2b2) {\Large $l_3=l_4$};

\node[fill=red] (a3b1) at (310:3.2) {};
\node[fill=red] (a3b2) at (350:3.2) {};
\draw (a3) -- (a3b1);
\draw (a3) -- (a3b2);
\node[draw=none, red, xshift=40] at (a3b2) {\Large $l_{2\nu-1}=l_{2\nu}$};

\node (a1b1c1) at (60:4.6) {};
\node (a1b1c2) at (80:4.6) {};
\draw (a1b1) -- (a1b1c1);
\draw (a1b1) -- (a1b1c2);

\node (a1b2c1) at (100:4.6) {};
\node (a1b2c2) at (120:4.6) {};
\draw (a1b2) -- (a1b2c1);
\draw (a1b2) -- (a1b2c2);
\node[draw=none, left] at (a1b2c2) {\Large $a_\mu$};

\node (a2b1c1) at (180:4.6) {};
\node (a2b1c2) at (200:4.6) {};
\draw (a2b1) -- (a2b1c1);
\draw (a2b1) -- (a2b1c2);
\node[draw=none, left] at (a2b1c1) {\Large $a_1$};
\node[draw=none, left] at (a2b1c2) {\Large $a_2$};

\node  (a2b2c1) at (220:4.6) {};
\node  (a2b2c2) at (240:4.6) {};
\draw (a2b2) -- (a2b2c1);
\draw (a2b2) -- (a2b2c2);
\node[draw=none, left] at (a2b2c1) {\Large $a_3$};
\node[draw=none,left] at (a2b2c2) {\Large $a_4$};

\node (a3b1c1) at (300:4.6) {};
\node (a3b1c2) at (320:4.6) {};
\draw (a3b1) -- (a3b1c1);
\draw (a3b1) -- (a3b1c2);

\node  (a3b2c1) at (340:4.6) {};
\node  (a3b2c2) at (0:4.6)   {};
\draw (a3b2) -- (a3b2c1);
\draw (a3b2) -- (a3b2c2);

\end{tikzpicture}%
}
\caption{The index set $\sfA_i := \{ \alpha \in \qq{\mu} : \dist_{\cT}(i, l_\al) = \ell+1 \}$ is given by $\{1,2,\cdots, 2\nu\}$, corresponding to these red nodes.}
\label{fig:Ai}
\end{figure}

\begin{remark}
The expansion \eqref{e:easy_G-Y} decomposes \(\widetilde G_{oo}^{(i)}-Y\) into two leading contributions and a higher–order remainder $\cU$, which together comprise weighted sums of terms built from the factors in \eqref{e:error_factor}.
By \Cref{l:basicG}, each factor in \eqref{e:error_factor} is bounded by \(N^{-\fb}\) with high probability. Consequently, any product \(R_r\) of \(r\) such factors satisfies \(|R_r|\lesssim N^{-\fb r}\) with high probability. The term \(\cU\) collects an \(\OO(1)\)-weighted sum of contributions of the form \((d-1)^{3(r-1)\ell}R_r\) with \(r\geq 2\), so that
\[
(d-1)^{3(r-1)\ell}\,|R_r| \lesssim (d-1)^{3(r-1)\ell}N^{-\fb r}.
\]
Therefore, every contribution to \(\cU\) is higher–order (by at least one extra factor \(N^{-\fb}\)) compared to the two leading sums in \eqref{e:easy_G-Y}.

\end{remark}

\begin{remark}
More explicitly, the term $\cU$ in \eqref{e:easy_G-Y} is given by
    \begin{align}\begin{split}\label{e:Uterm}
        &\cU=\frac{\msc^{2(\ell+1)}(z)}{(d-1)^{\ell+2}}\sum_{\al \in \sfA_i}L^{(i)}_{l_\al l_\al}(G_{c_\al c_\al}^{(b_\al)}-Q)^2
        + \frac{\msc^{2(\ell+1)}(z)}{(d-1)^{\ell+2}}\sum_{ \al\in \sfA_i,\beta \in \qq{\mu}\atop \al\neq \beta}(L^{(i)}_{l_\beta l_\beta}+L^{(i)}_{l_\al l_\beta})(G_{c_\al c_\beta}^{(b_\al b_\beta)})^2\\
        &+
     \sum_{\al\neq\beta\in\qq{\mu}}  \fc_1(\al,\beta)  (G_{c_\al c_\al}^{(b_\al)}-Q)(G_{c_{\beta} c_{\beta}}^{(b_{\beta})}-Q)
    +
   \sum_{\al\in\qq{\mu}\atop \al'\neq\beta'\in\qq{\mu}}  \fc_2(\al,\al', \beta')  (G_{c_\al c_\al}^{(b_\al)}-Q)G_{c_{\al'} c_{\beta'}}^{(b_{\al'} b_{\beta'})}\\
    &+\sum_{\al\neq \beta\in \qq{\mu},\al'\neq \beta'\in \qq{\mu}\atop \{\al,\beta\}\neq \{\al',\beta'\}}\fc_3(\al,\beta,\al',\beta')  G_{c_\al c_\beta}^{(b_\al b_\beta)}G_{c_{\al'} c_{\beta'}}^{(b_{\al'} b_{\beta'})}+ \sum_{\al\in\qq{\mu}}\fc_4(\al)  (G_{c_\al c_\al}^{(b_\al)}-Q)(Q-\msc(z))\\
    &+ \sum_{\al\neq \beta\in\qq{\mu}}\fc_5(\al,\beta)  G_{c_\al c_\beta}^{(b_\al b_\beta)}(Q-\msc(z))
     +\textnormal{terms of the form }\{(d-1)^{3(r-1)\ell} R_r\}_{r\geq 3},
    \end{split}\end{align}
    where the total sum of the coefficients $\fc_1, \fc_2, \ldots, \fc_5$ is bounded by $\OO(1)$, and 
    \begin{align}\label{e:2defPi}
        L^{(i)}=P^{(i)}(\cT, z, \msc(z))=\frac{1}{H_{\bT}^{(i)}-z- \msc(z)\mathbb I^{\del}},\quad \mathbb I^{\del}_{xy}=\bm1(\dist_\cT(x,o)=\ell)\delta_{xy},\text{ for } x,y\in \bT, 
    \end{align}
    which is the Green's function of the $(d-1)$-ary tree.
\end{remark}

In the rest we prove \Cref{l:coefficient}. We start with the Schur complement formulas which will be used to prove \Cref{l:coefficient}. Let $\wt B$ be the normalized adjacency matrix of the directed edges $\{(c_\al, l_\al)\}_{\al \in \qq{\mu}}$. Then the adjacency matrices $\widetilde H^{(i)}$ is in the block form
\begin{align*}
    \widetilde H^{(i)}=
    \left[
    \begin{array}{cc}
        H^{(i)}_{\bT} & \wt B^\top\\
        \wt B & \widetilde H_{\bT^\complement}
    \end{array}
    \right].
\end{align*}
We also denote the Green's function of $\cG^{(\bT)}$ and $\wt\cG^{(\bT)}$ as $ G^{(\bT)}$ and $\wt G^{(\bT)}$ respectively.  

We collect some estimates below, which will be used later. Recall from \Cref{greentree}, for $x,y\in \bT\setminus\{i\}$, $|L_{xy}^{(i)}|\lesssim (d-1)^{-\dist_\cT(x,y)/2}$. It follows that
\begin{align}\begin{split}\label{e:sum_Pbound}
    &\sum_{x\in \bT\setminus\{i\}} |L^{(i)}_{ox}|\lesssim \sum_{r=0}^{\ell}(d-1)^{r/2}\lesssim (d-1)^{\ell/2},\\
    &\sum_{x,y\in \bT\setminus\{i\}} |L^{(i)}_{xy}|\lesssim
    \sum_{r=0}^{\ell}(d-1)^{\ell-r}\left(\sum_{r'=0}^r (d-1)^{r'/2}+\sum_{r'=r+1}^{2\ell-r}(d-1)^{r/2}\right)\lesssim \ell(d-1)^{\ell},
\end{split}\end{align}
where for the first sum, we used that $|\{x\in \bT: \dist_\cT(o,x)=r\}|=\OO((d-1)^{r})$ for $0\leq r\leq \ell$. For the second sum, we consider $\{x\in \bT: \dist_\cT(o,x)=\ell-r\}$. Note that there are $\OO((d-1)^{\ell-r})$ such vertices. Given such $x$, for any $y\in \bT$, $0\leq \dist_\cT(x,y)\leq 2\ell-r$.
There are two cases depending on $\dist_\cT(x,y)=r'$. If $0\leq r'\leq r$, we have $|\{y\in \bT: \dist_\cT(x,y)=r'\}|=\OO((d-1)^{r'})$, and if $r+1\leq r'\leq 2\ell-r$ we have $|\{y\in \bT: \dist_\cT(x,y)=r'\}|=\OO((d-1)^{(r+r')/2})$. The second statement in \eqref{e:sum_Pbound} follows from summing over $r,r'$ and using $|L_{xy}^{(i)}|\lesssim (d-1)^{-r'/2}$.

\begin{proof}[Proof of \Cref{l:coefficient}]

Thanks to the Schur complement formula \eqref{e:Schur1}, we have 
\begin{align*}
\widetilde G_{oo}^{(i)}=\left(\frac{1}{H_{\bT}^{(i)}-z-{\widetilde  B}^\top \widetilde G^{(\bT)}{\widetilde  B} }\right)_{oo}.
\end{align*}
Since $o$ has a radius $\fR$ tree neighborhood, $H^{(i)}_\bT$ is the normalized adjacency matrix of a truncated $(d-1)$-ary tree, and $L^{(i)}$ from \eqref{e:2defPi} agrees with the Green's function of $(d-1)$-ary tree (see \eqref{e:Gtreemsc2}),
\begin{align*}
    &\msc(z)=L^{(i)}_{oo}=P_{oo}^{(i)}(\cT, z, \msc(z))=\left(\frac{1}{H_{\bT}^{(i)}-z- \msc(z)\mathbb I^{\partial}}\right)_{oo}=\left(\frac{1}{H_{\bT}^{(i)}-z-\widetilde B^\top \msc(z)\widetilde B}\right)_{oo}, 
\end{align*}
By taking the difference of the two above expressions, we have
\begin{equation*}
\widetilde G_{oo}^{(i)}-\msc(z)=\left(\frac{1}{H_{\bT}^{(i)}-z-\widetilde B^\top \msc(z)\widetilde B-\cD}-\frac{1}{H_{\bT}^{(i)}-z-\widetilde B^\top \msc(z)\widetilde B}\right)_{oo},
\end{equation*}
where 
\begin{align}\begin{split}\label{e:defcE}
\cD
=\widetilde  B^\top(Q-\msc(z))  \widetilde  B+\cD_1, \quad
\cD_1= \wt B^\top(\widetilde G^{(\bT)}-Q  )\widetilde  B,
\end{split}\end{align}
are matrices indexed by $(\bT\setminus\{i\})\times(\bT\setminus\{i\})$.

By our assumption $\cG, \wt \cG\in \Omega$. Thanks to \eqref{eq:infbound} and \Cref{l:basicG}, we have 
\begin{align*}
     |(\cD_1)_{xy}|, |\cD_{xy}|\leq N^{-\fb}, \text{ for }x,y\in \bT\setminus\{i\}.
\end{align*} 
Thus, for some sufficiently large constant $\fp$, we have
\begin{align}\begin{split}\label{eq:resolventexp}
    \widetilde G_{oo}^{(i)}-\msc(z)&=\left(\frac{1}{H_{\bT}^{(i)}-z-\widetilde B^\top \msc(z)  \widetilde B-\cD}-\frac{1}{H_{\bT}^{(i)}-z-\widetilde B^\top \msc(z)  \widetilde B}\right)_{oo}\\
    &=\left(L^{(i)}\sum_{k=1}^\fp \left(\cD L^{(i)}\right)^k\right)_{oo}+\OO(N^{-2}).
\end{split}
\end{align}
Recall $Y=Y_\ell(Q, z)$ from \eqref{def:Y} and \eqref{e:defYt}:
\begin{align*}
Y=\left(\frac{1}{H_{\bT}^{(i)}-z-\widetilde B^\top Q  \widetilde B}\right)_{oo}.
\end{align*}
By the same argument as in \eqref{eq:resolventexp} we also have that
\begin{align}\label{e:YQ-m}
    Y-\msc(z)=\left(L^{(i)}\sum_{k=1}^\fp \left(\widetilde B^\top (Q-\msc(z))\widetilde B L^{(i)}\right)^k\right)_{oo}+\OO(N^{-2}).
\end{align}
By taking the difference of \eqref{eq:resolventexp} and \eqref{e:YQ-m}, up to error $\OO(N^{-2})$, we get that the difference $\widetilde G_{oo}^{(i)}-Y$ is given as
\begin{align}\begin{split}\label{e:GooY}
(L^{(i)} \cD_1L^{(i)})_{oo}
&+ \left(L^{(i)}\sum_{k=2}^\fp \left(\cD L^{(i)}\right)^k\right)_{oo}\\
&-\left(L^{(i)}\sum_{k=2}^\fp \left((\widetilde B^\top (Q-\msc(z)) \widetilde B L^{(i)}\right)^k\right)_{oo}.
\end{split}\end{align}

If $\al \in \sfA_i$, then $\dist_\cT(i, l_\al)=\ell+1$, and \Cref{greentree} gives
\begin{align}\label{e:Piol}
  L^{(i)}_{ol_\al}=\msc(z)\left(-\frac{{\msc(z)}}{\sqrt{d-1}}\right)^{\dist_\cT(o,l_\al)},\quad  |L^{(i)}_{ol_\al}|\lesssim (d-1)^{-\dist_{\cT}(o,l_\al)/2}=(d-1)^{-\ell/2}.
\end{align}
Otherwise if $\al \in\qq{\mu}\setminus \sfA_i$, then $o,l_\al$ are in different connected components of $\cT^{(i)}$, and $|L^{(i)}_{ol_\al}|=0$.
Thus the first term in \eqref{e:GooY} can be computed as, 
\begin{align}\label{e:G-Y2}
    (L^{(i)}\cD_1 L^{(i)})_{oo}=\frac{\msc(z)^{2\ell+2}} {(d-1)^{\ell+1}}\sum_{\al\in\sfA_i}(\wt G^{(\bT)}_{c_\alpha c_\alpha}-Q )
   +\frac{\msc(z)^{2\ell+2}} {(d-1)^{\ell+1}}\sum_{\al\neq \beta\in\sfA_i} \wt G^{(\bT)}_{c_\alpha c_\beta},
\end{align}

We obtain the first two terms in \eqref{e:easy_G-Y}, after replacing $\widetilde G_{c_\al c_\al}^{(\bT)}-Q ,\widetilde G_{c_\al c_\beta}^{(\bT)}$ in \eqref{e:G-Y2} by $G_{c_\al c_\al}^{(b_\al)}, G_{c_\al c_\beta}^{(b_\al b_\beta)}$. We collect the difference in the error term $\cE$ (as in \eqref{e:defCE}).

For the terms $k\geq 2$, in general for any matrix $V$ defined on $(\bT\setminus\{i\})\times (\bT\setminus\{i\})$, 
$L^{(i)}(V L^{(i)})^k$ is given as a sum of terms in the following form
\begin{align}\label{e:PUP}
   \sum_{x_1, x_2, \cdots, x_{2k}\in \bT\setminus\{i\}}L^{(i)}_{o x_1} V_{x_1 x_2} L^{(i)}_{x_2 x_3} V_{x_3 x_4} 
   L^{(i)}_{x_4 x_5}\cdots V_{x_{2k-1} x_{2k}}L^{(i)}_{x_{2k} o}.
\end{align}
We can reorganize \eqref{e:PUP} in the following way
\begin{align}\begin{split}\label{e:totalsum0}
    &\phantom{{}={}}\sum_{x_1, x_2, \cdots, x_{2k}\in \bT\setminus\{i\}}L^{(i)}_{o x_1} L^{(i)}_{x_2 x_3}\cdots L^{(i)}_{x_{2k} o}  V_{x_1 x_2}  V_{x_3 x_4} 
   \cdots V_{x_{2k-1} x_{2k}}.\\
   &=(d-1)^{3(k-1)\ell}\sum_{x_1, x_2, \cdots, x_{2k}\in \bT\setminus\{i\}}(d-1)^{-3(k-1)\ell}L^{(i)}_{o x_1} L^{(i)}_{x_2 x_3}\cdots L^{(i)}_{x_{2k} o} V_{x_1x_2} V_{x_3 x_4} 
   \cdots  V_{x_{2k-1}x_{2k}}\\
   &=:(d-1)^{3(k-1)\ell}\sum_{x_1, x_2, \cdots, x_{2k}\in \bT\setminus\{i\}}\fc_{\bmx}V_{x_1x_2}V_{x_3 x_4}
   \cdots  V_{x_{2k-1}x_{2k}},
\end{split}\end{align}
where the weights $\fc_{\bmx}=(d-1)^{-3(k-1)\ell}L^{(i)}_{o x_1} \cdots L^{(i)}_{x_{2k} o} $, and the total weights are bounded as
\begin{align*}
\sum_{x_1, x_2, \cdots, x_{2k}\in \bT\setminus\{i\}} |\fc_{\bmx}|
&=(d-1)^{-3(k-1)\ell}\sum_{x_1\in \bT\setminus\{i\}}|L^{(i)}_{o x_1}| \sum_{x_2, x_3\in \bT\setminus\{i\}}|L^{(i)}_{x_2 x_3}|\cdots \sum_{x_{2k}\in \bT\setminus\{i\}}|L^{(i)}_{x_{2k} o}|\\
&\lesssim (d-1)^{-3(k-1)\ell} \ell^{k-1} (d-1)^{k\ell}= (d-1)^{-(2k-3)\ell} \ell^{k-1}\lesssim 1,
\end{align*}
where to get the second line we used \eqref{e:sum_Pbound}; in the last inequality, we used that $k\geq 2$.

To compute the difference for $k\geq 2$ in \eqref{e:GooY}, we consider two possible forms for $V$:
$V=\widetilde B^\top (Q-\msc(z))\widetilde B $ or $V=\widetilde B^\top (Q-\msc(z))\widetilde B +\cD_1 $. As discussed above (see \eqref{e:totalsum0}), terms in \eqref{e:GooY} with $k\geq 2$ break down to an $\OO(1)$-weighted sum of terms in the form $(d-1)^{3(k-1)\ell}\wt R_k$. Here $\wt R_k$ is a product of $k$ factors,  each taking  one of the following:
\begin{align*}
    (\wt G_{c_\al c_\al}^{(\bT)}-Q), \quad  \wt G_{c_\al c_{\beta}}^{(\bT)},\quad  (Q-\msc(z)), \quad \al\neq \beta\in \qq{\mu}.
\end{align*} 
Moreover, $\wt R_k$ contains at least one factor of the form $\{\wt G_{c_\al c_\al}^{(\bT)}-Q, \wt G_{c_\al c_\beta}^{(\bT)}\}_{\al\neq \beta\in \qq{\mu}}$ (arising from  $\cD_1$). Otherwise, the terms from the difference in \eqref{e:GooY} cancel out.

For each $\wt R_k$ terms, we get $R_k$ by replacing $\widetilde G_{c_\al c_\al}^{(\bT)}-Q,\widetilde G_{c_\al c_\beta}^{(\bT)}$ with $G_{c_\al c_\al}^{(b_\al)}-Q, G_{c_\al c_\beta}^{(b_\al b_\beta)}$, respectively. As $k\geq 2$  and each factor of $R_k, \wt R_k$ is bounded by $N^{-\fb} $ (by \Cref{l:basicG} and the assumption that $\cG,\widetilde\cG\in\Omega$), the replacement error is bounded by
\begin{align*}
    |\wt R_k-R_k|\lesssim \frac{1}{N^{\fb/2} } \sum_{\al, \beta\in \qq{\mu}} |\wt G^{(\bT)}_{c_\al c_\beta}-G^{(b_\al b_\beta)}_{c_\al c_\beta}|.
\end{align*}
We collect the above error in $\cE$ (as in \eqref{e:defCE}).

We denote the $\OO(1)$-weighted sum of terms in the form $(d-1)^{3(k-1)\ell} R_k$ as $\cU$. This finishes  the proof of \Cref{l:coefficient}.

\end{proof}

\subsection{Proof of \Cref{p:iteration}}

By plugging \eqref{e:easy_G-Y} into \eqref{e:IFIF}, we get the following four terms
\begin{align}
&\label{e:e1}\frac{\msc^{2(\ell+1)}(z)}{(d-1)^{\ell+1}} \sum_{\al\in \sfA_i}   \sum_{\bfi} \frac{1}{Z_{\cF}} \bE\left[I(\cF,\cG)\bm1(\cG\in \Omega)(G_{c_\al c_\al}^{(b_\al)}-Q)\right],\\
&\label{e:e2}\frac{\msc^{2(\ell+1)}(z)}{(d-1)^{\ell+1}}\sum_{\al\neq \beta\in \sfA_i}\sum_{\bfi}  \frac{1}{Z_{\cF}}\bE\left[I(\cF,\cG)\bm1(\cG\in \Omega) G_{c_\al c_\beta}^{(b_\al b_\beta)}\right],\\
&\label{e:e3}\frac{1}{Z_{\cF}}\sum_{\bfi} \bE\left[I(\cF,\cG)\bm1(\cG\in \Omega)\cU \right],\\
&\label{e:e4}\frac{1}{Z_{\cF}}\sum_{\bfi} \bE\left[I(\cF,\cG)\bm1(\cG\in \Omega)\cE \right].
\end{align}

\paragraph{First term \eqref{e:e1}.} The first term \eqref{e:e1} is negligible
\begin{align}\label{e:ftt1}
&\frac{\OO(1)}{(d-1)^{\ell}} \sum_{\al\in \sfA_i}   \sum_{\bfi} \frac{1}{Z_{\cF}} \bE\left[I(\cF,\cG)\bm1(\cG\in \Omega)(G_{c_\al c_\al}^{(b_\al)}-Q)\right]=\OO(N^{-\fb/2}\bE[\Psi]).
\end{align}

If we temporarily ignore the indicator and the averaging over embeddings, the
claim \eqref{e:ftt1} reduces to the definition of $Q$
\begin{align}\label{e:core1}
\frac{1}{Nd}\sum_{b_\al, c_\al \in \qq{N}} A_{c_\al b_\al}\,(G^{(b_\al)}_{c_\al c_\al}-Q)=0.
\end{align}
which follows from that $(\cG,\widetilde{\cG})$ are identically distributed under
the local resampling. In \eqref{e:ftt1}, for the average over embeddings, we can first sum over the indices $b_\al, c_\al$ using \eqref{e:core1}, and then average over other indices $\bfi\setminus\{b_\al,c_\al\}$.

\paragraph{Second term  \eqref{e:e2}.}
     Assume the following estimate for the second term \eqref{e:e2}
\begin{align}\begin{split}\label{e:ftt2}
&\phantom{{}={}}\frac{\OO(1)}{(d-1)^{\ell}} \sum_{\al\neq \beta\in \sfA_i}   \sum_{\bfi}  \frac{1}{Z_{\cF}}\bE\left[I(\cF,\cG)\bm1(\cG\in \Omega) G_{c_\al c_\beta}^{(b_\al b_\beta)}\right]\\
&=\frac{\OO(1)}{(d-1)^{\ell}} \sum_{\al\neq \beta\in \sfA_i} \sum_{\bfi}  \frac{1}{Z_{\cF}}\bE\left[I(\cF,\cG)\bm1(\cG\in \Omega) G_{b_{\al} b_{{\beta}}}(G_{c_{\al} c_{\al}}^{(b_{\al})}-Q ) (G_{c_{{\beta}} c_{{\beta}}}^{(b_{{\beta}})}-Q )\right]+\OO(N^{-\fb/4} \bE[  \Psi]).
\end{split}\end{align}
Then we can further change $(G_{c_\al c_\al}^{(b_\al)}-Q )$ in \eqref{e:ftt2} to $(G_{c_\al c_\al}^{(b_\al)}-Y )$ and get
\begin{align}\begin{split}
\label{e:chQY}
&\frac{\OO(1)}{(d-1)^{\ell}} \sum_{\al\neq \beta\in \sfA_i} \sum_{\bfi}  \frac{1}{Z_{\cF}}\bE\left[I(\cF,\cG)\bm1(\cG\in \Omega)(G_{c_{\al} c_{\al}}^{(b_{\al})}-Y )\times ( G_{b_{\al} b_{{\beta}}} (G_{c_{{\beta}} c_{{\beta}}}^{(b_{{\beta}})}-Q ))\right]\\
+
&\frac{\OO(1)}{(d-1)^{\ell}} \sum_{\al\neq \beta\in \sfA_i} \sum_{\bfi}  \frac{1}{Z_{\cF}}\bE\left[I(\cF,\cG)\bm1(\cG\in \Omega)|Y-Q| | G_{b_{\al} b_{{\beta}}} (G_{c_{{\beta}} c_{{\beta}}}^{(b_{{\beta}})}-Q )|\right].
\end{split}\end{align}
The first term on the righthand side of \eqref{e:chQY} is in the form of \eqref{e:case3}.
The second term on the righthand side of \eqref{e:chQY} can be further bounded as
\begin{align*}
\begin{split}
\frac{\OO(1)}{(d-1)^{\ell}} \sum_{\al\neq \beta\in \sfA_i} \sum_{\bfi}  \frac{1}{Z_{\cF}}\bE\left[I(\cF,\cG)\bm1(\cG\in \Omega)|Y-Q| N^{-2\fb}\right]
=\OO\left(N^{-\fb/4} \bE[\Psi]\right).
\end{split}\end{align*}

Next we outline the proof of \eqref{e:ftt2}. If we temporarily ignore the indicator and the averaging over embeddings, the
claim \eqref{e:ftt2} boils down to compute the following quantity
\begin{align}\label{e:core2}
\frac{1}{(Nd)^2}\sum_{b_\al, c_\al, b_\beta, c_\beta \in \qq{N}} A_{c_\al b_\al}A_{b_\beta c_\beta}G^{(b_\al b_\beta)}_{c_\al c_\beta}.
\end{align}
For random $d$-regular graphs, the adjacency matrices have a trivial eigenvector $(1,1,\cdots, 1)^\top$. As a consequence the row and column sums of the Green's function is small: 
\begin{align}\label{e:av1}
\sum_{i\in \qq{N}} G_{ij}=\sum_{j\in \qq{N}} G_{ij}=1/(d/\sqrt{d-1}-z)=\OO(1).
\end{align}
Thus if $G^{(b_\al b_\beta)}_{c_\al c_\beta}$ in \eqref{e:core2} is replaced by $G_{c_\al c_\beta}$ (without removing the vertices $b_\al, b_\beta$), then the average is very small $\OO(1/N)$. To compute \eqref{e:core2}, we need  to express  $G^{(b_\al b_\beta)}_{c_\al c_\beta}$ back to  $G^{(b_\al)}, G^{(b_\beta)}, G$ using Schur complement formula by carefully adding vertices $b_\al, b_\beta$ back. And it turns out we have nice leading term, and all other terms are negligible
\begin{align}\begin{split}
\label{e:core21}
&\phantom{{}={}}\frac{1}{(Nd)^2}\sum_{b_\al, c_\al, b_\beta, c_\beta \in \qq{N}} A_{c_\al b_\al}A_{b_\beta c_\beta}G^{(b_\al b_\beta)}_{c_\al c_\beta}\\
&=
\frac{1}{(Nd)^2}\sum_{b_\al, c_\al, b_\beta, c_\beta \in \qq{N}} A_{c_\al b_\al}A_{b_\beta c_\beta}\frac{G_{b_{\al} b_{{\beta}}}}{d-1}(G_{c_{\al} c_{\al}}^{(b_{\al})}-Q ) (G_{c_{{\beta}} c_{{\beta}}}^{(b_{{\beta}})}-Q )+\text{``negligible error"}.
\end{split}\end{align}

The claim \eqref{e:ftt2} follows by first summing over the indices $b_\alpha,c_\alpha,b_\beta,c_\beta$ using \eqref{e:core21}, and then averaging over the remaining indices in $\bfi\setminus\{b_\alpha,c_\alpha,b_\beta,c_\beta\}$.

\begin{proof}[Proof of \eqref{e:core21}]
For simplicity of notation, we write $b_\al,c_\al, b_\beta, c_\beta$ as $b,c,b',c'$.
First we notice that using \eqref{e:av1}, the average of Green's functions with only one vertex removed is also small. More precisely, the Schur complement formula \eqref{e:Schurixj} gives
\begin{align}\begin{split}\label{e:rGbs2}
   &\phantom{{}={}}\frac{1}{Nd}\sum_{b \sim c } G_{c c'}^{(b')}  =\frac{1}{Nd}\sum_{c \sim b }\left(G_{c c'}-\frac{G_{c  b'}G_{b'c'}}{G_{b'b'}}\right)=\OO\left(\frac{1}{N}\right), \quad
   \frac{1}{Nd}\sum_{b \sim c } G_{b c'}^{(b')}  =\OO\left(\frac{1}{N}\right),
\end{split}\end{align}
where in the first statement we average over the free index $c$; in the second statement we average over the free index $b$.

To show \eqref{e:core21}, thanks to the Schur complement formula \eqref{e:Schurixj}, we have
\begin{align}\begin{split}\label{e:rGccbb1}
    &\phantom{{}={}}G_{c c'}^{(b b')}
    =G_{c c'}^{( b')}+(G^{(b b')}H)_{cb}G^{(b')}_{bc'}=G_{c c'}^{(b')}+
   \frac{G_{b c'}^{(b')}}{\sqrt{d-1}}\sum_{x\sim b}G_{c x}^{(b b')}\\
   &=G_{c c'}^{( b')}+   \frac{G_{b c'}^{(b')}}{\sqrt{d-1}}\sum_{x\sim b}\left(G_{c  x}^{(b )}-\frac{G_{c  b'}^{(b)}G_{b' x}^{(b )}}{G_{b'b'}^{(b )}}\right)
   =G_{c c'}^{( b')}+   \frac{G_{b c'}^{(b')}}{\sqrt{d-1}}\sum_{x\sim b}G_{c  x}^{(b )}
   -\sum_{x\sim b}\frac{G_{b c'}^{(b')}G_{c  b'}^{(b)}G_{b' x}^{(b )}}{\sqrt{d-1}G_{b'b'}^{(b )}}.
\end{split}\end{align}
If we average over the edges $(b , c )$, the first term on the righthand side of \eqref{e:rGccbb1} is small by \eqref{e:rGbs2}; for the second term, by the same reasoning, we can replace $\sum_{x\sim b}G_{c  x}^{(b )}$ by $\sum_{x\sim b}(G_{c  x}^{(b )}-Q)$; for the last term, we can bound it by the Ward identity:
\begin{align}\label{e:rwarduse1}
    \begin{split}\frac{A_{b'c'}}{Nd}\sum_{b \sim c } \sum_{x\sim b }\left|\frac{G_{b  c'}^{(b')}G_{c  b'}^{(b )}G_{b' x}^{(b )}  }{G_{b'b'}^{(b )}}\right|
    &\lesssim  \frac{A_{b'c'} N^{-\fb} }{N}\sum_{b \sim c }|G_{b  c'}^{(b')}G_{c  b'}^{(b )}|\\&\lesssim  \frac{A_{b'c'} N^{-\fb} }{N}\sum_{b \sim c }(|G_{b  c'}^{(b')}|^2+|G_{c  b'}^{(b )}|^2)\lesssim  N^{-\fb/2}A_{b'c'} \Phi,
\end{split}\end{align}
where in the first statement we used that  $|G_{b'x}^{(b)}|\lesssim N^{-\fb} $ for $b\sim x$ from \eqref{eq:local_law}; in the second statement we used the Cauchy–Schwarz inequality; and in the last statement we used \eqref{eq:local_law}.

Thus averaging over the edges $(b , c )$ for \eqref{e:rGccbb1} we conclude that
\begin{align}\begin{split}\label{e:rGbbcc0}
    \frac{A_{b'c'}}{Nd}\sum_{b \sim c } 
 G_{c c'}^{(b  b')}
    &= \frac{A_{b'c'}}{Nd}\sum_{b \sim c } \left(\frac{G_{b  c'}^{(b')}}{\sqrt{d-1}}\left(\sum_{x\sim b}G_{c  x }^{(b )}-Q) \right)+\OO(N^{-\fb/2} \Phi)\right).
\end{split}\end{align}
We can then average over edges $( b', c')$  in \eqref{e:rGbbcc0}. Similar to \eqref{e:rGccbb1}, we can first replace $G_{b  c'}^{( b')}$ as 
\begin{align}\begin{split}\label{e:Gccbb2}
    G_{b  c'}^{( b')}=G_{b  c'}+G_{ b b' }(H G^{( b')})_{b'c'}=G_{b  c'}+
   \frac{G_{b   b'}}{\sqrt{d-1}}\sum_{x\sim  b'}G^{( b')}_{c' x},
\end{split}\end{align}
and conclude
\begin{align*}\begin{split}
\frac{1}{(Nd)^2}\sum_{b \sim c \atop  b'\sim c'}G_{c  c'}^{(b   b')}
    &=\frac{1}{(Nd)^2}\sum_{b \sim c \atop  b'\sim c'}\frac{G_{b   b'}}{d-1}\left(\sum_{x\sim b }G_{c  x}^{(b )} -Q\right)\left( \sum_{x\sim  b'}G_{c' x}^{( b')}-Q\right) +\OO(N^{-\fb/2} \Phi)\\
&=\frac{1}{(Nd)^2}\sum_{b \sim c \atop  b'\sim c'}\frac{G_{b   b'}}{d-1}(G_{c  c }^{(b )}-Q) (G_{c' c'}^{( b')}-Q) \\
&+\frac{N^{-\fb/2} }{(Nd)^2}\sum_{b \sim c \atop  b'\sim c'} \OO\left(|G_{b   b'}|\left(\sum_{x\sim b , x\neq c }|G_{c  x}^{(b )}|+\sum_{x\sim  b', x\neq c'}|G_{c' x}^{( b')}|\right)+ \Phi\right),
\end{split}\end{align*}
where for the last equality we bound $|G_{c x}^{( b)}-\delta_{cx}Q|\lesssim N^{-\fb} $ for $x\sim b$, and $|G_{c' x}^{( b')}-\delta_{c'x}Q|\lesssim N^{-\fb} $ for $x\sim b'$ using \eqref{eq:local_law} and \eqref{eq:infbound}. This gives \eqref{e:core21}.
\end{proof}

\paragraph{Third term  \eqref{e:e3}.} We recall the first few terms of $\cU$ from  \eqref{e:Uterm}.
     We have the following decomposition for the third term in \eqref{e:e3}

\begin{align}\label{e:decomposeI1I2I3}
    &\frac{1}{Z_{\cF}}\sum_{\bfi} \bE\left[I(\cF,\cG)\bm1(\cG\in \Omega)\cU \right]
    =J_1+J_2+J_3,
\end{align}
where
\begin{align}
 &\label{e:defI1}J_1=\sum_{\al \in \sfA_i}\sum_{\bfi} \frac{\msc^{2(\ell+1)}(z )L^{(i)}_{l_\al l_\al}}{(d-1)^{\ell+2}Z_{\cF}} \bE\left[I(\cF,\cG)\bm1(\cG\in \Omega)(G_{c_\al c_\al}^{(b_\al)}-Q )^2\right],\\
   &\label{e:defI2}J_2= \sum_{ \al \in \sfA_i,\beta\in\qq{\mu}\atop\al\neq \beta}\sum_{\bfi^+} \frac{\msc^{2(\ell+1)}(z )(L^{(i)}_{l_\beta l_\beta}+L^{(i)}_{l_\al l_\beta})}{(d-1)^{\ell+2}Z_{\cF}}
\bE\left[I(\cF,\cG)\bm1(\cG\in \Omega)(G_{c_\al c_\beta}^{(b_\al b_\beta)})^2\right].
\end{align}
And $J_3$ is an $\OO(1)$-weighted sum of terms in the form 
\begin{align}\label{e:fhatRi03}
  \sum_{\bfi} \frac{(d-1)^{3(h-1)}}{Z_{\cF}} \bE\left[I(\cF,\cG)\bm1(\cG\in \Omega)R_h\right],\quad h\geq 2,
\end{align}
where $R_h$ is a product of $h$ terms in the form \eqref{e:rdefcE1}, which contains $G_{c_\al c_\al}^{(b_\al)}-Q $ or $G_{c_\al c_\beta}^{(b_\al b_\beta)}$. Moreover, either $h\geq 3$, or $h=2$ and  $R_h$ (recall from \eqref{e:Uterm}) is one of the following  terms 
\begin{align}\begin{split}\label{e:Rhform}
 &(G_{c_\al c_\al}^{(b_\al)}-Q ) (G_{c_\beta c_\beta}^{(b_\beta)}-Q ), \quad   (G_{c_\al c_\al}^{(b_\al)}-Q )G_{c_{\al'} c_{\beta'}}^{(b_{\al'} b_{\beta'})},\quad G_{c_{\al} c_{\beta}}^{(b_{\al} b_{\beta})}G_{c_{\al'} c_{\beta'}}^{(b_{\al'} b_{\beta'})},\\
&\{G_{c_\al c_\al}^{(b_\al)}-Q , G_{c_\al c_\beta}^{(b_\al b_\beta)}\}\times(Q -\msc(z )),
\end{split} \end{align}
where $\alpha\neq \beta\in\qq{\mu}, \al'\neq \beta'\in \qq{\mu}$ and $ \{\al, \beta\}\neq \{\al', \beta'\}$.

In the following we discuss the three terms $J_1, J_2, J_3$ one by one. For $J_1$, by the same argument as in \eqref{e:chQY}, we can change one copy of $(G_{c_\al c_\al}^{(b_\al)}-Q )$ in \eqref{e:defI1} to $(G_{c_\al c_\al}^{(b_\al)}-Y )$, and the error is bounded by $\OO(N^{-\fb/2} \bE[ \Psi])$. After such replacement, we get
\begin{align}\label{e:newterm3}
\sum_{\al \in \sfA_i}\sum_{\bfi}\frac{\msc^{2(\ell+1)}(z )L^{(i)}_{l_\al l_\al}}{(d-1)^{\ell+2}Z_{\cF}}  \bE\left[I(\cF,\cG)\bm1(\cG\in \Omega)(G_{c_\al c_\al}^{(b_\al)}-Y ) (G_{c_\al c_\al}^{(b_\al)}-Q )\right],
\end{align}
which is an $\OO(1)$-weighted sum of terms in the form \eqref{e:case1} with $\fq=0$.

For $J_2$ as in \eqref{e:defI2}, we claim the following estimate 
\begin{align}\label{e:Gccerror0}
|J_2|
\;\lesssim\;
\sum_{\alpha\neq\beta}\frac{1}{(d-1)^\ell\,Z_{\cF}}
\sum_{\bfi}\bE\!\left[
  I(\cF,\cG)\,\bm{1}(\cG\in\Omega)\, \big|G^{(b_\alpha b_\beta)}_{c_\alpha c_\beta}\big|^2
\right]
\;\lesssim\;
(d-1)^{2\ell}\,\bE[\Psi],
\end{align}
so this contribution can be absorbed into the error term $\cE$ in
\eqref{e:IFIF}.

If we temporarily ignore the indicator and the averaging over embeddings, the
claim \eqref{e:Gccerror0} boils down to compute the following quantity
\begin{align}\label{e:Gccerror}
\frac{1}{(Nd)^2}\sum_{b_\al, c_\al, b_\beta, c_\beta \in \qq{N}} A_{c_\al b_\al}A_{b_\beta c_\beta}|G^{(b_\al b_\beta)}_{c_\al c_\beta}|^2\lesssim \Phi.
\end{align}

 \begin{proof}[Proof of \eqref{e:Gccerror}]

Averages like \eqref{e:Gccerror} can be bounded by the classical Ward identity \eqref{e:Ward}, which gives
\begin{equation}\label{eq:rwardex}
\frac{1}{N^2}\sum_{i,j}\big|G_{ij}(z)\big|^2
=\frac{\Im m_N(z)}{N\Im[z]}\lesssim \Phi(z).
\end{equation}
However, the entry
$G^{(b_\alpha b_\beta)}_{c_\alpha c_\beta}$ (the Green's function with rows/columns
$b_\alpha,b_\beta$ removed) depends on the choice of these edges, but we can
express it in terms of the full Green’s function $G$ by  the Schur complement
formula and bound the average using the Ward identity. For simplicity of notation, we write $b_\al,c_\al, b_\beta, c_\beta$ as $b,c,b',c'$.

We start with the Schur complement formula \eqref{e:Schur1}
\begin{align}\label{e:scf}
 G_{cc'}^{(bb')}=G_{cc'}-(G (G|_{\{bb'\}})^{-1} G)_{cc'},
\quad
    (G|_{\{bb'\}})^{-1}
    &=\frac{1}{G_{bb}G_{b'b'} -G_{bb'}^2}
    \left[
    \begin{array}{cc}
    G_{b'b'} & -G_{bb'}\\
    -G_{bb'} & G_{bb}
    \end{array}
    \right].
\end{align}
Thanks to \eqref{eq:infbound}, the Green's function terms are bounded, and the denominators in \eqref{e:scf} are bounded away from $0$ when $b,b'$ are far from each other, we can get
\begin{align}\label{e:Gcc'bb'}
|G_{cc'}^{(bb')}|\lesssim |G_{bb'}|+|G_{bc'}|+|G_{b'c}|+|G_{cc'}|.
\end{align}
Then the Ward identity bound \eqref{eq:rwardex} leads to 
\begin{align}\begin{split}\label{e:ccbb}
   \frac{1}{(Nd)^2} \sum_{c\sim b\atop c'\sim b'} |G_{cc'}^{(bb')}|^2\lesssim 
  \frac{1}{(Nd)^2}\sum_{c\sim b\atop c'\sim b'}(|G_{bb'}|^2+|G_{bc'}|^2+|G_{b'c}|^2+|G_{cc'}|^2)\lesssim \Phi.
\end{split}\end{align}

\end{proof}

Finally, for $J_3$, if $h=2$, $R_h$ is given in \eqref{e:Rhform}. There are two cases
\begin{enumerate}
\item $R_h$ contains a factor $G_{c_\al c_\al}^{(b_\al)}-Q$, then we can first average over edges $(b_\al, c_\al)$ as in \eqref{e:core1}. In this case we can bound \eqref{e:fhatRi03} by the same way as for \eqref{e:ftt1}, and it is bounded by $\OO(N^{-\fb/2}\bE[\Psi])$
\item $R_h$ contains a factor $G_{c_\al c_\beta}^{(b_\al b_\beta)}$, then we can first average over edges $(b_\al, c_\al), (b_\beta, c_\beta)$ as in \eqref{e:core2}. In this case we can estimate \eqref{e:fhatRi03} by the same way as for \eqref{e:ftt2}, and get
\begin{align}\begin{split}\label{e:rhuanG}
        \frac{1}{(d-1)}\frac{(d-1)^{3(h+1)\ell}}{(d-1)^{\fq\ell/2}Z_{\cF}}\sum_{\bfi}  \bE\left[I(\cF,\cG)\bm1(\cG\in \Omega)(G^{(b_\beta)}_{c_\al c_\al}-Y) R'_{h+1}\right]+\OO(N^{-\fb/4}\bE[  \Psi]),
   \end{split} \end{align}
   where $R'_{h+1}$ is obtained from $R_h$ by replacing $G_{c_\al c_\beta}^{(b_\al b_\beta)}$ by $G_{b_{\al} b_{{\beta}}} (G_{c_{{\beta}} c_{{\beta}}}^{(b_{{\beta}})}-Q )$, and it contains $h+1$ factors in the form \eqref{e:rdefcE1}; and $\fq=12$. This leads to \eqref{e:case3}.
\end{enumerate}

If $h\geq 3$, we recall that $R_h$ contains either $G_{c_\al c_\al}^{(b_\al)}-Q $ or $G_{c_\al c_\beta}^{(b_\al b_\beta)}$. There are three cases
\begin{enumerate}
\item If $R_h$ contains a factor $G_{c_\al c_\al}^{(b_\al)}-Q$, we can change it to $(G_{c_\al c_\al}^{(b_\al)}-Y )$, and the error is bounded by $\OO(N^{-\fb/2} \bE[ \Psi])$.

\item If $R_h$ contains two factors in the form $G_{c_\al c_\beta}^{(b_\al b_\beta)}$, by Cauchy-Schwarz inequality and similarly to \eqref{e:Gccerror}, we can bound it as
\begin{align*}
  \sum_{\bfi} \frac{(d-1)^{3(h-1)}}{Z_{\cF}} \bE\left[I(\cF,\cG)\bm1(\cG\in \Omega)N^{-(h-2)\fb}|G_{c_\al c_\beta}^{(b_\al b_\beta)}|^2\right]=\OO(N^{-\fb/2} \bE[ \Psi]).
\end{align*}

\item If $R_h$ does not contain any factor in the form $G_{c_\al c_\al}^{(b_\al)}-Q$, contains exactly one factor in the form $G_{c_\al c_\beta}^{(b_\al b_\beta)}$ and all other factors are $Q-\msc(z)$, then by the same reasoning as in \eqref{e:rhuanG}, up to negligible error we can replace $G_{c_\al c_\beta}^{(b_\al b_\beta)}$ by $(G^{(b_\beta)}_{c_\al c_\al}-Y) \times(G_{b_{\al} b_{{\beta}}} (G_{c_{{\beta}} c_{{\beta}}}^{(b_{{\beta}})}-Q ))$.

\end{enumerate}
All three cases above lead to  \eqref{e:case3}.

\paragraph{Fourth term \eqref{e:e4}.} We recall $\cE$ from \eqref{e:defCE}, the last term \eqref{e:e4} is given by
\begin{align}\begin{split}\label{e:Eterm1}
&\phantom{{}={}}\frac{1}{Z_{\cF}}\sum_{\bfi}\bE\left[I(\cF,\cG)\bm1(\cG,\tcG\in \Omega)\cE\right]\\
&=\frac{1}{Z_{\cF}}\sum_{\bfi}\bE\left[I(\cF,\cG)\bm1(\cG,\tcG\in \Omega)\left(\frac{\msc^{2(\ell+1)}(z )} {(d-1)^{\ell+1}}\sum_{\al,\beta\in\sfA_i}(\wt G^{(\bT)}_{c_\alpha c_\beta}-G^{(b_\alpha b_\beta)}_{c_\alpha c_\beta})\right) \right]\\
&+\OO\left(\frac{1}{Z_{\cF}}\sum_{\bfi}\bE\left[I(\cF,\cG)\bm1(\cG,\tcG\in \Omega)\left(\frac{1}{N^{\fb/2}} \sum_{\al,\beta\in \qq{\mu}}|\wt G^{(\bT)}_{c_\alpha c_\beta}-G^{(b_\alpha b_\beta)}_{c_\alpha c_\beta}|+\frac{1}{N^2}\right) \right]\right).
\end{split}\end{align}

We can express $\tG^{(\bT)}_{c_\al c_\beta}$ in terms of the Green's function $G$ of the original graph using Schur complement formula. In \Cref{l:diffG1} we show
\begin{align}\label{e:replaceexample}
    \tG^{(\bT)}_{c_\al c_\beta}=G^{(b_\al b_\beta)}_{c_\al c_\beta}+\cE_{\al \beta},
\end{align}
where 
\begin{align}\begin{split}\label{e:rcEbound}
    |\cE_{\al \beta}|&\lesssim  \sum_{\gamma\in\qq{\mu}, x\in \cN_\gamma}(|G_{c_\al x}^{(\bT\bW)}|^2+|G_{c_\beta x}^{(\bT\bW)}|^2)+\sum_{\gamma\in\qq{\mu}\setminus\{\al,\beta\}}(|G^{(\bT b_\al b_\beta )}_{c_\al b_\gamma}|^2+ |G^{(\bT b_\al b_\beta)}_{c_\beta b_{\gamma} }|^2)\\
    &+\sum_{x\in \bT}(|G_{c_\al x}^{(b_\al b_\beta)}|^2+|G^{(b_\al b_\beta)}_{c_\beta x}|^2)+N^{-\fb }\Phi,
\end{split}\end{align}
and $N_\gamma=\{x\neq c_\gamma: x\sim b_\gamma\}\cup\{a_\gamma\}$.

After averaging over embeddings of $\cF$ (i.e., over $\bfi$) most sums in
\eqref{e:rcEbound} are controlled by \eqref{eq:rwardex}. For instance, averaging
over $(b_\alpha,c_\alpha)$ and $(b_\gamma,c_\gamma)$ bounds
$\bE\big[|G^{(\bT b_\alpha b_\beta)}_{c_\alpha b_\gamma}|^2\big]$ via Ward identity.

The delicate case is terms such as $|G^{(\bT\bW)}_{c_\alpha x}|^2$ with
$x\in \cN_\alpha$, where $x$ and $c_\alpha$ are graph–neighbors of a common
vertex and the embedding average does not decouple their indices. To handle
these, we introduce a new Ward-type estimate, which we call the
\emph{punctured-vertex Ward bound}:
if $i,j$ are two neighbors of a vertex $o$ (so $o\sim i$ and $o\sim j$), then
\begin{equation}\label{e:Gijo-bound}
\bE\!\left[\,\big|G^{(o)}_{ij}(z)\big|^2\,\right]
\;\leq\; N^{\fo}\,\frac{\bE[\Im m_N(z)]}{N\eta},
\end{equation}
proved in Lemma~\ref{lem:deletedalmostrandom}. The idea is to perform local
resampling at $o$, use the invariance
$\bE[|G^{(o)}_{ij}|^2]=\bE[|\wt G^{(o)}_{ij}|^2]$. We then expand $\wt G_{ij}^{(o)}(z)$ using the Schur complement formula. Crucially, we can bound $\bE[| \wt G_{ij}^{(o)}(z)|^2]$, by $\bE[| G_{ij}^{(o)}(z)|^2]$ times a small factor, and errors as in \eqref{eq:rwardex}, leading to the desired bound given by the right-hand side of \eqref{e:Gijo-bound}.

Combining \eqref{e:replaceexample}, \eqref{e:rcEbound} with Ward’s identity
\eqref{eq:rwardex} and the bound \eqref{e:Gijo-bound}, we obtain
\[
\frac{1}{Z_{\cF}}\sum_{\bfi}\bE\!\left[I(\cF,\cG)\,\bm{1}(\cG,\wt\cG\in\Omega)
\sum_{\alpha,\beta\in\sfA_i}\big(\wt G^{(\bT)}_{c_\alpha c_\beta}-G^{(b_\alpha b_\beta)}_{c_\alpha c_\beta}\big)\right]
=\OO\big((d-1)^{2\ell}\,\bE[\Psi]\big),
\]
and the second line in \eqref{e:Eterm1} is
$\OO\!\big(N^{-\fb/4}\,\bE[\Psi]\big)$. See \Cref{s:change_est} for the full
statement and proof.

%
%

\section{Switching edges,  the forest and admissible functions}\label{s:forest}

\begin{figure}
\centering
\includegraphics[scale=0.15, trim=12cm 0cm 0cm 0cm, clip]{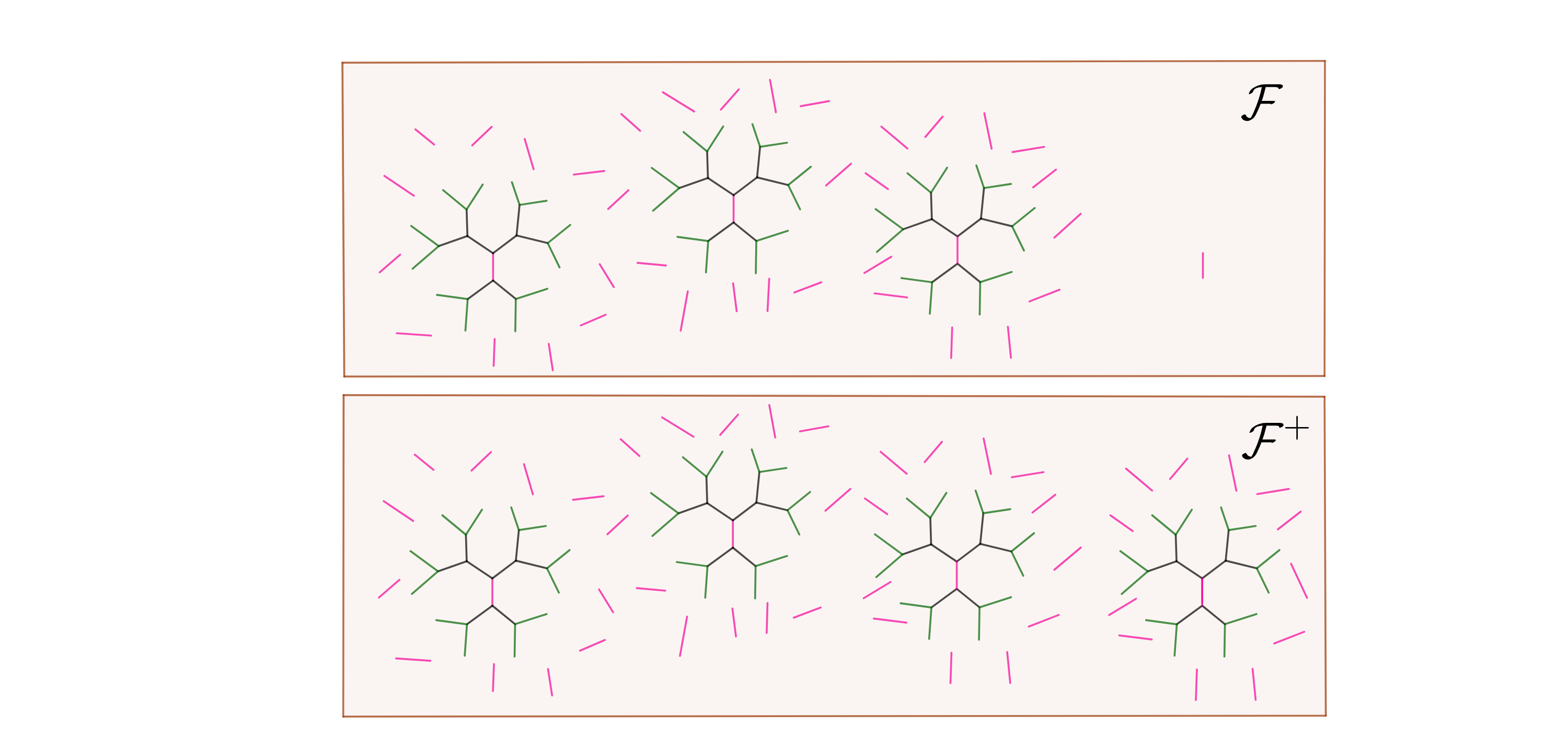}
\caption{Top Panel:
In the forest \(\mathcal{F}\), red edges represent core edges \(\mathcal{C}\). The used core edges belong to radius-\((\ell+1)\) balls, while each unused core edge \(\mathcal{C}^\circ\) forms its own connected component. Together, the red and green edges constitute the switching edges \(\mathcal{K}\).  Bottom Panel: We construct \(\mathcal{F}^+\) from \(\mathcal{F}\) by selecting an unused core edge (the rightmost red edge), expanding it into a radius-\((\ell+1)\) ball, and adding \(\mu\) new switching edges.
\label{fig:forest}}
\end{figure}

We recall from \Cref{p:iteration} that, up to negligible errors,
\[
\bE[\bm1(\cG\in \Omega)(Q-Y)]
\]
decomposes into an $\OO(1)$–weighted sum of terms of the form
\begin{align}\label{e:R_ifirst}
\bE\bigl[I(\cF,\cG)\,\bm1(\cG\in \Omega)\,(G_{c_\al c_\al}^{(b_\al)}-Y)\,R_\bfi\bigr],
\end{align}
where $\cF$ from \eqref{e:buildF} contains all switching edges, and
$R_\bfi$ is a product of factors of the form \eqref{e:rdefcE1}.
To estimate \eqref{e:R_ifirst}, we perform a local resampling around
the edge $(b_\al, c_\al)$ and repeat the procedure from
\Cref{s:expQ}, which naturally leads to an iterative scheme.

At each iteration, we locally resample the graph and express the
Green’s function of the switched graph in terms of the original one as in \Cref{s:expQ}.
Because almost all neighborhoods in $\cG$ are acyclic, with high
probability the edges involved in these resamplings have large tree
neighborhoods and are typically far apart. Hence, we may regard them
collectively as forming a forest. In this section, we formalize this
iterative procedure using a sequence of forests (see
\Cref{fig:forest}), which encode all edges involved in local
resamplings.

\medskip

Fix $\mu := d(d-1)^{\ell}$, the number of boundary edges of the
$d$–regular tree truncated at depth $\ell+1$.
We begin with a two–vertex \emph{template} (an unlabeled graph):
\[
\cF_0 := (V_0,E_0), \qquad
V_0=\{\mathtt i_0,\mathtt o_0\}, \quad
E_0=\{(\mathtt i_0,\mathtt o_0)\}.
\]
Here $\mathtt i_0$ and $\mathtt o_0$ are formal symbols
(distinguished template vertices) that become concrete vertices of
$\cG$ only once an embedding of $\cF$ into $\cG$ is specified.

\paragraph{Initialization.}
Construct $\cF_1$ from $\cF_0$ as follows:
\begin{itemize}
  \item extend the directed edge $(\mathtt i_0,\mathtt o_0)$ to a
        truncated $d$–regular tree $\cT_\ell(\mathtt o_0)$ rooted at
        $\mathtt o_0$;
  \item add $\mu$ boundary edges
        $\cM'_1=\{e'_\alpha\}_{\alpha\in\qq{\mu}}$ to
        $\cT_\ell(\mathtt o_0)$ to obtain
        $\cT_{\ell+1}(\mathtt o_0)$;
  \item introduce $\mu$ new directed edges
        $\cM_1=\{e_\alpha\}_{\alpha\in\qq{\mu}}$.
\end{itemize}
Thus,
\begin{equation}\label{eq:forestdef1}
  \cF_1=\cT_{\ell+1}(\mathtt o_0)\cup \cM_1, \qquad
  \cT_\ell(\mathtt o_0)\cup\cM'_1=\cT_{\ell+1}(\mathtt o_0).
\end{equation}
The subgraph $\cF$ in \eqref{e:buildF} can be viewed as an embedding
of $\cF_1$ into $\cG$.

\paragraph{General step.}
Given $\cF_s=(V_s,E_s)$ and the sets $\cM'_s,\cM_s$ created at the
previous steps, construct $\cF_{s+1}$ by
\begin{itemize}
  \item choosing a (previously created) core edge
        $e=(\mathtt i_s,\mathtt o_s)\in\cM_s$ and extending it to
        $\cT_\ell(\mathtt o_s)$;
  \item adding $\mu$ boundary edges
        $\cM'_{s+1}=\{e'_\alpha\}_{\alpha\in\qq{\mu}}$ to get
        $\cT_{\ell+1}(\mathtt o_s)$;
  \item adding $\mu$ new directed edges
        $\cM_{s+1}=\{e_\alpha\}_{\alpha\in\qq{\mu}}$.
\end{itemize}
Explicitly,
\begin{equation}\label{eq:forestdef2}
  \cF_{s+1}=\cF_s\cup \cT_{\ell+1}(\mathtt o_s)\cup \cM_{s+1},\quad   \cT_\ell(\mathtt o_s)\,\cup\,\cM'_{s+1}=\cT_{\ell+1}(\mathtt o_s).
\end{equation}
We refer to \Cref{fig:forest} for the construction of $\cF_{s+1}=\cF^+$ from $\cF_s=\cF$.
\paragraph{Embedding into $\cG$.}
Local resampling (Section~\ref{s:local_resampling}) yields an embedding
of $\cF_0\subset\cF_1\subset\cdots\subset\cF_{s+1}$ into $\cG$.
Under an embedding we write
$\cF_s=\cF_{\bfi_s}$ with vertex tuple
$\bfi_s\in \qq{N}^{|V_s|}$;
$\mathtt i_s\mapsto i_s$ and $\mathtt o_s\mapsto o_s$.
The neighborhood $\cT_\ell(\mathtt o_s)$ maps to the ball
$B_\ell(o_s;\cG)$; boundary edges $e'_\alpha,e_\alpha$ map to
$(\ell_\alpha,a_\alpha)$ and $(b_\alpha,c_\alpha)$ in $\cG$.

\paragraph{Core and switching edges.}
Let
\[
\cK_s:=\{(\mathtt i_0,\mathtt o_0)\}\cup
\cM_1\cup \cM'_1\cup\cdots\cup \cM_s\cup \cM'_s
\]
be the set of switching edges, and
\[
\cC_s:=\{(\mathtt i_0,\mathtt o_0)\}\cup \cM_1\cup\cdots\cup \cM_s
\]
the core edges (each component of $\cF_s$ contains exactly one core edge).
Write the used core edges as
$\{(\mathtt i_0,\mathtt o_0),\dots,(\mathtt i_{s-1},\mathtt o_{s-1})\}$ and
$\cC_s^\circ:=\cC_s\setminus
\{(\mathtt i_0,\mathtt o_0),\dots,(\mathtt i_{s-1},\mathtt o_{s-1})\}$
for the unused ones.

For most arguments we freeze a step and abbreviate
\[
\cF=\cF_s,\quad \cK=\cK_s,\quad \cC=\cC_s,\quad \cC^\circ=\cC_s^\circ,
\qquad
\cF^+=\cF_{s+1},\ \cK^+=\cK_{s+1},\ \cC^+=\cC_{s+1},\
(\cC^\circ)^+=\cC^\circ_{s+1}.
\]
In general,
\begin{equation}\label{e:cFtocF+}
  \cF=\!\!\bigcup_{e\in\cC^\circ}\!\{e\}
  \ \cup\!\!\!\!\bigcup_{(\mathtt i',\mathtt o')\in \cC\setminus \cC^\circ}\!\!\!\!
  \cT_{\ell+1}(\mathtt o'),
  \qquad
  \cK=\cC\ \cup\
  \!\!\!\!\bigcup_{(\mathtt i',\mathtt o')\in \cC\setminus \cC^\circ}\!\!\!\!
  \big(\cT_{\ell+1}(\mathtt o')\setminus \cT_{\ell}(\mathtt o')\big).
\end{equation}
If we expand an unused core edge $(\mathtt i,\mathtt o)\in\cC^\circ$ by
$\cT_{\ell+1}(\mathtt o)=\cT_\ell(\mathtt o)\cup\cM'$, where
$\cM'=\{e'_\alpha\}_{\alpha\in\qq{\mu}}$, and add
$\cM=\{e_\alpha\}_{\alpha\in\qq{\mu}}$, then
\begin{equation}\label{e:cF++}
  \cC^+=\cC\cup\cM,\qquad
  (\cC^\circ)^+=\cC^\circ\cup\cM\setminus\{(\mathtt i,\mathtt o)\},\qquad
  \cF^+=\cF\cup\cT_{\ell+1}(\mathtt o)\cup\cM,\qquad
  \cK^+=\cK\cup\cM\cup\cM'.
\end{equation}
We refer to \Cref{fig:forest} for $\cF$ and $\cF^+$.

\paragraph{Indicator for “good” embeddings.}

For most of this paper we view $\cF=(V, E)$ as an embedded (labeled) copy of the
template in $\cG$; this amounts to choosing a vertex assignment
$\bfi\in \qq{N}^{|V|}$. Different choices of $\bfi$
yield different embeddings. Under this convention, $\cK$, $\cC$, and
$\cC^\circ$ are also regarded as embedded subgraphs of $\cG$. With a slight abuse of notation, we write $\cF=(\bfi,E)$ to emphasize that we
have fixed an embedding of $\cF$ into $\cG$.  As discussed
above, one local resampling step produces a new embedded forest
$\cF^+=\cF^+({\bfi^+}, E^+)\subset \cG$ with vertex tuple $\bfi^+$.

The forest $\cF$ records the edges involved in all previous local resamplings.
To ensure these edges are well separated and lie in large tree neighborhoods, we
use the following indicator.

\begin{definition}\label{def:indicator}
Let $\cF=(\bfi,E)$ (as in \eqref{e:cFtocF+}) with core edges $\cC$, viewed as a
subgraph of a $d$-regular graph $\cG$ with adjacency matrix
$A$. Define $I(\cF,\cG)=1$ if and only if:
(i) $\cF$ is embedded in $\cG$; (ii) for every $(b,c)\in\cC$ and every
$x\in \cB_\ell(c;\cG)$, the ball $\cB_{\fR}(x;\cG)$ is a tree; and
(iii) distinct core edges are at pairwise distance at least $3\fR$.
Equivalently,
\begin{align}\label{e:defI}
I(\cF,\cG)
:= \prod_{\{x,y\}\in E} A_{xy}
   \;\prod_{(b,c)\in \cC}\;\prod_{x\in \cB_\ell(c;\cG)}
      \bm{1}\!\big(\cB_{\fR}(x;\cG)\ \text{is a tree}\big)
   \; \prod_{\substack{(b,c),(b',c')\in \cC\\ (b,c)\neq (b',c')}}      \bm{1}\!\big(\dist_\cG(c,c')\ge 3\fR\big).
\end{align}
\end{definition}

Fix a good embedding of $\cF=(\bfi, E)$ into $\cG\in \Omega$ with $I(\cF, \cG)=1$. Then we perform a local resampling around an unused core edge $(i,o)$. 
In the following lemma, we show that with high probability with respect to the randomness of $\bfS$, the randomly selected edges $(b_\al, c_\al)$ are far away from each other, and have large tree neighborhood. In particular $\wt \cG=T_\bfS \cG\in \oOmega$.
\begin{lemma}\label{lem:configuration}
Fix a $d$-regular graph $\cG\in \oOmega$, and a forest $\cF=(\bfi, E)$ (as in \eqref{e:cFtocF+}) viewed as a subgraph of $\cG$. 
Assume that $I(\cF,\cG)=1$ and $|\bfi|\leq N^{\fc/2}$. 
We consider the local resampling around an unused core edge $(i,o)\in \cC^\circ$, with resampling data $\{(l_\al, a_\al), (b_\al, c_\al)\}_{\al\in \qq{\mu}}$. 
We denote the set of resampling data $\sfF(\cG)\subset \sfS(\cG)$ (recall from \Cref{s:local_resampling}) such that the following holds 
\begin{enumerate}
    \item 
for any $\al\neq \beta\in \qq{\mu}$, $\dist_\cG(\{b_\al, c_\al\}\cup \bfi,\{b_\beta, c_\beta\} )\geq3\fR$; 
\item 
for any  $v\in \cB_{\ell}(\{b_\al, c_\al\}_{\al\in \qq{\mu}}, \cG)$, the radius $\fR$ neighborhood of $v$ is a tree.
\end{enumerate}
Then $\bP_\bfS(\sfF(\cG))\geq 1-N^{-1+2\fc}$ (where $\bP_{\bfS}(\cdot)$ is the probability with respect to the randomness of $\bfS$ as in \Cref{def:PS}). Also, for $\bfS\in \sfF(\cG)$ the following holds
\begin{enumerate}
    \item $\mu=d(d-1)^\ell$, ${\mathsf W}_{\bf S}=\qq{\mu}$ (recall from \eqref{Wdef}), and $\tcG=T_\bfS(\cG)\in \oOmega$; 
\item $I(\cF^+, \cG)=1$ and $I(\cF, \wt\cG)=1$.
\end{enumerate}
\end{lemma}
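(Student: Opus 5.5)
The plan is a union bound over the $\mu=d(d-1)^\ell$ independent, uniformly chosen oriented edges $(b_\al,c_\al)$ of $\cG^{(\bT)}$. After that, the deterministic consequences follow by checking definitions.

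For the probability bound, fix $\al$ and count the oriented edges that would violate one of the two conditions. Condition 1 fails for a pair $\al\neq\beta$ only if $\{b_\beta,c_\beta\}$ lies within distance $3\fR$ of $\{b_\al,c_\al\}\cup\bfi$. Since degrees are $d$, this set has at most $(|\bfi|+2)d(d-1)^{3\fR}\lesssim N^{\fc/2}N^{3\fc/4}$ vertices, using $(d-1)^{3\fR}=N^{3\fc/4}$. The number of oriented edges of $\cG^{(\bT)}$ is $Nd-\OO((d-1)^{\ell})\asymp N$. So for each pair $(\al,\beta)$ the failure probability is $\lesssim N^{-1+5\fc/4}$. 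There are $\mu^2=\OO((d-1)^{2\ell})$ pairs, which is $N^{\oo(\fb)}$ since $\ell/\log_{d-1}N\ll\fb\ll\fc$. Summing gives a total of $\lesssim N^{-1+5\fc/4+\oo(\fc)}$.

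For condition 2, I use that $\cG\in\oOmega$. At most $N^{\fc}$ vertices lack a radius-$\fR$ tree neighborhood, so the set of vertices within distance $\ell+1$ of such a vertex has size $\le N^{\fc}d(d-1)^{\ell+1}$. The probability that some $b_\al$ or $c_\al$ lands in it is $\lesssim \mu N^{\fc}(d-1)^{\ell}/N\le N^{-1+\fc+\oo(\fc)}$. Adding the two failure probabilities gives $\bP_\bfS(\sfF(\cG)^\complement)\le N^{-1+2\fc}$ for $N$ large.

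Next come the deterministic consequences for $\bfS\in\sfF(\cG)$. The vertex $o$ is the endpoint of a core edge, so $I(\cF,\cG)=1$ gives $\cB_{\fR}(o,\cG)$ a tree, hence $\cB_{\ell+1}(o)$ is a tree and $\mu=d(d-1)^\ell$ exactly.

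To show $I_\al=1$ for each $\al$, I check its two requirements.
\begin{itemize}
\item The tree requirement: $a_\al$ is at distance $\ell+1$ from $o$, so $\cB_{\fR/4}(a_\al)$ sits inside the tree $\cB_\fR(o)$. The neighborhoods of $b_\al,c_\al$ are trees by condition 2. These balls are at distance $\ge 3\fR-\ell-1$ from one another by condition 1 applied with $\bfi\ni o$, so adding the edge $\{a_\al,b_\al\}$ creates no cycle.
\item The isolation requirement: the $a$'s are separated in $\cG^{(\bT)}$, since distinct $a_\al,a_\beta$ lie in different branches hanging off $\bT$ in the tree $\cB_\fR(o)$. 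So their $\cG^{(\bT)}$-distance exceeds $\fR/4$ unless they are connected outside $\cB_\fR(o)$, which costs length $\ge 2(\fR-\ell-1)$. The $b,c$'s are separated from each other and from the $a$'s by condition 1.
\end{itemize}
Hence $\mathsf W_\bfS=\qq{\mu}$.

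For $\tcG\in\oOmega$, the switchings only modify edges inside tree neighborhoods that are far from everything else. So the only vertices whose radius-$\fR$ neighborhood can change are those within distance $\fR$ of the switched edges. The new edges join disjoint tree regions, creating no new short cycles near these vertices, and vertices that were far from cycles stay so. Thus the count of non-tree neighborhoods and the excess bound are preserved. This is the step I expect to need the most care, because one must argue that a new cycle through a switched edge has length $>2\fR$, and this uses condition 1 with the $3\fR$ separation.

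Finally, consider $I(\cF^+,\cG)=1$. The new core edges $(b_\al,c_\al)$ are embedded, have tree neighborhoods by condition 2, and are at distance $\ge3\fR$ from each other and from the old core edges by condition 1. Likewise $I(\cF,\tcG)=1$ holds, because $\cF$ avoids the switched edges other than those inside $\cT_{\ell+1}(o)$, and those edges are kept in $\tcG$ since $\cF$ contains only $\cB_{\ell+1}(o)$ of the expanded core. The tree and separation properties for the old core edges transfer by the same locality argument as for $\tcG\in\oOmega$. I would check whether $\cF$ indeed contains the resampled boundary edges $(l_\al,a_\al)$ of $o$; if it does, one should instead read $I(\cF,\tcG)$ with $\cF$ before expanding $(i,o)$, whose edges are all untouched.
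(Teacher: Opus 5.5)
Your proposal is correct and follows the paper's proof essentially step for step. The paper also uses a union bound built from $(d-1)^{3\fR}=N^{3\fc/4}$, $|\bfi|\le N^{\fc/2}$ and the at most $N^{\fc}$ non-tree vertices of $\oOmega$; it draws the $(b_\al,c_\al)$ sequentially rather than over pairs, which is immaterial. It then makes the same locality checks for ${\mathsf W}_\bfS=\qq{\mu}$, for $\tcG\in\oOmega$ (no $\fR$-ball gains excess), and for $I(\cF^+,\cG)=1$ and $I(\cF,\tcG)=1$. Your fallback reading of the last step is the right one: because $(i,o)\in\cC^\circ$ is unused, $\cF$ contains only the edge $\{i,o\}$ from that component, so no edge of $\cF$ is switched. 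Your preceding sentence claiming the boundary edges of $\cT_{\ell+1}(o)$ are kept in $\tcG$ is false, since the edges $\{l_\al,a_\al\}$ are exactly the ones removed.
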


\begin{proof}[Proof of \Cref{lem:configuration}]
    We sequentially select $(b_\al, c_\al)$ uniformly random from $\cG^{(\bT)}$. For any fixed $\alpha$, we consider all edges that would break the requirements of the lemma. For the first requirement, we have
    \begin{align}\label{e:small1}
        \bP_{\bfS}(\dist_\cG(\bfi\cup_{1\leq \beta\leq \al-1}\{b_\beta, c_\beta\},\{b_\al, c_\al\} )\leq3\fR)\lesssim N^{-1}(|\bfi|+2\mu)d(d-1)^{3\fR}\leq N^{-1+3\fc/2}.
    \end{align}
    For the second requirement, we recall that  $\cG\in \oOmega$, in which all vertices except for $N^\fc$ many have radius $\fR$ tree neighborhood. Thus  
    \begin{align}\label{e:small2}
        \bP_{\bfS}(\cB_\fR(v,\cG) \text{ is not a tree for some } v\in \cB_\ell(\{b_\al, c_\al\}, \cG))\leq N^{-1} N^\fc d(d-1)^\ell\leq N^{-1+3\fc/2}.
    \end{align}
   The claim $\bP_\bfS(\sfF(\cG))\geq 1-N^{-1+2\fc}$ follows from union bounding over all $\alpha$ using \eqref{e:small1} and \eqref{e:small2}. 
   
    Under our assumption $I(\cF,\cG)=1$, the radius $\fR$ neighborhood of $o$ is a tree. Thus $\mu=d(d-1)^\ell$. Moreover, the neighborhoods $\cB_{3\fR/2}(o, \cG)$, and $\cB_{\fR}(\{b_\al, c_\al\}, \cG)$ for $\al\in \qq{\mu}$ are disjoint. It follows that $\dist_{\cG^{(\bT)}}(\{a_\al,b_\al,c_\al\}, \{a_\beta,b_\beta,c_\beta\})> {\fR/4}$ for all $\al\neq \beta\in \qq{\mu}$, and the subgraph $\cB_{\fR/4}(\{a_\al, b_\al, c_\al\}, \cG^{(\bT)})$ after adding the edge $\{a_\al, b_\al\}$ is a tree for all $\al \in \qq{\mu}$. We conclude that ${\mathsf W}_{\bf S}=\qq{\mu}$.

    Next we show that for any vertex $v\in \qq{N}$, the excess of $\cB_{\fR}(v, \tcG)$ is no bigger than that of $\cB_{\fR}(v, \cG)$. Then it follows that $\tcG\in \oOmega$. 
    If $\dist(v, \{a_\al, b_\al, c_\al\}_{\al\in \qq{\mu}})\geq \fR$, then $\cB_{\fR}(v, \tcG)=\cB_{\fR}(v, \cG)$, and the statement follows.
    Otherwise either $v\in \cB_{\fR}(\{a_\al\}_{\al\in \qq{\mu}}, \cG)\subset\cB_{3\fR/2}(o, \cG)$ or $v\in \cB_{\fR}(\{b_\al, c_\al\}, \cG)$ for some $\al\in \qq{\mu}$. We will discuss the first case. The second case can be proven in the same way, so we omit its proof. 
    If $v\in \cB_\ell(o, \cG)$, we denote $r:=\min_{\al \in \qq{\mu}}\dist_\cG(v, \{l_\al\})\leq \fR$. Then $\cB_{\fR}(v, \tcG)$ is a subgraph of 
    $\cB_{\fR}(v, \cG)\cup_{\al\in \qq{\mu}} \cB_{\fR-r-1}(c_\al,\cG)$ after removing $\{(b_\al,c_\al)\}_{\al\in \qq{\mu}}$ and adding $\{(l_\al, c_\al)\}_{\al\in \qq{\mu}}$. By our construction of $\sfF(\cG)$, $\cB_{\fR-r-1}(c_\al,\cG)$ are disjoint trees. We conclude that $\cB_{\fR}(v, \tcG)$ is a tree.
    If $v\not\in \cB_\ell(o, \cG)$,
    we denote $r:=\min_{\al \in \qq{\mu}}\dist_\cG(v, a_\al)\leq \fR$, then $\cB_{\fR}(v, \tcG)$ is a subgraph of 
    $\cB_{\fR}(v, \cG)\cup_{\al\in \qq{\mu}} 
    \cB_{\fR-r-1}(c_\al,\cG)$ 
    after removing $\{(b_\al,c_\al)\}_{\al\in \qq{\mu}}$ and adding $\{(a_\al, b_\al)\}_{\al\in \qq{\mu}}$. 
   Again by our construction of $\sfF(\cG)$, $\cB_{\fR-r-1}(c_\al,\cG)$ are disjoint trees, we conclude the excess of $\cB_{\fR}(v, \tcG)$ is at most that of $\cB_{\fR}(v, \cG)$.

The claim $I(\cF^+, \cG)=1$ follows from the construction of $\sfF(\cG)$. It also follows from  the above discussion that $\cB_\fR(v,\tcG)$ is a tree for any $v\in \cB_\ell(o, \cG)$. One can then check that $I(\cF,\tcG)=1$. This finishes the proof of the second statement in  \Cref{lem:configuration}.

\end{proof}

\paragraph{Number of ``good" embeddings.}
From \eqref{e:cFtocF+}, each connected component of $\cF$ is either an unused core edge or a radius-$(\ell+1)$ ball corresponding to a used core edge. The following proposition states that the total number of embeddings where $I(\cF,\cG)=1$ is approximately equal to that of choosing each connected component independently.

\begin{proposition}\label{p:sumA}
Given a template $\cF=(V, E)$ with core edges $\cC$ and unused core edges $\cC^\circ$ as in \eqref{e:cFtocF+}, as well as a $d$-regular graph $\cG\in \oOmega$, we have 
\[
\sum_{\bfi} I(\cF,\cG)=Z_{\cF}\left(1+\OO\left(\frac{1}{N^{1-2\fc}}\right)\right),
\]
where
\begin{align}\label{e:sumA2}\begin{split}
Z_{\cF}:=(Nd)^{|\cC|}\left([(d-1)!]^{1+d+d(d-1)+\cdots+d(d-1)^{\ell-1}}\right)^{|\cC\setminus\cC^\circ|}.
\end{split}\end{align}
Here $|\cC|$ is the number of core edges; and $|\cC\setminus\cC^\circ|$ is the number of used core edges. We remark that $Z_\cF$ depends only on the template $\cF$ but not $\cG$.
\end{proposition}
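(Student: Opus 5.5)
We count the good embeddings of $\cF$ one connected component at a time, comparing with the idealized count in which components are placed independently with no constraints. By \eqref{e:cFtocF+}, each component of $\cF$ is either an unused core edge $(\mathtt b,\mathtt c)\in\cC^\circ$ or a ball $\cT_{\ell+1}(\mathtt o')$ attached to a used core edge $(\mathtt i',\mathtt o')$. Order the components arbitrarily and embed them sequentially. The idealized number of choices for each component is as follows.

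For an unused core edge, there are $Nd$ choices: $N$ for $\mathtt b$ and $d$ for its neighbour $\mathtt c$.

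For a used core edge we first choose the directed edge $(i',o')$, which gives $Nd$ choices. We then embed the labelled tree $\cT_{\ell+1}(\mathtt o')$ into $\cG$ so that it extends $(i',o')$. Assuming $\cB_{\ell+1}(o',\cG)$ is a tree, this amounts to assigning the children of each interior vertex bijectively to its neighbours in $\cG$. The root $o'$ has $d-1$ children besides $i'$, and each other vertex at depth $\le\ell$ has $d-1$ children. This gives $(d-1)!$ choices per vertex. The number of such vertices is the number of vertices at depths $0,\dots,\ell$ of $\cT_\ell$, namely $1+d+\cdots+d(d-1)^{\ell-1}$ (the root contributes the $1$). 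One should double check this exponent against the convention that the root's children other than $i'$ are labelled, since $i'$ itself is fixed.

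Multiplying over components gives exactly $Z_\cF$ in \eqref{e:sumA2}, provided every choice made is admissible, meaning it satisfies $I(\cF,\cG)=1$.

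The main step is to show that the fraction of bad choices is $\OO(N^{-1+2\fc})$, both as an upper bound and as a lower bound. For the upper bound, $\sum_\bfi I(\cF,\cG)\le Z_\cF$ is immediate. Any admissible embedding has tree neighbourhoods around every $x\in\cB_\ell(c;\cG)$, so the tree-extension count above is exact, and dropping the separation and tree constraints only increases the count.

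For the lower bound, when placing the $k$-th component we subtract the bad choices of its core edge. There are two kinds.

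First, the core vertex may lie within distance $3\fR$ of previously placed core vertices. There are at most $|\cC|\,d(d-1)^{3\fR}\le N^{\fc/2}N^{3\fc/4}$ such vertices, using $|\cC|\le|\bfi|$ and $(d-1)^{3\fR}=N^{3\fc/4}$. (Here we assume, as in \Cref{lem:configuration}, that $|\cF|\le N^{\fc/2}$, which is implicit since the number of iterations is bounded.)

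Second, some $x\in\cB_\ell(c;\cG)$ may have a non-tree $\fR$-neighbourhood. Since $\cG\in\oOmega$, there are at most $N^\fc$ such $x$, so at most $N^\fc d(d-1)^\ell\le N^{\fc+\fb}$ bad positions for $c$.

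In both cases the bad fraction is at most $N^{-1+2\fc}\cdot\OO(1)$ relative to the $Nd$ choices. The same reasoning as in \eqref{e:small1}–\eqref{e:small2} applies. Once the core edge of a used component is good, the tree extension has exactly $[(d-1)!]^{\cdots}$ realizations.

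Multiplying the factors $(1-\OO(N^{-1+2\fc}))$ over the boundedly many components, or absorbing $|\cC|$ into the exponent margin, gives the claimed relative error. The main obstacle is the bookkeeping in this bad-set count: making the exponents fit under $N^{-1+2\fc}$ uniformly in the number of components. This is why the size of $\bfi$ must be controlled, and it may require slightly adjusting the constant in the exponent.
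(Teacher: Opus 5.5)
Your argument is correct and is essentially the paper's proof. The paper also counts one component at a time, by induction on the number of components, getting $Nd\,(1+\OO(N^{-1+3\fc/2}))$ per core edge and an extra $[(d-1)!]^{1+d+\cdots+d(d-1)^{\ell-1}}$ per ball. Your exponent check is right: the root contributes one $(d-1)!$ for its children other than $i'$, and $i'$ at depth $1$ contributes another.

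Your version is more explicit than the paper's about the $3\fR$-separation constraint and about needing an a priori bound such as $|\bfi|\le N^{\fc/2}$. One correction: the number of components is not bounded, since along the iteration it grows like $(d-1)^{\ell}$ times the number of steps. That is still at most $N^{\fb}$. So the step that closes the argument is your second option: absorb $|\cC|$ into the exponent margin using the actual per-component bad fractions $N^{-1+5\fc/4}$ and $N^{-1+\fc+\fb}$, not the cruder per-component bound $N^{-1+2\fc}$.
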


\begin{remark}
In the rest of this article, we have many expressions in the following form 
\begin{align}
\frac{1}{Z_\cF} \sum_\bfi \bE[I(\cF, \cG)\times (\cdots )].
\end{align}
Thanks to \Cref{p:sumA}, the above expression can be viewed as an average over all possible embedding of the template $\cF$ into $\cG$. 
\end{remark}

\begin{proof}
We notice that $|\cC|$ is also the number of connected components of $\cF$, and $|\cC\setminus\cC^\circ|$ is the number of connected components in $\cF$ which are balls of radius $\ell+1$.

We can prove \eqref{e:sumA2} by 
induction on the number of connected components. If $\cF$ consists of a single edge $\cF=\{b,c\}$ which is an unused core edge, then 
\begin{align}\label{e:single_edge}
    \sum_{\bfi}I(\cF,\cG)=\sum_{b,c} A_{bc}\prod_{v\in \cB_\ell(c,\cG)}\bm1(\cB_{\fR}(v, \cG) \text{ is a tree}) )=Nd\left(1+\OO\left(\frac{1}{N^{1-3\fc/2}}\right)\right),
\end{align}
where we used the definition of $\overline\Omega$ from \Cref{def:omegabar}. 
If $\cF$ consists of a radius $(\ell+1)$-ball, corresponding to one used core edge, then we can also first sum over its core edge. The number of choices of this is the same as \eqref{e:single_edge}.  Then we sum over the remaining vertices. Each interior vertex of the radius-$(\ell+1)$ ball contributes a factor $(d-1)!$, since there are $(d-1)!$ ways to embed its children vertices. We get
\begin{align}\label{e:ball}
    \sum_{\bfi}I(\cF,\cG)=Nd[(d-1)!]^{1+d+d(d-1)+\cdots+d(d-1)^{\ell-1}}\left(1+\OO\left(\frac{1}{N^{1-3\fc/2}}\right)\right).
\end{align}

If the statement holds for $\cF$ with $\theta$ connected components, next we show it for $\cF$ with $\theta+1$ connected components. We can first sum over the indices corresponding to a connected component, fixing the other indices. 
 If it is a single edge, we get a factor similar to \eqref{e:single_edge}; if it is a radius-$(\ell+1)$ ball, we get a factor similar to \eqref{e:ball}. Next we can sum over the remaining $\theta$ connected components of $\cF$, which gives \eqref{e:sumA2}.

\end{proof}

\subsection{Admissible functions} \label{s:admissible}
Later, we repeatedly localize the Green's function on the embedded forest
$\cF\subset \cG$ and separate the \emph{tree-like} main term from the
\emph{fluctuation}. Concretely, for vertices $w,w'\in\bfi$ we approximate the
global Green's function entry $G_{ww'}(z)$ by a deterministic local kernel
$L_{ww'}(z,\cF,\cG)$, which coincides with the Green's function on copies of the
$d$-regular tree whenever the neighborhood of $\cF$ is cycle-free. The error
$G^\circ:=G-L$ will be treated perturbatively. To organize the resulting
expressions, we introduce a class of \emph{admissible functions}, namely products
of resolvent factors associated with core and switching edges.

\begin{definition}[Local Green's Function]\label{d:localGF}
  Given a forest $\cF=(\bfi, E)$ embedded in $\cG$, we introduce the local Green's function $L(z,\cF,\cG)$: for  $w,w'\in \bfi$, \begin{align}\begin{split}\label{e:local_Green}
    &L_{ww'}(z,\cF,\cG)=P_{ww'}(\cF,z,\msc(z)).
\end{split}\end{align}
Given the event $I(\cF,\cG)=1$, $L_{ww'}$ is simply the Green's function of copies of $d$-regular trees (recall from \Cref{greentree}): $L_{ww'}=0$ if $w,w'$ are disconnected in $\cF$, otherwise, 
\begin{align*}
L_{ww'}=\md(z)\left(-\frac{\msc(z)}{\sqrt{d-1}}\right)^{\dist_{\cF}(w,w')}.
\end{align*} 

We also denote the centered version of the Green's function as
\begin{align}\label{e:G-L}
G_{ww'}^\circ(z)=G_{ww'}(z)-L_{ww'}(z,\cF,\cG).
\end{align}
When the context is clear, we will simply write $G^\circ(z), L(z,\cF,\cG)$ as $G^\circ, L$ for simplicity.

\end{definition}

\begin{remark}

 At each step, we expand the forest $\cF=(\bfi, E)$ to a new forest $\cF^+=(\bfi^+, E^+)$ by including local resampling data ($\cF\subset \cF^+$ are embedded subgraph of $\cG$). Given the event $I(\cF^+,\cG)=1$, the local Green's functions are compatible (both are given by the Green's function of copies of $d$-regular trees)
     \begin{align*}
        L_{ww'}(z,\cF,\cG)=L_{ww'}(z,\cF^+,\cG),\quad w,w'\in \bfi.
            \end{align*}

Later, we need the local Green's function with one vertex removed: Let $(i,o)\in \cC\setminus\cC^\circ$, and recall $P^{(i)}$ from \eqref{e:defPi},
\begin{align}\label{e:defLi}
    L^{(i)}_{ww'}:= L^{(i)}_{ww'}(z,\cF, \cG):=P^{(i)}_{ww'}(\cF,z,\msc(z)).
\end{align}
\end{remark}

\begin{definition}[Admissible Function]
    \label{def:pgen}
Consider a forest $\cF=(\bfi, E)$ as defined in \eqref{e:cFtocF+}, with switching edges $\cK$, core edges $\cC$, unused core edges $\cC^\circ$. For any nonnegative integers $r\geq0$, we denote the set of admissible functions $\Adm(r, \cF,\cG)$ where a function $R_{\bfi}\in \Adm(r,\cF,\cG)$ contains $r$ factors of the form 
\begin{align}\begin{split}\label{e:defcE1}
    &\{(G_{c c}^{(b)}-Q)\}_{(b,c)\in \cC^\circ}, \quad  \{ G_{c c'}^{(bb')}, G_{b c'}^{(b')}, G_{bb'}\}_{(b,c)\neq (b',c')\in \cC^\circ},\quad \{ G^{\circ}_{ss'}\}_{s,s'\in \cK},\quad (Q-\msc(z)).
\end{split}\end{align}
\end{definition}  

\begin{remark}
    At each step, we expand the forest $\cF$ to a new forest $\cF^+$ by including local resampling data. This change also affects the admissible set of functions, which now expands as follows:
    \begin{align*}
        \Adm(r,\cF,\cG)\subset \Adm(r,\cF^+,\cG).
    \end{align*}
\end{remark}

We now give the general ways of bounding the terms involved in the admissible functions (recall from \Cref{def:pgen}). 

\begin{proposition}\label{p:small_Ri}
     We take $z\in {\bf D}$ (recall from \eqref{e:D}). Then the following holds:
     \begin{enumerate}
     \item For any factor $B$ in \eqref{e:defcE1}, we have
 \begin{align}\label{e:Bsmall}
       \bm1(\cG\in \Omega)I(\cF, \cG) |B|\lesssim N^{-\fb}.
    \end{align} 

     \item  Let $R_\bfi\in \Adm(r,\bmr,\cF,\cG)$ as in \Cref{def:pgen}.
      If $R_\bfi$ contains two terms of the form $\{G_{cc'}^{(bb')},G_{bc'}^{(b')}, G_{bb'}\}$ with $(c,b)\neq (c',b')\in \cC^\circ$, then
    \begin{align}\label{e:refined_bound}
        \frac{(d-1)^{3\ell r}}{Z_\cF}\sum_{\bfi}\bm1(\cG\in \Omega)I(\cF, \cG)R_\bfi =\OO\left(\frac{(d-1)^{3\ell r}N^\fo}{N^{(r-2)\fb} } \Phi\right).
    \end{align}
     If we further assume $r\geq 3$, then the expectation of \eqref{e:refined_bound} is bounded by  $\OO(N^{-\fb/2}\bE[\Psi_p])$;
    \end{enumerate}
    \end{proposition}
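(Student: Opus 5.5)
For \eqref{e:Bsmall}, I will go through the factors in \eqref{e:defcE1} one at a time, always working on the event $\cG\in\Omega$, $I(\cF,\cG)=1$.

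The factor $Q-\msc(z)$ is bounded by $N^{-2\fb}$ directly from \eqref{eq:infbound}.

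For $G_{cc}^{(b)}-Q$, I apply \Cref{l:basicG} with $\bX=\{b\}$. The tree condition in $I(\cF,\cG)$ guarantees that $\cB_{\fR/100}(\{c,b\},\cG)$ is a tree. Hence $P^{(b)}_{cc}$ is the Green's function at the root of a $(d-1)$-ary tree, which equals $\msc(z)$ by \eqref{e:Gtreemsc2}. Combined with $|Q-\msc|\lesssim N^{-2\fb}$, this gives the bound.

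For the off-diagonal factors $G_{cc'}^{(bb')}$, $G_{bc'}^{(b')}$ and $G_{bb'}$ with $(b,c)\neq(b',c')\in\cC^\circ$, the separation condition $\dist_\cG(c,c')\geq 3\fR$ in \eqref{e:defI} places the two edges in different components of $\cB_{\fR/100}(\{\cdot\}\cup\bX,\cG)$. The tree approximation $P$ therefore vanishes, and \Cref{l:basicG} (with $\bX=\{b,b'\}$ or $\{b'\}$) gives $\lesssim N^{-\fb}$.

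For $G^\circ_{ss'}$ with $s,s'\in\cK$, I use $L=P(\cF,z,\msc)$ on the embedded forest. Under $I(\cF,\cG)=1$, the ball $\cB_{\fR/100}(\{s,s'\},\cG)$ is a tree, and its tree Green's function agrees with $L_{ss'}$: both are given by \eqref{e:Gtreemkm} if $s,s'$ lie in the same component, and both vanish otherwise because the separation exceeds $\fR/50$. So \eqref{eq:infbound} gives $|G^\circ_{ss'}|\lesssim N^{-2\fb}$.

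For \eqref{e:refined_bound}, I write $R_\bfi=B_1B_2R'$, where $B_1,B_2$ are the two off-diagonal factors and $R'$ is the product of the remaining $r-2$ factors. By the first part, $|R'|\lesssim N^{-(r-2)\fb}$. By Cauchy--Schwarz, $|B_1B_2|\leq (|B_1|^2+|B_2|^2)/2$, so it suffices to show that
\[
\frac{1}{Z_\cF}\sum_\bfi I(\cF,\cG)|B|^2\lesssim N^\fo\Phi
\]
for each such $B$.

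For this I use \Cref{p:sumA} to treat the normalized sum as an average over independent embeddings of the components. I first sum over the two free core edges $(b,c),(b',c')\in\cC^\circ$, keeping the other indices fixed and dropping the indicators that involve them (everything is nonnegative), at the cost of an overall factor $1+\OO(N^{-1+2\fc})$.

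\begin{itemize}
\item For $G_{bb'}$, the Ward identity \eqref{eq:rwardex} immediately gives $N^{-2}\sum_{b,b'}|G_{bb'}|^2\lesssim\Phi$.
\item For $G_{bc'}^{(b')}$ and $G_{cc'}^{(bb')}$, I first use the Schur bounds \eqref{e:Gbound} and \eqref{e:Gcc'bb'} to reduce to $|G_{bc'}|+|G_{bb'}|$, respectively $|G_{bb'}|+|G_{bc'}|+|G_{b'c}|+|G_{cc'}|$. These are valid since the denominators $G_{bb}$, $G_{b'b'}$ and $G_{bb}G_{b'b'}-G_{bb'}^2$ are bounded below by local law. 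Then Ward applies as in \eqref{e:ccbb}.
\end{itemize}

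The $N^\fo$ loss absorbs the $\Im m_N$ versus $\Im G$ comparisons from \eqref{e:Gsize}. Averaging over the remaining indices is trivial, and this yields \eqref{e:refined_bound}.

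Finally, for $r\geq3$ the prefactor is $(d-1)^{3\ell r}N^{\fo-(r-2)\fb}\leq N^{-\fb/2}$, because $\ell\ll\fb\log_{d-1}N$ and $\fo\ll\fb$. Taking expectations and using $\Phi\leq\Psi$ on $\Omega$ gives the stated $\OO(N^{-\fb/2}\bE[\Psi])$.

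The main technical point is this embedding average. One must check that summing over the two free core edges genuinely decouples from the indicator $I(\cF,\cG)$ and the other components, so that the Ward identity can be applied with the full normalization $(Nd)^2$ from $Z_\cF$. Since everything involved is nonnegative, dropping the indicator only increases the sum, which is what makes this step clean.
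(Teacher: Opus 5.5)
Your proposal is correct and follows essentially the same route as the paper: part~1 is the local law on $\Omega$ applied as in \Cref{l:basicG}, and part~2 bounds the remaining $r-2$ factors by $N^{-\fb}$ and controls the two off-diagonal factors via the Schur reduction \eqref{e:Gcc'bb'} and the Ward identity, exactly as in \eqref{e:Gccerror}. Your step $|B_1B_2|\le(|B_1|^2+|B_2|^2)/2$ just makes explicit the reduction to the squared case $(G^{(bb')}_{cc'})^2$ that the paper treats as representative; when you discard indicators, make sure to keep the adjacency factors $A_{bc}A_{b'c'}$, which your $(Nd)^2$ normalization already implicitly assumes.
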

\begin{proof}[Proof of \Cref{p:small_Ri}]
The claim \eqref{e:Bsmall} follows from the definition \eqref{eq:infbound} of the set \(\Omega\) and can be proved in the same way as \Cref{l:basicG}. Thus, we omit its proof.
For \eqref{e:refined_bound}, we show the case that $R_\bfi$ contains $(G_{cc'}^{(bb')})^2$. The other cases can be proven in the same way so we omit. We can first bound the other factors of $R_\bfi$ using \eqref{e:Bsmall}, as $N^{-(r-2)\fb}$. Then \eqref{e:refined_bound} reduces to
\begin{align}
        \frac{(d-1)^{3\ell r}}{Z_\cF}\sum_{\bfi}\bm1(\cG\in \Omega)I(\cF, \cG)|R_\bfi |\lesssim \frac{(d-1)^{3\ell r}}{N^{(r-2)\fb} Z_\cF}\sum_{\bfi}\bm1(\cG\in \Omega)I(\cF, \cG)|G_{cc'}^{(bb')}|^2\lesssim \frac{(d-1)^{3\ell r}\Phi}{N^{(r-2)\fb} Z_\cF},
\end{align}
which follows from the same argument as \eqref{e:Gccerror}.
\end{proof}
\section{Switching using 
the Schur complement formula}\label{sec:schurlemma}
In this section we will use the Schur complement formula to study the Green's function after local resampling. 
We recall the local resampling and related notation from \Cref{s:local_resampling}. We also introduce the following S-Product term.
\begin{definition}[S-Product term]\label{d:S-product}
    Fix $r\geq 0$, we define $R_r$ to be a \emph{S-product term} of order $r$ (where ``\emph{S}" indicates that these terms arise from expansions using the Schur complement formula) if it is a product of $r$ factors in the following forms: 
\begin{align*}
   (G_{c_\al c_\al}^{(b_\al)}-Q), \quad  G_{c_\al c_{\beta}}^{(b_\al b_\beta)},\quad  (Q-\msc(z)),\quad \al\neq \beta\in \qq{\mu}.
\end{align*} 
\end{definition}

In the following proposition, we derive an expansion for factors that are Green's function entries with at most one index in $\{i,o\}$. They can be proven in exactly the same way as \Cref{l:coefficient} using the Schur complement formula, so we omit their proofs.
\begin{proposition}\label{lem:offdiagswitch}
We assume that $\cG, \widetilde \cG\in \Omega$ and $I(\cF^+,\cG)=1$ (recall from \eqref{e:defI}), and define the index set $\sfA_i := \{ \alpha \in \qq{\mu} : \dist_{\cT}(i, l_\al) = \ell+1 \}$ (see \Cref{fig:Ai}). Then for any unused core edges $(b,c)\neq (b',c')\in \cC^\circ\setminus\{ (i,o)\}$, the following holds:
\begin{enumerate}
    \item  $\widetilde G^{(ib)}_{oc}$ and $\widetilde G^{(b)}_{ic}$ can be rewritten as a weighted sum 
\begin{align}\label{e:tGoc_exp}
  \frac{1} {(d-1)^{\ell/2}}\sum_{\al\in \qq{\mu}}\fc_1(\bm1(\al\in \sfA_i))G_{c_\al c}^{(b_\al b)}
   +\cU  +\cE,
\end{align}
where $|\fc_1(\cdot)|\lesssim 1$; $\cU$ is an $\OO(1)$-weighted sum of terms of the form $(d-1)^{3r\ell}R_r G_{c_\al c}^{(b_\al b)}$, for $R_r$ an S-product term (see \Cref{d:S-product}) with $r\geq 1$, and the error $\cE$ is bounded by
\begin{align}\label{e:tGoc_cE}
    |\cE|\lesssim \sum_{\al\in \qq{\mu}} |\wt G_{c_\al c}^{(\bT b)}-G_{c_\al c}^{(b_\al b)}|+\sum_{\al, \beta\in \qq{\mu}}|\wt G_{c_\al c_\beta}^{(\bT)}-G^{(b_\al b_\beta)}_{c_\al c_\beta}|+N^{-2}.
\end{align}
  \item  $\widetilde G_{ib}$ and $\wt G_{ob}^{(i)}$ can be rewritten as a weighted sum 
\begin{align*}
  \frac{1} {(d-1)^{\ell/2}}\sum_{\al\in \qq{\mu}}\fc_1(\bm1(\al\in \sfA_i))G^{(b_\al)}_{c_\al b }
   +\cU  +\cE,
\end{align*}
where $|\fc_1(\cdot)|\lesssim 1$; $\cU$ is an $\OO(1)$-weighted sum of terms of the form $(d-1)^{3r\ell}R_r G_{c_\al b}^{( b_\al)}$, where $R_r$ is an S-product term (see \Cref{d:S-product}) with $r\geq 1$;
and the error $\cE$ is bounded by
\begin{align*}
    |\cE|\lesssim \sum_{\al\in \qq{\mu}} |\wt G_{c_\al b}^{(\bT )}-G_{c_\al b}^{(b_\al )}|+\sum_{\al, \beta\in \qq{\mu}}|\wt G_{c_\al c_\beta}^{(\bT)}-G^{(b_\al b_\beta)}_{c_\al c_\beta}|+N^{-2}.
\end{align*}
\item  $\wt G_{cc}^{(b)}, \wt G_{cc'}^{(bb')},\wt G_{bc'}^{(b')}$ and $ \wt G_{bb'}$ can be rewritten as
\begin{align*}
&\wt G_{cc}^{(b)}=G_{cc}^{(b)}+\cE,\quad 
    |\cE|\lesssim |\wt G^{(\bT b)}_{cc}-G^{( b)}_{cc}|
    +(d-1)^\ell\sum_{\al\in \qq{\mu}} |\wt G_{c_\al c}^{(\bT b)}|^2,
\\
&\wt G_{cc'}^{(bb')}=G_{cc'}^{(bb')}+\cE,\quad
    |\cE|\lesssim |\wt G^{(\bT bb')}_{cc'}-G^{( bb')}_{cc'}|
    +(d-1)^\ell\sum_{\al\in \qq{\mu}} (|\wt G_{c_\al c}^{(\bT bb')}|^2+|\wt G_{c'c_\al}^{(\bT bb')}|^2),
    \\
&\wt G_{bc'}^{(b')}=G_{bc'}^{(b')}+\cE,\quad
    |\cE|\lesssim |\wt G^{(\bT b')}_{bc'}-G^{( b')}_{bc'}|
    +(d-1)^\ell\sum_{\al\in \qq{\mu}} (|\wt G_{bc_\al}^{(\bT b')}|^2+|\wt G_{c'c_\al}^{(\bT b')}|^2),
\\
&\wt G_{bb'}=G_{bb'}+\cE,\quad
    |\cE|\lesssim |\wt G^{(\bT)}_{bb'}-G_{bb'}|
    +(d-1)^\ell\sum_{\al\in \qq{\mu}} (|\wt G_{bc_\al}^{(\bT)}|^2+|\wt G_{b'c_\al}^{(\bT)}|^2).
\end{align*}
\end{enumerate}
\end{proposition}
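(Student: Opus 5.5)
The plan is to follow the same route as the proof of \Cref{l:coefficient}: use the Schur complement formula \eqref{e:Schur1} with respect to the block $\bT$ (or $\bT\setminus\{i\}$) to separate the unchanged interior $\cT$ from the switched exterior $\wt\cG^{(\bT)}$. Then expand the interior resolvent around the tree kernel $L^{(i)}$ from \eqref{e:2defPi}. Finally, replace every exterior Green's function entry of $\wt G^{(\bT)}$ by the corresponding entry of $G$ with the $b_\al$'s removed, and collect the replacement costs into $\cE$.

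For item 1, take $\wt G^{(ib)}_{oc}$ and apply \eqref{e:Schur1} with $\bT\setminus\{i\}$ as the first block inside $\wt\cG^{(b)}$. This gives
\begin{align*}
\wt G^{(ib)}_{oc}=-\bigl(\wt G^{(ib)}|_{\bT}\,\wt B^\top \wt G^{(\bT b)}\bigr)_{oc}=-\frac{1}{\sqrt{d-1}}\sum_{\al}(\wt G^{(ib)}|_\bT)_{ol_\al}\wt G^{(\bT b)}_{c_\al c}.
\end{align*}
Here $(\wt G^{(ib)}|_\bT)=(H^{(i)}_\bT-z-\wt B^\top\wt G^{(\bT b)}\wt B)^{-1}$. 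Write $\wt B^\top\wt G^{(\bT b)}\wt B=\msc\bI^\del+\cD$, as in \eqref{e:defcE}, and expand
\begin{align*}
(\wt G^{(ib)}|_\bT)_{ol_\al}=\Bigl(L^{(i)}\sum_{k\geq0}(\cD L^{(i)})^k\Bigr)_{ol_\al}+\OO(N^{-2}).
\end{align*}
The $k=0$ term equals $L^{(i)}_{ol_\al}$. By \eqref{e:Piol} it has size $(d-1)^{-\ell/2}$ and depends only on whether $\al\in\sfA_i$, since it vanishes otherwise. This produces the leading term with coefficient $\fc_1(\bm1(\al\in\sfA_i))$.

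The terms with $k\geq1$ are handled by the same reorganization \eqref{e:totalsum0}, using the weight bounds \eqref{e:sum_Pbound}. They become $\OO(1)$-weighted sums of $(d-1)^{3k\ell}\wt R_k\,\wt G^{(\bT b)}_{c_\al c}$. Because of the extra factor $\wt G^{(\bT b)}_{c_\al c}$ at the end of the chain, the accounting now gives $r=k\geq1$, rather than $k\geq2$ as in \Cref{l:coefficient}.

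The entries $\wt G^{(\bT b)}_{c_\al c_\beta}$ appearing in $\cD$ differ from $\wt G^{(\bT)}_{c_\al c_\beta}$ by a rank-one Schur correction through $b$. This correction is small because $b$ is at distance at least $3\fR$ from the $c_\al$'s: on the event $I(\cF^+,\cG)=1$, \eqref{eq:local_law} makes $\wt G^{(\bT)}_{c_\al b}$ of order $N^{-\fb}$. I would absorb this correction either into $\cE$ or into products that are one order higher.

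Next, replace $\wt G^{(\bT b)}_{c_\al c}$ by $G^{(b_\al b)}_{c_\al c}$, and $\wt G^{(\bT)}_{c_\al c_\beta}$ by $G^{(b_\al b_\beta)}_{c_\al c_\beta}$. Since all factors are $\OO(N^{-\fb})$ by \eqref{e:Bsmall}, the total replacement cost is bounded by \eqref{e:tGoc_cE}.

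The entry $\wt G^{(b)}_{ic}$ is treated the same way, by first writing
\begin{align*}
\wt G^{(b)}_{ic}=-\wt G^{(b)}_{ii}\bigl(H\wt G^{(ib)}\bigr)_{ic}.
\end{align*}
This reduces it to $\wt G^{(ib)}_{oc}$ and to neighbours of $i$ inside $\bT$. Item 2 is identical, with the far column $c$ (with $b$ removed) replaced by the column $b$ with nothing removed.

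For item 3, the indices are far from $\bT$, so both endpoints lie in the exterior block. Apply the last identity of \eqref{e:Schur1} in $\wt\cG^{(b)}$ (and in its analogues):
\begin{align*}
\wt G^{(b)}_{cc}=\wt G^{(\bT b)}_{cc}+\frac{1}{d-1}\sum_{\al,\beta}\wt G^{(\bT b)}_{cc_\al}(\wt G^{(b)}|_\bT)_{l_\al l_\beta}\wt G^{(\bT b)}_{c_\beta c}.
\end{align*}
The matrix $\wt G^{(b)}|_\bT$ has bounded row sums, by the argument of \eqref{e:sum_Ginverse}. Cauchy--Schwarz over the at most $\mu\lesssim(d-1)^\ell$ indices then bounds the second term by $(d-1)^\ell\sum_\al|\wt G^{(\bT b)}_{c_\al c}|^2$. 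Finally I would write $\wt G^{(\bT b)}_{cc}=G^{(b)}_{cc}+(\wt G^{(\bT b)}_{cc}-G^{(b)}_{cc})$. The other three cases follow verbatim.

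The main obstacle will be item 1. The bookkeeping there must ensure that every higher-order term carries exactly one trailing factor $G^{(b_\al b)}_{c_\al c}$, with $\OO(1)$ total weight after extracting $(d-1)^{3r\ell}$. It must also account for the discrepancy caused by removing $b$ from the exterior without producing terms outside the allowed classes. I would first check that the weight estimate $\sum|\fc_{\bmx}|\lesssim(d-1)^{-(2k-3)\ell}\ell^{k-1}$ still closes when the final $L^{(i)}_{x_{2k}o}$ is replaced by the open end $l_\al$. The worry is that this replacement costs an extra factor $(d-1)^{\ell/2}$, which must be absorbed by the exponent $3r\ell$.
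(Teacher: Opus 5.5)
Your proposal is correct and follows the route the paper intends: the paper omits this proof, saying it is the same Schur-complement expansion around the tree kernel as in \Cref{l:coefficient}. Your worry about the open end is unfounded, since the $k$-th order term has total weight $\lesssim \ell^k(d-1)^{k\ell+\ell/2}\lesssim (d-1)^{3k\ell}$, so $r=k\geq 1$ closes. Two small remarks. First, for $\wt G^{(b)}_{ic}$ and $\wt G_{ib}$ it is simpler to take the Schur complement over the full block $\bT$ and expand around $P(\cT,z,\msc(z))$, whose entries $P_{il_\al}$ are nonzero for every $\al$ and depend only on $\bm1(\al\in\sfA_i)$; pulling out the random prefactor $\wt G^{(b)}_{ii}$, as you propose, forces you to expand that prefactor too. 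Second, the rank-one correction $\wt G^{(\bT b)}_{c_\al c_\beta}-\wt G^{(\bT)}_{c_\al c_\beta}$ that you flag is indeed not covered by the literal bound \eqref{e:tGoc_cE}, which only involves $\wt G^{(\bT)}_{c_\al c_\beta}$ (the statement is loose there as well). Its contribution is at most $N^{-\fb/2}\sum_{\al}|\wt G^{(\bT)}_{c_\al b}|^2$, and this becomes negligible once the embedding of $b$ is averaged, by the Ward identity.
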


\section{Switching using the Woodbury formula}\label{sec:fanalysis}
In this section, we introduce a novel expansion based on the Woodbury formula \eqref{e:woodbury}.
In the rest of this section, we assume that $I(\cF^+, \cG)=1$ (recall from \eqref{e:defI}). Then the switching edges $(b_\al, c_\al)_{\al\in \qq{\mu}}$ are far away from each other, and have large tree neighborhood. Thanks to \Cref{lem:configuration},  $I(\cF^+, \cG)=1$ holds with high probability provided $I(\cF, \cG)=1$.

We compare the normalized adjacency matrix of the switched graph to that of the original graph, $\wt H-H$.
We recall from \eqref{e:H-H}
\begin{align*}
   \wt H-H=-\sum_{\al\in\qq{\mu} }\xi_\al,\quad  \xi_\al:=\frac{1}{\sqrt{d-1}}\left(\Delta_{l_\al a_\al}
    +\Delta_{b_\al c_\al}
    -\Delta_{l_\al c_\al}-\Delta_{a_\al b_\al}\right).
\end{align*}
We denote the rank of this difference as $r=\OO((d-1)^\ell)$, and rewrite 
\begin{align*}
    \wt H-H=UV^\top,
\end{align*}
where $U, V$ are $N\times r$ matrices, and their nonzero rows correspond to the vertices $\{l_\al, a_\al, b_\al, c_\al\}_{\al\in \qq{\mu}}$.
Then, the Woodbury formula \eqref{e:woodbury} gives us 
\begin{align}\label{e:tGG}
\tG-G=(H-z+UV^\top)^{-1}-(H-z)^{-1}=-GU(\mathbb I+V^\top G U)^{-1}V^\top G.
\end{align}

We recall $\cF^+$ as in \eqref{eq:forestdef2}, and denote by $\wt \cF^+$ the switched version of it
\begin{align*}
  \cF^+:=\cF\cup \cB_{\ell}(o,\cG)\cup\bigcup_{\al=1}^\mu \{(l_\alpha,a_\alpha), (b_\alpha, c_\alpha)\},\quad   \wt \cF^+:=\cF\cup \cB_{\ell}(o,\cG)\cup\bigcup_{\al=1}^\mu \{(l_\alpha,c_\alpha), (a_\alpha, b_\alpha)\}.
\end{align*} 
We view $\cF^+, \wt\cF^+$ as subgraphs of $\cG, \wt\cG$ respectively.  We will analyze \eqref{e:tGG} using local Green's functions
 \begin{align}\label{e:defPtP}
   L:=P(\cF^+,z ,\msc(z )), \quad
   \widetilde L:=P(\wt\cF^+,z ,\msc(z )),
\end{align} 
as was defined in \Cref{def:pdef}. We remark that condition on $I(\cF^+, \cG)=1$, both $L$ and $\wt L$ are simply the Green's function of copies of $d$-regular trees.

Notice that when restricted to the vertex set of $\cF^+$ (which contains the vertices $\{l_\al, a_\al, b_\al, c_\al\}_{\al\in \qq{\mu}}$), 
\begin{align}\label{e:inverseL}
    \widetilde L^{-1}-L^{-1}=\widetilde H-H=-\sum_{\al\in\qq{\mu} }\xi_\al=UV^\top.
\end{align} 
We can use the Woodbury formula on $ L, \widetilde L$ as well, giving
\begin{align}\label{e:tP-P}
    \widetilde L-L=-LU(\mathbb I+V^\top L U)^{-1}V^\top L.
\end{align}

A crucial observation is that the quantity $-U(\mathbb I+V^\top L U)^{-1}V^\top$ in \eqref{e:tGG} and \eqref{e:tP-P} take very simple form.
\begin{lemma}\label{l:defF2}
We introduce the following matrix $F$, which is nonzero on the vertex set $\{l_\al, a_\al, b_\al, c_\al\}_{\al\in \qq{\mu}}$,
\begin{align}\label{e:defF}
F:=\sum_{\al \in \qq{\mu}} \xi_{\al}+\sum_{\al, \beta\in \qq{\mu}} \xi_{\al}\tL \xi_{\beta}.
\end{align}
Then 
\begin{align}\label{e:defF2g}
    F=-U(\mathbb I+V^\top L U)^{-1}V^\top.
\end{align}
\end{lemma}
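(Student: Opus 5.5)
We will verify the identity by a purely algebraic manipulation, working on the finite vertex set of $\cF^+$, where \eqref{e:inverseL} gives $\wt L^{-1}-L^{-1}=UV^\top=-\sum_\al\xi_\al=:-\Xi$. The route is to compute the right-hand side of \eqref{e:defF2g} from the Woodbury formula \eqref{e:tP-P} for $\wt L-L$, and to compare it with the left-hand side using the resolvent identity \eqref{e:resolv} for $\wt L$ and $L$.

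\textbf{Step 1: rewrite $\wt L - L$ in terms of $W$.} Set
\[
W:=-U(\bI+V^\top LU)^{-1}V^\top .
\]
Then \eqref{e:tP-P} reads $\wt L-L=LWL$.

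\textbf{Step 2: rewrite $\wt L - L$ in terms of $\Xi$.} The resolvent identity gives
\[
\wt L-L=\wt L(L^{-1}-\wt L^{-1})L=\wt L\,\Xi\, L .
\]
Expanding $\wt L=L+\wt L\Xi L$ on the left, we get
\[
\wt L\Xi L=(L+\wt L\Xi L)\Xi L=L\Xi L+\wt L\Xi L\Xi L .
\]
This is not yet of the symmetric form $L(\cdot)L$. It is cleaner to write both sides as $L(\cdot)L$ directly. Since $\wt L=L+L\Xi\wt L$ (the other resolvent ordering), we have
\[
\wt L\Xi L=L\Xi L+L\Xi\wt L\Xi L=L\bigl(\Xi+\Xi\wt L\Xi\bigr)L=LFL,
\]
because $\Xi+\Xi\wt L\Xi=\sum_\al\xi_\al+\sum_{\al,\beta}\xi_\al\wt L\xi_\beta=F$ by \eqref{e:defF}.

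\textbf{Step 3: conclude $F=W$.} Steps 1 and 2 give $LWL=LFL$. Since $L$ is invertible (it is a resolvent of a matrix with $\Im z>0$ and boundary weights of nonnegative imaginary part), cancelling $L$ on both sides yields $W=F$, which is \eqref{e:defF2g}.

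\textbf{Main obstacle.} The only delicate point is the cancellation in Step 3. The identities hold as matrices on the vertex set of $\cF^+$, and the support claims need checking:
\begin{itemize}
\item $F$ is supported on $\{l_\al,a_\al,b_\al,c_\al\}$ because each $\xi_\al$ is.
\item $W$ is supported there because the rows of $U$ and $V$ are.
\end{itemize}
We also need invertibility of $L$, $\wt L$ and $\bI+V^\top LU$. This follows from \eqref{e:inverseL}, since $\wt L^{-1}=L^{-1}(\bI+LUV^\top)$ makes $\bI+LUV^\top$, and hence $\bI+V^\top LU$, invertible by Sylvester's determinant identity.

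Alternatively, one can avoid cancelling $L$: multiply $\bI+V^\top LU$ against the candidate and directly verify that $F(\bI+LUV^\top)=-UV^\top=\Xi$. Using $\wt L(\bI+UV^\top L)^{-1}\ldots$, this reduces to the same resolvent identity.
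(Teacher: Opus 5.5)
Your proof is correct and is essentially the paper's argument. The paper also starts from the Woodbury identity \eqref{e:tP-P} and multiplies by $L^{-1}$ on both sides, which is your cancellation of $L$. It then shows $L^{-1}\tL L^{-1}-L^{-1}=\Xi+\Xi\tL\Xi$, with $\Xi=L^{-1}-\tL^{-1}=\sum_{\al}\xi_\al$, by inserting $\tL^{-1}\tL$; this is the same computation as your two applications of the resolvent identity.
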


\begin{proof}[Proof of \Cref{l:defF2}]
The nonzero rows of $U, V$ are parametrized by $\{l_\alpha, a_\alpha, b_\alpha, c_\alpha\}_{\al\in \qq{\mu}}$.
By rearranging the above expression \eqref{e:tP-P} (we view all the matrices as restricted on the vertex set of $\cF^+$), we get
\begin{align}\label{e:Fz}
L^{-1}\tL L^{-1}-L^{-1}=-U(\mathbb I+V^\top L U)^{-1}V^\top.
\end{align}
We can reorganize \eqref{e:Fz} as
\begin{align}\begin{split}\label{e:defF2}
    &\phantom{{}={}}-U(\mathbb I+V^\top L U)^{-1}V^\top =L^{-1}\tL L^{-1}-L^{-1}=L^{-1}\tL \tL^{-1}+L^{-1}\tL (L^{-1}-\tL^{-1})-L^{-1}\\
    &=L^{-1}\tL (L^{-1}-\tL^{-1})
    =(L^{-1}-\tL^{-1})\tL (L^{-1}-\tL^{-1})+\tL^{-1}\tL (L^{-1}-\tL^{-1})\\
     &=(L^{-1}-\tL^{-1})+(L^{-1}-\tL^{-1})\tL (L^{-1}-\tL^{-1})=\sum_{\al \in \qq{\mu}} \xi_{\al}+\sum_{\al, \beta\in \qq{\mu}} \xi_{\al}\tL \xi_{\beta}=F,
\end{split}\end{align}
where in the last statement we used  \eqref{e:inverseL}.
\end{proof}

Our next lemma attempts to expand $\tG-G$ in terms of $\tL-L$.

\begin{lemma}\label{lem:woodbury} 
We assume that $\cG, \widetilde \cG\in \Omega$ and $I(\cF^+,\cG)=1$ (recall from \eqref{e:defI}), and recall $G^\circ=(G-L)$. Then we have:  
\begin{align}\label{e:tG-Gdiff}
&\tG-G=\sum_{k\geq 0} GF(G^\circ F)^{k}G,
\end{align}
\end{lemma}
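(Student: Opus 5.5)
Starting from the Woodbury identity \eqref{e:tGG}, $\tG-G=-GU(\mathbb I+V^\top GU)^{-1}V^\top G$, I would replace $G$ inside the inverse by $L+G^\circ$. This isolates the deterministic part, which by \Cref{l:defF2} is exactly $F=-U(\mathbb I+V^\top LU)^{-1}V^\top$, and leaves a perturbation built from $G^\circ$. Expanding in $G^\circ$ should then produce the series \eqref{e:tG-Gdiff}.

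The algebra goes as follows. Set $M:=\mathbb I+V^\top LU$, so that $\mathbb I+V^\top GU=M+V^\top G^\circ U$. Factoring out $M$ gives
\[
(M+V^\top G^\circ U)^{-1}=(\mathbb I+M^{-1}V^\top G^\circ U)^{-1}M^{-1}=\sum_{k\ge0}\bigl(-M^{-1}V^\top G^\circ U\bigr)^kM^{-1},
\]
provided the Neumann series converges. Substituting into \eqref{e:tGG} and regrouping each string $U(\cdots)V^\top$ gives
\[
-GU\sum_k(-M^{-1}V^\top G^\circ U)^kM^{-1}V^\top G=\sum_k G\,(-UM^{-1}V^\top)(G^\circ(-UM^{-1}V^\top))^kG .
\]
Since $-UM^{-1}V^\top=F$ by \eqref{e:defF2g}, this is $\sum_{k\ge0}GF(G^\circ F)^kG$. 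Equivalently, I can write the right side of \eqref{e:tGG} as $GX G$ with $X=-U(\mathbb I+V^\top GU)^{-1}V^\top$. A direct check shows $X=F-FG^\circ X$, so $X=(\mathbb I+FG^\circ)^{-1}F$, whose series expansion gives the same result. The sign works out because $(-1)^k$ is absorbed into the $k$ copies of $-UM^{-1}V^\top$.

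The main obstacle is justifying convergence, which needs $\|M^{-1}V^\top G^\circ U\|<1$, or equivalently that $FG^\circ$ has spectral radius below $1$ on the support set $\{l_\al,a_\al,b_\al,c_\al\}$. Two points must be checked.

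First, $M$ is invertible. This holds because $\tL$ exists: under $I(\cF^+,\cG)=1$, $\wt\cF^+$ is a union of trees, so $\tL$ is a tree Green's function, and the computation in \Cref{l:defF2} shows $F$ is given by the explicit formula \eqref{e:defF} with bounded entries.

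Second, $G^\circ$ is small. I would show $F$ is sparse with $\OO(1)$ row sums. Each $\xi_\al$ touches only four vertices, and $(\xi_\al\tL\xi_\beta)$ carries the factor $\tL_{xy}$, which decays like $(d-1)^{-\dist/2}$. Summing over $\beta$ is therefore controlled as in \eqref{e:sum_Pbound}, up to a factor of order $\ell$. On the other hand, $|G^\circ_{xy}|\lesssim N^{-\fb}$ for $x,y$ in the support, by \eqref{eq:infbound}, \Cref{l:basicG} and $\cG\in\Omega$. The support has size $\OO((d-1)^\ell)$. A crude operator bound gives $\|FG^\circ\|\lesssim \ell(d-1)^{\ell}N^{-\fb}\ll1$, since $\ell/\log_{d-1}N\ll\fb$. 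This yields geometric convergence, so the series can be truncated at a large $\fp$ with error $\OO(N^{-2})$, as done elsewhere in the paper.
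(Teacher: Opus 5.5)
Your main computation is correct and is essentially the paper's proof: split $\mathbb I+V^\top GU=(\mathbb I+V^\top LU)+V^\top G^\circ U$, Neumann-expand around $M=\mathbb I+V^\top LU$, and regroup using \eqref{e:defF2g}; your convergence argument is the smallness bound $\ell(d-1)^\ell N^{-\fb}\ll1$ from \Cref{c:Fest}, which the paper records as \eqref{e:sumF}. There is one sign slip in your side check: since $X-F=U\bigl[M^{-1}-(M+W)^{-1}\bigr]V^\top=FG^\circ X$ with $W=V^\top G^\circ U$, the correct identity is $X=F+FG^\circ X$, so $X=(\mathbb I-FG^\circ)^{-1}F$; the version $(\mathbb I+FG^\circ)^{-1}F$ you wrote would expand with alternating signs and would not reproduce \eqref{e:tG-Gdiff}.
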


\begin{proof}

Thanks to \eqref{eq:infbound},   uniformly for $x,y\in \{l_\al, a_\al, b_\al, c_\al\}_{\al \in \qq{\mu}}$.
We can then expand \eqref{e:tGG} using the resolvent identity \eqref{e:resolv} and  \eqref{e:defF2g} to conclude that
\begin{align*}
\tG-G
&=-GU(\mathbb I+V^\top G U)^{-1}V^\top G
=-GU(\mathbb I+V^\top L U+V^\top G^\circ U)^{-1}V^\top G\\
&
=-GU\left((\mathbb I+V^\top L U)^{-1}\sum_{k\geq 0}(-1)^k(V^\top G^\circ U (\mathbb I+V^\top L U)^{-1})^k \right)V^\top G\\
&
=\sum_{k\geq 0}(-1)^{k+1} GU(\mathbb I+V^\top L U)^{-1}(V^\top G^\circ U (\mathbb I+V^\top L U)^{-1})^k V^\top G\\
&=\sum_{k\geq 0} GF(G^\circ F)^{k}G
.
\end{align*}
This gives \eqref{e:tG-Gdiff}. 
\end{proof}

\begin{proposition}
\label{c:Qmchange}
We assume that $\cG, \widetilde \cG\in \Omega$ and $I(\cF^+,\cG)=1$ (recall from \eqref{e:defI}). Then for $w,w'\in \qq{N}$, we have
\begin{align}\begin{split}\label{e:Gsw_exp}
    |(\tG-G)_{ww'}|\leq (d-1)^\ell\sum_{x\in \{l_\al, a_\al, b_\al, c_\al\}_{\al \in \qq{\mu}}} (|G_{wx}|^2+|G_{w'x}|^2).
\end{split}\end{align}
As a consequence, we have the following bounds
\begin{align}\label{e:Qtmtbound}
|\wt Q -Q |, |\wt m -m |\lesssim (d-1)^{3\ell}N^\fo \Phi.
\end{align}
\end{proposition}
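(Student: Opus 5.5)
The starting point is the Woodbury expansion of \Cref{lem:woodbury}, $\tG-G=\sum_{k\geq0}GF(G^\circ F)^kG$. The entrywise bound will follow by controlling the middle matrix between the two outer factors of $G$.

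The first step is to understand $F$ from \eqref{e:defF}. It is supported on the vertex set $\bV:=\{l_\al,a_\al,b_\al,c_\al\}_{\al\in\qq{\mu}}$, which has size $\OO((d-1)^\ell)$. Since $I(\cF^+,\cG)=1$, $\tL$ restricted to $\bV$ is the Green's function of copies of $d$-regular trees (\Cref{greentree}). Its entries are therefore bounded by $(d-1)^{-\dist/2}$. From this one gets a uniform bound $\max_{x,y}|F_{xy}|\lesssim 1$, and the row sums satisfy $\sum_y|F_{xy}|\lesssim \ell$ or at worst $(d-1)^{\ell/2}$, by the same counting as in \eqref{e:sum_Pbound}.

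The second step is to bound the geometric series. By \eqref{eq:infbound}, together with the fact that $L$ agrees with the local tree approximation on $\bV$, we have $|G^\circ_{xy}|\lesssim N^{-\fb}$ for $x,y\in\bV$. The operator norm of $G^\circ F$ on $\ell^\infty(\bV)$ is then at most $N^{-\fb}|\bV|\max|F|\lesssim (d-1)^\ell N^{-\fb}\ll1$, because $\ell/\log_{d-1}N\ll\fb$. This shows the series converges and that
\begin{align*}
M:=\sum_{k\geq0}F(G^\circ F)^k
\end{align*}
satisfies $\max_{x,y\in\bV}|M_{xy}|\lesssim 1$. For the $k\geq1$ terms one uses the crude estimate $|(F(G^\circ F)^k)_{xy}|\leq \max|F|\,(|\bV|N^{-\fb}\max|F|)^k$.

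The third step gives the entrywise bound. We write $(\tG-G)_{ww'}=\sum_{x,y\in\bV}G_{wx}M_{xy}G_{yw'}$ and apply $|G_{wx}G_{yw'}|\leq\frac12(|G_{wx}|^2+|G_{w'y}|^2)$ together with the symmetry of $G$. Summing over the free index then produces a factor $|\bV|\lesssim(d-1)^\ell$, which gives \eqref{e:Gsw_exp}.

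For the consequence \eqref{e:Qtmtbound}, the bound on $\wt m-m$ comes from averaging \eqref{e:Gsw_exp} over $w=w'$. The Ward identity \eqref{e:Ward} and \eqref{e:Gsize} give $\frac1N\sum_w|G_{wx}|^2\lesssim N^{\fo}\Phi$, so summing over $x\in\bV$ yields $(d-1)^{2\ell}N^\fo\Phi$.

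The quantity $Q$ is the main obstacle, since it involves $G^{(i)}_{jj}$ for both graphs, and the edge sets differ on $\OO((d-1)^\ell)$ edges. I would split the sum into two parts. The first part collects edges $i\sim j$ present in both graphs. For these, $G^{(i)}_{jj}=G_{jj}-G_{ij}^2/G_{ii}$, and each term of the difference is expanded via \eqref{e:Gsw_exp} for the pairs $(j,j)$, $(i,j)$, $(i,i)$. This uses that $|G_{ii}|\gtrsim1$ and that the entries are bounded, from \eqref{eq:infbound}. Averaging over the $Nd$ edges, with $\frac1{N}\sum_j|G_{jx}|^2\lesssim N^\fo\Phi$ and the same bound for neighbors $i$ of $j$ (degree $d$), gives $(d-1)^{2\ell}N^\fo\Phi$. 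The second part collects the $\OO((d-1)^\ell)$ edges that appear in only one graph. Each such term is $\OO(1)$, so this part contributes $\OO((d-1)^\ell/N)\leq (d-1)^{\ell}N^{-2\fc}\Phi$, using $\Phi\geq N^{-1+2\fc}$. The extra factor $(d-1)^{\ell}$ in the final bound $(d-1)^{3\ell}$ absorbs the constants lost in the row-sum bound on $F$. If the max bound on $F$ is only $\ell$-dependent, this slack also covers it.
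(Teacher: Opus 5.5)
Your proposal is correct and follows the paper's proof. The paper likewise uses the Neumann series of \Cref{lem:woodbury} with the $F$-bounds of \Cref{c:Fest} to get $\OO(1)$ entries for $\sum_{k\ge0}F(G^\circ F)^k$ on $\{l_\al,a_\al,b_\al,c_\al\}_{\al\in\qq{\mu}}$, then Cauchy--Schwarz for \eqref{e:Gsw_exp}, and it splits $\wt Q-Q$ into unchanged edges (via $G^{(u)}_{vv}=G_{vv}-G_{uv}^2/G_{uu}$ and the Ward identity) and the $\OO((d-1)^\ell)$ switched edges contributing $\OO((d-1)^\ell/N)$. The only deviations are harmless: you obtain $\wt m-m$ directly from \eqref{e:Gsw_exp} rather than citing \eqref{e:tmmdiff}, and your quantity $|\bV|N^{-\fb}\max|F|$ is an entrywise bound on $G^\circ F$ rather than its $\ell^\infty(\bV)$ operator norm, so each power actually costs an extra factor $|\bV|$, which is still $\ll 1$ since $(d-1)^{2\ell}N^{-\fb}\ll1$.
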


We start with the following estimates, which will be used later to prove \Cref{c:Qmchange}. 
\begin{claim}\label{c:Fest}
The matrix $F$ from \eqref{e:defF} has nonzero entries only on the vertices $\{l_\al, a_\al, b_\al, c_\al\}_{\al\in \qq{\mu}}$, and 
\begin{align}\begin{split}\label{e:Fbound}
    \sum_{s,s'\in \{l_\al, a_\al, b_\al, c_\al\}_{\al\in \qq{\mu}}}|F_{ss'}|
    \lesssim \ell (d-1)^\ell.
\end{split}\end{align}
\end{claim}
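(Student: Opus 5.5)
The support statement is immediate from the definition \eqref{e:defF}. Each $\xi_\al$ is supported on $\{l_\al,a_\al,b_\al,c_\al\}$, and each product $\xi_\al\tL\xi_\beta$ has its rows supported on the vertices of $\xi_\al$ and its columns on the vertices of $\xi_\beta$. The work is therefore in bounding $\sum_{s,s'}|F_{ss'}|$, which I would split into the linear part $\sum_\al\xi_\al$ and the quadratic part $\sum_{\al,\beta}\xi_\al\tL\xi_\beta$.

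For the linear part, each $\xi_\al$ has $\OO(1)$ entries, each of size $1/\sqrt{d-1}$. Since $\mu=d(d-1)^\ell$, this part contributes $\OO((d-1)^\ell)$.

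For the quadratic part, I expand each entry. $(\xi_\al\tL\xi_\beta)_{ss'}$ is a sum of $\OO(1)$ terms of the form $\frac{1}{d-1}\tL_{xy}$, with $x\in\{l_\al,a_\al,b_\al,c_\al\}$ and $y\in\{l_\beta,a_\beta,b_\beta,c_\beta\}$. Hence
\begin{align*}
\sum_{s,s'}|F_{ss'}|\lesssim (d-1)^\ell+\sum_{\al,\beta\in\qq{\mu}}\ \sum_{x\in\{l_\al,a_\al,b_\al,c_\al\}}\ \sum_{y\in\{l_\beta,a_\beta,b_\beta,c_\beta\}}|\tL_{xy}|.
\end{align*}
Under $I(\cF^+,\cG)=1$, the switched forest $\wt\cF^+$ consists of copies of $d$-regular trees. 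One component is formed by $\cB_\ell(o,\cG)$ together with the new edges $(l_\al,c_\al)$, which makes it $\cB_{\ell+1}(o)$ with the $c_\al$ as leaves. The other components are the edges $(a_\al,b_\al)$, which are disconnected from each other and from the ball. So by \Cref{greentree}, $\tL_{xy}=\md(-\msc/\sqrt{d-1})^{\dist(x,y)}$ when $x,y$ are connected, and $\tL_{xy}=0$ otherwise. Since $|\msc|\approx 1$ on $\bf D$, $|\tL_{xy}|\lesssim(d-1)^{-\dist(x,y)/2}$.

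This leaves two cases. For the pairs $x,y\in\{a_\al,b_\al\}$, only $\al=\beta$ contributes, and each such $\al$ gives $\OO(1)$; the total is $\OO((d-1)^\ell)$. For the pairs $x,y\in\{l_\al,c_\al\}_{\al}$, which lie in the ball component, I need the sum over pairs of boundary vertices of a depth-$(\ell+1)$ tree. I would count as in the computation of \eqref{e:Pboundary}. Fix a leaf $x$. For each $1\le r\le\ell+1$, there are $\OO((d-1)^{r})$ leaves $y$ at distance $2r$, and each contributes $(d-1)^{-r}$. Summing over $r$ gives $\OO(\ell)$ per $x$. Summing over the $\OO((d-1)^\ell)$ choices of $x$ gives $\OO(\ell(d-1)^\ell)$. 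Repeated $l_\al$ (shared parents) and the interior vertices $l_\al$ at depth $\ell$ only change constants.

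The main obstacle is this counting step: one must verify that the geometric growth of the leaf count, $(d-1)^r$, exactly cancels the Green's-function decay, $(d-1)^{-r}$, at each distance scale. It is this exact cancellation that produces the factor $\ell$ in \eqref{e:Fbound} rather than a worse power. Combining the two cases with the linear part yields \eqref{e:Fbound}.
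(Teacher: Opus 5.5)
Your proof is correct and follows the same route as the paper. The paper also uses that, under $I(\cF^+,\cG)=1$, $\tL$ is the Green's function of copies of $d$-regular trees, so $|\tL_{\sfJ_\al\sfJ'_{\al'}}|\lesssim (d-1)^{-\dist_\cT(l_\al,l_{\al'})/2}$; it transfers this bound to the entries of $F$ and sums over $\al,\al'$, using that there are $\OO((d-1)^r)$ indices $\al'$ with $\dist_\cT(l_\al,l_{\al'})=2r$. Your separate handling of the linear part and of the isolated edges $(a_\al,b_\al)$ is simply a more explicit version of that same computation.
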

    
\begin{proof}[Proof of \Cref{c:Fest}]
    It is easy to see from the expression \eqref{e:defF} that $F$
 has nonzero entries only on the vertices $\{l_\al, a_\al, b_\al, c_\al\}_{\al\in \qq{\mu}}$.  We remark that condition on $I(\cF^+, \cG)=1$, both $L$ and $\wt L$ are simply the Green's function of copies of $d$-regular trees. Hence,  the estimate \eqref{e:Gtreemkm} gives
\begin{align}\label{e:Piolha}
  |\tL_{{\sfJ}_\al \sfJ'_{\al'}}|\lesssim (d-1)^{-\dist_{\cT}(l_\al,l_{\al'})/2},\text{ for } \sfJ, \sfJ'\in \{l,a,b,c\},\quad \al, \al'\in \qq{\mu}.
\end{align}
 By plugging \eqref{e:Piolha} into \eqref{e:defF}, we get
\begin{align*}\begin{split}
    &|F_{{\sfJ}_\al \sfJ'_{\al'}}|\lesssim (d-1)^{-\dist_{\cT}(l_\al, l_{\al'})/2},
   \text{ for }  \sfJ, \sfJ'\in \{l,a,b,c\},\quad \al, \al'\in \qq{\mu},\\
    &\sum_{s,s'\in \{l_\al, a_\al, b_\al, c_\al\}_{\al\in \qq{\mu}}}|F_{ss'}|
    \lesssim \sum_{\al, \al'\in \qq{\mu}}(d-1)^{-\dist_{\cT}(l_\al, l_{\al'})/2}\lesssim \ell (d-1)^\ell,
\end{split}\end{align*}
where in the last inequality we used that $\{\al'\in \qq{\mu}: \dist_{\cT}(l_\al, l_{\al'})=2r\}=\OO((d-1)^{r})$ for $0\leq r\leq \ell$.
\end{proof}

\begin{proof}[Proof of \Cref{c:Qmchange}]
 For any $x,y\in \{l_\al, a_\al, b_\al, c_\al\}_{\al \in \qq{\mu}}$, we have 
\begin{align}\label{e:sumF}
|(F(G^\circ F)^{k})_{xy}|
&\leq N^{-k\fb}\sum_{s_1, s_2}|F_{s_1s_2}|\sum_{s_3, s_4}|F_{s_3s_4}|\cdots 
  \sum_{s_{2k-1}, s_{2k}}|F_{s_{2k-1}s_{2k}}|\leq N^{-k\fb}\ell^k (d-1)^{k\ell} 
\end{align}
where in the first statement we used
$| G^{\circ}_{xy}|\leq N^{-\fb} $ uniformly for $x,y\in \{l_\al, a_\al, b_\al, c_\al\}_{\al \in \qq{\mu}}$ from \eqref{eq:infbound}; in the second statement we used \eqref{e:Fbound}.

The claim \eqref{e:Gsw_exp} follows from \eqref{e:tG-Gdiff},
     \begin{align}\begin{split}\label{e:tG-Gdiff4}
    | (\wt G-G)_{ww'}|=\left|\sum_{k\geq 0} (GF(G^\circ F)^{k}G)_{ww'}\right|
     &\leq\sum_{x,y\in \{l_\al, a_\al, b_\al, c_\al\}_{\al \in \qq{\mu}}} \sum_{k\geq 0}(\ell N^{-\fb} (d-1)^{\ell} )^k|G_{wx}||G_{yw'}|\\
     &\leq \sum_{x,y\in \{l_\al, a_\al, b_\al, c_\al\}_{\al \in \qq{\mu}}}|G_{wx}||G_{yw'}|\\
     &\leq (d-1)^\ell\sum_{x\in \{l_\al, a_\al, b_\al, c_\al\}_{\al \in \qq{\mu}}} (|G_{wx}|^2+|G_{w'x}|^2).
    \end{split}\end{align}
where in the first statement we used \eqref{e:tG-Gdiff}; in the second statement we used \eqref{e:sumF}; in the third statement we sum the geometry series; in the last statement we used Cauchy-Schwartz inequality.

Next we prove \eqref{e:Qtmtbound} for $|\wt Q -Q |$, the statement for $|\wt m -m |$ follows from \eqref{e:tmmdiff}.
The difference $\wt Q -Q $ can be rewritten as
\begin{align}\begin{split}\label{e:tQ-Qdiff}
  &\phantom{{}={}}\frac{1}{Nd}\sum_{ \{u,v\}\notin \{\{l_\al, a_\al\},\{b_\al, c_\al\}\}_{\al\in\qq{\mu}} }A_{uv}(\widetilde G_{vv}^{(u)}-G_{vv}^{(u)})+\frac{1}{Nd}\sum_{\{u,v\}\in \{\{l_\al, c_\al\},\{a_\al, b_\al\}\}_{\al\in\qq{\mu}}}(\wt G^{(u)}_{vv}-G_{vv}^{(u)})\\
    &+\frac{1}{Nd}\sum_{\{u,v\}\in \{\{l_\al, c_\al\},\{a_\al, b_\al\}\}_{\al\in\qq{\mu}}}G_{vv}^{(u)}
    -\frac{1}{Nd}\sum_{\{u,v\}\in \{\{l_\al, a_\al\},\{b_\al, c_\al\}\}_{\al\in\qq{\mu}}}G^{(u)}_{vv}. 
\end{split}\end{align}
For $G_{vv}^{(u)}, \wt G_{vv}^{(u)}$ in \eqref{e:tQ-Qdiff}, we can rewrite them using the Schur complement formula \eqref{e:Schurixj},
\begin{align}\label{e:schurexp}
    G_{vv}^{(u)}=G_{vv}-\frac{G_{uv}^2}{G_{uu}},\quad
    \wt G_{vv}^{(u)}=\wt G_{vv}-\frac{\wt G_{uv}^2}{\wt G_{uu}}.
\end{align}
For the difference $\widetilde G_{vv}^{(u)}- G_{vv}^{(u)}$, using \eqref{e:schurexp} and \eqref{eq:infbound}, we can bound it as
\begin{align}\begin{split}\label{e:Guuvdiff}
 | \widetilde G_{vv}^{(u)}- G_{vv}^{(u)}|\leq 
 |\wt G_{vv}-G_{vv}|
 + |\wt G_{uv}-G_{uv}|
 + |\wt G_{uu}-G_{uu}|.
\end{split}\end{align}
By plugging \eqref{e:Guuvdiff} into \eqref{e:tQ-Qdiff}, we conclude that
\begin{align*}
|\wt Q-Q|
&\lesssim \sum_{u\sim v}( |\wt G_{vv}-G_{vv}|
 + |\wt G_{uv}-G_{uv}|
 + |\wt G_{uu}-G_{uu}|)+\frac{(d-1)^\ell}{N}\\
& \lesssim (d-1)^\ell \sum_{u\sim v}\sum_{x\in \{l_\al, a_\al, b_\al, c_\al\}_{\al \in \qq{\mu}}}|(G_{ux}|^2+|G_{vx}|^2)+\frac{(d-1)^\ell}{N}\lesssim (d-1)^{3\ell}N^\fo \Phi,
\end{align*}
where in the second statement we used \eqref{e:Gsw_exp}; in the last inequality, we used the Ward identity bound.
\end{proof}

\section{Proof  for the self-consistent equation} \label{s:proofoutline}

In this section we prove \eqref{e:QY} in \Cref{t:recursion}. 
As discussed in \Cref{s:forest}, at each iteration, we estimate \eqref{e:R_ifirst} by performing a local resampling around $(i,o)$. We will show that the expectation breaks down into an $\OO(1)$-weighted sum of terms in the same form.
We begin with a weighted version of \eqref{e:R_ifirst}, as presented on the left-hand side of \eqref{e:maint} in the following proposition. Here, the additional factor $(d-1)^{3r\ell}$ depends on the admissible function $R_\bfi$. The reader can interpret this as follows: each term in $R_r$ (a product of $r$
factors of the form \eqref{e:defcE1}) is accompanied by a factor $(d-1)^{3\ell}$.  Thanks to \eqref{e:Bsmall}, even with these factors, the size of the terms remains small, i.e. bounded by $(d-1)^{3r\ell}N^{-\fb r}\ll1$.
These factors are introduced to ensure that all the expansions in this section are \(\OO(1)\)-weighted sums of terms, as defined in \Cref{def:O1sum}. Specifically, combinatorial factors are absorbed into \((d-1)^{3r \ell}\).

The proposition below expresses the expectation of Green's functions of the graph $\cG$ in terms of the quantities of the new graph $\widetilde \cG$ after local resampling.

\begin{proposition}\label{p:add_indicator_function}
  Consider a forest $\cF=(\bfi, E)$ as in \eqref{e:cFtocF+} and a function $(G_{oo}^{(i)}-Y)R_\bfi$ with $R_\bfi\in \Adm(r,\cG,\cF)$ and $r\geq 1$. We perform a local resampling around $(i, o) \in \cF$ using the resampling data ${\bf S}=\{(l_\al, a_\al), (b_\al, c_\al)\}_{\al\in\qq{\mu}}$, denoting the new graph as $\widetilde \cG = T_\bfS(\cG)$, with its corresponding Green's function $\widetilde G$. Then 
    \begin{align}\begin{split}\label{e:maint}
    &\phantom{{}={}}\frac{(d-1)^{3r\ell}}{Z_\cF}\sum_{\bfi}\bE\left[I(\cF,\cG)\bm1(\cG\in \Omega) (G_{oo}^{(i)}-Y)R_\bfi\right]\\
    &=\frac{(d-1)^{3r\ell}}{Z_{\cF^+}}\sum_{\bfi^+}\bE\left[I(\cF^+,\cG)\bm1(\cG,\tcG\in \Omega)(\widetilde G_{oo}^{(i)}-Y) \wt R_\bfi\right]+\OO(N^{-\fb/2}\bE[\Psi]).
\end{split}\end{align}
Here, $\widetilde R_\bfi$ is obtained by computing $R_\bfi$ for the graph $\widetilde \cG$.
\end{proposition}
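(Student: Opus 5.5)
The proof combines the exchangeability of $(\cG,\tcG)$ from \Cref{lem:exchangeablepair} with the counting of good embeddings in \Cref{p:sumA} and the high-probability control of resampling data in \Cref{lem:configuration}. It is the same argument as \Cref{c:resample}, now carried out with the extra admissible factor $R_\bfi$.

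First, note that $(i,o)\in\cC^\circ$ and that $R_\bfi$ depends only on $\cG$ and on the embedding $\bfi$. Since the resampling data $\bfS$ is independent of $\cG$ given $\cG$, we can average over it. For fixed $\bfi$, summing over the $\mu$ new switching edges $(b_\al,c_\al)$ chosen from $\cG^{(\bT)}$ corresponds exactly to enlarging $\bfi$ to $\bfi^+$. The normalizations then match: $Z_{\cF^+}/Z_\cF$ equals $[(Nd)^\mu$ times the number of labelings of the ball$]$, which is the size of $\sfS(\cG)$ times the number of ways to embed $\cB_\ell(o)$. The correction from \Cref{p:sumA} is only a factor $1+\OO(N^{-1+2\fc})$.

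Second, use \Cref{lem:configuration} to insert the indicator $I(\cF^+,\cG)$. The complement of $\sfF(\cG)$ has $\bP_\bfS$-probability at most $N^{-1+2\fc}$. On it, the integrand is bounded by $(d-1)^{3r\ell}N^{-r\fb}\cdot \OO(1)$ by \eqref{e:Bsmall} and the boundedness of $G^{(i)}_{oo}-Y$. This error is $\OO(N^{-1+2\fc})\lesssim \Phi\le \Psi$, hence acceptable. On $\sfF(\cG)$ we also have $\tcG\in\oOmega$, $I(\cF,\tcG)=1$ and $\mathsf W_\bfS=\qq{\mu}$.

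The key step is exchangeability. The pair $(\cG,\tcG)$, jointly with the embedding data, is exchangeable, and the inverse switching maps $\cF^+$ in $\cG$ to the corresponding switched forest in $\tcG$ while keeping $\cF$, and in particular the ball around $o$ and $R_\bfi$'s vertex set, fixed. So the quantity $I(\cF,\cG)\bm1(\cG\in\Omega)(G^{(i)}_{oo}-Y)R_\bfi$ has the same expectation as its counterpart computed on $\tcG$, namely $I(\cF,\tcG)\bm1(\tcG\in\Omega)(\wt G^{(i)}_{oo}-\wt Y)\wt R_\bfi$. After this exchange, three replacements remain:
\begin{enumerate}
\item Replace $I(\cF,\tcG)$ by $I(\cF^+,\cG)$. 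They agree on $\sfF(\cG)$ by \Cref{lem:configuration}, with reverse roles handled by the symmetric statement.
\item Insert $\bm1(\cG\in\Omega)$. The event $\tcG\in\Omega\setminus\{\cG\in\Omega\}$ has probability at most $N^{-\fC}$ by \Cref{thm:prevthm0}.
\item Replace $\wt Y=Y_\ell(\wt Q,z)$ by $Y$. By \eqref{e:Yl_derivative} and \eqref{e:Qtmtbound}, $|\wt Y-Y|\lesssim \ell(d-1)^{3\ell}N^\fo\Phi$. Multiplied by $(d-1)^{3r\ell}|\wt R_\bfi|\lesssim (d-1)^{3r\ell}N^{-r\fb}$ with $r\ge1$, this gives $\OO(N^{-\fb/2}\Psi)$, using $\ell\ll \fb\log_{d-1}N$.
\end{enumerate}

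The main obstacle is making the exchangeability statement precise at the level of embedded forests. One must check that the map $(\cG,\bfi^+)\mapsto(\tcG,\tilde\bfi^+)$, where the boundary edges $(l_\al,c_\al),(a_\al,b_\al)$ become the new boundary and core edges, is a bijection between good configurations on both sides with matching weights $1/Z_{\cF^+}$. My first attempt would be to mimic the proof of \Cref{lem:exchangeablepair}: restrict to $\sfF(\cG)$, where every switching is admissible and the indicator conditions are symmetric. Then the only asymmetry lies in the tree-neighborhood conditions near the new edges, and those are controlled by the excess argument in \Cref{lem:configuration}.
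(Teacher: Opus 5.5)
Your proposal is correct and follows the paper's sketched argument: the resampling symmetry $\bE[(G^{(i)}_{oo}-Y)R_\bfi]=\bE[(\wt G^{(i)}_{oo}-\wt Y)\wt R_\bfi]$, indicator bookkeeping as in \Cref{c:resample}, and replacing $\wt Y$ by $Y$ via \eqref{e:Yl_derivative} and \eqref{e:Qtmtbound}. One small slip: the event $\{\tcG\in\Omega,\ \cG\notin\Omega\}$ has probability $\OO(N^{-1+2\fc})$, not $N^{-\fC}$, since $\oOmega$ itself fails with probability $\OO(N^{-1+\fc})$; this is still absorbed into $\OO(N^{-\fb/2}\bE[\Psi])$ because $\Psi\geq N^{-1+2\fc}$ on $\Omega$.
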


If we temporarily ignore the indicator and the averaging over embeddings, the
above proposition reduces to the symmetry
\begin{align}\label{e:changet}
 \bE\!\left[(G^{(i)}_{oo}-Y) R_\bfi\right]
= \bE\!\left[(\wt G^{(i)}_{oo}-\wt Y) \wt R_\bfi\right]
= \bE\!\left[(\wt G^{(i)}_{oo}- Y) \wt R_\bfi\right] -\bE\!\left[(\wt Y-Y) \wt R_\bfi\right].
\end{align}
The indicator $I(\cF,\cG)$ just restricts attention to “good” placements where we have good estimates for the Green's function. In this regime we have $|\wt R_\bfi|\lesssim N^{-\fb}$ and  
\begin{align*}
|\wt Y-Y|=|Y_\ell(\wt Q,z)-Y_\ell(Q,z)|\lesssim |\ell (\wt Q-Q)|\lesssim \ell (d-1)^{3\ell}N^\fo\Phi
\end{align*}
where we used \eqref{e:Yl_derivative} and \eqref{e:Qtmtbound}. So the last term in \eqref{e:changet} is negligible. We omit the proof of \Cref{p:add_indicator_function}.

The right-hand side of \eqref{e:maint} involves the Green's function of the switched graph $\tcG$. The following two propositions help evaluate them, and express them as $\OO(1)$-weighted sums of terms involving only the Green's function of the original graph $\cG$, with negligible error. More importantly, these terms match the structure of the left-hand side of \eqref{e:maint}.

\begin{proposition}\label{p:general}
Given a forest $\cF=(\bfi, E)$ and a function $(G_{oo}^{(i)}-Y)R_\bfi$ with $R_\bfi\in \Adm(r,\cF,\cG)$ (recall from \Cref{def:pgen}). We construct $\cF^+=(\bfi^+, E^+)$ (as given by \eqref{e:cF++}) by performing a local resampling around  $(i,o)\in \cF$ with resampling data ${\bf S}=\{(l_\al, a_\al), (b_\al, c_\al)\}_{\al\in\qq{\mu}}$, and denote $\wt \cG=T_\bfS(\cG)$. 
\begin{enumerate}
\item Let  $r\geq 2$ and take $R_{\bfi}\in \Adm(r,\cF,\cG)$. Then, up to an error of size $\OO(N^{-\fb/4}\bE[ \Psi])$,  
    \begin{align}\label{e:higher_case3}
       \frac{(d-1)^{3r\ell}}{(d-1)^{\fq \ell/2}Z_{\cF^+}}\sum_{\bfi^+}\bE\left[I(\cF^+,\cG)\bm1(\cG,\tcG\in \Omega)(\widetilde G_{oo}^{(i)}-Y) \wt R_\bfi\right]
    \end{align}
    can be rewritten as an $\OO(1)$-weighted sum of terms in the following form
     \begin{align}\label{e:case3_copy}
       \frac{(d-1)^{3r^+ \ell}}{(d-1)^{\fq^+\ell/2}Z_{\cF^+}}\sum_{\bfi^+}  \bE[\bm1(\cG\in \Omega)I(\cF^+, \cG)(G_{c_\al c_\al}^{(b_\al)}-Y)R_{\bfi^+}],
    \end{align}
    where $R_{\bfi^+}\in \Adm(r^+,\cF^+,\cG)$, where either $\fq^+\geq \fq+1$, $r^+\geq r$; or  $\fq^+\geq \fq$, $r^+\geq r+1$.

\item Let $R_\bfi=( G_{oo}^{(i)}- Q )$. Then, up to an error of size $\OO((d-1)^{-\fq\ell/2}N^\fo\bE[\Psi])$,  
\begin{align}\label{e:higher_case1}
       \frac{1}{(d-1)^{\fq\ell/2}Z_{\cF^+}}\sum_{\bfi}  \bE[I(\cF^+,\cG)\bm1(\cG,\tcG\in \Omega)(\wt G_{oo}^{(i)}-Y )\wt R_\bfi]
    \end{align}
 can be rewritten as an $\OO(1)$-weighted sum of terms in the form of \eqref{e:case3_copy} with $R_{\bfi^+}\in \Adm(r^+,\cF^+,\cG)$, where $r^+\geq 2$ and $\fq^+\geq \fq$, or in the following form
    \begin{align}\label{e:higher_case11}
       \frac{1}{(d-1)^{\fq^+ \ell/2}Z_{\cF}}\sum_{\bfi^+}  \bE[\bm1(\cG\in \Omega)I(\cF^+, \cG)(G_{c_\al c_\al}^{(b_\al)}-Y)(G_{c_\al c_\al}^{(b_\al)}-Q)],
    \end{align}
    where $\fq^+ \geq \fq+1$.
     \end{enumerate}

\end{proposition}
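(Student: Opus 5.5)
We follow the template of the proof of \Cref{p:iteration}, now with a multiplicative factor $\wt R_\bfi$ carried along. The first step is to expand $\wt G^{(i)}_{oo}-Y$ by \Cref{l:coefficient}: two leading linear sums over $\sfA_i$ (the $G^{(b_\al)}_{c_\al c_\al}-Q$ and $G^{(b_\al b_\beta)}_{c_\al c_\beta}$ terms, with prefactor $\msc^{2\ell+2}/(d-1)^{\ell+1}$), the higher-order remainder $\cU$, and the replacement error $\cE$ from \eqref{e:defCE}.

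Next we rewrite each factor of $\wt R_\bfi$ in terms of $G$. Factors indexed by unused core edges away from $(i,o)$, and centered entries $\wt G^\circ_{ss'}$, are handled by \Cref{lem:offdiagswitch}(3) or by the Woodbury expansion \Cref{lem:woodbury}; the leading term is the same factor for $\cG$, and the corrections are squared entries controlled by Ward-type bounds. Factors involving $i$ or $o$ (and $\wt Q-\msc$, which is changed via \eqref{e:Qtmtbound}) are handled by \Cref{lem:offdiagswitch}(1),(2): they become $(d-1)^{-\ell/2}\sum_\al \fc_1 G^{(b_\al b)}_{c_\al c}$ plus $\cU$-type terms. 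In case 2, the factor $\wt G^{(i)}_{oo}-\wt Q$ is itself expanded by \Cref{l:coefficient}.

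Multiplying the expansions, each product is, up to errors, an $\OO(1)$-weighted sum of terms $(d-1)^{3r'\ell}(\text{linear factor})\times R'$ with $R'\in\Adm(\cdot,\cF^+,\cG)$. We then run the same case analysis as for \eqref{e:e1}--\eqref{e:e4}:
\begin{itemize}
\item A linear factor $G^{(b_\al)}_{c_\al c_\al}-Q$ whose index $\al$ appears nowhere else averages to zero by \eqref{e:core1}.
\item If that index is shared with another factor, we convert one copy to $G^{(b_\al)}_{c_\al c_\al}-Y$ at cost $|Q-Y|N^{-\fb}\lesssim N^{-\fb/2}\Psi$.
\item A factor $G^{(b_\al b_\beta)}_{c_\al c_\beta}$ is replaced, as in \eqref{e:core21}, by $G_{b_\al b_\beta}(G^{(b_\al)}_{c_\al c_\al}-Q)(G^{(b_\beta)}_{c_\beta c_\beta}-Q)/(d-1)$. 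This raises $r$ by one.
\item Two off-diagonal factors give a $\Phi$ bound via \Cref{p:small_Ri}.
\end{itemize}

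Bookkeeping then gives the dichotomy in case 1. The expansion of $\wt G^{(i)}_{oo}-Y$ gains either $(d-1)^{-\ell/2}$ (from $L^{(i)}_{ol_\al}$, which yields $\fq^+\ge\fq+1$) or an extra factor (which yields $r^+\ge r+1$). Combinatorial counts are absorbed into $(d-1)^{3\ell}$ per factor. In case 2, the square $(G^{(b_\al)}_{c_\al c_\al}-Q)^2$ arising from $J_1$-type terms has coefficient $(d-1)^{-\ell-2}\cdot|\sfA_i|$-weighted and yields \eqref{e:higher_case11}. All other terms have $r^+\ge2$, and the diagonal $J_2$-type terms cost $(d-1)^{-\fq\ell/2}N^\fo\Psi$.

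We expect the main obstacle to be the error terms $\cE$, i.e.\ $\wt G^{(\bT)}_{c_\al c_\beta}-G^{(b_\al b_\beta)}_{c_\al c_\beta}$ multiplied by $\wt R_\bfi$. We plan to bound these via \eqref{e:rcEbound}. After averaging over the embedding, the index-decoupled squares fall to the Ward identity \eqref{eq:rwardex}. The coupled ones, such as $|G^{(\bT\bW)}_{c_\al x}|^2$ with $x\in\cN_\al$, need the punctured-vertex Ward bound \eqref{e:Gijo-bound}. The remaining factor $N^{-\fb r}(d-1)^{3r\ell}$ then gives the $N^{-\fb/4}\bE[\Psi]$ error.
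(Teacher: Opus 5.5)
Your architecture matches the paper's: expand $\wt G^{(i)}_{oo}-Y$ and each $\wt B_j$ by \Cref{l:coefficient} and \Cref{lem:offdiagswitch}, pass from $\prod_j\wt B_j$ to $\prod_j\wh B_j$ with errors controlled by \Cref{lem:task2} and the punctured-vertex Ward bound, then use edge-averaging cancellations and \Cref{p:small_Ri}. But the step that actually produces the dichotomy is missing. The rule that ``combinatorial counts are absorbed into $(d-1)^{3\ell}$ per factor'' does not apply to the raw leading linear sums. For instance, $(d-1)^{-\ell/2}\sum_{\al\in\qq{\mu}}\fc_1\,G^{(b_\al b)}_{c_\al c}$ in \eqref{e:zhankai3} is a single factor with total weight of order $(d-1)^{\ell/2}$, and the second sum in \eqref{e:fcase1W0} has total weight of order $(d-1)^{\ell+1}$. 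Nor does the gain come from $L^{(i)}_{ol_\al}$: since $\sum_{\al\in\sfA_i}|L^{(i)}_{ol_\al}|^2=\OO(1)$, the $A$-sum in \eqref{e:fcase1W0} is $\OO(1)$-weighted with no gain. The gain appears only because a surviving index must occur again elsewhere.

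The paper makes this precise in two steps. First, it groups the index words $(\bm\al,\bm\beta,\bm\gamma)$ into classes under vertex-permutation invariance; a class with $f_0$ free distinct indices ($f_1$ of them occurring once) costs $(d-1)^{f_0\ell}$ against the normalization $(d-1)^{(k_1+(k_2+k_3)/2)\ell}$ in \eqref{e:partition}. Second, it applies \Cref{l:erbu}: a once-occurring index in an $A$-factor kills the term, while in a $B$- or $C$-factor it is traded, via \eqref{e:sum_one_index} or \eqref{e:core21}, for one extra admissible factor and a factor $(d-1)^{-1/2}$ (all four substitutions in \eqref{e:final_replace}). With $k_1+k_2+k_3\geq 2f_0-f_1$ this gives $\fq^+=2k_1+k_2+k_3-2f_0+6f_1\geq k_1$. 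That is exactly what yields $\fq^+\geq\fq+1$ when the $A$-sum of $\wh B_0$ survives, and $r^+\geq r+1$ when a trade occurs.

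Your bullets cover only the $A$-kill and the case where both indices of a $C$-factor are single. The one-sided $C$-substitutions are absent, as are the substitutions for the $B$-factors $G^{(b_\beta b)}_{c_\beta c}$, $G^{(b_\beta)}_{c_\beta b}$ coming from \Cref{lem:offdiagswitch}(1),(2). Without them those terms are neither negligible nor of the form \eqref{e:case3_copy}. You also do not locate where $r\geq2$ enters. It is the subcase where the $C$-sum of $\wh B_0$ is chosen, neither index is single, and $\wh R_{\bfi^+}$ has two off-diagonal factors. There \eqref{e:refined_bound} is $\OO(N^{-\fb/2}\bE[\Psi])$ only because there are at least $r+1\geq3$ factors; with two it is merely $\OO(N^{\fo}\bE[\Psi])$.

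In statement 2 you attribute \eqref{e:higher_case11} to the $J_1$-type square with weight $(d-1)^{-(\ell+2)}|\sfA_i|$. That weight is $\OO(1)$, i.e.\ $\fq^+=\fq$, and a term that reproduces itself without gain would prevent the iteration in the proof of \eqref{e:QY} from terminating. That square actually lies in $\cU_0$ and already has two factors, so after multiplication by $\wh B_1$ it goes into \eqref{e:case3_copy} with $r^+\geq 2$. The \eqref{e:higher_case11} term is instead the product of the linear $A$-sums of $\wh B_0$ and $\wh B_1$ (case $\wh h=1$, $f_1=0$). Its coefficient $\msc^{4(\ell+1)}(d-1)^{-2(\ell+1)}$, restricted to coinciding indices (distinct ones are single and negligible), sums over $\sfA_i$ to $\OO((d-1)^{-(\ell+1)})$, i.e.\ $\fq^+=\fq+2$ as in \eqref{e:main1}. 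The product of the two $C$-sums is \eqref{e:main2}, and that is the source of the $\OO((d-1)^{-\fq\ell/2}N^{\fo}\bE[\Psi])$ error.

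Finally, your assertion that every $\wt G^\circ_{ss'}$ has leading term $G^\circ_{ss'}$ with squared corrections fails when $s$ or $s'$ is $i$ or $o$. There \eqref{e:Gsw_exp} is useless since $\sum_\al|G_{ol_\al}|^2\asymp1$, and the first nonvanishing correction in \eqref{e:tG-Gdiff} is linear in $G^\circ$. Such factors would need a \Cref{l:coefficient}-type expansion; the paper's list \eqref{e:Blist} omits them.
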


The claim \eqref{e:QY} follows from iterating \Cref{p:add_indicator_function} and \Cref{p:general}.
\begin{proof}[Proof of \eqref{e:QY}]
We recall the sequence of forests $\cF_0\subset\cF_1\subset \cF_2\subset \cdots$ from \Cref{s:forest}. They encode all edges involved in local resamplings. By \Cref{c:resample} and \Cref{p:iteration},  up to an error $(d-1)^{2\ell}\bE[\Psi]$, $\bE[\bm1(\cG\in \Omega)(Q-Y)]$ is an $\OO(1)$-weighted sum of terms of the following two forms
  \begin{align}\label{e:st1}
       \frac{1}{(d-1)^{\fq\ell/2}Z_{\cF_1}}\sum_{\bfi}  \bE[\bm1(\cG\in \Omega)I(\cF_1, \cG)(G_{c_\al c_\al}^{(b_\al)}-Y)(G_{c_\al c_\al}^{(b_\al)}-Q)],
    \end{align}
where $\fq\geq 0$; or
  \begin{align}\label{e:st2}
       \frac{(d-1)^{3r\ell}}{(d-1)^{\fq\ell/2}Z_{\cF_1}}\sum_{\bfi}  \bE[\bm1(\cG\in \Omega)I(\cF_1, \cG)(G_{c_\al c_\al}^{(b_\al)}-Y)R_{\bfi_1}],
    \end{align}
where $\fq\geq 0$ and $r\geq 2$, and the function $R_{\bfi_1}\in \Adm(r,\cF_1, \cG)$.

The above expression \eqref{e:st1} aligns with the form of \Cref{p:add_indicator_function}, allowing us to apply \Cref{p:add_indicator_function} and \Cref{p:general} for the iteration process.  The result in \Cref{p:general} essentially states that, after further expansion, \eqref{e:st1} either maintain the same form with an additional factor of $(d-1)^{-\ell/2}$, or they transform into \eqref{e:st2}.
Similarly, after expansion, \eqref{e:st2} remains in the same form, either with an additional  $(d-1)^{-\ell/2}$ factor, or an extra term in the form of \eqref{e:defcE1}, which is bounded by $N^{-\fb}\ll (d-1)^{-\ell/2}$.
Therefore, after finitely many steps, namely $\OO(4\log_{d-1}(N)/\ell)$, all terms are bounded by $\OO(N^{-2})=\OO(\bE[\Psi]/N)$. Meanwhile the errors from \Cref{p:add_indicator_function} and \Cref{p:general} are all bounded by $\OO((d-1)^{2\ell}\bE[\Psi])$. This gives \eqref{e:QY}.

\end{proof}

\subsection{Proof of \Cref{p:general}}\label{s:general}

We denote $R_\bfi\in\Adm(r,\cF,\cG)$ in \Cref{p:general} (as in \Cref{def:pgen}), which contains $r$ factors in the form of \eqref{e:defcE1}. 

In the following we prove \Cref{p:general} assuming $\fq=0$. The case with $\fq\geq 1$ follows from simply multiplying $(d-1)^{-\fq \ell/2}$. 
In order to use our various propositions from \Cref{s:expQ}, \Cref{sec:schurlemma} and \Cref{sec:fanalysis} that allow us to reduce $(\wt G_{oo}^{(i)}-Y ) \wt R_\bfi $ to terms of the unswitched graph $\cG$, we need to classify the factors of $R_\bfi$ based on their dependence on $i,o$. Let 
\begin{align}\label{e:introB_j}
     B_0=(G_{oo}^{(i)}-Y ),\quad R_{\bfi}=\prod_{j=1}^{r}B_j,
\end{align}
where $B_j$ is of one of the following forms
\begin{align}\begin{split}\label{e:Blist}
&\{G_{cc}^{(b)}-Q \}_{(b,c)\in \cC^\circ},\quad
\{G_{oc}^{(ib)}, G_{ic}^{(b)}, G_{ib}, G_{ob}^{(i)}\}_{(i,o)\neq (b,c)\in \cC^\circ},\\
&\{ G_{cc'}^{(bb')}, G_{bc'}^{(b')}, G_{bb'}\}_{(i,o)\neq (b,c), (b',c')\in \cC^\circ},\quad (Q -\msc(z )).
\end{split}\end{align}

We can write \eqref{e:higher_case3} (with $\fq=0$) as
\begin{align}\begin{split}\label{e:sreplace0}
    &\phantom{{}={}}\frac{(d-1)^{3r\ell}}{Z_{\cF^+}}\sum_{\bfi^+}\bE\left[I(\cF^+,\cG)\bm1(\cG,\tcG\in \Omega)(\wt G_{oo}^{(i)}-Y ) \wt R_\bfi\right]\\
    &=\frac{(d-1)^{3r\ell}}{Z_{\cF^+}}\sum_{\bfi^+}\bE\left[I(\cF^+,\cG)\bm1(\cG,\tcG\in \Omega) (\wt G_{oo}^{(i)}-Y )\prod_{j=1}^r \wt B_j\right].
\end{split}\end{align}
In the following we discuss the terms $B_j$  as in \eqref{e:introB_j} after the local
resampling.  In \Cref{i1}-\Cref{i7} below, for any $h\geq 0$, we denote $R_h$ a product of $h$ factors of the form 
\begin{align}\label{e:list_factor}
    (G_{cc}^{(b)}-Q ),  \quad \{G_{cc'}^{(bb')}, G_{bc'}^{(b')}, G_{bb'}\},\quad  (Q -\msc(z )),
\end{align}
where $(b,c)\neq (b',c')\in \cK^+$.
They are terms from \Cref{l:coefficient} and \Cref{lem:offdiagswitch}. We refer to \Cref{f:decomposition} for a picture illustration. 
\begin{enumerate}
\item \label{i1}
By \Cref{l:coefficient}, $\wt G_{oo}^{(i)}-Y =\widehat B_0+\cE_0$, where $\wh B_0$ is given by
\begin{align}\label{e:fcase1W0}
\wh B_0= \frac{\msc^{2(\ell+1)}(z )} {(d-1)^{\ell+1}}\sum_{\al\in\sfA_i}(G^{(b_\alpha)}_{c_\alpha c_\alpha}-Q )+\frac{\msc^{2(\ell+1)}(z )} {(d-1)^{\ell+1}}\sum_{\al\neq \beta\in\sfA_i}G^{(b_\alpha b_\beta)}_{c_\alpha c_\beta}
   + \cU_0. 
\end{align}
Here, $\cU_0$ is an $\OO(1)$-weighted sum of terms of the form $(d-1)^{3(h-1)\ell}R_h$ with $h\geq 2$, and each term contains at least one factor of the form  $( G_{c_\al c_{\al}}^{(b_\al)}-Q )$ or $G_{c_\al c_{\beta}}^{(b_\al b_\beta)}$; $\cE_0$ is $\cE$ in \Cref{l:coefficient}.

    \item \label{i2}
For $B_j=(G_{oo}^{(i)}-Q )$  we can first rewrite $\wt B_j$ as
\begin{align*}
    \wt B_j=(\wt G_{oo}^{(i)}-\wt Q )=(\wt G_{oo}^{(i)}-Y )+(Y -\wt Q ),
\end{align*}
then expand according to \Cref{l:coefficient}. This gives that $\wt B_j=\wh B_j+\cE_j$, where
\begin{align}\label{e:zhankai1}
\wh B_j&=\frac{\msc^{2(\ell+1)}(z )} {(d-1)^{\ell+1}}\sum_{\al\in\sfA_i}(G^{(b_\alpha)}_{c_\alpha c_\alpha}-Q )+\frac{\msc^{2(\ell+1)}(z )} {(d-1)^{\ell+1}}\sum_{\al\neq \beta\in\sfA_i}G^{(b_\alpha b_\beta)}_{c_\alpha c_\beta}+\cU_j.
\end{align}
Here $\cU_j$ is an $\OO(1)$-weighted sum of terms of the form $(d-1)^{3(h-1)\ell}R_h$ with $h\geq 2$; 
$\cE_j$ is $ \cE+( (Y -Q ) +
    (Q -\wt Q ))$, where $\cE$ is from \Cref{l:coefficient}.

\item \label{i3}
For $B_j\in \{G_{oc}^{(ib)}, G_{ic}^{(b)}\}_{(i,o)\neq (b,c)\in \cC^\circ}$, by \Cref{lem:offdiagswitch}, we have $\wt B_j=\wh B_j+\cE_{j}$, where
\begin{align}\label{e:zhankai3}
  \wh B_j=(d-1)^{-\ell/2}\sum_{\al\in \qq{\mu}}\fc_1(\bm1(\al\in \sfA_i)) G_{c_\al c}^{(b_\al b)}+\cU_j.
\end{align}
Here $\cU_j$ is an $\OO(1)$-weighted sum of terms of the form $(d-1)^{3h \ell}R_hG^{(b_\al b)}_{c_\al c}$ with $h\geq 1$;  and 
$\cE_j$ is $ \cE$ in the first statement of \Cref{lem:offdiagswitch}.

For $B_j\in \{ G_{ib},  G_{ob}^{(i)}\}_{(i,o)\neq (b,c)\in \cC^\circ}$, we have a similar expansion $\wt B_j=\wh B_j+\cE_{j}$, where
\begin{align}\label{e:zhankai4}
  \wh B_j=(d-1)^{-\ell/2}\sum_{\al\in \qq{\mu}}\fc_1(\bm1(\al \in \sfA_i)) G_{c_\al b}^{(b_\al )}+\cU_j.
\end{align}
Here $\cU_j$ is an $\OO(1)$-weighted sum of terms of the form $(d-1)^{3h\ell}R_hG^{(b_\al)}_{c_\al b}$ with $h\geq 2$; $ \cE_j$ is $\cE$ in the second statement of \Cref{lem:offdiagswitch}.

\item \label{i4}
For $B_j\in \{ G_{cc}^{(b)}, G_{cc'}^{(bb')}, G_{bc'}^{(b')}, G_{bb'}\}_{(b,c)\neq  (b',c')\in \cC^\circ}$, by \Cref{lem:offdiagswitch}, we have  $\widetilde B_j=\widehat B_j+\cE_j$, where 
 $   \widehat B_j=\cU_j=B_j$,
and $\cE_j$ is $\cE$ in the third statements of \Cref{lem:offdiagswitch}.


\item \label{i7}
For $B_j=Q -\msc(z )$, we have by the  statement \eqref{e:Qtmtbound} in \Cref{c:Qmchange},
$\wt B_j=\widehat B_j+\cE_j$, where 
$
\widehat B_j=\cU_j=B_j=Q-\msc(z)
$,
and $|\cE_j|\lesssim (d-1)^{6\ell}N^{\fo}\Phi$.

\end{enumerate}

\begin{figure}[t]
    \centering
    \begin{tikzpicture}[
      >=stealth,
      box/.style={draw=brown!70!black, very thick, rounded corners=1pt,
                  fill=brown!5, inner sep=4pt},
      smallbox/.style={box, inner sep=3pt},
      brace/.style={decorate, decoration={brace, amplitude=4pt}},
      arr/.style={->, thick, draw=gray!70},
      math/.style={font=\large},
      every node/.style={font=\normalsize}
    ]


    \node[box, math] (topG0) at (-5,3.5)
      {$\bigl(\widetilde{G}^{(i)}_{oo}-Y\bigr)$};
      \node[math] at (-0.75,3.5) {$\times$};

    \node[box, math] (topG1) at (-2,3.5)
      {$\wt B_j=\bigl(\widetilde{G}^{(i)}_{oo}-\wt Q\bigr)$};

     \node[math] at (2.,3.5) {$\times$};
      
    \node[box, math] (topG2) at (1.6,3.5)
      {$\wt B_j\in \{G_{oc}^{(ib)}, G_{ic}^{(b)}\}$};
      
    \node[box, math] (topG3) at (5.2,3.5)
      {$\wt B_j\in \{G_{ib}, G_{ob}^{(i)}\}$};

 \node[box, math] (topG4) at (8,3.5)
      {other $\wt B_j$};

\node[math] at (-3.75,3.5) {$\times$};
\node[math] at (-0.25,3.5) {$\times$};
\node[math] at (3.45,3.5) {$\times$};
\node[math] at (6.925,3.5) {$\times$};

    \node[box, math] (leftBig) at (-4.7,0.4)
    {$
      \begin{aligned}
      &\frac{\sum_{\alpha\in \sfA_i}\bigl(G^{(b_\alpha)}_{c_\alpha c_\alpha}-Q\bigr)}
              {(d-1)^{\ell+1}}\\[4pt]
      &\frac{\sum_{\alpha\neq\beta\in \sfA_i} G^{(b_\alpha b_\beta)}_{c_\alpha c_\beta}}
              {(d-1)^{\ell+1}}\\[4pt]
      &\mathcal U_j : (d-1)^{3(h-1)\ell} R_h
      \end{aligned}
    $};

    \node[box, math] (midBig) at (0.1,0.4)
    {$
      \begin{aligned}
      &\frac{\sum_{\alpha\in \qq{\mu}}\fc(\bm1(\al\in \sfA_i))G^{(b_\alpha b)}_{c_\alpha  c}}{(d-1)^{\ell/2}}\\
      &\mathcal U_j : (d-1)^{3h\ell} R_h
      \end{aligned}
    $};

     \node[box, math] (rightBig) at (5.4,0.4)
    {$
      \begin{aligned}
      &\frac{\sum_{\alpha\in \qq{\mu}}\fc(\bm1(\al\in \sfA_i))G_{c_\alpha b}^{(b_\al)}}{(d-1)^{\ell/2}}\\
      &\mathcal U_j : (d-1)^{3h\ell} R_h
      \end{aligned}
    $};

    \node[box, math] (lastBig) at (8.5,0.4)
    {$
      B_j
    $};

    \draw[arr] (topG0.south) -- (leftBig.north);
    \draw[arr] (topG1.south) -- (leftBig.north);

    \draw[arr] (topG2.south) -- (midBig.north);
    \draw[arr] (topG3.south) -- (rightBig.north);
    \draw[arr] (topG4.south) -- (lastBig.north);

    \end{tikzpicture}
    \caption{Schematic decomposition of the expression into three cases and products.}
    \label{f:decomposition}
\end{figure}
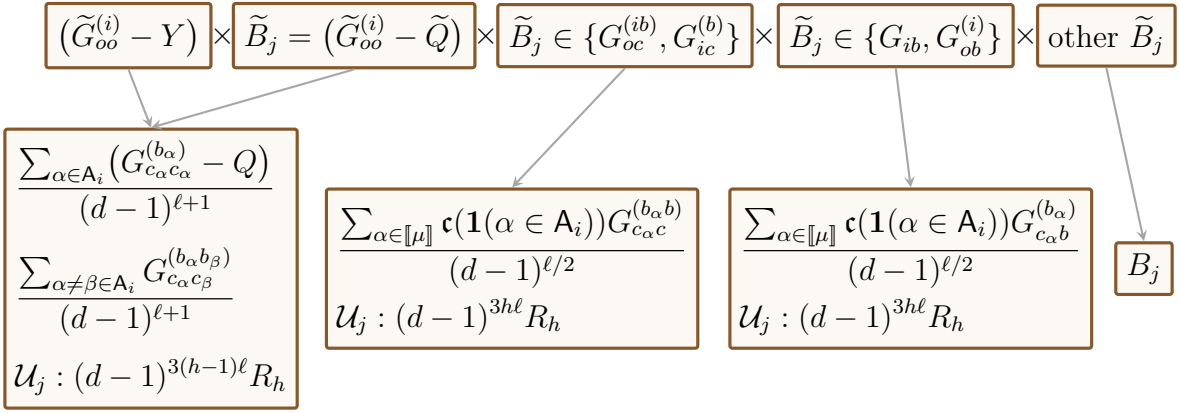

We can replace $(\wt G_{oo}^{(i)}-Y )\prod_{j=1}^r \wt B_j$ in \eqref{e:sreplace0} with $\prod_{j=0}^r \widehat B_j$, with the overall error from this substitution being negligible. 
\begin{lemma}\label{l:yibu}
Adopt the notation and assumptions in \Cref{p:general}, we can rewrite \eqref{e:sreplace0} as
\begin{align}
\label{e:hatRi}
  \eqref{e:sreplace0}=  \frac{(d-1)^{3r\ell}}{Z_{\cF^+}}\sum_{\bfi^+}\bE\left[I(\cF^+ ,\cG)\bm1(\cG\in \Omega) \prod_{j=0}^r \widehat B_j\right]+\OO(N^{-\fb/2}\bE[  \Psi]).
\end{align}
\end{lemma}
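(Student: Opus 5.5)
The plan is to telescope the product. Write $\wt B_j=\wh B_j+\cE_j$ for $0\leq j\leq r$ as in items \ref{i1}--\ref{i7}, with $\wt B_0=\wt G^{(i)}_{oo}-Y$. Then
\begin{align*}
\prod_{j=0}^r \wt B_j-\prod_{j=0}^r\wh B_j=\sum_{k=0}^r \Bigl(\prod_{j<k}\wh B_j\Bigr)\cE_k\Bigl(\prod_{j>k}\wt B_j\Bigr).
\end{align*}
It therefore suffices to show two things. First, each summand, weighted by $(d-1)^{3r\ell}$ and averaged over embeddings, is $\OO(N^{-\fb/2}\bE[\Psi])$. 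Second, the indicator $\bm1(\cG,\tcG\in\Omega)$ can be replaced by $\bm1(\cG\in\Omega)$ in the main term.

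For the second point, $\wh B_j$ depends only on $\cG$. On $I(\cF^+,\cG)=1$ and $\cG\in\Omega$, each $|\wh B_j|\lesssim N^{-\fb}$ by \eqref{e:Bsmall}. The event $\tcG\notin\Omega$ has tiny probability by \Cref{thm:prevthm0} together with the exchangeability in \Cref{lem:exchangeablepair}, so dropping it costs $\OO(N^{-2})\leq \OO(\bE[\Psi]/N)$.

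For the first point, I bound all factors other than $\cE_k$ crudely. On the good event each $\wh B_j,\wt B_j$ is $\OO(N^{-\fb})$ by \eqref{e:Bsmall}, applied to $\cG$ and to $\tcG$. Since $r\geq 1$, at least one such factor remains, and $(d-1)^{3r\ell}N^{-r\fb}\leq N^{-\fb/2}$ because $\ell\ll\fb\log_{d-1}N$. This leaves the task of showing $\frac{1}{Z_{\cF^+}}\sum_{\bfi^+}\bE[I(\cF^+,\cG)\bm1(\cG,\tcG\in\Omega)|\cE_k|]\lesssim (d-1)^{C\ell}N^{\fo}\bE[\Psi]$ for a fixed $C$, so that after multiplying by $N^{-\fb/2}$-type gains we still have room. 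The easy errors are the $Q-\wt Q$ term and the $(d-1)^{6\ell}N^\fo\Phi$ term from \eqref{e:Qtmtbound}. The $Y-Q$ term in item \ref{i2} is bounded by $N^{\fb/8}\Psi$; multiplying by the remaining $N^{-\fb}$ factors gives $N^{-\fb/2}\Psi$.

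The main obstacle is the remaining errors from \Cref{l:coefficient} and \Cref{lem:offdiagswitch}. These have the form $|\wt G^{(\bT\cdots)}_{c_\al\,\cdot}-G^{(b_\al\cdots)}_{c_\al\,\cdot}|$ or $(d-1)^\ell\sum_\al|\wt G^{(\bT\cdots)}_{c_\al c}|^2$. For these I use \eqref{e:replaceexample}--\eqref{e:rcEbound}, the analogue of \Cref{l:diffG1}, which bounds each difference by sums of squared Green's function entries plus $N^{-\fb}\Phi$. For the squared $\wt G$ terms I first convert to $G$ via \Cref{c:Qmchange}: $|\wt G-G|$ is controlled by squares of entries of $G$.

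After averaging over the embedding $\bfi^+$, each squared entry has at least one index among the freshly sampled, essentially uniform edges $(b_\al,c_\al)$. So the average is a Ward-type sum \eqref{eq:rwardex}, \eqref{e:Gest}, giving $\lesssim N^\fo\Phi$. The delicate exception is entries $|G^{(\bT\bW)}_{c_\al x}|^2$ with $x\in\cN_\al$, where both indices are neighbors of $b_\al$. For these I invoke the punctured-vertex Ward bound \eqref{e:Gijo-bound}, after checking that the removed set $\bT\bW$ can be reduced to removing $b_\al$ alone at a Schur-complement cost controlled by \Cref{l:basicG}. Collecting the bounds, every error is at most $N^{-\fb/2}\bE[\Psi]$, which gives \eqref{e:hatRi}.
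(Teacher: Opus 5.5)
Your plan is the paper's own argument. The paper's ``total substitution error'' is exactly your telescoping sum, bounded by $(d-1)^{3r\ell}N^{-r\fb}\sum_j\bE|\cE_j|$ using the smallness of the other factors. For the $\wh B_j$, which are sums, you get $N^{-\fb}$ only up to a harmless $(d-1)^{O(\ell)}$ loss, not directly from \eqref{e:Bsmall}. The rest also matches: $Y-Q$ and $\wt Q-Q$ in \Cref{i2} go through \eqref{e:defPhi} and \eqref{e:Qtmtbound}, the remaining $\cE_j$ go through \Cref{l:diffG1}, Ward bounds and \eqref{e:Gijo-bound}, and $r\geq 1$ closes the estimate.

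One step fails as you state it: converting the squared terms $(d-1)^\ell\sum_\al|\wt G^{(\bT b)}_{c_\al c}|^2$, and their analogues in the third part of \Cref{lem:offdiagswitch}, to $\cG$ via \Cref{c:Qmchange}. First, \eqref{e:Gsw_exp} is stated for the full resolvents, not for $\wt G^{(\bT b)}$. More seriously, it was symmetrized by Cauchy--Schwarz. For an entry with index $w=c_\al$, a switching vertex, its right-hand side therefore contains $(d-1)^\ell|G_{c_\al c_\al}|^2\asymp(d-1)^\ell$. So it is vacuous for exactly the entries you need, since every one of these squared terms carries the index $c_\al$.

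The repair is to treat these terms like the difference terms. The \Cref{l:diffG1}-type expansion gives $\wt G^{(\bT b)}_{c_\al c}=G^{(b_\al b)}_{c_\al c}+\cE$, with $|\cE|$ bounded by sums of squared entries that are each $\OO(N^{-\fb})$ pointwise. Hence $|\wt G^{(\bT b)}_{c_\al c}|^2\lesssim|G^{(b_\al b)}_{c_\al c}|^2+N^{-\fb}\cdot(\text{those sums})$, which averages to $\OO(N^{\fo}\Phi)$ by \eqref{e:Gest} and \eqref{e:Gijo-bound}. Alternatively, return to the unsymmetrized bound $\sum_{x,y}|G_{wx}||G_{yw'}|$ in the proof of \Cref{c:Qmchange}, redone for the resolvents with $\bT b$ removed. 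Absorb $|G_{c_\al x}|\lesssim 1$ and Ward-sum the factors $|G_{yc}|^2$.

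There are also two smaller inaccuracies. First, unconditionally $\bP(\tcG\notin\Omega)=\bP(\cG\notin\Omega)$ is only $\OO(N^{-1+\fc})$ by \Cref{lem:omega}, not $\OO(N^{-2})$. You can either use \Cref{lem:configuration} to get $\tcG\in\oOmega$ on $\{I(\cF^+,\cG)=1\}$, so that only $\oOmega\setminus\Omega$ (probability $\OO(N^{-\fC})$) matters. Or note that $N^{-1+\fc}$ already suffices, since $\bE[\Psi]\gtrsim N^{-1+2\fc}$ and $\fb\ll\fc$. Second, not every squared entry has an index on a freshly sampled edge. The third-part errors such as $|\wt G^{(\bT b)}_{cc}-G^{(b)}_{cc}|$ produce terms like $|G^{(b)}_{cx}|^2$ with $x\in\bT$. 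The Ward bound still applies there because the unused core edge $(b,c)$ is itself averaged over in $\bfi^+$.
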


If we temporarily ignore the indicator and the averaging over embeddings, the
total substitution error satisfies
\[
(d-1)^{3r\ell} N^{-r\fb}\sum_{j=0}^r\bE[|\cE_j|]
\]
which follows from the bound $|B_j|\leq N^{-\fb}$ in \eqref{e:Bsmall}. 
Using the same argument as in \eqref{e:Eterm1}, we obtain
\(\bE[|\cE_j|]\lesssim (d-1)^{2\ell}\bE[\Psi]\) in all five cases above,
except for \Cref{i2}, where we also have the additional errors (ignoring the
indicator functions)
\begin{align*}
\bE[|Y-Q|+|\wt Q-Q|]
  \lesssim N^{\fb/8}\bE[\Psi]+\bE[(d-1)^{3\ell}N^\fo \Phi],
\end{align*}
where we used the definitions \eqref{e:defPhi} and estimate \eqref{e:Qtmtbound}.
Finally, \eqref{e:hatRi} follows from the fact that \(r\geq 1\).

We can further decompose $(d-1)^{3r\ell}\prod_{j=0}^{r} \widehat B_j$ in \eqref{e:hatRi}. The discussions in \Cref{i1}--\Cref{i7} provide the decomposition of $(d-1)^{3r\ell}\prod_{j=0}^{r} \widehat B_j$ as an $\OO(1)$-weighted sum of terms of the following form:
\begin{align}\begin{split}\label{e:oneterm0}
&\frac{(d-1)^{3\widehat h\ell}}{(d-1)^{(k_1 +(k_2+k_3)/2)\ell}} \sum_{\bm\al, \bm\beta, \bm\gamma}  \prod_{m=1}^{k_1}  A_{\al_  m}  \prod_{  m=1}^{k_2}  B_{\beta_m} 
\prod_{m=1}^{k_3/2}  C_{\gamma_{2m-1} \gamma_{2m}}  R_{\widehat h+1-k_1-k_2-k_3/2}.
\end{split}\end{align}
Here the summation for $\bm\al, \bm\beta, \bm\gamma$ runs through each $\al_m, \beta_m, \gamma_m$ in one of the sets $\qq{\mu}$, $\sfA_i$ or $\qq{\mu}\setminus \sfA_i$ (we recall $\sfA_i$ from \Cref{fig:Ai}).
The factors in \eqref{e:oneterm0} are defined as follows:
\begin{align}\begin{split}\label{e:defABC}
  &A_\al=   ( G_{c_\al c_\al}^{(b_\al)}-Q ), \quad B_\beta\in \left\{ G_{c_\beta c}^{(b_\beta b)},  G_{c_\beta b}^{(b_\beta)}\right\}_{(b,c)\in \cC^\circ},\quad C_{\gamma \gamma'}=  G_{c_\gamma c_{\gamma'}}^{(b_\gamma b_{\gamma'})}.
\end{split}\end{align}
 Here $A_\al$ originates from $G_{c_\al c_\al}^{(b_\al)}-Q $ in \eqref{e:fcase1W0}, \eqref{e:zhankai1}. $B_\beta$ comes from $G_{c_\al c}^{(b_\al b)}, G_{c_\al b}^{(b_\al)}$ in \eqref{e:zhankai3}, \eqref{e:zhankai4}.  $C_{\gamma \gamma'}$ are from $G_{c_\al  c_\al}^{(b_\al b_\beta)}$ in \eqref{e:fcase1W0}, \eqref{e:zhankai1}. In total there are $k_1+k_2+k_3/2$ such terms. $R_{\widehat h+1-k_1-k_2-k_3/2}$ collects all $R_h$ factors from $\cU_j$ as in \Cref{i1}--\Cref{i7}, with a total count of ${\widehat h+1-k_1-k_2-k_3/2}$. Thus, the summand in \eqref{e:oneterm0} consists of $\widehat h+1$  factors in total. We notice that after expansion, each $\wt B_j$ gives at least one term, so $\widehat h+1 \geq r+1$. Finally the coefficient $(d-1)^{3\widehat h\ell}$ arises from the following crucial observation: in the replacements outlined in \Cref{i1}--\Cref{i7}, each term $(d-1)^{3\ell}B_j$ is replaced by one of the terms $(d-1)^{3\ell}A_\al, (d-1)^{3\ell}A_\beta, (d-1)^{3\ell}C_{\gamma\gamma'}$ as in \eqref{e:defABC}, or a factor $R_h$ (as in \eqref{e:list_factor}) for $h\geq 1$ with coefficient at most $(d-1)^{3h\ell}$. This leads to the following statement

\begin{claim}
\eqref{e:hatRi} can be written as an $\OO(1)$-weighted sum of terms of the following form:
For $\widehat h\geq r$, 
\begin{align}\begin{split}\label{e:oneterm}
&\frac{1}{(d-1)^{(k_1 +(k_2+k_3)/2)\ell}} \sum_{\bm\al, \bm\beta, \bm\gamma}   \sum_{\bfi^+} \frac{(d-1)^{3\widehat h\ell}}{Z_{\cF^+}}\times \bE\left[I(\cF^+,\cG)\bm1(\cG\in \Omega) \widehat R_{\bfi^+}\right],\\
&\widehat R_{\bfi^+}
=R_{\widehat h+1}
=\prod_{m=1}^{k_1}  A_{\al_  m}  \prod_{  m=1}^{k_2}  B_{\beta_m} 
\prod_{m=1}^{k_3/2}  C_{\gamma_{2m-1} \gamma_{2m}}  E(\bm \chi).
\end{split}\end{align}
Here $R_{\widehat h+1}$ is the summand in \eqref{e:oneterm0}. 
In this way $\widehat R_{\bfi^+}\in \Adm(\widehat h+1,\cF^+,\cG)$.
The summation for $\bm\al, \bm\beta, \bm\gamma$ is over each $\al_m, \beta_m, \gamma_m$ in one of the sets $\qq{\mu}$, $\sfA_i$ or $\qq{\mu}\setminus \sfA_i$;
 and the factors in \eqref{e:oneterm} are given by \eqref{e:defABC}, and $E(\bm\chi)=R_{\widehat h+1-k_1-k_2-k_3/2}$ is a product of the remaining terms which depends on $\{b_\chi, c_\chi\}_{\chi \in \bm\chi}$ for some ${\bm\chi}\subset\qq{\mu}$.
\end{claim}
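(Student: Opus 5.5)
The claim is a bookkeeping consequence of the expansions in \Cref{i1}--\Cref{i7}. I will prove it by multiplying out $(d-1)^{3r\ell}\prod_{j=0}^r\widehat B_j$ in \eqref{e:hatRi}, pulling the finite sums over resampling indices outside the average over embeddings $\bfi^+$, and checking two things for each resulting monomial: the prefactor has the stated form, and the total weight of the coefficients is $\OO(1)$.

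Each $\widehat B_j$ is an $\OO(1)$-weighted combination of a small number of elementary types.
\begin{itemize}
\item For $j=0$, or when $B_j=G^{(i)}_{oo}-Q$: the sums $(d-1)^{-(\ell+1)}\sum_{\al\in\sfA_i}A_\al$ and $(d-1)^{-(\ell+1)}\sum_{\al\neq\beta\in\sfA_i}C_{\al\beta}$, plus $\cU_j$, a weighted sum of terms $(d-1)^{3(h-1)\ell}R_h$.
\item When $B_j$ is one of $G^{(ib)}_{oc},G^{(b)}_{ic},G_{ib},G^{(i)}_{ob}$: the sum $(d-1)^{-\ell/2}\sum_{\beta\in\qq{\mu}}\fc_1 B_\beta$, plus $\cU_j$, a weighted sum of terms $(d-1)^{3h\ell}R_hB_\beta$.
\item In every other case: $B_j$ itself.
\end{itemize}

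Expanding the product, each monomial picks one type from each factor. Let $k_1$, $k_2$ and $k_3/2$ be the numbers of chosen $A$-, $B$- and $C$-type leading terms. The explicit normalizations $(d-1)^{-(\ell+1)}$ and $(d-1)^{-\ell/2}$ then multiply to $(d-1)^{-(k_1+(k_2+k_3)/2)\ell}$, up to harmless factors $(d-1)^{-1}$ and $|\msc|^{2\ell+2}\le 1$, which I absorb into the coefficients.

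Next I check that the weights are $\OO(1)$. For the leading parts, the sums over $\al\in\sfA_i$ and $\beta\in\qq{\mu}$ have at most $\mu\lesssim(d-1)^\ell$ terms. I keep them as explicit index sums and put them in the prefactor of \eqref{e:oneterm}, so the remaining coefficients ($\msc^{2\ell+2}$, $\fc_1$) are bounded. For the $\cU_j$ parts, the weighted-sum structure already has total weight $\OO(1)$, as established in \eqref{e:totalsum0} and \Cref{lem:offdiagswitch}.

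Next I count factors and powers of $(d-1)^{3\ell}$. The original prefactor is $(d-1)^{3r\ell}$, which I distribute as $(d-1)^{3\ell}$ per factor $B_1,\dots,B_r$; $B_0$ carries no such factor. Each $B_j$ is replaced in one of three ways:
\begin{itemize}
\item by a single leading factor ($A$, $B$ or $C$), with the same weight $(d-1)^{3\ell}$;
\item by $B_j$ itself;
\item by $R_h$ (possibly times $B_\beta$), coming with at most $(d-1)^{3(h-1)\ell}$ or $(d-1)^{3h\ell}$.
\end{itemize}
Define $\widehat h+1$ as the total number of factors in the monomial. Every replacement produces at least one factor per original factor, so $\widehat h\ge r$. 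The power of $(d-1)^{3\ell}$ attached is at most $(d-1)^{3\widehat h\ell}$ in every case; for $B_0$ this is because $\cU_0$ carries $(d-1)^{3(h-1)\ell}$ for $h$ new factors. Where the attached power is strictly smaller, I write it as $(d-1)^{3\widehat h\ell}$ times a factor at most $1$ and absorb that factor into the coefficient. The remaining factors — the $B_j$ from \Cref{i4} and \Cref{i7}, and the $R_h$ factors from all the $\cU_j$ — are collected into $E(\bm\chi)$. These are of the types in \eqref{e:list_factor} and involve only switching edges indexed by some $\bm\chi\subset\qq{\mu}$ together with old unused core edges.

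It remains to verify $\widehat R_{\bfi^+}\in\Adm(\widehat h+1,\cF^+,\cG)$, that is, that every factor is listed in \eqref{e:defcE1} for the forest $\cF^+$. After expanding $(i,o)$, the new edges $(b_\al,c_\al)$ belong to $(\cC^\circ)^+$ by \eqref{e:cF++}. Also, $\Adm(r,\cF,\cG)\subset\Adm(r,\cF^+,\cG)$. So each of $A_\al$, $B_\beta$, $C_{\gamma\gamma'}$ and $E$ is of one of the forms $G^{(b)}_{cc}-Q$, $G^{(bb')}_{cc'}$, $G^{(b')}_{bc'}$, $G_{bb'}$ or $Q-\msc$ with core edges in $(\cC^\circ)^+$.

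The main obstacle is uniformity: the number of monomials and the sizes of the index sets grow with $\mu$, so I must make sure no stray $(d-1)^{\ell}$ from counting the indices $\al$ leaks into the coefficients. I handle this by keeping the $\bm\al,\bm\beta,\bm\gamma$ sums explicit in \eqref{e:oneterm}, exactly as the statement allows. I also use that $r$ is fixed and each $\cU_j$ has total weight $\OO(1)$, so the expanded product has total weight at most $C^{r+1}=\OO(1)$.
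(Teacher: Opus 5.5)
Your proposal is correct and follows the paper's own argument. You expand $(d-1)^{3r\ell}\prod_{j=0}^r\widehat B_j$ via \Cref{i1}--\Cref{i7}, keep the index sums and their normalizations $(d-1)^{-(\ell+1)}$, $(d-1)^{-\ell/2}$ explicit, and observe that each $(d-1)^{3\ell}B_j$ with $j\ge1$ becomes $h_j$ factors of weight at most $(d-1)^{3h_j\ell}$, while $B_0$ becomes $h_0$ factors of weight $(d-1)^{3(h_0-1)\ell}$; this yields $(d-1)^{3\widehat h\ell}$ and $\widehat h\ge r$. The one detail you leave implicit is that the index-dependent coefficient $\fc_1(\bm1(\al\in\sfA_i))$ is moved outside by splitting the sum over $\qq{\mu}$ into sums over $\sfA_i$ and $\qq{\mu}\setminus\sfA_i$, which is why the statement allows those index sets.
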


The summation over \(\bm\alpha, \bm\beta, \bm\gamma\) in \eqref{e:oneterm} produces
\((d-1)^{(k_1+k_2+k_3)\ell}\) terms of the form \eqref{e:case3_copy}. Note that
this number is much larger than the normalization factor
\((d-1)^{(k_1+(k_2+k_3)/2)\ell}\) in the denominator. However, as we will show
in \Cref{l:erbu}, most of these terms can be seen almost immediately to be
negligible. The cancellation mechanism is similar to those in
\eqref{e:core1} and \eqref{e:core21}. Before stating \Cref{l:erbu}, we first
introduce some notation.

We view $\bm\al, \bm\beta, \bm\gamma, \bm\chi$ as words, which are sequences of indices in $\qq{\mu}$. In particular $(\bm\al, \bm\beta, \bm\gamma)$ is a word with length $k_1+k_2+k_3$.
Given $\bm\chi$, we partition words $ \bm\omega\in \qq{\mu}^{k_1+k_2+k_3}$ into equivalence classes.  Two words $\bm\omega\sim \bm\omega'$ are equivalent if there is a bijection on $\qq{\mu}$ which preserves $\bm\chi$ and maps $\bm\omega$ to $\bm\omega'$. We remark that the expectation in \eqref{e:oneterm} depends only on the equivalence class of $(\bm\al, \bm\beta, \bm\gamma)$. 

For any $f_0, f_1\geq 0$, let $\mathsf W(f_0, f_1)$ denote a set of representatives for equivalence classes of $\qq{\mu}^{k_1+k_2+k_3}$. Here, for a word $\bm\omega\in \mathsf W(f_0, f_1)$, $f_0$ is the number of distinct indices (ignoring multiplicity)  that do not appear in 
$\bm\chi$, and $f_1$ is the number of these indices appearing exactly once in $\bm\omega$. The length of $\bm\omega$ is $k_1+k_2+k_3$, and $f_0-f_1$ of these distinct indices appear at least twice in $\bm\omega$. This implies
\begin{align}\label{e:f0f1}
    k_1+k_2+k_3\geq  f_1+2(f_0-f_1)\Rightarrow k_1+k_2+k_3\geq 2f_0-f_1.
\end{align}

\begin{example}
The following are two possible terms of \(\widehat R_{\bfi^+}\):
\begin{align*}
&A_1 A_2 B_2 C_{47}C_{37} G^{b_8 b}_{c_8 c}\bigl(G^{(b_4)}_{c_4 c_4}-Q\bigr),
\quad
\bm\omega=(\bm\alpha,\bm\beta,\bm\gamma)=(1,2),(2),(4,7,3,7),\quad \bm\chi=(8,4), \\
&A_1 A_5 B_5 C_{49}C_{39} G^{b_8 b}_{c_8 c}\bigl(G^{(b_4)}_{c_4 c_4}-Q\bigr),
\quad
\bm\omega'=(\bm\alpha',\bm\beta',\bm\gamma')=(1,5),(5),(4,9,3,9),\quad \bm\chi=(8,4).
\end{align*}
In this example, \(\bm\omega\sim \bm\omega'\) (by mapping \(2\) to \(5\) and \(7\) to \(9\), and keeping all other indices in \(\qq{\mu}\) fixed). By permutation invariance of the vertices, the two terms above have the same expectation. Moreover, in \(\bm\omega\) the indices \(1,2,3,7\) do not appear in \(\bm\chi=(8,4)\), and among them \(1\) and \(3\) appear only once, so \(\bm\omega\in\mathsf W(4,2)\).

\end{example}

The expectation in \eqref{e:oneterm} depends only on the equivalence class of $(\bm\al, \bm\beta, \bm\gamma)$. Moreover, for fixed $f_0$ and $f_1$, the summation of $(\bm\al, \bm\beta,\bm\gamma)\sim \bm\omega$, contains $\OO((d-1)^{f_0\ell})$ terms. Thus for the summation over $(\bm\al, \bm\beta, \bm\gamma)$ in \eqref{e:oneterm}, we can first sum over the equivalence classes. We recall the summand from \eqref{e:oneterm}
\begin{align}\label{e:partition2}
   \sum_{\bfi^+} \frac{(d-1)^{3\widehat h \ell}}{Z_{\cF^+}}\bE\left[I(\cF^+,\cG)\bm1(\cG\in \Omega) \widehat R_{\bfi^+}(\bm\al, \bm\beta, \bm\gamma, \bm\chi)\right].
\end{align}
We then have
\begin{align}\begin{split} \label{e:partition}
    \eqref{e:oneterm}&=\frac{1}{(d-1)^{(k_1 +(k_2+k_3)/2)\ell}} \sum_{\bm\al, \bm\beta, \bm\gamma}   \eqref{e:partition2}\\
    &=\frac{1}{(d-1)^{(k_1 +(k_2+k_3)/2)\ell}} \sum_{f_0,f_1}\sum_{\bm\omega\in \mathsf W(f_0,f_1)}\sum_{(\bm\al, \bm\beta, \bm\gamma)\sim \bm\omega}   \eqref{e:partition2},\\
    &=\sum_{f_0,f_1}\sum_{(\bm\al, \bm\beta, \bm\gamma)\in \mathsf W(f_0,f_1)}\frac{\OO((d-1)^{f_0\ell})}{(d-1)^{(k_1 +(k_2+k_3)/2)\ell}} \times   \eqref{e:partition2}.
\end{split}\end{align}

The following lemma states that for given $\bm\al, \bm\beta, \bm\gamma$, the summands in the last term of \eqref{e:partition} is either negligible, or it can be reduced to a term as in \eqref{e:oneterm2} below, where each index in $\bm\al, \bm\beta, \bm\gamma$ appears at least twice in $R_{\bfi^+}'$.

\begin{lemma}\label{l:erbu}
Fix $0\leq f_1\leq f_0$ satisfying \eqref{e:f0f1}, and a word $(\bm\al, \bm\beta, \bm\gamma)\in \mathsf W(f_0, f_1)$.
Let $\mathsf I_{\rm single}\subset \qq{\mu}$ denote the set of indices that appear only once among 
$(\bm\al, \bm\beta, \bm\gamma)$, and do not appear in $\bm\chi$. Then $|\mathsf I_{\rm single}|=f_1$, and
\begin{align}\begin{split}\label{e:oneterm1}
&\frac{(d-1)^{f_0\ell}}{(d-1)^{(k_1 +(k_2+k_3)/2)\ell}}   \sum_{\bfi^+} \frac{(d-1)^{3\widehat h\ell}}{Z_{\cF^+}}\times \bE\left[I(\cF^+,\cG)\bm1(\cG\in \Omega) \widehat R_{\bfi^+}(\bm\al, \bm\beta, \bm\gamma,\bm\chi)\right]
\end{split}\end{align}
satisfies
\begin{enumerate}
    \item If there exists $\al_m\in \mathsf I_{\rm single}$ then $\eqref{e:oneterm1}=\OO(N^{-\fb/4}\bE[ \Psi])$.
\item Otherwise, 
\begin{align}\begin{split}\label{e:oneterm2}
\eqref{e:oneterm1}=\frac{(d-1)^{3(\widehat h+f_1)\ell}}{(d-1)^{\fq^+\ell/2}Z_{\cF^+}}
\sum_{\bfi^+}  \bE\left[I(\cF^+,\cG)\bm1(\cG\in \Omega)  (d-1)^{-f_1/2}R'_{\bfi^+}\right]+\OO(N^{-\fb/4}\bE[ \Psi]),
\end{split}\end{align}
where $\fq^+=k_1+(k_1 +k_2+k_3 -2 f_0+6f_1)\geq 0$, and $R'_{\bfi^+}\in \Adm(\widehat h+f_1+1,\cF^+,\cG)$ is obtained from $\widehat R_{\bfi^+}$ by making the following substitutions: 
\begin{align}\begin{split}\label{e:final_replace}
&G_{c_{\beta_m} c}^{(b_{\beta_m} b)}\Rightarrow G_{b_{\beta_m} c}^{(b)}(G_{c_{\beta_m} c_{\beta_m}}^{(b_{\beta_m})}-Q ),
\quad 
G_{c_{\beta_m} b}^{(b_{\beta_m})}\Rightarrow G_{b_{\beta_m} b}(G_{c_{\beta_m} c_{\beta_m}}^{(b_{\beta_m})}-Q ), \quad \beta_m\in \mathsf I_{\rm single},
\\
 &   G_{c_{\gamma_{2m-1}} c_{\gamma_{2m}}}^{(b_{\gamma_{2m-1}} b_{\gamma_{2m}})}
    \Rightarrow 
 G_{b_{\gamma_{2m-1}} c_{\gamma_{2m}}}^{ (b_{\gamma_{2m}})}(G_{c_{\gamma_{2m-1}} c_{\gamma_{2m-1}}}^{(b_{\gamma_{2m-1}})}-Q ) , 
\quad \gamma_{2m-1}\in \mathsf I_{\rm single},\quad \gamma_{2m}\not\in \mathsf I_{\rm single},\\
 &   G_{c_{\gamma_{2m-1}} c_{\gamma_{2m}}}^{(b_{\gamma_{2m-1}} b_{\gamma_{2m}})}
    \Rightarrow 
 G_{c_{\gamma_{2m-1}} b_{\gamma_{2m}}}^{ (b_{\gamma_{2m-1}})}(G_{c_{\gamma_{2m}} c_{\gamma_{2m}}}^{(b_{\gamma_{2m}})}-Q ) , 
\quad \gamma_{2m-1}\notin \mathsf I_{\rm single},\quad \gamma_{2m}\in \mathsf I_{\rm single},\\
&G_{c_{\gamma_{2m-1}} c_{\gamma_{2m}}}^{(b_{\gamma_{2m-1}} b_{\gamma_{2m}})}
\Rightarrow 
G_{b_{\gamma_{2m-1}} b_{{\gamma_{2m}}}}(G_{c_{\gamma_{2m-1}} c_{\gamma_{2m-1}}}^{(b_{\gamma_{2m-1}})}-Q ) (G_{c_{{\gamma_{2m}}} c_{{\gamma_{2m}}}}^{(b_{{\gamma_{2m}}})}-Q ), 
\quad \gamma_{2m-1}, \gamma_{2m}\in \mathsf I_{\rm single}.
\end{split}\end{align}
\end{enumerate}

\end{lemma}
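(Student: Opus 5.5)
The plan is to treat each index in $\mathsf I_{\rm single}$ one at a time, as in \eqref{e:core1} and \eqref{e:core21}. Fix the word $(\bm\al,\bm\beta,\bm\gamma)$. Given $\bfi^+\setminus\{b_\chi,c_\chi\}$ for a single index $\chi\in\mathsf I_{\rm single}$, the edge $(b_\chi,c_\chi)$ runs essentially uniformly over directed edges of $\cG$; this is guaranteed by \Cref{p:sumA}, up to relative error $N^{-1+2\fc}$, together with the indicator $I(\cF^+,\cG)$. Since $\chi$ occurs exactly once in $\widehat R_{\bfi^+}$, the summand is linear in the single factor carrying the index $\chi$. We can therefore carry out the average over $(b_\chi,c_\chi)$ first, with all other indices frozen.

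\emph{Case 1}, where the single index is some $\al_m$. Summing the factor $A_{\al_m}=G^{(b_{\al_m})}_{c_{\al_m}c_{\al_m}}-Q$ over edges gives zero by the definition of $Q$, as in \eqref{e:core1}. The only discrepancies come from the indicator, which excludes edges within distance $3\fR$ of the rest of $\bfi^+$ or of non-tree vertices. There are $\OO(N^{\fc}(d-1)^{3\fR})$ such edges, so they contribute a relative $\OO(N^{-1+2\fc})$. All remaining factors are bounded by $N^{-\fb}$ via \eqref{e:Bsmall}, so the prefactor $(d-1)^{f_0\ell-(k_1+(k_2+k_3)/2)\ell+3\widehat h\ell}$ times $N^{-\widehat h\fb}N^{-1+2\fc}$ is $\OO(N^{-\fb/4}\Phi)$. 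Here I use $\widehat h\ge r\ge 1$ and $\ell\ll \fb\log_{d-1}N$, so the $(d-1)^{\OO(\ell)}$ factors are absorbed.

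\emph{Case 2}, where the single indices sit only in $B$ or $C$ factors. For a single $\beta_m$ with $B_{\beta_m}=G^{(b_{\beta_m}b)}_{c_{\beta_m}c}$, I would reuse the computation \eqref{e:rGccbb1}--\eqref{e:rGbbcc0}. Expanding with \eqref{e:Schurixj} gives three pieces:
\begin{itemize}
\item $G^{(b)}_{c_{\beta_m}c}$, whose edge average is $\OO(1/N)$ by \eqref{e:av1} and \eqref{e:rGbs2};
\item $(d-1)^{-1/2}G^{(b)}_{b_{\beta_m}c}\sum_{x\sim b_{\beta_m}}(G^{(b_{\beta_m})}_{c_{\beta_m}x}-\delta Q)$, whose diagonal term $x=c_{\beta_m}$ is the substitution in \eqref{e:final_replace} up to the $(d-1)^{-1/2}$ weight, while the off-diagonal terms $x\ne c_{\beta_m}$ are bounded via \eqref{eq:local_law} by $N^{-\fb}$ times quantities treated by Ward;
\item a cubic remainder, bounded by Cauchy--Schwarz and the Ward estimate \eqref{e:rwarduse1}.
\end{itemize}
The same route applies to $G^{(b_{\beta_m})}_{c_{\beta_m}b}$, using \eqref{e:Gccbb2}. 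For $C_{\gamma\gamma'}$ with one or both indices single, I apply the same expansion once per single index, which gives the two-sided replacement of \eqref{e:core21}. Each Ward-type remainder is controlled by $\Phi$ after averaging over embeddings, as in \eqref{e:Gccerror}. Because every other factor is $\le N^{-\fb}$ and $\widehat h\ge1$, these remainders are $\OO(N^{-\fb/4}\bE[\Psi])$.

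The bookkeeping of powers then goes as follows. Each replacement adds one admissible factor, giving $\widehat h\to\widehat h+f_1$, and one power $(d-1)^{-1/2}$ per single index. I insert $(d-1)^{3f_1\ell}$ artificially and compensate in $\fq^+$. Solving for $\fq^+$, with the prefactor $(d-1)^{(f_0-k_1-(k_2+k_3)/2)\ell}$ rewritten as $(d-1)^{-\fq^+\ell/2}(d-1)^{3f_1\ell}$, gives $\fq^+=2k_1+k_2+k_3-2f_0+6f_1$, which matches the stated value. Nonnegativity of $\fq^+$ follows from \eqref{e:f0f1}.

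The main obstacle is making the Ward bounds uniform when several single indices and the non-single indices interact, for example when $c$ belongs to another resampled edge or $b_{\gamma_{2m}}$ is also non-single. There the errors involve entries like $G^{(b_\chi b')}_{c_\chi x}$ with $x$ a neighbour of a fixed vertex. For these I would invoke the punctured-vertex Ward bound \eqref{e:Gijo-bound}, and I expect the argument to need the separation guaranteed by $I(\cF^+,\cG)$ to keep all Schur denominators bounded below.
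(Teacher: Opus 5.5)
Your overall route matches the paper's, and your Case~1 and your computation of $\fq^+$ are correct. The route is:
\begin{itemize}
\item average first over the edge carrying a single index;
\item for an $A$ factor, use $\sum_{b\sim c}(G^{(b)}_{cc}-Q)=0$ as in \eqref{e:core1};
\item for $B$ and $C$ factors, reinsert $b_\chi$ via \eqref{e:Schurixj} as in \eqref{e:rGccbb1}--\eqref{e:Gccbb2}.
\end{itemize}

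\textbf{The gap: the off-diagonal terms in Case~2.} These are the terms $x\sim b_\chi$, $x\neq c_\chi$ of $\sum_{x\sim b_\chi}G^{(b_\chi)}_{c_\chi x}$. They come multiplied by a partner entry: $G^{(b)}_{b_\chi c}$, $G_{bb_\chi}$, or $G^{(b_{\gamma'})}_{b_\chi c_{\gamma'}}$. The same cross terms appear in the two-sided replacement for a $C$ factor with both indices single.
\begin{itemize}
\item Your treatment does not suffice. The pointwise bound $|G^{(b_\chi)}_{c_\chi x}|\lesssim N^{-\fb}$ from \eqref{eq:local_law}, plus Ward for the partner, gives after Cauchy--Schwarz only about $N^{-\fb}\sqrt{\Phi}$ times the remaining factors. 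That is far larger than $N^{-\fb/4}\Phi$ when $\Phi\approx N^{-2/3}$.
\item Ordinary Ward cannot rescue it. In $\frac{1}{Nd}\sum_{b_\chi\sim c_\chi}\sum_{x\sim b_\chi,\,x\neq c_\chi}|G^{(b_\chi)}_{c_\chi x}|^2$ both indices are neighbours of the averaged vertex $b_\chi$, so the edge average frees no index.
\end{itemize}

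\textbf{The missing ingredient} at exactly this point is the punctured-vertex Ward bound, \Cref{lem:deletedalmostrandom}. The fix runs as follows.
\begin{itemize}
\item First extract $N^{-\widehat h\fb}$ pointwise from the other $\widehat h\geq 1$ factors. This must come first, because \eqref{e:sameasdisconnect} holds only in expectation.
\item Bound the product by $|G^{(b)}_{b_\chi c}|^2+|G^{(b_\chi)}_{c_\chi x}|^2$, and analogously for the other partners.
\item Control the first term with \eqref{e:Gest}.
\item Control the second with \eqref{e:sameasdisconnect}, using permutation invariance to reduce the embedding average to a single edge.
\end{itemize}
This is precisely the error term $|G_{bb'}||G^{(b)}_{cx}|$ left at the end of the proof of \eqref{e:core21}. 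It is also the ``delicate case'' described after \eqref{e:rcEbound}.

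You do cite \eqref{e:Gijo-bound}, but only as a tool for interactions among several indices. Entries whose second index is adjacent to a non-averaged vertex are in fact handled by ordinary Ward once $c_\chi$ is freed. The punctured bound is already indispensable for a single substitution. Without it, the $\OO(N^{-\fb/4}\bE[\Psi])$ error claimed in \eqref{e:oneterm2} is not established.
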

The first statement in \Cref{l:erbu} extends~\eqref{e:ftt1}, and the fourth claim in~\eqref{e:final_replace} extends~\eqref{e:ftt2}. 
Since the underlying ideas are analogous, we omit the proof. 
The first three claims in~\eqref{e:final_replace} essentially follow from the following estimates:
\begin{align}
\begin{split}\label{e:sum_one_index}
&\phantom{{}={}}\frac{1}{Nd}\sum_{b_\al, c_\al \in \qq{N}} 
A_{c_\al b_\al} A_{bc} G^{(b_\al b)}_{c_\al c} \\
&= 
\frac{1}{Nd}\sum_{b_\al, c_\al \in \qq{N}} 
A_{c_\al b_\al} A_{bc} 
\frac{G^{(b)}_{b_{\al} c}}{\sqrt{d-1}}
\bigl(G_{c_{\al} c_{\al}}^{(b)} - Q \bigr)
+ \text{``negligible error,''},\\
&\phantom{{}={}}\frac{1}{Nd}\sum_{b_\al, c_\al \in \qq{N}} 
A_{c_\al b_\al} A_{bc} G^{(b_\al)}_{c_\al b} \\
&= 
\frac{1}{Nd}\sum_{b_\al, c_\al \in \qq{N}} 
A_{c_\al b_\al} A_{bc} 
\frac{G_{b_{\al} b}}{\sqrt{d-1}}
\bigl(G_{c_{\al} c_{\al}}^{(b_{\al})} - Q \bigr)
+ \text{``negligible error,''}
\end{split}
\end{align}
which can be proven in the same manner as~\eqref{e:core21}. 
Again, we omit the proof.
We can repeat the above substitutions for all \(f_1\) indices in
\(\mathsf I_{\rm single}\), which appear only once among
\((\bm\alpha, \bm\beta, \bm\gamma)\) and do not appear in \(\bm\chi\).
Each substitution contributes an extra factor, and after these substitutions
\begin{align}
\widehat R_{\bfi^+}\Rightarrow (d-1)^{-f_1/2}R'_{\bfi^+},\quad R'_{\bfi^+}\in
\Adm(\widehat h+f_1+1,\cF^+,\cG),
\end{align}
and
\[
\fq^+=2k_1+k_2+k_3-2f_0+6f_1
= k_1/2+(k_1+k_2+k_3-2f_0+6f_1).
\]

We remark that in  \eqref{e:oneterm2}, if $\fq^+\geq 1$, we gain an additional factor of  $(d-1)^{-\ell/2}$. We do not obtain this extra factor only if $k_1=0, f_1=0$ and each index in $(\bm\al, \bm\beta, \bm\gamma)$ appears exactly twice without appearing in $\bm\chi$ (so $k_2+k_3=2f_0$).

\begin{proof}[Proof of the first statement in \Cref{p:general}] 
Up to a negligible error, the expression \eqref{e:higher_case3} can be rewritten as an $\OO(1)$-weighted sum of terms in the form of  \eqref{e:oneterm1}. We also refer back to the more explicit expression given in \eqref{e:oneterm0} and \eqref{e:oneterm}. 
If the assumptions in the first statement in \Cref{l:erbu} hold, there is nothing to prove. So in the rest of the proof we can focus on the second case \eqref{e:oneterm2}. 

There are several cases in which we can apply \eqref{e:oneterm2}, based on the decomposition of  $\widehat B_0$ (as in \eqref{e:fcase1W0}) using terms $\msc^{2(\ell+1)}(z )(d-1)^{-(\ell+1)}\sum_{\al\in \sfA_i} (G_{c_\al c_\al}^{(b_\al)}-Q )$, $\msc^{2(\ell+1)}(z )(d-1)^{-(\ell+1)}\sum_{\al\neq\beta\in \sfA_i} (G_{c_\al c_\beta}^{(b_\al b_\beta)}-Q )$ or a term in $\cU_0$. We treat each of these separately.

\begin{enumerate}

\item  Assume $\widehat R_{\bfi^+}$ in \eqref{e:oneterm} contains the factor $\msc^{2(\ell+1)}(z )(d-1)^{-(\ell+1)}\sum_{\al\in\sfA_i} (G_{c_\al c_\al}^{(b_\al)}-Q )$ from the decomposition of $\widehat  B_0$, then in \eqref{e:oneterm2}, $k_1\geq 1$ and $R'_{\bfi^+}$ contains a factor $(G_{c_\al c_\al}^{(b_\al)}-Q )$. Let $R'_{\bfi^+}= (G_{c_\al c_\al}^{(b_\al)}-Q )R_{\bfi^+}$, then $R_{\bfi^+}\in\Adm(\widehat h+f_1,\cF^+,\cG)$ (recall from \eqref{e:oneterm2}).  We claim that replacing $Q $ with $Y $ yields a negligible error. To see this, we write 
    \begin{align}\begin{split}\label{e:finaleq}
       &\phantom{{}={}}\frac{(d-1)^{3(\widehat h+f_1)\ell}}{(d-1)^{\fq^+\ell/2} Z_{\cF^+}}
       \sum_{\bfi^+}\bE\left[I(\cF^+,\cG)\bm1(\cG\in \Omega)  (G_{c_\al c_\al}^{(b_\al)}-Q )R_{\bfi^+}\right]\\
       &=\frac{(d-1)^{3(\widehat h+f_1)\ell}}{(d-1)^{\fq^+\ell/2} Z_{\cF^+}}\sum_{\bfi^+}\bE\left[I(\cF^+,\cG)\bm1(\cG\in \Omega)  (G_{c_\al c_\al}^{(b_\al)}-Y )R_{\bfi^+}\right]\\
       &+
       \frac{(d-1)^{3(\widehat h+f_1)\ell}}{(d-1)^{\fq^+\ell/2} Z_{\cF^+}}\sum_{\bfi^+}\bE\left[I(\cF^+,\cG)\bm1(\cG\in \Omega)  (Q -Y )R_{\bfi^+}\right].
    \end{split}\end{align}
    Note that $k_1\geq 1$, so $\fq^+\geq 1$, and the second term on the right-hand side of \eqref{e:finaleq} is bounded as
\begin{align*}\begin{split}
        &\phantom{{}={}}\frac{(d-1)^{3(\widehat h+f_1)\ell}}{(d-1)^{\fq^+\ell/2} Z_{\cF^+}}\sum_{\bfi^+}\bE\left[I(\cF^+,\cG)\bm1(\cG\in \Omega)  |Q -Y ||R_{\bfi^+}|\right]\\
        &\lesssim 
         \frac{(d-1)^{3(\widehat h+f_1)\ell}N^{-(\widehat h+f_1)\fb}}{(d-1)^{\fq^+\ell/2} }\bE\left[\bm1(\cG\in \Omega)  |Q -Y |\right]\lesssim N^{-\fb/2}\bE[  \Psi],
    \end{split}\end{align*}
    where in the first inequality we used \eqref{e:Bsmall}; in the second inequality we used \eqref{e:defPhi} and $\wh h\geq r\geq 2$.
    The first term on the right-hand side of \eqref{e:finaleq} is in the form of \eqref{e:case3_copy}, by setting $r^+= \widehat h+f_1\geq \widehat h\geq r$.

\item \label{ii:offab} If \eqref{e:oneterm} contains the factor $\msc^{2(\ell+1)}(z )(d-1)^{-(\ell+1)}\sum_{\al\neq \beta\in\sfA_i} G_{c_\al c_{\beta}}^{(b_\al b_{\beta})}$ from the decomposition of $\widehat  B_0$, then $k_3\geq 2$ in \eqref{e:oneterm2}.  There are several cases for the factor $G_{c_\al c_{\beta}}^{(b_\al b_{\beta})}$ contained in $\widehat R_{\bfi^+}$.

    
    If in  $\widehat R_{\bfi^+}$ (from \eqref{e:oneterm}) $\al\in \mathsf I_{\rm single}$, then $f_1 \geq 1$ in \eqref{e:oneterm2}. Also, \eqref{e:final_replace} implies that $R'_{\bfi^+}$ contains $G_{c_\al c_\al}^{(b_\al)}-Q $. By the same argument as in \eqref{e:finaleq}, this leads to \eqref{e:case3_copy} by setting $r^+= \widehat h+f_1\geq r+1$. The same conclusion holds if $\beta\in \mathsf I_{\rm single}$. 
    
   In the remaining cases $\al, \beta\not\in \mathsf I_{\rm single}$. There are again two cases: either $\widehat R_{\bfi^+}$ in \eqref{e:oneterm} contains at least two terms in the form $\{G_{cc'}^{(bb')}, G_{c b'}^{(b)}, G_{bb'}\}_{(b,c)\neq (b',c')\in \cK^+}$, or $\widehat R_{\bfi^+}$ in \eqref{e:oneterm} contains both factors $(G_{c_\al c_\al}^{(b_\al)}-Q ), (G_{c_{\beta} c_{\beta}}^{(b_{\beta})}-Q )$.
    
   In the first case, if $\widehat R_{\bfi^+}$ in \eqref{e:oneterm} contains at least two terms in the form $\{G_{cc'}^{(bb')}, G_{c b'}^{(b)}, G_{bb'}\}_{(b,c)\neq (b',c')\in \cK^+}$, so does \eqref{e:oneterm2}. By our assumption $r\geq 2$, we have $\widehat h+f_1+1\geq \widehat h+1 \geq r+1\geq 3$. We remark that this is the only point in the argument where the assumption $r\geq 2$ is required; all other parts of the proof remain valid for $r\geq 1$. Then \eqref{e:refined_bound} (with $r$ taking value $\widehat h+f_1+1\geq 3$)  implies that \eqref{e:oneterm2} is bounded by $\OO(N^{-\fb/2}\bE[ \Psi])$. 

 In the second case, $\widehat R_{\bfi^+}$ in \eqref{e:oneterm} contains factors $(G_{c_\al c_\al}^{(b_\al)}-Q ), (G_{c_{\beta} c_{\beta}}^{(b_{\beta})}-Q )$. Then they are either contained in $E(\bm\chi)$ or $k_1\geq 1$. In both cases $\fq^+\geq 1$ in \eqref{e:oneterm2}. The same argument as in \eqref{e:finaleq} leads to  \eqref{e:case3_copy}, by setting $r^+= \widehat h+f_1\geq \widehat h\geq r$.

\item In the remaining case, $\widehat R_{\bfi^+}$ in \eqref{e:oneterm} contains a factor $(d-1)^{3(h-1)\ell}R_{h}$ from $\cU_0$ in the decomposition  \eqref{e:fcase1W0} of $\widehat  B_0$. Here $h\geq 2$, $R_{h}$ is an $S$-product term (recall from \Cref{d:S-product}), and it contains at least one factor of the form  $( G_{c_\al c_{\al}}^{(b_\al b_\al)}-Q )$ or $G_{c_\al c_{\beta}}^{(b_\al b_\beta)}$. Moreover, in this case the factor $R_{h}$ is included in $E(\bm\chi)$ in \eqref{e:oneterm}, and $\widehat h\geq r+(h-1)\geq r+1$.

If $R_{h}$ contains at least one term of the form $(G_{c_\al c_\al}^{(b_\al)}-Q )$, by the same argument as in \eqref{e:finaleq}, \eqref{e:oneterm2} leads to \eqref{e:case3_copy} with $r^+=\widehat h+f_1\geq \widehat h\geq r+1$. 

In the other cases, $R_{h}$ contains at least one term of the form $G_{c_\al c_\beta}^{(b_\al b_\beta)}$. By the same argument as in the second statement of \Cref{l:erbu}, if $\{b_\al, c_\al\}$ do not appear in other terms of $\widehat R_{\bfi^+}$ (but $\{b_\beta, c_\beta\}$ do appear), we can replace $G_{c_\al c_\beta}^{(b_\al b_\beta)}$ by $
      G_{b_\al c_{\beta}}^{(b_{\beta})}(G_{c_\al c_\al}^{(b_\al)}-Q )/\sqrt{d-1} 
$ ); if $\{b_\beta, c_\beta\}$ do not appear in other terms of $\widehat R_{\bfi^+}$ (but $\{b_\al, c_\al\}$ do appear), we can replace $G_{c_\al c_\beta}^{(b_\al b_\beta)}$ by $
      G_{b_\beta c_{\al}}^{(b_{\al})}(G_{c_\beta c_\beta}^{(b_\beta)}-Q )/\sqrt{d-1} 
$ ); and if $\{b_\al, c_\al, b_\beta, c_\beta\}$ do not appear in other terms of $\widehat R_{\bfi^+}$, we can replace it by $
      G_{b_\al b_{\beta}}(G_{c_\al c_\al}^{(b_\al)}-Q ) (G_{c_{\beta} c_{\beta}}^{(b_{\beta})}-Q ) /(d-1)$. Moreover, the errors from such replacements are bounded by $\OO(N^{-\fb/4}\bE[ \Psi])$. 
Then by the same argument as in \Cref{ii:offab}, either \eqref{e:oneterm2} is bounded by $\OO(N^{-\fb/4}\bE[ \Psi])$, or \eqref{e:oneterm2} leads to  \eqref{e:case3_copy} with $r^+\geq \widehat h+f_1\geq \widehat h\geq r+1$.
\end{enumerate}
\end{proof}

\begin{proof}[Proof of the second statement in \Cref{p:general}] 
Up to a negligible error, the expression \eqref{e:higher_case1} can also be rewritten as an $\OO(1)$-weighted sum of terms in the form of  \eqref{e:oneterm1}. We also refer back to the more explicit expression given in \eqref{e:oneterm0} and \eqref{e:oneterm}. 
If the assumptions in the first statement in \Cref{l:erbu} hold, there is nothing to prove. So in the rest of the proof we can focus on the second case \eqref{e:oneterm2}.

We recall that in \eqref{e:higher_case1}, $r=1$ and $B_1=(G_{oo}^{(i)}-Q)$. If $\widehat h+f_1\geq 2$ (from \eqref{e:oneterm2}), we can proceed in exactly the same manner as in the proof of \eqref{e:higher_case3}.
Otherwise, $\widehat h=1$ and $f_1=0$. We assume this scenario in the following discussion.
\begin{enumerate}
    \item 

Assume $\widehat R_{\bfi^+}$ in \eqref{e:oneterm} contains $\msc^{2\ell+2}(z )(d-1)^{-(\ell+1)}\sum_{\al\in\sfA_i} (G_{c_\al c_{\al}}^{(b_\al)}-Q )$ from the decomposition \eqref{e:fcase1W0} of $\widehat  B_0$. 
Since $\widehat h=1, f_1=0$, $\widehat R_{\bfi^+}$ also contains $\msc^{2(\ell+1)}(z )(d-1)^{-(\ell+1)}\sum_{\al\in\sfA_i} (G_{c_\al c_{\al}}^{(b_\al)}-Q )$ from the decomposition of $\widehat  B_1$ (we recall the precise coefficients from \eqref{e:easy_G-Y}). Then \eqref{e:oneterm2} is of the form 
       \begin{align}\label{e:main1}
       \frac{\msc^{4(\ell+1)}(z )}{(d-1)^{2(\ell+1)}}\sum_{\al\in \sfA_i}\sum_{\bfi^+} \frac{1}{Z_{\cF^+}} \bE[\bm1(\cG\in \Omega)I(\cF^+, \cG)(G_{c_\al c_\al}^{(b_\al)}-Q )^2 ].
    \end{align}
After replacing a copy of $(G_{c_\al c_\al}^{(b_\al)}-Q )$ by $(G_{c_\al c_\al}^{(b_\al)}-Y )$, \eqref{e:main1} is an $\OO(1)$-weighted sum of terms in the form \eqref{e:higher_case11} with $\fq^+=2$, and the error is bounded by $\OO(N^{-\fb/2}\bE[\Psi  ])$. 

\item If  \eqref{e:oneterm}  contains $\msc^{2(\ell+1)}(z )(d-1)^{-(\ell+1)}\sum_{\al\neq \beta\in\sfA_i} G_{c_\al c_{\beta}}^{(b_\al b_{\beta})}$ from the decomposition \eqref{e:fcase1W0} of $\widehat B_0$. Since $\widehat h=1, f_1=0$, $\widehat R_{\bfi^+}$ also contains $\msc^{2(\ell+1)}(z )(d-1)^{-(\ell+1)}\sum_{\al\neq \beta\in\sfA_i} G_{c_\al c_{\al}}^{(b_\al b_\beta)}$ from the decomposition of $\widehat  B_1$ (we recall the precise coefficients from \eqref{e:easy_G-Y}). Then \eqref{e:oneterm2} is of the form 
 \begin{align}\label{e:main2}
       \frac{2\msc^{4(\ell+1)}(z )}{(d-1)^{2(\ell+1)}}\sum_{\al\neq\beta\in \sfA_i}\sum_{\bfi^+} \frac{1}{Z_{\cF^+}} \bE[\bm1(\cG\in \Omega)I(\cF^+, \cG)(G_{c_\al c_\beta}^{(b_\al b_\beta)})^2].
    \end{align}
By \eqref{e:refined_bound} with \(r=2\), the quantity in \eqref{e:main2} is bounded by
\(\OO\bigl(N^{\fo}\,\bE[\Psi]\bigr)\), where we also used that the factor
\((d-1)^{-2(\ell+1)}\) is canceled by the summation over \(\alpha\neq\beta\),
which contains \(\OO\bigl((d-1)^{2(\ell+1)}\bigr)\) terms.

\item
In the remaining case, $\widehat R_{\bfi^+}$ in \eqref{e:oneterm} contains a factor $(d-1)^{3(h-1)\ell}R_{h}$ with $h\geq 2$ from $\cU_0$ in the decomposition \eqref{e:fcase1W0} of $\widehat  B_0$. Then $\widehat h\geq r+1\geq 2$. 
\end{enumerate}

\end{proof}

\section{Proof of the first loop equation}
\label{s:first_loop}
In this section, we prove the first loop equation \eqref{e:Qrefined_bound}. We need to identify the leading order error terms from  \Cref{p:iteration} and \Cref{p:general}. These refined estimates are presented in the following three propositions.

\begin{proposition}\label{p:track_error1}
Adopt the notation and assumptions in \Cref{p:iteration}, and define the index set $\sfA_i := \{ \alpha \in \qq{\mu} : \dist_{\cT}(i, l_\al) = \ell+1 \}$ (see \Cref{fig:Ai}). We recall the local Green's functions $L$ and $ L^{(i)}$ (with vertex $i$ removed) from \eqref{e:local_Green} and \eqref{e:defLi}.
$I_1$ in \eqref{e:IFIF} is explicitly given by
\begin{align}\label{e:case1 erm}
    I_1=\sum_{\al\in \sfA_i}\sum_{\bfi^+}\frac{\msc^{2(\ell+1)}(z )L_{l_\al l_\al}^{(i)}}{(d-1)^{\ell+2}Z_{\cF^+}}\bE\left[I(\cF^+,\cG)\bm1(\cG\in \Omega) (G_{c_\al c_\al}^{(b_\al)}-Y)(G_{c_\al c_\al}^{(b_\al)}-Q)\right].
\end{align}

We have the following refined expression for the error term $\cE$ in \eqref{e:IFIF}
\begin{align}\label{e:daerrorE}
    \cE=\eqref{e:first erm0}+\eqref{e:second erm0}+\OO(N^{-\fb/4}  \Psi),
\end{align}
where 
\begin{align}
     \eqref{e:first erm0}   &\label{e:first erm0}=\sum_{\al,\beta\in \sfA_i}\sum_{\bfi^+}\frac{\msc^{2(\ell+1)}(z )}{(d-1)^{\ell+1}Z_{\cF^+}}\bE\left[I(\cF^+,\cG)\bm1(\cG,\wt \cG\in \Omega)(\wt G_{c_\al c_\beta}^{(\bT)}-G_{c_\al c_\beta}^{(b_\al b_\beta)}) \right],\\
       \eqref{e:second erm0} &\label{e:second erm0}=\sum_{\al\in \sfA_i,\beta\in \qq{\mu}\atop 
       \al\neq  \beta}\sum_{\bfi^+}\frac{\msc^{2(\ell+1)}(z )(L_{l_\beta l_\beta}^{(i)}+L_{l_\al l_\beta}^{(i)})}{(d-1)^{\ell+2}Z_{\cF^+}}\bE\left[I(\cF^+,\cG)\bm1(\cG\in \Omega)(G_{c_\al c_\beta}^{(b_\al b_\beta)})^2\right],
\end{align}
\end{proposition}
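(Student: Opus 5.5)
The plan is to retrace the proof of \Cref{p:iteration} in \Cref{s:expQ}. This time we keep track of exactly which pieces of the four terms \eqref{e:e1}--\eqref{e:e4} land in $I_1$ and which in $\cE$, instead of bounding them coarsely. The starting point is the explicit form of $\cU$ in \eqref{e:Uterm}, together with the expansion \eqref{e:easy_G-Y} from \Cref{l:coefficient}.

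First, identify $I_1$. In the proof of \Cref{p:iteration}, the only contribution of the form \eqref{e:case1} comes from $J_1$ in \eqref{e:defI1}. That is the diagonal quadratic term $\frac{\msc^{2(\ell+1)}}{(d-1)^{\ell+2}}\sum_{\al\in\sfA_i}L^{(i)}_{l_\al l_\al}(G^{(b_\al)}_{c_\al c_\al}-Q)^2$ of $\cU$, after one factor $(G^{(b_\al)}_{c_\al c_\al}-Q)$ is replaced by $(G^{(b_\al)}_{c_\al c_\al}-Y)$ as in \eqref{e:newterm3}. The replacement costs $\OO(N^{-\fb/2}\bE[\Psi])$, via the argument of \eqref{e:chQY}. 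Every other quadratic term of $\cU$ goes into $J_3$ (the $\fc_1,\dots,\fc_5$ terms) and yields either negligible errors or terms of type \eqref{e:case3}, hence $I_2$. The terms \eqref{e:e1} and \eqref{e:e2} also produce only negligible errors or $I_2$-type terms, by \eqref{e:ftt1} and \eqref{e:ftt2}--\eqref{e:chQY}. So $I_1$ is exactly \eqref{e:case1 erm}, once we use \Cref{lem:configuration} and \Cref{p:sumA} to rewrite the embedding average over $\cF^+$ with normalization $Z_{\cF^+}$.

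Second, identify $\cE$. The pieces previously absorbed into $\cE$ in \eqref{e:IFIF} are exactly three: the term $J_2$ in \eqref{e:defI2}; the fourth term \eqref{e:e4} coming from the error $\cE$ of \Cref{l:coefficient}; and the $\OO(N^{-\fb/2})$ errors listed above. $J_2$ is literally \eqref{e:second erm0}. For \eqref{e:e4}, split \eqref{e:defCE} into its leading part $\frac{\msc^{2(\ell+1)}}{(d-1)^{\ell+1}}\sum_{\al,\beta\in\sfA_i}(\wt G^{(\bT)}_{c_\al c_\beta}-G^{(b_\al b_\beta)}_{c_\al c_\beta})$, which gives \eqref{e:first erm0}, and the remainder. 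In the remainder, the second line of \eqref{e:Eterm1}, each difference carries an extra factor $N^{-\fb/2}$. Bounding it by \eqref{e:replaceexample}--\eqref{e:rcEbound}, the Ward identity \eqref{eq:rwardex}, and the punctured-vertex bound \eqref{e:Gijo-bound} gives $\OO(N^{-\fb/2}(d-1)^{2\ell}N^\fo\Psi)=\OO(N^{-\fb/4}\Psi)$, because $\ell\ll\fb\log_{d-1}N$.

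The main obstacle is bookkeeping. We must make sure that no other $\OO((d-1)^{2\ell}\Psi)$-sized contribution was silently dumped into $\cE$. In particular, we need every higher-order term, i.e.\ those with $r\geq3$ and the $J_3$ terms containing two off-diagonal factors, to gain a full $N^{-\fb/4}$ rather than only the $(d-1)^{2\ell}$ factor. I would check this case by case using \eqref{e:refined_bound} with $r\geq3$ and the cancellations \eqref{e:core1}, \eqref{e:core21}. The only terms that lack such a gain should be precisely those quadratic in $G^{(b_\al b_\beta)}_{c_\al c_\beta}$ with $\al\in\sfA_i$, which form $J_2$, together with the leading switching difference.
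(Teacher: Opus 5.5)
Your proposal is correct and follows essentially the same route as the paper. The paper's proof simply reads off $I_1$ from \eqref{e:newterm3}, and takes the two non-negligible pieces of $\cE$ from $J_2$ in \eqref{e:defI2} and from the first line of \eqref{e:Eterm1}; everything else produced in the proof of \Cref{p:iteration} is either $\OO(N^{-\fb/4}\bE[\Psi])$ or of type $I_2$. (One minor slip in your bound for the second line of \eqref{e:Eterm1}: since \Cref{lem:task2} costs a factor $(d-1)^{\ell}$ per pair, the remainder is $\OO(N^{-\fb/2}(d-1)^{3\ell}N^{\fo}\bE[\Phi])$ rather than carrying $(d-1)^{2\ell}$, but this is still $\OO(N^{-\fb/4}\bE[\Psi])$.)
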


\begin{proposition}\label{p:track_error2}
Adopt the notation and assumptions in \Cref{p:general}. The error from expanding $I_1$ (from \eqref{e:case1 erm}) as in \eqref{e:higher_case1}
is given by 
\begin{align}\begin{split}\label{e:track_error2}
      &\left(1-\left(\frac{\msc(z )}{\sqrt{d-1}}\right)^{2\ell+2}\right)   \frac{2\md(z )\msc^{6(\ell+1)}(z )}{(d-1)^{2\ell+3}} \sum_{\al\neq\beta\in \sfA_i}\sum_{\bfi^+} \frac{1}{Z_{\cF^+}} \bE[\bm1(\cG\in \Omega)I(\cF^+, \cG)(G_{c_\al c_\beta}^{(b_\al b_\beta)})^2 ]
      +\OO(N^{-\fb/4} \bE[\Psi]).
\end{split}\end{align}
\end{proposition}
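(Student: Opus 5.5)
The plan is to repeat the expansion in the proof of the second statement of \Cref{p:general}, applied to $I_1$ from \eqref{e:case1 erm}, but now tracking exactly which contributions survive at order $\Psi$ rather than bounding them. With $R_\bfi=(G_{c_\al c_\al}^{(b_\al)}-Q)$, relabel $(b_\al,c_\al)$ as the new core edge $(i,o)$. Apply \Cref{p:add_indicator_function} to pass to the resampled graph, then expand both $\wt G_{oo}^{(i)}-Y$ and $\wt G_{oo}^{(i)}-\wt Q$ by \Cref{l:coefficient}, as in items \ref{i1} and \ref{i2}.

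The product of the two expansions has three kinds of terms.

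\emph{Diagonal--diagonal products.} The terms $(G_{c_\al c_\al}^{(b_\al)}-Q)$ multiplied together reproduce \eqref{e:higher_case11} with $\fq^+\ge \fq+2$. I will argue these are not errors but the next iterate, so they do not enter \eqref{e:track_error2}.

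\emph{Cross terms with one diagonal factor.} Products containing a single $(G_{c_\al c_\al}^{(b_\al)}-Q)$ with an index appearing once vanish to $\OO(N^{-\fb/4}\bE[\Psi])$ by the first statement of \Cref{l:erbu}.

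\emph{Off-diagonal squares.} The genuine error is the $\al\neq\beta$ term. It arises from pairing the two off-diagonal sums $\frac{\msc^{2\ell+2}}{(d-1)^{\ell+1}}\sum_{\al\ne\beta\in\sfA_i}G_{c_\al c_\beta}^{(b_\al b_\beta)}$ coming from $\wh B_0$ and $\wh B_1$. Each unordered pair $\{\al,\beta\}$ can be matched in two ways, which gives the factor $2$.

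The prefactor $\frac{\md(1-(\msc/\sqrt{d-1})^{2\ell+2})\msc^{2\ell+2}}{(d-1)^{\ell+1}}$ is inherited from $I_1$ itself. The step I intend to verify is that $L^{(i)}_{l_\al l_\al}$, evaluated at a boundary leaf of the $(d-1)$-ary tree, equals $\md(1-(\msc/\sqrt{d-1})^{2\ell+2})$; this follows from \eqref{e:Gtreemsc} with $\dist=0$ and ${\rm anc}=\ell$. After summing over the $(d-1)^{\ell+1}$ boundary indices, I expect these factors to combine with the two copies of $\msc^{2\ell+2}/(d-1)^{\ell+1}$ to produce the power $\msc^{6(\ell+1)}/(d-1)^{2\ell+3}$. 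This bookkeeping of powers is mechanical but must be checked carefully.

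Several groups of terms then have to be shown negligible. First, the $\cU$ contributions with $h\ge2$, and products of an off-diagonal factor with a diagonal one, are $\OO(N^{-\fb/4}\bE[\Psi])$. For this I use \Cref{l:erbu} and \eqref{e:refined_bound} with three or more factors, gaining an extra $N^{-\fb}$ relative to $\Psi$.

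Second, the substitution errors $\cE_0,\cE_1$ are controlled the same way. After the leading $I_1$ coefficient $(d-1)^{-\ell}$, they contribute only an additional $N^{-\fb}$ times the bound of \eqref{e:Eterm1}. For the extra $Y-\wt Q$ in item \ref{i2}, I use \eqref{e:Qtmtbound} together with $|Y-Q|\le N^{\fb/8}\Psi$.

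Third, the terms $(G^{(b_\al b_\beta)}_{c_\al c_\beta})^2$ with $\beta\notin\sfA_i$ appear with a coefficient $L^{(i)}_{o l_\beta}=0$, so they drop out.

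The main obstacle is the second group: showing that the Woodbury/Schur replacement errors $\wt G^{(\bT)}_{c_\al c_\beta}-G^{(b_\al b_\beta)}_{c_\al c_\beta}$, multiplied by the extra small factor, are $\OO(N^{-\fb/4}\Psi)$ and not merely $\OO((d-1)^{2\ell}\Psi)$. I would rely on \eqref{e:rcEbound} and the punctured-vertex Ward bound \eqref{e:Gijo-bound}, using the additional factor $N^{-\fb}$ from the second Green's function entry to beat $(d-1)^{2\ell}$.
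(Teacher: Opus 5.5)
Your overall route is the paper's. Since the expectation in \eqref{e:case1 erm} does not depend on $\alpha$, the paper sums over the $(d-1)^{\ell+1}$ indices of $\sfA_i$ and relabels $(b_\alpha,c_\alpha)$ as $(i,o)$. This turns $I_1$ into the constant $\bigl(1-(\msc/\sqrt{d-1})^{2\ell+2}\bigr)\md\,\msc^{2\ell+2}/(d-1)$ times the quantity in \eqref{e:higher_case1} with $\fq=0$. The case analysis in the proof of the second statement of \Cref{p:general} then leaves \eqref{e:main2} as the only contribution that is neither an iterate nor $\OO(N^{-\fb/4}\bE[\Psi])$. Your value of $L^{(i)}_{l_\alpha l_\alpha}$, the factor $2$ from the two matchings, and the resulting $\msc^{6(\ell+1)}/(d-1)^{2\ell+3}$ all agree with this. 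The paper simply quotes that case analysis instead of redoing it.

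One step, however, fails as justified. You claim that the $\cU$-contributions with $h\ge 2$, and the products of an off-diagonal factor with a diagonal one, are $\OO(N^{-\fb/4}\bE[\Psi])$ by \Cref{l:erbu} and \eqref{e:refined_bound}. The first statement of \Cref{l:erbu} needs a diagonal factor whose index appears only once. The negligibility part of \eqref{e:refined_bound} needs two off-diagonal-type factors. Neither condition holds for the following two terms:
\begin{itemize}
\item $\bigl(G^{(b_\alpha)}_{c_\alpha c_\alpha}-Q\bigr)^2\bigl(Q-\msc(z)\bigr)$, which comes from the $\fc_4$-term of $\cU_0$ in \eqref{e:Uterm} times the diagonal part of $\widehat B_1$ with the same index;
\item $\bigl(G^{(b_\alpha)}_{c_\alpha c_\alpha}-Q\bigr)G^{(b_\alpha b_\gamma)}_{c_\alpha c_\gamma}$, which by \eqref{e:final_replace} becomes $\bigl(G^{(b_\alpha)}_{c_\alpha c_\alpha}-Q\bigr)G^{(b_\alpha)}_{c_\alpha b_\gamma}\bigl(G^{(b_\gamma)}_{c_\gamma c_\gamma}-Q\bigr)$ and has only one off-diagonal factor.
\end{itemize}
Neither term contains a squared off-diagonal entry to which a Ward bound applies. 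A priori they are only of size $N^{-3\fb}$ up to powers of $(d-1)^{\ell}$, far larger than $\Psi$ near the edge, and they become small only after further resampling.

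In the paper these terms are not part of the error. They have $\widehat h+f_1\ge 2$ and are processed as in the first statement of \Cref{p:general}. After trading one $Q$ for $Y$ at cost $\OO(N^{-\fb/2}\bE[\Psi])$, they become iterate terms of the form \eqref{e:case3_copy} with $r^+\ge 2$. If you reclassify them this way, as you already do for the diagonal--diagonal products, your argument yields \eqref{e:track_error2}.
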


For $z$ close to the spectral edge $\pm 2$, the following proposition gives refined estimates for the error terms in \eqref{e:first erm0}, \eqref{e:second erm0} and \eqref{e:track_error2}.
\begin{proposition}\label{l:first erm}
Adopt the notation and assumptions in \Cref{p:iteration}, and recall $\cA$ from \eqref{e:KMdistribution}. For $z\in \bf D$ (recall from \eqref{e:D}) and $|z-2|\leq N^{-\fg}$, we have the following estimates for the terms involved in the error \eqref{e:daerrorE}: 
\begin{align}\label{e:first erm}
   \eqref{e:first erm0}&=\left(\frac{d(d-1)^{\ell}}{d-2} -\frac{d}{d-2}\right)\frac{1}{\cA^2}\bE\left[\bm1(\cG\in \Omega)\frac{\del_z m_N(z)}{N}\right]+\OO\left(\frac{\bE[\Psi]}{(d-1)^\ell} \right),\\
\label{e:refine_Gccerror}
    \eqref{e:second erm0}&=\left(\frac{d+2}{d-2}-\frac{d(d-1)^{\ell}}{d-2}-(\ell+1)\right)\frac{1}{\cA^2}\bE\left[\bm1(\cG\in \Omega)\frac{\del_z m_N(z)}{N}\right]+\OO\left(\frac{\bE[\Psi]}{(d-1)^\ell} \right),
\end{align}
Moreover, the error \eqref{e:track_error2} satisfies
\begin{align}\label{e:GIGG3}
       \eqref{e:track_error2}= -\frac{2}{d-2}\frac{1}{\cA^2}\bE\left[\bm1(\cG\in \Omega)\frac{\del_z m (z)}{N}\right]+\OO\left(\frac{\bE[ \Psi]}{(d-1)^\ell} \right).
\end{align}
If $|z+2|\leq N^{-\fg}$, analogous statements hold after multiplying the right-hand sides by $-1$. 
\end{proposition}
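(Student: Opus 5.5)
The plan is to reduce all three terms to one universal two-point average,
\[
\Theta:=\frac{1}{(Nd)^2}\sum_{b\sim c,\ b'\sim c'}\bigl(G^{(bb')}_{cc'}\bigr)^2 ,
\]
and then evaluate $\Theta$ through the $z$-derivative of the Green's function. The identity to use is $\partial_z G=G^2$, which gives
\[
\frac{1}{N^2}\sum_{x,y}G_{xy}^2=\frac{1}{N^2}\Tr G^2=\frac{\partial_z m_N}{N}.
\]
This is the holomorphic counterpart of the Ward identity \eqref{eq:rwardex}; squared entries replace absolute squares.

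\textbf{Step 1: evaluate $\Theta$.} First I would use the Schur complement formula \eqref{e:scf}. Since $b,b'$ are far apart, $G_{bb'}$ is small, and $G^{(bb')}_{cc'}$ expands as a linear combination of $G_{cc'}$, $G_{cb'}$, $G_{bc'}$, $G_{bb'}$. The coefficients are products of diagonal entries $G_{cc},G_{bc},G_{bb}$, each replaceable up to $N^{-\fb}$ by its tree value $\md$ or $-\md\msc/\sqrt{d-1}$ via \eqref{eq:infbound}. Squaring and averaging over directed edges gives sums of the form
\[
\frac{1}{N^2}\sum_{x,y}(A^{p}GA^{q})^2_{xy}\quad\text{with }p,q\in\{0,1\}.
\]
Because $AG=\sqrt{d-1}(1+zG)$, each such sum reduces to $\Tr G^2$, $\Tr G$ and $\Tr 1$. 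The $\Tr G$ and constant pieces are deterministic but too large, so they must cancel against the explicit algebraic relations between $\md$ and $\msc$. This should leave $\Theta=\kappa(z)\,\partial_z m_N/N+\OO(N^{-\fb/2}\Phi)$, with $\kappa$ explicit. At the edge I would insert \eqref{e:mscmd0}, i.e.\ $\msc\to-1$ and $\md\to-(d-1)/(d-2)$, and expect $\kappa$ to become $1/\cA^2$ times simple factors.

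\textbf{Step 2: reduce \eqref{e:first erm0} to $\Theta$.} For \eqref{e:first erm0} I would use Proposition~\ref{lem:offdiagswitch} and the Woodbury expansion of Lemma~\ref{lem:woodbury} to write $\wt G^{(\bT)}_{c_\al c_\beta}-G^{(b_\al b_\beta)}_{c_\al c_\beta}$ as a quadratic expression in the original Green's function. Its leading part is
\[
\sum_\gamma G_{c_\al x_\gamma}\,L\text{-coefficients}\,G_{x_\gamma c_\beta},
\]
where $x_\gamma$ ranges over the $a_\gamma,b_\gamma$ neighbours. After averaging over embeddings these become $\Theta$-type averages. The count over $\gamma$ (of size $\mu=d(d-1)^\ell$) together with the tree weights $(\msc/\sqrt{d-1})^{\dist}$ would then produce the prefactor $\frac{d(d-1)^\ell}{d-2}-\frac{d}{d-2}$. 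The diagonal $\gamma=\al,\beta$ terms explain the subtracted constant.

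\textbf{Step 3: the remaining two terms.} For \eqref{e:second erm0} the reduction to $\Theta$ is direct, with weight $\sum_{\beta}(L^{(i)}_{l_\beta l_\beta}+L^{(i)}_{l_\al l_\beta})$. Since $L^{(i)}$ is the $(d-1)$-ary-tree Green's function of Proposition~\ref{greentree}, this sum is a geometric series in $\msc^2$ over the distance $2r$, with $r\le\ell+1$. At the edge each term is $\approx1$, which yields the $-(\ell+1)$ contribution. The $-d(d-1)^\ell/(d-2)$ and $(d+2)/(d-2)$ terms come from the boundary and diagonal corrections. The term \eqref{e:track_error2} is handled the same way: its prefactor tends to $2\md/(d-1)^{2\ell+3}$, the sum over $\al\ne\beta\in\sfA_i$ supplies $\approx(d-1)^{2\ell+2}$ pairs, and together with $\kappa$ this gives $-\frac{2}{d-2}\cA^{-2}$. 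Every $N^{-\fb}$ and $\sqrt{|z-2|}$ replacement error is absorbed into $\bE[\Psi]/(d-1)^\ell$, using $|z-2|\le N^{-\fg}$ and $\ell\ll\fg\log N$.

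\textbf{Main obstacle.} I expect the hard part to be Step 2: controlling the Woodbury expansion exactly enough to track constants of order one, and not merely bounds. The $(d-1)^\ell$ leading pieces must cancel exactly between \eqref{e:first erm0} and \eqref{e:second erm0}, so every $\OO(1)$ coefficient, including the $\gamma=\al,\beta$ diagonal cases, has to be computed precisely. The other delicate point is the punctured-vertex terms with $x\in\cN_\al$, which must be shown to enter only the error; for these I would use \eqref{e:Gijo-bound}.
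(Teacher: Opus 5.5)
Your Steps 1 and 3 follow the paper's own route for \eqref{e:refine_Gccerror}, which is the only item the paper proves in detail. That route is: the Schur expansion \eqref{e:remove_one} with tree values for $G_{bc}$ and $G_{bb}$; then $(HG)_{uv}=zG_{uv}+\delta_{uv}$ and $\Tr G^2/N^2=\partial_z m_N/N$; combined with the explicit tree sum \eqref{e:thesum}. Your constant bookkeeping for \eqref{e:GIGG3} is also right. One correction to Step 1: after the $(Nd)^{-2}$ normalization, the $\Tr G$ and $\Tr 1$ pieces are $\OO(1/N)=\OO(N^{-2\fc}\Phi)$. They are therefore simply negligible, and no cancellation against $\md/\msc$ relations is needed; the coefficient of $\Tr G^2$ alone already tends to $\cA^{-2}$.

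The genuine gap is in Step 2, where you plan to send the punctured-vertex terms with $x\in\cN_\al$ into the error via \eqref{e:Gijo-bound}.
\begin{itemize}
\item In \eqref{e:first erm0}, the diagonal pairs $\al=\beta\in\sfA_i$ carry total weight $\msc^{2\ell+2}\approx 1$. For such a pair, the $\gamma=\al$ summand of \eqref{e:diffG1} is $\frac{\md}{d-1}\bigl(\sum_{x\in\cN_\al}G^{(\bT\bW)}_{c_\al x}\bigr)^2$. After the replacement \eqref{e:qinkuang2}, this contains $\frac{\md}{d-1}\sum_{x\sim b_\al,\,x\neq c_\al}\bigl(G^{(b_\al)}_{c_\al x}\bigr)^2$ with an $\OO(1)$ coefficient.
\item These terms are not lower order. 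Redo the resampling from the proof of \Cref{lem:deletedalmostrandom}, but evaluate the leading term coming from \eqref{e:k=1Pxx} (with squares instead of absolute squares) rather than bounding it. You find that $\bE[(G^{(o)}_{ij})^2]$ for $i,j\sim o$ is an explicit nonzero multiple of your $\Theta$, i.e.\ of order $\partial_z m_N/N$.
\item Hence \eqref{e:Gijo-bound} leaves an error of size $N^{\fo}\bE[\Phi]$, not $\bE[\Psi]/(d-1)^{\ell}$, and it discards part of the constant $-\frac{d}{d-2}$.
\end{itemize}
That constant is essential. The constants $-\frac{d}{d-2}$, $\frac{d+2}{d-2}$ and $-\frac{2}{d-2}$ must cancel exactly so that only $-(\ell+1)$ survives in \eqref{e:Qrefined_bound}. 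Any $\OO(1)$ slip becomes an $\OO(1/\ell)$ relative error there. Your remark that the $\gamma=\al,\beta$ terms produce the subtracted constant is also in tension with discarding exactly these terms.

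The remedy is to compute these punctured contributions exactly in terms of $\Theta$ through that extra resampling. Relatedly, the tool that gives the leading terms of $\wt G^{(\bT)}_{c_\al c_\beta}-G^{(b_\al b_\beta)}_{c_\al c_\beta}$ is \Cref{l:diffG1}. It is not the full-resolvent Woodbury Lemma~\ref{lem:woodbury}, nor Proposition~\ref{lem:offdiagswitch}, whose third item has an error term containing the very difference $|\wt G^{(\bT b)}_{cc}-G^{(b)}_{cc}|$ you are trying to compute.
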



\begin{proof}[Proof of \Cref{t:recursion}] 
We will prove \eqref{e:Qrefined_bound} only for $|z-2|\leq N^{-\fg}$, the other case $|z+2|\leq N^{-\fg}$, can be established in exactly the same way. 
To prove \eqref{e:Qrefined_bound}, we must track the errors from the iteration process more carefully.  These refined error estimates are presented in \Cref{p:track_error1}, \Cref{p:track_error2} and \Cref{p:track_error2}.
By adding \eqref{e:first erm} and \eqref{e:refine_Gccerror},  the error $\cE$ from \Cref{p:iteration} is given by
\begin{align}\begin{split}\label{e:final_error1}
&-\left(\ell+1-\frac{2}{d-2}\right)\frac{1}{\cA^2}\bE\left[\bm1(\cG\in \Omega)\frac{\del_z m_N(z)}{N}\right]+\OO\left(\frac{\bE[\Psi]}{(d-1)^\ell} \right).
\end{split}\end{align}

From \Cref{p:general}, the error from expanding \eqref{e:higher_case3} is small, i.e. bounded by $\OO(N^{-\fb/4}\bE[\Psi]$.
The errors from expanding \eqref{e:higher_case1} with $\fq\geq 1$ are bounded by $\OO((d-1)^{-\ell/2}N^\fo\bE[\Psi])$. For $\fq=0$, the error from expanding \eqref{e:higher_case1} is given in \eqref{e:track_error2} and \eqref{e:GIGG3}
\begin{align}\label{e:final_error2}
 -\frac{2}{d-2}\frac{1}{\cA^2}\bE\left[\bm1(\cG\in \Omega)\frac{\del_z m_N(z)}{N}\right]+\OO\left(\frac{\bE[\Psi]}{(d-1)^\ell} \right).
\end{align}
The correction terms in the microscopic loop equation \eqref{e:Qrefined_bound} is obtained by summing the refined errors from \eqref{e:final_error1} and \eqref{e:final_error2}.

\end{proof}

\begin{proof}[Proof of \Cref{p:track_error1}]
The first statement \eqref{e:case1 erm} is from \eqref{e:newterm3}.
The decomposition of the error $\cE$ as in \eqref{e:daerrorE} is from the first term in \eqref{e:Eterm1} and $J_2$ in \eqref{e:defI2}.
\end{proof}

\begin{proof}[Proof of \Cref{p:track_error2}]

We recall $I_1$ from \eqref{e:case1 erm}. Conditioned on $I(\cF^+,\cG)=1$, the expectation in \eqref{e:case1 erm} does not depend on $\al$.  Moreover, $|\sfA_i|=(d-1)^{\ell+1}$, and by \eqref{e:Gtreemsc}, $L_{l_\al l_\al}^{(i)}=\md(z )(1-(-\msc(z )/\sqrt{d-1})^{2\ell+2})$.
We denote $(i,o)=(b_\al,c_\al)$ and $(\cF, \bfi)=(\cF^+, \bfi^+)$, and rewrite $I_1$ from \eqref{e:case1 erm} as
\begin{align}\label{e:I1_rewrite}
\left(1-\left(\frac{\msc(z )}{\sqrt{d-1}}\right)^{2\ell+2}\right)\frac{\md(z )\msc^{2(\ell+1)}(z ) }{(d-1)Z_{\cF}}\sum_{\bfi}\bE\left[I(\cF,\cG)\bm1(\cG\in \Omega) (G_{oo}^{(i)}-Y )(G_{oo}^{(i)}-Q )\right],
\end{align}
which is in the form of \eqref{e:higher_case1}, up to the constant. 

 From the proof of \eqref{e:higher_case1} in \Cref{p:general}, the errors from expanding \eqref{e:higher_case1} are either bounded by $\OO(N^{-\fb/4} \bE[ \Psi])$, or given by \eqref{e:main2}. 
 Thus, the error from expanding \eqref{e:I1_rewrite} 
is given by 
\begin{align*}
   &\left(1-\left(\frac{\msc(z )}{\sqrt{d-1}}\right)^{2\ell+2}\right)   \frac{2\md(z )\msc^{6(\ell+1)}(z )}{(d-1)^{2\ell+3}}\sum_{\al\neq\beta\in \sfA_i}\sum_{\bfi^+} \frac{1}{Z_{\cF^+}} \bE[\bm1(\cG\in \Omega)I(\cF^+, \cG)(G_{c_\al c_\beta}^{(b_\al b_\beta)})^2]
      +\OO(N^{-\fb/4} \bE[ \Psi]).
\end{align*}
This finishes the proof of \eqref{e:track_error2}.

\end{proof}

\begin{proof}[Proof of \Cref{l:first erm}]
To illustrate the basic ideas, we will only prove \eqref{e:refine_Gccerror}.

First, note that the expectation is independent of the choice of $(\alpha,\beta)$.
Therefore, we may first sum over $\alpha\in\sfA_i$ with $\alpha\neq \beta$ to compute
the corresponding coefficient.

We will use that for $|z\pm 2|\le N^{-\fg}$,
\begin{align}\begin{split}\label{e:mscmd}
\msc(z) &= -1+\OO\!\left(\sqrt{|z-2|}\right)= -1+\OO(N^{-\fg/2}),\\
\md(z)  &= -\frac{d-1}{d-2}+\OO\!\left(\sqrt{|z-2|}\right)
        = -\frac{d-1}{d-2}+\OO(N^{-\fg/2}).
\end{split}\end{align}
Moreover, on the event $I(\cF^+,\cG)=1$, the vertex $o$ has a tree neighborhood of
radius $\fR$. In particular,
\[
\mu=d(d-1)^{\ell},\qquad |\sfA_i|=(d-1)^{\ell+1},
\]
and $L^{(i)}_{l_\beta l_\beta}$ as well as $L^{(i)}_{l_\alpha l_\beta}$ are given
explicitly by the Green's function on the $d$-regular tree (see \Cref{greentree}).
A direct computation then yields the following identity, whose proof we omit:
\begin{align}\label{e:thesum}
 \sum_{\substack{\alpha\in \sfA_i,\ \beta\in \qq{\mu}\\ \alpha\neq \beta}}
 \frac{\msc^{2(\ell+1)}(z)\bigl(L_{l_\beta l_\beta}^{(i)}+L_{l_\alpha l_\beta}^{(i)}\bigr)}{(d-1)^{\ell+2}}
 =
 \left(\frac{d+2}{d-2}-\frac{d(d-1)^{\ell}}{d-2}-(\ell+1)
 +\OO\bigl((d-1)^{-\ell}\bigr)\right).
\end{align}

By plugging \eqref{e:thesum} into  \eqref{e:second erm0}, we get
    \begin{align}\begin{split}\label{e:plugin}
   \eqref{e:second erm0} &=\left(\frac{d+2}{d-2}-\frac{d(d-1)^{\ell}}{d-2}-(\ell+1)+\OO((d-1)^{-\ell})\right) \sum_{\bfi^+}\frac{1}{Z_{\cF^+}}\bE\left[I(\cF^+,\cG)\bm1(\cG\in \Omega) (G_{c_\al c_\beta}^{(b_\al b_\beta)})^2\right]
   \end{split}
   \end{align}
 Next we show 
 \begin{align}\label{e:averageGG}
    &\sum_{\bfi^+}\frac{1}{Z_{\cF^+}}\bE\left[I(\cF^+,\cG)\bm1(\cG\in \Omega) (G_{c_\al c_\beta}^{(b_\al b_\beta)})^2\right]=\frac{1}{\cA^2}\bE\left[\bm1(\cG\in \Omega)\frac{\del_z m (z)}{N}\right]+\OO\left(N^{-\fb/2}\bE[ \Psi]\right),
\end{align}
and  \eqref{e:refine_Gccerror} follows from combining \eqref{e:plugin} and \eqref{e:averageGG}

If we temporarily ignore the indicator and the averaging over embeddings, writing $(b_\al, c_\al, b_\beta, c_\beta)$ as $(b,c,b',c')$, the
above statement \eqref{e:averageGG} reduces to computing 
\begin{align}\label{e:remove_indices}
\frac{1}{(Nd)^2}\sum_{b\sim c\atop b'\sim c'}(G_{c c'}^{(bb')})^2=\frac{1}{\cA^2}\frac{\del_z m (z)}{N}+\OO(N^{-\fb/2} \Phi).
\end{align}

We start with the Schur complement formula \eqref{e:Schur1}
\begin{align*}
 G_{cc'}^{(bb')}=G_{cc'}-(G (G|_{\{bb'\}})^{-1} G)_{cc'},
\quad
    (G|_{\{bb'\}})^{-1}
    &=\frac{1}{G_{bb}G_{b'b'} -G_{bb'}^2}
    \left[
    \begin{array}{cc}
    G_{b'b'} & -G_{bb'}\\
    -G_{bb'} & G_{bb}
    \end{array}
    \right].
\end{align*}

Conditioned on $I(\cF^+,\cG)=1$, by \eqref{eq:infbound} and \eqref{e:mscmd}, the  terms
\begin{align}\begin{split}\label{e:centered erm}
   &G_{bc}-\frac{\sqrt{d-1}}{d-2},\quad  G_{b'c'}-\frac{\sqrt{d-1}}{d-2}, \quad 
   G_{b b'},\quad  G_{b c'},\quad  G_{c b'},\quad G_{c c'}\\
   &G_{bb}+\frac{d-1}{d-2},\quad  G_{c c}+\frac{d-1}{d-2}, \quad  G_{b'b'}+\frac{d-1}{d-2}, \quad 
 G_{c'c'}+\frac{d-1}{d-2},
\end{split}\end{align}
are all bounded by $\OO(N^{-\fb})$.
The Schur complement formula \eqref{e:Schurixj} imply
\begin{align}\begin{split}\label{e:remove_one}
  G_{c c'}^{(b b')}
    &=G_{cc'}-\frac{G_{cb}G_{bc'}}{G_{bb}}-\frac{G_{cb'}G_{b'c'}}{G_{b'b'}} +\frac{G_{cb}G_{bb'}G_{b'c'}}{G_{bb}G_{b'b'}}+ \OO\left(|G_{bb'}|^2+|G_{bc'}|^2+|G_{cb'}|^2+|G_{cc'}|^2\right) \\ 
&
    = G_{c c'}-\frac{ G_{b c'}}{\sqrt{d-1}}-\frac{ G_{c b'}}{\sqrt{d-1}}+\frac{ G_{b b'}}{d-1} +\cE,
\end{split}\end{align}
where $|\cE|\lesssim N^{-\fb} (|G_{b b'}|+|G_{b c'}|+|G_{c b'}|+|G_{c c'}|)$.

By plugging \eqref{e:remove_one} into the first statement in \eqref{e:remove_indices}, we get
\begin{align}\begin{split}\label{e:square1}
   \frac{1}{(Nd)^2}\sum_{b\sim c\atop b'\sim c'}(G_{c c'}^{(bb')})^2
   &=\frac{1}{(Nd)^2}\sum_{b\sim c\atop b'\sim c'}\left(G_{c c'}-\frac{ G_{b c'}}{\sqrt{d-1}}-\frac{ G_{c b'}}{\sqrt{d-1}}+\frac{ G_{b b'}}{d-1} \right)^2 +\OO(N^{-\fb/2}\Phi)\\
   &=\frac{1}{\cA^2}\frac{\del_z m (z)}{N}+\OO(N^{-\fb/2} \Phi).
\end{split}\end{align}
where in the first statement we used the Ward identity \eqref{eq:rwardex} to bound the error; and the second statement follows from repeated using the following relations, we omit further details.
\begin{align}\begin{split}\label{e:sum_neighbor}
    &\frac{1}{\sqrt{d-1}}\sum_{x\sim u}G_{xv}
    =(HG)_{uv}=zG_{uv}+\delta_{uv}=2G_{uv}+\delta_{uv}+\OO(N^{-\fg}|G_{uv}|),\\
    &\frac{1}{N^2}\sum_{x,y}G^2_{xy}=\frac{\Tr[G^2]}{N^2}=\frac{\del_z m (z)}{N}.
\end{split}\end{align}
  \end{proof}

\section{Error from local resampling}
\label{s:change_est}
In this section we prove the estimates for the error terms arising from the local resampling used to bound \eqref{e:e4}. A key ingredient is the following \emph{punctured-vertex} Ward bound.

\begin{proposition}
    \label{lem:deletedalmostrandom}
 We take $z\in {\bf D}$ (recall from \eqref{e:D}), and recall the indicator functions 
 \begin{align*}
   I(\{o,i\},\cG)= A_{oi}\bm1(\cB_{\fR}(o, \cG) \text{ is a tree}),
\end{align*}
  from \eqref{e:defineI}. 
    Then the following holds for $i,j\sim o$ and $i\neq j$, 
    \begin{align}\label{e:sameasdisconnect}
        \frac{1}{N}\sum_{o\in \qq{N}}\bE[I(\{i,o\},\cG) |G^{(o)}_{ij}|^2  ]\lesssim \bE[N^\fo \Phi  ].
    \end{align}
\end{proposition}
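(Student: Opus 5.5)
The plan follows the hint given after \eqref{e:Gijo-bound}: perform a local resampling around $o$ and use exchangeability. On the event $I(\{i,o\},\cG)=1$, let $\cT=\cB_\ell(o,\cG)$ and resample its boundary edges with data $\bfS$, producing $\wt\cG=T_\bfS(\cG)$. By \Cref{lem:exchangeablepair}, together with the embedding average used in \Cref{c:resample}, we have
\[
\frac1N\sum_o\bE[I(\{i,o\},\cG)|G^{(o)}_{ij}|^2] = \frac1N\sum_o\bE[I(\{i,o\},\cG)\bm1(\cG,\wt\cG\in\Omega)|\wt G^{(o)}_{ij}|^2] + \OO(N^{-1+2\fc}).
\]
The restriction to good resampling data $\sfF(\cG)$ costs only $\OO(N^{-1+2\fc})\lesssim \Phi$, by \Cref{lem:configuration}. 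The indices $i$ and $j$ are two distinct neighbors of $o$, so they lie in $\cT$, which is unchanged by the resampling.

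Next I expand $\wt G^{(o)}_{ij}$ with the Schur complement formula \eqref{e:Schur1} with respect to $\bT\setminus\{o\}$. Removing $o$ disconnects $i$ and $j$ inside the tree $\cT$. Hence the block $(H^{(o)}_\bT - z - \wt B^\top \wt G^{(\bT)}\wt B)^{-1}$ has no direct $ij$ coupling: its $ij$ entry arises only through boundary terms. Expanding around $L^{(o)} = P^{(o)}(\cT,z,\msc)$ as in \eqref{eq:resolventexp}, we have $L^{(o)}_{ij}=0$, and the first-order term is
\[
\sum_{\al,\beta} L^{(o)}_{i l_\al}(\wt G^{(\bT)}_{c_\al c_\beta}-Q\delta_{\al\beta})L^{(o)}_{l_\beta j}/(d-1).
\]
Only $\al\neq\beta$ survives, because $l_\al$ cannot lie in both the $i$-branch and the $j$-branch. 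This gives $\wt G^{(o)}_{ij}\approx (d-1)^{-\ell-1}\sum_{\al\in \sfA_i,\beta\in\sfA_j} \msc^{2\ell+2}\wt G^{(\bT)}_{c_\al c_\beta}$ plus higher-order products. Then I replace $\wt G^{(\bT)}_{c_\al c_\beta}$ by $G^{(b_\al b_\beta)}_{c_\al c_\beta}$, with the error controlled as in \eqref{e:replaceexample}–\eqref{e:rcEbound}.

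Squaring and applying Cauchy–Schwarz over the $(d-1)^{2\ell}$ pairs, the main term is bounded by $(d-1)^{2\ell}$ times averages of $|G^{(b_\al b_\beta)}_{c_\al c_\beta}|^2$ over independent uniform edges, with prefactor $(d-1)^{-2\ell-2}$. Each such average is $\lesssim\Phi$ by \eqref{e:Gccerror}. Higher-order terms carry extra factors $N^{-\fb}$ times similar squared quantities, and are handled the same way.

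The main obstacle is the replacement error \eqref{e:rcEbound}. It contains terms such as $|G^{(\bT\bW)}_{c_\al x}|^2$ with $x\in\cN_\al$, i.e. two neighbors of $b_\al$ in a graph with $b_\al$ removed. These are exactly of the punctured form we are trying to bound, so the argument is circular. I would close it by self-improvement. Define
\[
\Gamma:=\sup\frac1N\sum_o\bE[I\,|G^{(o)}_{ij}|^2],
\]
together with its analogues with a bounded number of additional far-away vertices removed; these are comparable by Schur complement and \Cref{l:basicG}. The circular terms appear with a small prefactor, of order $(d-1)^{-\ell}$ or $N^{-\fb}$, because each comes multiplied by a coefficient $L^{(o)}_{il_\al}L^{(o)}_{l_\beta j}$ of size $(d-1)^{-\ell}$ and only $\OO((d-1)^{\ell})$ of them occur. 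This gives $\Gamma\le C N^{\fo}\bE[\Phi]+\tfrac12\Gamma$, and the a priori bound $\Gamma\lesssim N^{-2\fb}<\infty$ from \eqref{eq:local_law} allows rearranging, which yields \eqref{e:sameasdisconnect}. All remaining error terms are averages of squared Green's function entries over independent indices, which the Ward identity \eqref{eq:rwardex} bounds by $N^\fo\Phi$.
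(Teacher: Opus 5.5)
Your outline (resampling at $o$, exchangeability, Schur expansion with $L^{(o)}_{ij}=0$, replacing $\wt G^{(\bT)}_{c_\al c_\beta}$ by $G^{(b_\al b_\beta)}_{c_\al c_\beta}$, absorbing the punctured terms) matches the paper. The step that bounds the leading $k=1$ term, however, fails. Cauchy--Schwarz gives $\bigl|(d-1)^{-\ell-1}\sum_{\al\in\sfA_i,\beta\in\sfA_j}G^{(b_\al b_\beta)}_{c_\al c_\beta}\bigr|^2\le(d-1)^{-2}\sum_{\al,\beta}|G^{(b_\al b_\beta)}_{c_\al c_\beta}|^2$. That remaining sum still has $(d-1)^{2\ell}$ terms, each of average size $\Phi$ by \eqref{e:Gccerror}. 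So you get $(d-1)^{2\ell-2}\Phi$, not $\OO(\Phi)$: Cauchy--Schwarz charges you the number of pairs twice, while the prefactor pays for it only once. This factor cannot be hidden in $N^{\fo}$. The note tracks $(d-1)^{\ell}$ and $N^{\fo}$ separately, and \eqref{e:Qrefined_bound} is only an improvement if $(d-1)^{\ell/2}\gg N^{\fo}$. The loss also feeds into your self-improving inequality, which then closes only at $(d-1)^{2\ell}N^{\fo}\bE[\Phi]$.

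The missing ingredient is decorrelation among the summands. The paper expands the square as a double sum over $(\al,\beta),(\al',\beta')$.
\begin{itemize}
\item The $\OO((d-1)^{2\ell})$ terms with $\{\al,\beta\}=\{\al',\beta'\}$ are each $\lesssim N^{\fo}\Phi$ by \eqref{e:fbound2}, so after the prefactor they contribute $\OO(N^{\fo}\Phi)$.
\item In every other term some index, say $\al$, occurs only once, so the edge $(b_\al,c_\al)$ can be averaged separately. By \eqref{e:sum_one_index} (the row-sum cancellation from the trivial eigenvector, as in \eqref{e:core21}), this replaces $G^{(b_\al b_\beta)}_{c_\al c_\beta}$ by $G^{(b_\beta)}_{b_\al c_\beta}(G^{(b_\al)}_{c_\al c_\al}-Q)/\sqrt{d-1}$ and gains a factor $N^{-\fb}$; see \eqref{e:fbound1}. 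These $(d-1)^{4\ell}$ cross terms then total $(d-1)^{2\ell}N^{-\fb}\Phi$, which is negligible.
\end{itemize}
Crude Cauchy--Schwarz is harmless only where a spare $N^{-\fb}$ is available: the $k\ge2$ terms and the products involving $\cE_{\al\beta}$. That spare $N^{-\fb}$, not your coefficient count, is also what makes the circular terms small. The count you give, $(d-1)^{-\ell}$ times $\OO((d-1)^{\ell})$ terms, is only $\OO(1)$. Finally, to close the loop the paper converts $|G^{(\bT\bW)}_{c_\al x}|^2$ into $|G^{(b_\al)}_{c_\al x}|^2$ via \eqref{e:Greplace} and then uses permutation invariance, rather than taking a supremum over variants.
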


We recall the resampling data ${\bf S}=\{(l_\al, a_\al), (b_\al, c_\al)\}_{\al\in\qq{\mu}}$ around $o$ from \Cref{s:local_resampling},  and let
\begin{equation}\label{e:cF_copy}
 \cF:=\{i,o\},\quad \cF^+ :=B_{\ell}(o, \cG)\cup \{(l_\al, a_\al), (b_\al, c_\al)\}_{\al\in\qq{\mu}}=B_{\ell+1}(o, \cG)\cup \{ (b_\al, c_\al)\}_{\al\in\qq{\mu}}=(\bfi^+ , E^+ ),
\end{equation}
which contains all the switching edges, see \Cref{fig:FG}. We also recall the following indictor function from \eqref{e:defI}
\begin{align}\begin{split}\label{e:defI_copy}
&I(\cF ,\cG)
:=A_{io}\prod_{x\in \cB_\ell(o;\cG)}
      \bm{1}\!\big(\cB_{\fR}(x;\cG)\ \text{is a tree}\big)\\
& I(\cF^+ ,\cG)
:= \prod_{\{x,y\}\in E^+ } A_{xy}
   \;\prod_{c\in \{o,c_\al,\cdots, c_\mu\}\atop x\in \cB_\ell(c;\cG)}
      \bm{1}\!\big(\cB_{\fR}(x;\cG)\ \text{is a tree}\big)
   \; \prod_{\substack{c\neq c'\in \{o,c_1,\cdots, c_\mu\}}}      \bm{1}\!\big(\dist_\cG(c,c')\ge 3\fR\big).
\end{split}\end{align}

As a consequence of \Cref{lem:deletedalmostrandom}, the following proposition states that during the local resampling, the errors $\cE$ from \Cref{l:coefficient} (after averaging) are negligible. 
\Cref{lem:task2} follows from \Cref{lem:deletedalmostrandom} and Schur complement formula \eqref{e:Schur1}. 
The proofs of \Cref{lem:deletedalmostrandom} and \Cref{lem:task2} will be given in \Cref{s:removeonevertex}.
\begin{proposition}

\label{lem:task2}
 We take $z\in {\bf D}$, and denote the resampling data ${\bf S}=\{(l_\al, a_\al), (b_\al, c_\al)\}_{\al\in\qq{\mu}}$,  the following holds
\begin{align}\begin{split}\label{eq:task2}
    &\frac{1}{Z_{\cF^+}}\sum_{\bfi^+}\bE[I(\cF^+,\cG)\bm1(\cG\in \Omega)|\widetilde G^{(\bT)}_{c_\alpha c_\beta}-G_{c_\alpha c_\beta}^{(b_\alpha b_\beta)}| ]\lesssim(d-1)^{\ell}\bE[N^\fo\Phi  ].
   \end{split}
    \end{align} 
 
\end{proposition}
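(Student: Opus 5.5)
We reduce \eqref{eq:task2} to a pointwise (in $\bfS$) Schur complement decomposition of $\widetilde G^{(\bT)}_{c_\alpha c_\beta}-G^{(b_\alpha b_\beta)}_{c_\alpha c_\beta}$ into quadratic Green's function expressions on the unswitched graph. Each of these is then averaged over embeddings using either the classical Ward identity \eqref{eq:rwardex} or the punctured-vertex Ward bound of \Cref{lem:deletedalmostrandom}.

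\emph{Step 1: pointwise expansion.} On $I(\cF^+,\cG)=1$, the graph $\widetilde\cG^{(\bT)}$ coincides with $\cG^{(\bT)}$ except that each edge $\{b_\gamma,c_\gamma\}$ has been replaced by $\{a_\gamma,b_\gamma\}$. In particular, $\widetilde\cG^{(\bT \bW)}=\cG^{(\bT\bW)}$, with $\bW=\{b_\gamma\}_{\gamma\in\qq{\mu}}$. We apply \eqref{e:Schur1} twice, removing the set $\bW$, to express both $\widetilde G^{(\bT)}|_{\{c_\alpha,c_\beta\}}$ and $G^{(b_\alpha b_\beta)}|_{\{c_\alpha,c_\beta\}}$ through the common matrix $G^{(\bT\bW)}$ together with the coupling rows to $b_\gamma$. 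The coupling rows are $\{c_\gamma\}\cup\{x\sim b_\gamma\}$ in $\cG$ and $\{a_\gamma\}\cup\{x\sim b_\gamma, x\ne c_\gamma\}$ in $\widetilde\cG$. The relevant $\bW\times\bW$ Schur matrices have local-law approximations by tree quantities, and these are identical for the two graphs by \Cref{lem:configuration} and \eqref{eq:local_law}. Their inverses therefore agree to leading order and are bounded.

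Subtracting, and expanding the inverses to finite order as in \eqref{e:sum_Ginverse0}, every surviving term should contain at least two off-diagonal factors. Each such factor has one of the following forms: $G^{(\bT\bW)}_{c_\alpha x}$ or $G^{(\bT\bW)}_{c_\beta x}$ with $x\in\cN_\gamma$; $G^{(\bT b_\alpha b_\beta)}_{c_\alpha b_\gamma}$ with $\gamma\ne\alpha,\beta$; or $G^{(b_\alpha b_\beta)}_{c_\alpha x}$ with $x\in\bT$, which arises when $\bT$ is put back for the comparison with $G^{(b_\alpha b_\beta)}$. A Cauchy--Schwarz step then gives the bound \eqref{e:rcEbound}, with $O(1)$ constants, up to an $N^{-\fb}\Phi$ remainder from higher-order terms.

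\emph{Step 2: averaging.} Next we average \eqref{e:rcEbound} over $\bfi^+$, that is, over the $\mu$ independent uniform edges $(b_\gamma,c_\gamma)$ and the center $o$. First take terms like $|G^{(\bT b_\alpha b_\beta)}_{c_\alpha b_\gamma}|^2$ with $\gamma\ne\alpha,\beta$. Here $b_\gamma$ is a free uniform vertex, so after undoing the removal of $b_\alpha,b_\beta,\bT$ via \eqref{e:Gbound}-type bounds, the sum over $b_\gamma$ is controlled by \eqref{e:Gest}, giving $N^{\fo}\Phi$. We then sum over $\gamma$, which gives a factor $(d-1)^\ell$. The terms with $x\in\bT$ are handled the same way, averaging over the free edge $(b_\alpha,c_\alpha)$: $c_\alpha$ is uniform while $x$ ranges over the ball, whose size is $(d-1)^\ell$. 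Terms $|G^{(\bT\bW)}_{c_\alpha x}|^2$ with $\gamma\neq\alpha$ similarly decouple, since $c_\alpha$ and $b_\gamma$ are independent uniform choices.

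\emph{Step 3, the main obstacle.} The remaining case is $\gamma=\alpha$ with $x\in\cN_\alpha$. Here $c_\alpha$ and $x$ are two distinct neighbours of the same removed vertex $b_\alpha$, so no free index is available. We first reinsert the vertices $\bT$ and $\bW\setminus\{b_\alpha\}$ using \eqref{e:Schur1}. Since those vertices are at distance at least $3\fR$ from $b_\alpha$, this costs only terms already controlled in Step 2, and reduces the problem to $|G^{(b_\alpha)}_{c_\alpha x}|^2$ with $c_\alpha,x\sim b_\alpha$. The edge $(b_\alpha,c_\alpha)$ is uniform, so the average is exactly the left side of \eqref{e:sameasdisconnect} with $o=b_\alpha$. \Cref{lem:deletedalmostrandom} then yields $N^\fo\bE[\Phi]$.

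Summing over the $O((d-1)^\ell)$ choices of $\gamma$ and $x$ gives \eqref{eq:task2}. The delicate points are the bookkeeping of the Schur inverse cancellations in Step 1 and the reduction to \eqref{e:sameasdisconnect} in Step 3. Both rely on the far-separation guaranteed by $I(\cF^+,\cG)=1$.
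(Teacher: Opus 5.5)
Your proposal follows the paper's proof essentially step for step. You reduce to the bound \eqref{e:cEbound} via the three Schur complement comparisons of \Cref{l:diffG1}, average the terms that have a free index using the Ward bound \eqref{e:Gest}, and treat the terms $|G^{(\bT\bW)}_{c_\alpha x}|^2$ with $x\sim b_\alpha$, $x\neq c_\alpha$ by replacing them with $|G^{(b_\alpha)}_{c_\alpha x}|^2$ and invoking \Cref{lem:deletedalmostrandom}. One small slip: $a_\alpha\in\cN_\alpha$ is not a $\cG$-neighbour of $b_\alpha$, so the term with $\gamma=\alpha$, $x=a_\alpha$ does not belong in your Step 3; it is handled as in Step 2, since the edge $(b_\alpha,c_\alpha)$ is free and independent of $a_\alpha$, which is exactly how the paper treats it.
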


\subsection{Schur complement formula revisit}
\label{s:Schur_revisit}
Adopt the notations as in the proof of \Cref{l:coefficient}, and condition on that $I(\cF^+,\cG)=1$, so the switching edges have tree neighborhood and are far away from each other. Then the normalized adjacency matrix $\widetilde H^{(\bT)}$ of $\tcG^{(\bT)}$ is in the block form
\begin{align*}
    \widetilde H^{(\bT)}=
    \left[
    \begin{array}{cc}
        \wt H^{(\bT)}_{\bW} & \wt B^\top\\
        \wt B & \widetilde H^{(\bT)}_{\bW^\complement}
    \end{array}
    \right].
\end{align*}
We also denote the Green's function of $\cG^{(\bT)}$ and $\wt\cG^{(\bT)}$ as $ G^{(\bT)}$ and $\wt G^{(\bT)}$ respectively.

In this section, we investigate the error from replacing $\wt G^{(\bT)}_{c_\al c_\beta}$ with $ G_{c_\al c_\beta}^{(b_\al b_\beta)}$. 
We notice that $ G_{c_\al c_\beta}^{(b_\al b_\beta)}$ can be obtained from $\wt G^{(\bT)}_{c_\al c_\beta}$ through the following steps. First, we remove $\bW=\{b_1, b_2, \cdots, b_\mu\}$, which gives $G_{c_\al c_\beta}^{(\bT \bW)}$; we then add $\bW\setminus \{b_\al, b_\beta\}$ back, which gives $G_{c_\al c_\beta}^{(\bT b_\al b_\beta)}$; finally we add $\bT$ back, which gives $G_{c_\al c_\beta}^{(b_\al b_\beta)}$. The errors from these replacements are explicit, thanks to the Schur complement formulas \eqref{e:Schur1}:
\begin{align}
\label{e:removeW}&\wt G^{(\bT)}_{c_\al c_\beta}-G^{(\bT\bW)}_{c_\al c_\beta}=(G^{(\bT\bW)}\wt B  \wt G^{(\bT)}|_{\bW} {\wt B}^\top G^{(\bT\bW)})_{c_\al c_\beta},\\
 \label{e:addW}&G^{(\bT\bW)}_{c_\al c_\beta}-G^{(\bT b_\al b_\beta)}_{c_\al c_\beta}=-(G^{(\bT\bW)}(G^{(\bT b_\al b_\beta)}|_{\bW\backslash\{b_\alpha,b_\beta\}})^{-1}   G^{(\bT b_\al b_\beta)})_{c_\al c_\beta},\\
 \label{e:schur_removeT}
&G_{c_\al c_\beta}^{(b_\al b_\beta)}-G^{(\bT b_\al b_\beta)}_{c_\al c_\beta}=(G^{(b_\al b_\beta)}(G^{(b_\al b_\beta)}|_\bT)^{-1}G^{(b_\al b_\beta)})_{c_\al c_\beta}.
\end{align}

The following lemma provide leading order terms for  the replacement errors associated with the above equations \eqref{e:removeW}, \eqref{e:addW} and \eqref{e:schur_removeT}. 
\begin{lemma}\label{l:diffG1}
Fix $z\in \bf D$ and recall $\cF, \cF^+$ from \eqref{e:cF_copy}. We denote $\cT=\cB_\ell(o,\cG)$ with vertex set $\bT$. We assume that $\cG, \tcG\in \Omega$ and $I(\cF^+,\cG)=1$. For any indices $\al,\beta\in\qq{\mu}$, the following holds: 
\begin{enumerate}
\item   
    The difference $\wt G^{(\bT)}_{c_\al c_\beta}-G^{(\bT\bW)}_{c_\al c_\beta}$, is given by
    \begin{align}\label{e:diffG1}
    \wt G^{(\bT)}_{c_\al c_\beta}-G^{(\bT\bW)}_{c_\al c_\beta}=\frac{\md(z )}{d-1}\sum_{\gamma\in\qq{\mu}}\left( \sum_{x\in \cN_\gamma}G_{c_\al x}^{(\bT\bW)}\right)\left(\sum_{x\in \cN_\gamma}G_{c_\beta x}^{(\bT\bW)}\right)+\cE,
    \end{align}
   where $\cN_\gamma=\{x\neq c_\gamma: x\sim b_\gamma \text{ in }\cG\}\cup \{a_\gamma\}$, which enumerates the adjacent vertices of $b_\gamma$ in $\tcG$, and
    \begin{align*}
     |\cE|\lesssim N^{-\fb/2}\sum_{\gamma\in\qq{\mu}}\sum_{x\in \cN_\gamma}(|G_{c_\al x}^{(\bT\bW)}|^2+|G_{c_\beta x }^{(\bT\bW)}|^2)+N^{-\fb }\Phi.
    \end{align*}
    As a consequence of \eqref{e:diffG1}, we have
    \begin{align}\label{e:total_diffG1}
    |\wt G^{(\bT)}_{c_\al c_\beta}-G^{(\bT\bW)}_{c_\al c_\beta}|\lesssim \sum_{\gamma\in\qq{\mu}} \sum_{x\in \cN_\gamma}(|G_{c_\al x}^{(\bT\bW)}|^2+|G_{c_\beta x}^{(\bT\bW)}|^2)+N^{-\fb }\Phi.
    \end{align}
\item
    The difference $G^{(\bT\bW)}_{c_\al c_\beta}-G^{(\bT b_\al b_\beta)}_{c_\al c_\beta}$ is given by
    \begin{align}\label{e:diffG2}
    G^{(\bT\bW)}_{c_\al c_\beta}-G^{(\bT b_\al b_\beta)}_{c_\al c_\beta}=-\frac{1}{\md(z )}\sum_{\gamma\in\qq{\mu}\setminus\{\al,\beta\}}G^{(\bT b_\al b_\beta )}_{c_\al b_\gamma}  G^{(\bT b_\al b_\beta)}_{b_{\gamma} c_\beta}+\cE,
    \end{align}
where
\begin{align*}
|\cE|
\lesssim N^{-\fb/2} \sum_{\gamma\in\qq{\mu}\setminus\{\al,\beta\}}|G^{(\bT b_\al b_\beta )}_{c_\al b_\gamma}|^2+|G^{(\bT b_\al b_\beta)}_{ c_\beta b_{\gamma}}|^2.
\end{align*}
    As a consequence of \eqref{e:diffG2}, we have
\begin{align}\label{e:total_diffG2}
    |G^{(\bT\bW)}_{c_\al c_\beta}-G^{(\bT b_\al b_\beta)}_{c_\al c_\beta}|\lesssim \sum_{\gamma\in\qq{\mu}\setminus\{\al,\beta\}}(|G^{(\bT b_\al b_\beta )}_{c_\al b_\gamma}|^2+ |G^{(\bT b_\al b_\beta)}_{c_\beta b_{\gamma} }|^2).
\end{align}
\item
The difference $G^{(\bT b_\al b_\beta)}_{c_\al c_\beta}-G_{c_\al c_\beta}^{(b_\al b_\beta)}$ is given by 
\begin{align}\label{e:diffG3}
G^{(\bT b_\al b_\beta)}_{c_\al c_\beta}-G_{c_\al c_\beta}^{(b_\al b_\beta)}=-\sum_{\dist(x,o)=\ell\atop  x\sim x'\in \bT}\left(\frac{1}{\sqrt{d-1}}G_{c_\al x}^{(b_\al b_\beta)}G^{(b_\al b_\beta)}_{x' c_\beta}
+\frac{1}{\msc(z )}G_{c_\al x}^{(b_\al b_\beta)}G^{(b_\al b_\beta)}_{x c_\beta}\right)+\cE,
\end{align}
where
    \begin{align*}
       |\cE|
       \lesssim N^{-\fb/2}\sum_{x\in \bT}(|G^{(b_\al b_\beta)}_{c_\al x}|^2+|G^{(b_\al b_\beta)}_{ c_\beta x}|^2 ).
    \end{align*}
    As a consequence of \eqref{e:diffG3}, we have
\begin{align}\label{e:total_diffG3} 
|G^{(\bT b_\al b_\beta)}_{c_\al c_\beta}-G_{c_\al c_\beta}^{(b_\al b_\beta)}|\lesssim 
\sum_{x\in \bT}(|G_{c_\al x}^{(b_\al b_\beta)}|^2+|G^{(b_\al b_\beta)}_{c_\beta x}|^2).
\end{align}
\item For any $x\sim b_\beta$ in $\cG$, the following holds 
\begin{align}\begin{split}\label{e:Greplace}
    &|G^{(\bT \bW)}_{c_\al x}-G_{c_\al x}^{(b_\al b_\beta)}|\lesssim \sum_{\gamma\in\qq{\mu}\setminus\{\al,\beta\}}(|G^{(\bT b_\al b_\beta )}_{c_\al b_\gamma}  |^2+|G^{(\bT b_\al b_\beta)}_{b_{\gamma} c_\beta}|^2)
    +\sum_{y\in \bT}(|G_{c_\al y}^{(b_\al b_\beta)}|^2+|G^{(b_\al b_\beta)}_{ x y}|^2).
\end{split}\end{align}
\end{enumerate}
\end{lemma}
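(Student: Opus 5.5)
Each of the four statements comes from expanding the corresponding exact Schur complement identity \eqref{e:removeW}, \eqref{e:addW} or \eqref{e:schur_removeT}. In each, the small matrix being inverted (or the middle block) is replaced by its tree approximation from the local law, and the fluctuation is bounded quadratically. Throughout we work on $I(\cF^+,\cG)=1$ with $\cG,\tcG\in\Omega$. The switching edges are then at mutual distance $\geq 3\fR$ and have tree neighborhoods, so \Cref{l:basicG} and \eqref{eq:local_law1} apply to every $\bX\in\{\bW,\bT\cup\bW\}$ and to $\bT$ in $\tcG$.

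\textbf{Item 1.} Start from \eqref{e:removeW}. In $\tcG^{(\bT)}$, vertex $b_\gamma$ is adjacent exactly to $\cN_\gamma$, so $(\wt B^\top G^{(\bT\bW)})_{b_\gamma\cdot}=(d-1)^{-1/2}\sum_{x\in\cN_\gamma}G^{(\bT\bW)}_{x\cdot}$. The block $\wt G^{(\bT)}|_{\bW}$ is close to its tree value. In $\tcG^{(\bT)}$ the vertex $b_\gamma$ has a tree neighborhood and degree $d$, so its diagonal entry is $\md(z)+\OO(N^{-\fb})$. Its off-diagonal entries $\wt G^{(\bT)}_{b_\gamma b_{\gamma'}}$, $\gamma\neq\gamma'$, are $\OO(N^{-\fb})$, since the $b$'s are far apart; this holds even though they are joined through $\tcG$ only by long paths.

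Substituting these values, the diagonal part produces the main term in \eqref{e:diffG1}. The diagonal error, and the off-diagonal part, each have the form $\sum_{\gamma,\gamma'}v_\gamma M_{\gamma\gamma'}w_{\gamma'}$ with $|M|\lesssim N^{-\fb}$ entrywise. For the diagonal error Cauchy–Schwarz gives $N^{-\fb/2}\sum(|v|^2+|w|^2)$ directly. For the off-diagonal part, entrywise smallness does not suffice because there are $\mu^2$ pairs. Here I plan to bound the operator norm instead. I would write $\wt G^{(\bT)}|_\bW-\md\,\bI$ through one more Schur step, as $\wt B'$ applied to $G^{(\bT\bW)}$ on $\cup_\gamma\cN_\gamma$. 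Its off-diagonal sums are then controlled by averaging $|G^{(\bT\bW)}_{xy}|^2$ over pairs of far-apart vertices, which the Ward identity \eqref{e:Gest} bounds by $\Phi$ times $(d-1)^{O(\ell)}N^{\fo}$. That gives the $N^{-\fb}\Phi$ term. Finally, \eqref{e:total_diffG1} follows from \eqref{e:diffG1} by Cauchy–Schwarz, using $|\cN_\gamma|\le d$.

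\textbf{Item 2.} Use \eqref{e:addW}, rewriting its right side as $-\sum_{\gamma,\gamma'}G^{(\bT b_\al b_\beta)}_{c_\al b_\gamma}\big[(G^{(\bT b_\al b_\beta)}|_{\bW'})^{-1}\big]_{\gamma\gamma'}\cdots$, where $\bW'=\bW\setminus\{b_\al,b_\beta\}$. This form comes from the first line of \eqref{e:Schur1}, since $G^{(\bT\bW)}=G^{(\bT b_\al b_\beta)}-G(G|_{\bW'})^{-1}G$. The matrix $G^{(\bT b_\al b_\beta)}|_{\bW'}$ equals $\md\,\bI+\OO(N^{-\fb})$ entrywise, so its inverse is $\md^{-1}\bI$ plus a perturbation. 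I would handle the perturbation by a Neumann series. The diagonal term gives \eqref{e:diffG2}, and the remainder is bounded by Cauchy–Schwarz, which yields \eqref{e:total_diffG2}.

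\textbf{Item 3.} Use \eqref{e:schur_removeT}. Apply \eqref{e:sum_Ginverse0} in the graph with $b_\al,b_\beta$ removed; this is allowed because $\bT$ is far from them. It gives $(G^{(b_\al b_\beta)}|_\bT)^{-1}=H|_\bT-z-\msc\,\bI^\del+\OO(N^{-\fb})$. Expanding $(G H|_\bT G)_{c_\al c_\beta}$ produces the neighbor sum with weight $(d-1)^{-1/2}$. The $-z\,G G$ and $-\msc\bI^\del$ contributions should combine into the $1/\msc$ coefficient on boundary vertices: interior vertices cancel via $(HG)=zG+\delta$ restricted to the tree, leaving boundary terms. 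This bookkeeping is what I expect to be delicate, and I would verify it against the exact tree computation. The error term comes from \eqref{e:sum_Ginverse0} together with Cauchy–Schwarz.

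\textbf{Item 4.} This follows by chaining items 2 and 3 with $c_\beta$ replaced by $x$, since $x$ lies in $\cN_\beta$. Specifically, I would apply \eqref{e:total_diffG2} and \eqref{e:total_diffG3} to the entry $(c_\al,x)$.

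\textbf{Main obstacle.} I expect the main difficulty to be the off-diagonal block control in item 1 (and its analogue in item 2). There, entrywise $N^{-\fb}$ smallness summed over $\mu^2$ pairs is too weak, and the Ward identity must be used to convert the sum into $\Phi$.
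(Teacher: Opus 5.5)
Your architecture is the paper's: the three exact Schur identities \eqref{e:removeW}, \eqref{e:addW}, \eqref{e:schur_removeT}, with the middle (or inverted) block replaced by its tree value so that the diagonal gives the leading term. Item 3 also matches: the interior cancellation $\sum_{y\in\bT}(H-z)_{xy}G^{(b_\al b_\beta)}_{yc_\beta}=0$ for $\dist(x,o)<\ell$, plus $-(z+\msc)=1/\msc$ at the boundary. So does item 4, obtained by rerunning items 2--3 with $c_\beta$ replaced by $x$.

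The problem is the off-diagonal block in item 1, which you single out as the main obstacle. The premise is wrong: entrywise smallness suffices. Write $v_\gamma=\sum_{x\in\cN_\gamma}G^{(\bT\bW)}_{c_\al x}$ and $w_\gamma=\sum_{x\in\cN_\gamma}G^{(\bT\bW)}_{c_\beta x}$, and use $|\wt G^{(\bT)}_{b_\gamma b_{\gamma'}}-\delta_{\gamma\gamma'}\md|\lesssim N^{-\fb}$ from \eqref{eq:local_law1}. Then
\[
\Big|\sum_{\gamma,\gamma'\in\qq{\mu}}v_\gamma\big(\wt G^{(\bT)}_{b_\gamma b_{\gamma'}}-\delta_{\gamma\gamma'}\md\big)w_{\gamma'}\Big|
\lesssim N^{-\fb}\Big(\sum_{\gamma}|v_\gamma|\Big)\Big(\sum_{\gamma'}|w_{\gamma'}|\Big)
\lesssim N^{-\fb}\mu\sum_{\gamma}\big(|v_\gamma|^2+|w_\gamma|^2\big).
\]
Here $\mu=d(d-1)^\ell\le N^{\fb/2}$ because $\ell\ll\fb\log_{d-1}N$ by \eqref{e:parameters}. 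Together with $|v_\gamma|^2\le d\sum_{x\in\cN_\gamma}|G^{(\bT\bW)}_{c_\al x}|^2$, this is exactly the stated error, and it is precisely \eqref{e:II_approx}. You already rely on this absorption of powers of $(d-1)^{\ell}$ against $N^{-\fb}$, in the Neumann series of item 2 and the sum over $\bT$ in item 3.

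The route you propose instead would not work. The lemma is deterministic: it concerns one fixed graph and one fixed placement of the switching edges. By contrast, \eqref{e:Gest} and \eqref{eq:rwardex} bound only averages over edge positions, i.e.\ over the embeddings $\bfi^+$ in the applications. For the specific vertices $x\in\cN_\gamma$, $y\in\cN_{\gamma'}$, the only pointwise information is $|G^{(\bT\bW)}_{xy}|\lesssim N^{-\fb}$. The full-row identity \eqref{e:Ward} gives only $\sum_y|G^{(\bT\bW)}_{xy}|^2=\Im G^{(\bT\bW)}_{xx}/\eta\lesssim N^{1+\fo}\Phi$, so no $\Phi$-sized bound on a sum over specific pairs can come out. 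Bounding an operator norm through entrywise bounds is legitimate, but it just reproduces the factor $\mu N^{-\fb}$ above.

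You also do not need to generate the $N^{-\fb}\Phi$ term: it is slack in the statement, and the paper's proof does not produce it either. Averaging over embeddings enters only later, in Propositions \ref{lem:deletedalmostrandom} and \ref{lem:task2}, where the quadratic right-hand sides of \eqref{e:total_diffG1}--\eqref{e:total_diffG3} are summed over $\bfi^+$.
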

\begin{proof}[Proof of \Cref{l:diffG1}]
We can decompose the right-hand side of \eqref{e:removeW} as given by:
\begin{align*}
&I:=\frac{1}{d-1}\sum_{\gamma,\gamma'\in \qq{\mu}}\sum_{x\in \cN_\gamma\atop y\in \cN_{\gamma'}}G_{c_\al x}^{(\bT\bW)} \wt G^{(\bT)}_{b_\gamma b_{\gamma'}}   G^{(\bT\bW)}_{yc_\beta },
\end{align*}
where $\cN_\gamma=\{x\neq c_\gamma: x\sim b_\gamma \text{ in }\cG\}\cup \{a_\gamma\}$.

Since $\tcG\in \Omega$ and $I(\cF^+,\cG)=1$, \eqref{eq:local_law1} gives that for $\gamma,\gamma'\in\qq{\mu}$, $|\wt G^{(\bT)}_{b_\gamma b_{\gamma}}-\delta_{\gamma\gamma'}\md(z)|\lesssim N^{-\fb} $. The leading order term of $I$ is then for pairs $\gamma=\gamma'$, giving
\begin{align}\label{e:II_approx}
I= \frac{\md(z )}{d-1}\sum_{\gamma\in\qq{\mu}} \left(\sum_{x\in \cN_\gamma}G_{c_\al x}^{(\bT\bW)}\right)\left(\sum_{x\in \cN_\gamma}G_{c_\beta x}^{(\bT\bW)}\right)+\OO\left(N^{-\fb} \sum_{\gamma, \gamma'\in\qq{\mu}}\sum_{x\in \cN_\gamma\atop y\in \cN_{\gamma'}}|G_{c_\al x}^{(\bT\bW)}||G_{yc_\beta }^{(\bT\bW)}|\right).
\end{align}
The claim \eqref{e:diffG1} follows from \eqref{e:II_approx}.

To prove \eqref{e:diffG2}, we start with \eqref{e:addW}. Consider
\begin{align}\begin{split}\label{e:defcE1beta}
&\phantom{{}={}}(G^{(\bT b_\al b_\beta)}(G^{(\bT b_\al b_\beta)}|_{\bW\backslash\{b_\alpha,b_\beta\}})^{-1}   G^{(\bT b_\al b_\beta)})_{c_\al c_\beta}\\
&=\sum_{\gamma, \gamma'\in\qq{\mu}\setminus\{\al,\beta\}}
G^{(\bT b_\al b_\beta)}_{c_\al b_\gamma}(G^{(\bT b_\al b_\beta)}|_{\bW\backslash\{b_\alpha,b_\beta\}})^{-1}_{b_\gamma b_{\gamma'}}   G^{(\bT b_\al b_\beta)}_{b_{\gamma'} c_\beta}.
\end{split}\end{align}
The leading order term in \eqref{e:defcE1beta} is given by those with $\gamma=\gamma'$,
\begin{align}\begin{split}\label{e:IVerror}
&\eqref{e:defcE1beta}= \frac{1}{\md(z )}\sum_{\gamma\in\qq{\mu}\setminus\{\al,\beta\}}G^{(\bT b_\al b_\beta )}_{c_\al b_\gamma}  G^{(\bT b_\al b_\beta)}_{b_{\gamma} c_\beta}+\cE,\\
&\cE:=\sum_{\gamma,\gamma'\in\qq{\mu}\setminus\{\al,\beta\}}
G^{(\bT b_\al b_\beta )}_{c_\al b_\gamma}((G^{(\bT b_\al b_\beta)}|_{\bW\backslash\{b_\alpha,b_\beta\}})^{-1}_{b_\gamma b_{\gamma'}} -\delta_{\gamma\gamma'} /\md(z )) G^{(\bT b_\al b_\beta)}_{b_{\gamma'} c_\beta}.
\end{split}\end{align}
Thanks to \eqref{eq:local_law}, $|G^{(\bT b_\al b_\beta)}_{b_\gamma b_{\gamma'}}-\md(z )\delta_{\gamma\gamma'}|\lesssim N^{-\fb}$. Thus 
$|(G^{(\bT b_\al b_\beta)}|_{\bW\backslash\{b_\alpha,b_\beta\}})^{-1}_{b_\gamma b_{\gamma'}} -\delta_{\gamma\gamma'} /\md(z )|\lesssim N^{-3\fb/4} $, and  the error $\cE$ in \eqref{e:IVerror} is bounded as
\begin{align}\label{e:IVerror2}
   |\cE|\lesssim N^{-3\fb/4} \sum_{\gamma,\gamma'\in\qq{\mu}\setminus\{\al,\beta\}}|G^{(\bT b_\al b_\beta )}_{c_\al b_\gamma}||G^{(\bT b_\al b_\beta)}_{b_{\gamma'} c_\beta}|\lesssim N^{-\fb/2} \sum_{\gamma\in\qq{\mu}\setminus\{\al,\beta\}}|G^{(\bT b_\al b_\beta )}_{c_\al b_\gamma}|^2+|G^{(\bT b_\al b_\beta)}_{ c_\beta b_{\gamma}}|^2.
\end{align}
The claim \eqref{e:diffG2} follows from combining \eqref{e:IVerror} and \eqref{e:IVerror2}.

To prove \eqref{e:diffG3}, we can rewrite the right-hand side of \eqref{e:schur_removeT} explicitly as
\begin{align}\label{e:schur_removeT2}
 -(G^{(b_\al b_\beta)}(G^{(b_\al b_\beta)}|_\bT)^{-1}G^{(b_\al b_\beta)})_{c_\al c_\beta}
    =-\sum_{x,y\in \bT} G^{(b_\al b_\beta)}_{c_\al x}(G^{(b_\al b_\beta)}|_\bT)^{-1}_{xy}G^{(b_\al b_\beta)}_{y c_\beta}.
\end{align}
Since $I(\cF^+;\cG)=1$, by \eqref{eq:local_law}, for $x,y\in \bT$, $|G^{(b_\al b_\beta)}_{xy}-(H_\bT-z -\msc(z )\mathbb{I}^{\del})^{-1}_{xy}|\lesssim N^{-\fb}$, where $\mathbb{I}^\del_{xy}=\bm1(\dist_\cT(o,x)=\ell)\delta_{xy}$. Thus we have
\begin{align*}
|(G^{(b_\al b_\beta)}|_\bT)_{xy}^{-1}-(H^{(b_\al b_\beta)}-z -\msc(z )\mathbb{I}^{\del})_{xy}|\lesssim N^{-3\fb/4}, \text{ for }x,y\in \bT,
\end{align*}
and 
\begin{align}\label{e:diffG4}
    \eqref{e:schur_removeT2}=  -\sum_{x,y\in \bT}G^{(b_\al b_\beta)}_{c_\al x}(H-z -\msc(z )\mathbb{I}^\del)_{xy}G^{(b_\al b_\beta)}_{y c_\beta}+\OO\left(N^{-3\fb/4} \sum_{x,y\in \bT}|G^{(b_\al b_\beta)}_{c_\al x}||G^{(b_\al b_\beta)}_{y c_\beta}|\right).
\end{align}

For the summation over $x,y\in \bT$ in \eqref{e:diffG4},  if $x\in \bT$ but $\dist_\cT(x, o)<\ell$, then $\mathbb{I}^\del_{xy}=0$, and  we have
\begin{align}\begin{split}\label{e:HGexp}
&\phantom{{}={}}\sum_{y\in \bT}(H-z -\msc(z )\mathbb{I}^\del)_{xy}G^{(b_\al b_\beta)}_{y c_\beta}
=\sum_{y\in \bT}(H-z )_{xy}G^{(b_\al b_\beta)}_{y c_\beta}=0,
\end{split}\end{align} 
where for the last equality, we used that by the definition of the Green's function.
Thus by plugging \eqref{e:HGexp} into \eqref{e:diffG4}, it follows that 
\begin{align}\begin{split}\label{e:interior erm}
    \sum_{\dist(x,o)<\ell\atop y\in \bT}G_{c_\al x}^{(b_\al b_\beta)}(H-z -\msc(z )\mathbb{I}^\del)_{xy}G^{(b_\al b_\beta)}_{y c_\beta}=0.
\end{split}\end{align}

If $x\in \bT$ and $\dist_\cT(x, o)=\ell$, we denote the parent node of $x$ as $x'$. We then have, by the self-consistent equation of $\msc$,
\begin{align}\begin{split}\label{e:boundary erm}
&\phantom{{}={}}\sum_{y\in \bT}G_{c_\al x}^{(b_\al b_\beta)}(H-z -\msc(z )\mathbb{I}^\del)_{xy}G^{(b_\al b_\beta)}_{y c_\beta}\\
&=\frac{1}{\sqrt{d-1}}G_{c_\al x}^{(b_\al b_\beta)}G^{(b_\al b_\beta)}_{x' c_\beta}
-(z +\msc(z ))G_{c_\al x}^{(b_\al b_\beta)}G^{(b_\al b_\beta)}_{x c_\beta}\\
&=\frac{1}{\sqrt{d-1}}G_{c_\al x}^{(b_\al b_\beta)}G^{(b_\al b_\beta)}_{x' c_\beta}
+\frac{1}{\msc(z )}G_{c_\al x}^{(b_\al b_\beta)}G^{(b_\al b_\beta)}_{x c_\beta}.
\end{split}\end{align} 
The claim \eqref{e:diffG3} follows from plugging \eqref{e:interior erm} and \eqref{e:boundary erm} into \eqref{e:diffG4}.

The claims \eqref{e:Greplace} follow from the same arguments as in \eqref{e:diffG2} and \eqref{e:diffG3}, so we omit the proof. 
\end{proof}
\subsection{Proof of \Cref{lem:deletedalmostrandom} and \Cref{lem:task2}}
\label{s:removeonevertex}
\begin{proof}[Proof of \Cref{lem:deletedalmostrandom}]
We can replace the indicator function $I_o$ by $I(\cF,\cG)$ by the same argument for \Cref{c:resample}
\begin{align}\begin{split}\label{e:switching_Pi}
    &\phantom{{}={}} \frac{1}{N}\sum_{o\in \qq{N}}\bE[|G^{(o)}_{ij}|^2  ]=\frac{1}{Z_\cF}\sum_{\bfi}\bE\left[\bm1(\cG\in \Omega) I(\cF,\cG) |G^{(o)}_{ij}|^2  \right]+\OO(N^{-1+\fc})\\
    &\lesssim\frac{1}{Z_{\cF^+}}\sum_{\bfi^+}\bE\left[\bm1(\cG, \wt \cG\in \Omega) I(\cF^+,\cG) |\wt G^{(o)}_{ij}|^2   \right]+\OO(N^{-\fb}\bE[\Phi  ]).
\end{split}\end{align}

We are now left to write the Green's function of the switched graph in terms of the original graph. Let $\cT=B_\ell(o,\cG)$ and $L:=P(\cT,z ,\msc(z ))$. Here $L$ is consistent with the local Green's function  (as defined in \eqref{e:local_Green}) on $\cT$, and we use the same symbols to represent them.  We notice that since $i,j$ are distinct neighbors of $o$, so $i,j$ are in different connected components of $\cT^{(o)}$. Thus $L^{(o)}_{ij}=0$, and $\wt G^{(o)}_{ij}=\wt G^{(o)}_{ij}-L^{(o)}_{ij}$. We will use the same argument as in the proof of \Cref{l:coefficient}.   In the rest, we condition on that $\cG, \tcG\in \Omega$ and $I(\cF^+,\cG)=1$. Then by the same argument as for \eqref{eq:resolventexp}, we have
\begin{align}\label{e:GooY2}
    \wt G^{(o)}_{ij}&=\left(L^{(o)}\sum_{k=1}^\fp \left((\widetilde  B^\top(\tG^{(\bT)}-\msc(z ))\widetilde  B)  L^{(o)}\right)^k\right)_{ij}+\OO(N^{-2}).
\end{align}

For any $1\leq k\leq \fp$, the $k$-th term in \eqref{e:GooY2} is an $\OO(1)$-weighted sum of terms of the following form
\begin{align}\label{e:PUP2}
   \sum_{x_1, x_2,\cdots, x_{2k}\in \bT}L^{(o)}_{i x_1} V_{x_1 x_2} L^{(o)}_{x_2 x_3} V_{x_3 x_4} 
   L^{(o)}_{x_4 x_5}\cdots V_{x_{2k-1} x_{2k}}L^{(o)}_{x_{2k} j}.
\end{align}
Here  $V=(\wt B^\top (\wt G^{(\bT)}-\msc(z ))\wt B)$, and \eqref{eq:local_law1} gives that  $|V_{xy}|\lesssim N^{-\fb} $ for any $x, y\in \bT$. 

We recall that $i,j$ are in different connected components of $\cT^{(o)}$. For the sequence of indices $x_0:=i, x_1, x_2,\cdots, x_{2k}, x_{2k+1}:=j$, there exists some pair of consecutively listed vertices that are in different connected components of $\cT^{(o)}$. If for some $0\leq m\leq k$, $x_{2m}, x_{2m+1}$ are in different connected components of $\cT^{(o)}$, then $L^{(o)}_{x_{2m}x_{2m+1}}=0$ and \eqref{e:PUP2} vanishes. Thus we only need to consider the case that for some $1\leq m\leq k$, $x_{2m-1}, x_{2m}$ are in different connected components of $\cT^{(o)}$. 
In this case, 
\begin{align}\label{e:Vxm}
V_{x_{2m-1}, x_{2m}}=(\wt B^\top (\wt G^{(\bT)}-\msc(z ))\wt B)_{x_{2m-1}, x_{2m}}
 =\frac{1}{d-1}\sum_{\al, \beta: l_\al=x_{2m-1}, l_\beta=x_{2m}}\wt G^{(\bT)}_{c_\al c_\beta}.
 \end{align}

If $k=1$, then $m=1$ and we can compute \eqref{e:PUP2} using \eqref{e:Vxm} as
 \begin{align}\begin{split}\label{e:k=1Pxx}
     &\phantom{{}={}}\frac{1}{d-1}\sum_{\al\neq \beta\in \qq{\mu}}L^{(o)}_{i l_\al }\wt G^{(\bT)}_{c_\al c_\beta} L^{(o)}_{l_\beta j}=\frac{\fc}{(d-1)^{\ell}}\sum_{\dist_\cT(i, l_\al)=\ell-1\atop \dist_\cT(j, l_\beta)=\ell-1}\wt G^{(\bT)}_{c_\al c_\beta},
\end{split}\end{align}
where we used \eqref{e:sum_Pbound}, and $|\fc|=\OO(1)$.
For $k\geq 2$, we can bound \eqref{e:PUP2} as
\begin{align}\begin{split}\label{e:sumx1234}
    \eqref{e:PUP2} &\lesssim N^{-(k-1)\fb} \sum_{\al\neq \beta}|\wt G^{(\bT)}_{c_\al c_\beta}|\sum_{x_1, x_2,\cdots, x_{2m-2}\in \bT\atop x_{2m+1}, x_{2m+2},\cdots, x_{2k}\in \bT}|L^{(o)}_{i x_1}| 
   \cdots |L^{(o)}_{x_{2m-2} l_\al}||L^{(o)}_{l_\beta x_{2m+1}}| \cdots |L^{(o)}_{x_{2k} j}|\\
   &\lesssim
   N^{-(k-1)\fb} \sum_{\al\neq \beta}|\wt G^{(\bT)}_{c_\al c_\beta}|
   (d-1)^{\ell}(\ell(d-1)^\ell)^{k-1}\lesssim 
    \frac{1}{(d-1)^{7(k-1)\fb/8}}\sum_{\al\neq \beta}|\wt G^{(\bT)}_{c_\al c_\beta}|,
\end{split}\end{align}
where the first statement follows from \eqref{e:Vxm} and $|V_{xy}|\lesssim N^{-\fb}$;  the second statement follows from \eqref{e:sum_Pbound}; in the third statement we used $N^{\fb} \geq (d-1)^{20\ell}$.

The estimates \eqref{e:k=1Pxx} and \eqref{e:sumx1234} together lead to the following estimate for \eqref{e:GooY2} 
\begin{align}\label{e:firstbound}
   \eqref{e:GooY2}= \frac{\msc^{2\ell}(z)}{(d-1)^{\ell}}\sum_{\al\in \sfA_i\atop \beta\in \sfA_j}\wt G^{(\bT)}_{c_\al c_\beta}
    +\OO\left(\frac{1}{N^{7\fb/8}}\sum_{\al\neq \beta\in\qq{\mu}}|\wt G_{c_\al c_\beta}^{(\bT)}| +N^{-2}\right),
\end{align}
and by plugging \eqref{e:firstbound} back into \eqref{e:switching_Pi} we conclude that
\begin{align}\begin{split}\label{e:IIGU2}
    &\frac{1}{Z_\cF}\sum_{\bfi}\bE\left[\bm1(\cG\in \Omega) I(\cF,\cG) |G^{(o)}_{ij}|^2  \right]\lesssim J_1+J_2+\OO(N^{-\fb}\bE[\Phi  ])\\
    &J_1:= \frac{1}{Z_{\cF^+}}\sum_{\bfi^+}\bE\left[\bm1(\cG, \wt \cG\in \Omega) I(\cF^+,\cG)  \left|\frac{1}{(d-1)^{\ell}}\sum_{\al\neq \beta\in\qq{\mu}}\wt G^{(\bT)}_{c_\al c_\beta} \right|^2  \right]\\
    &|J_2|\lesssim \frac{1}{Z_{\cF^+}}\sum_{\bfi^+}\bE\left[\bm1(\cG, \wt \cG\in \Omega) I(\cF^+,\cG) N^{-3\fb/4} \left(\sum_{\al\neq \beta\in\qq{\mu}}|\wt G^{(\bT)}_{c_\al c_\beta}|^2+ \Phi\right) \right].
\end{split}\end{align}

Next, we estimate $J_1$ and $J_2$ as in \eqref{e:IIGU2}. We need to express $\wt G_{c_\al c_\beta}^{(\bT)}$ in terms of the Green's function of the graph $\cG$. In this process, any term that can be bounded by $\OO(N^{-\oo(1)}\Phi)$ is considered negligible, since it contributes to an error $\bE\left[{\bm1(\cG\in \Omega)}N^{-\oo(1)}\Phi   \right]=\OO(N^{-\oo(1)}\bE[\Phi ])$.

Thanks to \Cref{l:diffG1} (combining \eqref{e:total_diffG1}, \eqref{e:total_diffG2} and \eqref{e:total_diffG3}), we have
\begin{align}\label{e:replacealpha}
    \tG^{(\bT)}_{c_\al c_\beta}=G^{(b_\al b_\beta)}_{c_\al c_\beta}+\cE_{\al \beta},
\end{align}
where 
\begin{align}\begin{split}\label{e:cEbound}
    |\cE_{\al \beta}|&\lesssim  \sum_{\gamma\in\qq{\mu}, x\in \cN_\gamma}(|G_{c_\al x}^{(\bT\bW)}|^2+|G_{c_\beta x}^{(\bT\bW)}|^2)+\sum_{\gamma\in\qq{\mu}\setminus\{\al,\beta\}}(|G^{(\bT b_\al b_\beta )}_{c_\al b_\gamma}|^2+ |G^{(\bT b_\al b_\beta)}_{c_\beta b_{\gamma} }|^2)\\
    &+\sum_{x\in \bT}(|G_{c_\al x}^{(b_\al b_\beta)}|^2+|G^{(b_\al b_\beta)}_{c_\beta x}|^2)+N^{-\fb }\Phi,
\end{split}\end{align}
and $N_\gamma=\{x\neq c_\gamma: x\sim b_\gamma\}\cup\{a_\gamma\}$.

In the following we show that for $\cG\in \Omega$, and $\al\neq \beta, \al'\neq \beta'$
\begin{align}\begin{split}\label{e:smallterm}
   &\phantom{{}={}}\frac{1}{Z_{\cF^+}}\sum_{\bfi^+}I(\cF^+,\cG) (|\cE_{\al \beta}|| G^{(b_{\al'} b_{\beta'})}_{c_{\al'} c_{\beta'}}|+|G^{(b_\al b_\beta)}_{c_\al c_\beta}||\cE_{\al' \beta'}|
    +|\cE_{\al \beta}|^2)\\
    &\lesssim N^{-\fb} \sum_{\gamma\in\{\al, \beta, \al',\beta'\}\atop x\sim b_\gamma, 
x\neq c_\gamma}  \frac{1}{Z_{\cF^+}}\sum_{\bfi^+}I(\cF^+,\cG) |G^{(b_\gamma)}_{c_\gamma x}|^2+\frac{\Phi}{N^{3\fb/4 }},
\end{split}\end{align}
and thus
\begin{align}\begin{split}\label{e:GGGxy}
    &\phantom{{}={}}\frac{1}{Z_{\cF^+}}\sum_{\bfi^+}I(\cF^+,\cG)(G^{(b_\al b_\beta)}_{c_\al c_\beta}+\cE_{\al \beta})(\overline G^{(b_{\al'} b_{\beta'})}_{c_{\al'} c_{\beta'}}+\overline\cE_{\al' \beta'}) \\
    &=\frac{1}{Z_{\cF^+}}\sum_{\bfi^+}I(\cF^+,\cG)G^{(b_\al b_\beta)}_{c_\al c_\beta}\overline G^{(b_{\al'} b_{\beta'})}_{c_{\al'} c_{\beta'}}+\OO\left(\sum_{\gamma\in\{\al, \beta, \al',\beta'\}\atop x\sim b_\gamma, 
x\neq c_\gamma} \frac{N^{-\fb}}{ Z_{\cF^+}}\sum_{\bfi^+}I(\cF^+,\cG) |G^{(b_\gamma)}_{c_\gamma x}|^2+ \frac{\Phi}{N^{3\fb/4 }}\right).
\end{split}\end{align}

To prove \eqref{e:smallterm}, we start by plugging in the bound of $\cE_{\alpha \beta}$ from \eqref{e:cEbound} into the left-hand side of \eqref{e:smallterm}, after which each term contains three Green's function entries as factors. We can bound one of them by $N^{-\fb} $ using \eqref{eq:local_law}, and the remaining two can be bounded by terms in the form $\{|G_{c_\gamma x}^{(b_\gamma)}|^2\}$ with ${\gamma\in \{\al, \beta, \al', \beta'\}}, x\sim b_\gamma, x\neq c_\gamma$ or can be bounded by $N^\fo\Phi$ using \eqref{e:Gest}.  In the following we estimate the following term from \eqref{e:smallterm}, and the other terms can be bounded in the same way, so we omit arguments about them.
\begin{align}\label{e:yizhong}
\frac{1}{Z_{\cF^+}}\sum_{\bfi^+}I(\cF^+,\cG)\sum_{\gamma\in\qq{\mu}, x\in \cN_\gamma}|G_{c_\al x}^{(\bT\bW)}|^2|G^{(b_{\al'}b_{\beta'})}_{c_{\al'}c_{\beta'}}|\lesssim 
\frac{1}{Z_{\cF^+}}\sum_{\bfi^+}I(\cF^+,\cG)\sum_{\gamma\in\qq{\mu}, x\in \cN_\gamma}N^{-\fb}|G_{c_\al x}^{(\bT\bW)}|^2,
\end{align}
where we bound $|G^{(b_{\al'}b_{\beta'})}_{c_{\al'}c_{\beta'}}|\lesssim N^{-\fb} $ by \eqref{eq:local_law}. 
If $\gamma\neq \al$, thanks to \eqref{e:Gest}, we have (ignore the indicator and the averaging over embeddings)
\begin{align}\label{e:yizhong2}
\eqref{e:yizhong}
\lesssim \frac{1}{Nd}\sum_{\gamma\in\qq{\mu}\setminus\{\al\}\atop x\in \cN_\gamma}N^{-\fb}\sum_{b_\al\sim c_\al}|G_{c_\al x}^{(\bT\bW)}|^2
\lesssim N^{-\fb}N^\fo \Phi\lesssim N^{-3\fb/4}\Phi.
\end{align}

Thus we can reduce \eqref{e:yizhong} to the case $\gamma=\al$ 
\begin{align}
    \label{e:GtGG}
\frac{1}{Z_{\cF^+}}\sum_{\bfi^+}I(\cF^+,\cG)N^{-\fb}\left(|G_{c_\al a_\al}^{(\bT \bW)}|^2+\sum_{
x\sim b_\alpha\atop
x\neq c_\alpha} |G^{(\bT \bW)}_{c_\alpha x}|^2\right).
\end{align} 
The terms involving $|G_{c_\al a_\al}^{(\bT \bW)}|^2$ can be bounded by the same way as in \eqref{e:yizhong2}. 
Next, we show that we can replace $G_{c_\al x}^{(\bT \bW)}$ in \eqref{e:GtGG} by $G_{c_\al x}^{(b_\al)}$.
\begin{align}\begin{split}\label{e:yizhong3}
    \eqref{e:GtGG}&= \frac{1}{Z_{\cF^+}}\sum_{\bfi^+}I(\cF^+,\cG)N^{-\fb}\sum_{
x\sim b_\alpha\atop
x\neq c_\alpha} |G^{(b_\al)}_{c_\alpha x}|^2+\cE,\\
|\cE|&\lesssim \frac{1}{Z_{\cF^+}}\sum_{\bfi^+}I(\cF^+,\cG)N^{-\fb}\sum_{
x\sim b_\alpha\atop
x\neq c_\alpha} |G^{(\bT\bW)}_{c_\alpha x}-G^{(b_\al)}_{c_\alpha x}| +\OO(N^{-3\fb/4}\Phi),
\end{split}\end{align}
where for the bound of $\cE$, we used that $|G^{(\bT\bW)}_{c_\alpha x}|,|G^{(b_\al)}_{c_\alpha x}|\lesssim 1$ from \eqref{eq:local_law}.

Thanks to \eqref{e:Greplace}, we can bound the difference $|G_{c_\al x}^{(\bT \bW)}-G_{c_\al x}^{(b_\al)}|$ by
\begin{align}\label{e:yizhong4}
\left|G^{(\bT \bW)}_{c_\alpha x}-G^{(b_\alpha)}_{c_\alpha x}\right|
\leq \sum_{\gamma\in\qq{\mu}\setminus\{\al\}}(|G^{(\bT b_\al )}_{c_\al b_\gamma}|^2+|G^{(\bT b_\al)}_{x b_{\gamma}}|^2)+\sum_{y\in \bT}(|G^{(b_\al)}_{c_\al y}|^2+|G^{(b_\al)}_{xy}|^2).
\end{align}
By plugging \eqref{e:yizhong4} into \eqref{e:yizhong3}, by the same argument as in \eqref{e:yizhong2}, we can bound $\cE$ in \eqref{e:yizhong3} as
\begin{align}\label{e:yizhong5} \frac{N^{-\fb}}{Z_{\cF^+}}\sum_{\bfi^+}I(\cF^+,\cG)\left(\sum_{\gamma\in\qq{\mu}\setminus\{\al\}}(|G^{(\bT b_\al )}_{c_\al b_\gamma}|^2+|G^{(\bT b_\al)}_{x b_{\gamma}}|^2)+\sum_{y\in \bT}(|G^{(b_\al)}_{c_\al y}|^2+|G^{(b_\al)}_{xy}|^2)\right)
    \lesssim N^{-3\fb/4}\Phi.
\end{align}
The claim \eqref{e:smallterm} follows from plugging \eqref{e:yizhong2}, \eqref{e:yizhong3} and \eqref{e:yizhong5} into \eqref{e:yizhong}.

For the first term on the right-hand side of \eqref{e:GGGxy}, we recall that $\al \neq \beta$ and $\al'\neq \beta'$. Then either $\{\al, \beta\}=\{\al', \beta'\}$, or some indices, say $\al,\al'$, only appears once (namely, $\al\neq \al', \beta'$ and $\al'\neq \al, \beta$). Then we can sum over $(b_\al, c_\al)$ and $(b_{\al'}, c_{\al'})$ separately (ignoring the indicator function)
\begin{align}\begin{split}\label{e:fbound1}
    &\phantom{{}={}}\frac{1}{(Nd)^2}
    \sum_{c_\al\sim b_\al}\sum_{c_{\al'}\sim b_{\al'}}  G^{(b_\al b_\beta)}_{c_\al c_\beta} \overline G^{(b_{\al'} b_{\beta'})}_{c_{\al'} c_{\beta'}} 
\\
    &\lesssim  \frac{1}{(Nd)^2}\sum_{c_\al\sim b_\al}\sum_{c_{\al'}\sim b_{\al'}} N^{-\fb/2}(|G^{( b_{\beta})}_{b_{\al} c_{\beta}}|+\Phi)|N^{-\fb/2}(|G^{( b_{\beta'})}_{b_{\al'} c_{\beta'}}|+\Phi)|\\
    &\lesssim  \frac{N^{-\fb}}{(Nd)^2}\sum_{c_\al\sim b_\al}\sum_{c_{\al'}\sim b_{\al'}}
    (|G^{( b_{\beta})}_{b_{\al} c_{\beta}}|^2+|G^{( b_{\beta'})}_{b_{\al'} c_{\beta'}}|^2+\Phi)    \lesssim N^{-\fb} \Phi,
\end{split}\end{align}
where in the first statement we used \eqref{e:sum_one_index}; the second statement follows from Cauchy-Schwarz inequality; and the last  statement follows from the Ward-identity bound \eqref{e:Gest}.
Otherwise if $\{\al, \beta\}=\{\al', \beta'\}$,  \eqref{e:Gccerror} gives
\begin{align}\label{e:fbound2}
    \frac{1}{Z_{\cF^+}}\sum_{\bfi^+}I(\cF^+,\cG) |G^{(b_\al b_\beta)}_{c_\al c_\beta}|^2\lesssim N^\fo\Phi.
\end{align}

We recall that  $J_1$ in \eqref{e:IIGU2} is obtained by averaging \eqref{e:GGGxy} over  $\alpha \neq \beta \in \qq{\mu}$ and  $\alpha' \neq \beta' \in \qq{\mu}$. By substituting \eqref{e:GGGxy}, \eqref{e:fbound1}, and \eqref{e:fbound2}, we conclude that:
\begin{align}\begin{split}\label{e:J1bound}
    J_1&=\sum_{\al\neq\beta\in\qq{\mu}}\frac{1}{(d-1)^{2\ell}Z_{\cF^+}}\sum_{\bfi^+}\bE\left[\bm1(\cG, \wt \cG\in \Omega) I(\cF^+,\cG)  |G^{(b_\al b_\beta)}_{c_\al c_\beta}|^2  \right]\\
    &+\frac{1}{Z_{\cF^+}}\sum_{\bfi^+}\bE\left[\bm1(\cG,\wt\cG\in \Omega) I(\cF^+,\cG) \left(\frac{(d-1)^{2\ell}}{N^{\fb}}\sum_{\al\in\qq{\mu}}\sum_{
x\sim b_\alpha,
x\neq a_\alpha} |G^{(b_\al)}_{c_\al x}|^2 +\frac{\Phi}{N^{\fb/2}}\right) \right],\\
&\lesssim \frac{1}{Z_{\cF^+}}\sum_{\bfi^+}\bE\left[\bm1(\cG,\wt\cG\in \Omega) I(\cF^+,\cG) \left(\frac{1}{N^{3\fb/4}}\sum_{
x\sim b_\alpha,
x\neq a_\alpha} |G^{(b_\al)}_{c_\al x}|^2 +N^\fo \Phi\right) \right].
\end{split}\end{align}
where in the last statement we used \eqref{e:fbound2} and the permutation invariance of the vertices, so that the expectation does not depend on $\al$. By the same argument we can also bound $J_2$ in \eqref{e:IIGU2} as, 
\begin{align}\label{e:J2bound}
    J_2\lesssim \frac{1}{N^{3\fb/4}Z_{\cF^+}}\sum_{\bfi^+}\bE\left[\bm1(\cG,\wt\cG\in \Omega) I(\cF^+,\cG) \left(\frac{1}{N^{3\fb/4}}\sum_{
x\sim b_\alpha,
x\neq a_\alpha} |G^{(b_\al)}_{c_\al x}|^2 +(d-1)^{2\ell}N^\fo \Phi\right) \right].
\end{align}

By plugging \eqref{e:J1bound} and \eqref{e:J2bound} into \eqref{e:IIGU2}, we conclude
\begin{align}\begin{split}\label{e:IIGU3}
    \eqref{e:IIGU2}
    &\lesssim \frac{1}{Z_{\cF^+}}\sum_{\bfi^+}\bE\left[\bm1(\cG,\wt\cG\in \Omega) I(\cF^+,\cG) \left(\frac{1}{N^{3\fb/4}}\sum_{
x\sim b_\alpha,
x\neq a_\alpha} |G^{(b_\al)}_{c_\al x}|^2 +N^\fo \Phi\right) \right] \\
  &\lesssim 
   \frac{1}{N^{3\fb/4} Z_{\cF^+}}\sum_{\bfi^+}\bE\left[\bm1(\cG\in \Omega) I(\cF^+,\cG) \sum_{
x\sim b_\alpha,
x\neq a_\alpha} |G^{(b_\al)}_{c_\al x}|^2 \right]+\bE[N^\fo\Phi ]\\
&\lesssim \frac{1}{N^{3\fb/4}(Nd)} \sum_{b_\al\sim c_\al}  \bE\left[ I(\{c_\al,b_\al\},\cG)\sum_{
x\sim b_\alpha,
x\neq a_\alpha}  |G^{(b_\al)}_{c_\al x}|^2 \right] +\bE[N^\fo\Phi ]\\
&\lesssim \frac{1}{N^{1+3\fb/4}} \sum_{o\in\qq{N}}   \bE\left[ I(\{i,o\},\cG) |G^{(o)}_{ij}|^2 \right] +\bE[N^\fo\Phi ],
\end{split}\end{align}
where in the second statement, we dropped the indicator function $\bm1(\wt \cG\in \Omega)$; in the third statement sum over $\bfi^+\setminus\{b_\al,c_\al\}$; for the last statement, we used the permutation invariance of the vertices, so that $G_{ij}^{(o)}$ and $G_{c_\al x}^{(b_\al)}$ have the same distribution. 

Thus \eqref{e:IIGU2} and \eqref{e:IIGU3} together leads to the following bound 
    \begin{align*}
\frac{1}{N} \sum_{o\in\qq{N}} \bE[I(\{i,o\},\cG) |G_{ij}^{(o)}|^2 ]\lesssim  \frac{1}{N^{1+3\fb/4}} \sum_{o\in\qq{N}} \bE[I(\{i,o\},\cG) |G_{ij}^{(o)}|^2 ]+\bE[N^\fo\Phi ],
\end{align*}
and the claim \eqref{e:sameasdisconnect} follows from rearranging.

\end{proof}

\begin{proof}[Proof of Proposition \ref{lem:task2}]
Thanks to \Cref{l:diffG1}, we have
\begin{align*}
    \tG^{(\bT)}_{c_\al c_\beta}=G^{(b_\al b_\beta)}_{c_\al c_\beta}+\cE_{\al \beta},
\end{align*}
where $\cE_{\al \beta}$ as in \eqref{e:cEbound}. 
The statement follows from showing 
\begin{align}\begin{split}\label{e:G2bound}
     &\sum_{\gamma\in \qq{\mu}}\sum_{x\in \cN_\gamma}\frac{1}{Z_{\cF^+}}\sum_{\bfi^+}\bE[I(\cF^+,\cG)\bm1(\cG\in \Omega)|G_{c_\al x}^{(\bT\bW)}|^2 ]\lesssim (d-1)^\ell \bE[N^\fo ].\\
     &\sum_{\gamma\in\qq{\mu}\setminus\{\al,\beta\}}\frac{1}{Z_{\cF^+}}\sum_{\bfi^+}\bE[I(\cF^+,\cG)\bm1(\cG\in \Omega)|G_{c_\al b_\gamma}^{(\bT b_\al b_\beta)}|^2 ]\lesssim (d-1)^\ell \bE[N^\fo ],\\
     &\sum_{x\in\bT}\frac{1}{Z_{\cF^+}}\sum_{\bfi^+}\bE[I(\cF^+,\cG)\bm1(\cG\in \Omega)|G_{c_\al x}^{( b_\al b_\beta)}|^2 ]\lesssim (d-1)^\ell \bE[N^\fo ],
\end{split}\end{align}
where  $N_\gamma=\{x\neq c_\gamma: x\sim b_\gamma\}\cup\{a_\gamma\}$.

In the following we prove the first statement in \eqref{e:G2bound}, the others are similar, so we omit their proofs.
If $x\in \cN_\gamma$ with $\gamma\neq \al$, or $x=a_\al$, we can first sum over $b_\al\sim c_\al$, and  \eqref{e:Gest} gives
\begin{align}\label{e:qinkuang1}
   \frac{1}{Z_{\cF^+}}\sum_{\bfi^+}I(\cF^+,\cG)\bm1(\cG\in \Omega)|G_{c_\al x}^{(\bT\bW)}|^2\lesssim N^\fo\Phi.
\end{align}
Otherwise $x\sim b_\al, x\neq c_\al$. 
Thanks to \eqref{eq:local_law}, we have  $||G^{(\bT \bW)}_{c_\alpha x}|^2-|G^{(b_\alpha)}_{c_\alpha x}|^2|\lesssim N^{-\fb}|G^{(\bT \bW)}_{c_\alpha x}-G^{(b_\alpha)}_{c_\alpha x}|$.
We recall the upper bound on $|G^{(\bT \bW)}_{c_\alpha x}-G^{(b_\alpha)}_{c_\alpha x}|$ from \eqref{e:yizhong4} and \eqref{e:yizhong5}, then it follows that
\begin{align}\label{e:qinkuang2}
 \frac{1}{Z_{\cF^+}}\sum_{\bfi^+}I(\cF^+,\cG)\bm1(\cG\in \Omega)|G^{(\bT \bW)}_{c_\alpha x}|^2
 =\frac{1}{Z_{\cF^+}}\sum_{\bfi^+}I(\cF^+,\cG)\bm1(\cG\in \Omega)|G^{(b_\alpha)}_{c_\alpha x}|^2
 +\OO(N^{-\fb/2}\Phi).
\end{align}
By combining \eqref{e:qinkuang1} and \eqref{e:qinkuang2}, we conclude
    \begin{align}\begin{split}\label{e:decomp}
    &\phantom{{}={}}\sum_{\gamma\in \qq{\mu}}\sum_{x\in \cN_\gamma}\frac{1}{Z_{\cF^+}}\sum_{\bfi^+}\bE[I(\cF^+,\cG)\bm1(\cG\in \Omega)|G_{c_\al x}^{(\bT\bW)}|^2 ]\\
    &\lesssim 
    \sum_{x\sim b_\al, x\neq c_\al}\frac{1}{Z_{\cF^+}}\sum_{\bfi^+}\bE[I(\cF^+,\cG)\bm1(\cG\in \Omega)|G_{c_\al x}^{(b_\al)}|^2 ]
    +(d-1)^{\ell}\bE[N^\fo\Phi ]
    \lesssim (d-1)^{\ell}\bE[N^\fo\Phi ].
    \end{split}\end{align}
where in the last line we used \eqref{e:sameasdisconnect} to bound the first term; The first claim in \eqref{e:G2bound} follows from combining  \eqref{e:qinkuang1} and \eqref{e:decomp}.
\end{proof}

\bibliography{ref}{}
\bibliographystyle{abbrv}

\end{document}